\documentclass[11pt]{article}
\usepackage{graphicx} 

\title{The canonical structures of the limit of the Yang-Mills flows for nef and big classes}
\author{Satoshi Jinnouchi \thanks{Department of Mathematics, Graduate School of Science, The University of Osaka,
1-1, Machikaneyama-cho, Toyonaka, Osaka 560-0043, Japan.
email:{{\tt u122988d[@]ecs.osaka-u.ac.jp}},
email:{{\tt 20160312sti[@]gmail.com}}}}
\date{August 2025}

\usepackage{amssymb,amsmath,amsthm}    
\usepackage[abbrev]{amsrefs} 
\usepackage{mathrsfs}
\usepackage[bbgreekl]{mathbbol}
\usepackage{comment}
\usepackage{color}
\usepackage{tikz}
\usepackage{mathtools}
\usepackage{appendix}
\usepackage[all]{xy} 
\usetikzlibrary{positioning}
\usetikzlibrary {arrows.meta}

\newtheorem{theo}{Theorem}[section]
\newtheorem{lemm}[theo]{Lemma}
\newtheorem{corr}[theo]{Corollary}
\newtheorem{prop}[theo]{Proposition}

\numberwithin{equation}{section}

\theoremstyle{definition}
\newtheorem{defi}[theo]{Definition}

\newtheorem{rema}[theo]{Remark}

\allowdisplaybreaks

\newcommand{\rk}{{\rm{rk}}}

\newcommand{\Amp}{{\rm{Amp}}}

\newcommand{\codim}{{\rm{codim}}}

\newcommand{\id}{{\rm{id}}}

\newcommand{\loc}{{\rm{loc}}}

\newcommand{\exc}{{\rm{exc}}}

\newcommand{\End}{{\rm{End}}}

\newcommand{\Tr}{{\rm{Tr}}}

\newcommand{\wtil}{\widetilde}

\newcommand{\im}{{\rm im}}

\newcommand{\delbar}{\bar{\partial}}
\newcommand{\del}{\partial}
\newcommand{\ve}{\varepsilon}
\newcommand{\an}{{\rm an}}
\newcommand{\Gr}{{\rm Gr}}
\newcommand{\HNS}{{\rm HNS}}

\newcommand{\YM}{{\rm YM}}
\newcommand{\HYM}{{\rm HYM}}

\usepackage[colorlinks=true, linkcolor=blue, citecolor=blue]{hyperref}

\usepackage{cite}

\begin{document}
\date{\empty}
\maketitle
\begin{abstract}
In the previous paper \cite{Jin26}, the author introduced the notions of an adapted current $T$ and an adapted Hermitian-Einstein metric to establish the Kobayashi-Hitchin correspondence for a nef and big class $\alpha$. 
As a continuation of the previous work, this paper studies the solvability and the convergence of the Yang-Mills flow for a nef and big class $\alpha$ on a holomorphic vector bundle $E$ over a compact K\"{a}hler manifold $X$. In particular, we show that the limit of the Yang-Mills flow at infinity is determined by the holomorphic structure of $E$ and the nef and big class $\alpha$. More precisely, if we fix an integrable unitary connection $A_0$ on $E$, we show that the $T$-Yang-Mills flow on $E$ with initial condition $A_0$ is solvable for all time and it converges to a $T$-Yang-Mills connection $A_{\infty}$ in the sense of Uhlenbeck limit. Furthermore, we also show that, on the ample locus of $\alpha$, $A_{\infty}$ is complex-gauge equivalent to the direct sum of the Chern connections of the $T$-adapted Hermitian-Einstein metrics on the factors of the graded sheaf associated with the $\alpha^{n-1}$-Harder-Narasimhan-Seshadri filtration of $E$. 
\end{abstract}
\tableofcontents
\setcounter{theo}{0}
\renewcommand{\thetheo}{\Alph{theo}}
\section{Introduction}
This paper establishes the solvability and the convergence of the Yang-Mills flow for a nef and big class. In the K\"{a}hler setting, Daskalopoulos-Wentworth \cite{DW04} and Sibley \cite{Sib15} proved that the Yang-Mills flow converges in the sense of Uhlenbeck limit, and the limit depends only on the holomorphic structure of the original bundle, as conjectured by Bando-Siu \cite{BS94}. 
This paper generalizes their results to the nef and big class setting. A nef and big class appears as a limit of a sequence of K\"{a}hler classes. In complex and algebraic geometry, a base point free nef and big line bundle plays an important role. For example, the canonical divisors of minimal models of general type are base point free nef and big. 
However, there exist several examples of non-base point free nef and big line bundles and there is a natural setting in which the nef and big canonical divisors are not base point free (see e.g. \cite[Example 5.5]{ACSS22}). Therefore, it is natural to study global geometric properties via general nef and big line bundles (or more generally nef and big classes).
When working with a nef and big class, a major difficulty is that a metric representing the class is not necessarily positive definite and moreover has singularities whose model metrics are not known. To overcome this difficulty, we use the notion of an adapted current introduced in \cite{Jin26}, which admits a suitable approximation by smooth K\"{a}hler metrics.
This paper is a continuation of the author\rq{}s previous work \cite{Jin26} which established the Kobayashi-Hitchin correspondence for nef and big classes.

Let us quickly review the study of the Yang-Mills connections and the relationship with the Kobayashi-Hitchin correspondence (see also the introduction of \cite{Sib15}).
A Yang-Mills connection is one of the differential geometric canonical objects on a smooth hermitian vector bundle. The Yang-Mills connection arises as a critical point of the Yang-Mills energy which is defined as an integral of the squared norm of the curvature of an integrable unitary connection on the hermitian vector bundle. Therefore, it is natural to expect that the Yang-Mills connection can be obtained as a limit of the Yang-Mills flow which is the gradient flow of the Yang-Mills energy. 
This was proved by Daskalopoulos \cite{Das92} in the case of a Riemann surface.

When $(X,\omega)$ is a higher dimensional K\"{a}hler manifold, the convergence of the Yang-Mills flow fails in the usual sense.
Donaldson reformulated the Yang-Mills flow as a parabolic PDE of hermitian metrics on a holomorphic vector bundle, called the Hermitian-Yang-Mills flow, and proved its long time existence. Furthermore, on a projective manifold, he also established the smooth convergence of the Hermitian-Yang-Mills flow to a Hermitian-Einstein metric in the case where the holomorphic bundle is stable \cite{Don85}, \cite{Don87}. S. Kobayashi and M. L{$\rm\ddot{u}$}bke proved that Hermitian-Einstein bundles are stable (refer to \cite[section 5.8]{Kob87}).
In this way, the study of the Yang-Mills connections is closely related to the Kobayashi-Hitchin correspondence, namely, the equivalence between the existence of Hermitian-Einstein metrics and stability of holomorphic vector bundles. 

The Kobayashi-Hitchin correspondence of Donaldson was generalized to compact K\"{a}hler manifolds by Uhlenbeck-Yau \cite{UY86} using a different method. Finally Bando-Siu \cite{BS94} extended the correspondence to reflexive sheaves on compact K\"{a}hler manifolds. In \cite{BS94}, they also conjectured that the Yang-Mills flow will converge to a coherent sheaf defined by the Harder-Narasimhan filtration of the original holomorphic bundle. Here the Harder-Narasimhan filtration is the filtration of subsheaves whose successive quotients are semistable (see Definition \ref{HNS def2} for precise definition). The study of the relationship between the Yang-Mills flows and the Harder-Narasimhan filtrations dates back to Atiyah-Bott \cite{AB86}. In their study, the stratification of the space of holomorphic structures on the hermitian vector bundle by the Harder-Narasimhan types is important  (refer to \cite{Sib15}), where the Harder-Narasimhan type is the tuple of real numbers $(\mu_1,\ldots,\mu_l)$ which is defined by the Harder-Narasimhan filtration (refer to Definition \ref{HNS def}). 

The conjecture of Bando-Siu was solved by Daskalopoulos-Wentworth \cite{DW04} for K\"{a}hler surfaces and by Sibley \cite{Sib15} for compact K\"{a}hler manifolds. Their approach is to use Hermitian-Yang-Mills type functionals to distinguish Harder-Narasimhan types, an idea that first appeared in \cite{AB86}. In this paper, we refine their ideas and generalize their results to the nef and big class setting.

We now introduce the main results of this paper. Since the cohomology class $\alpha$ which we consider is nef and big, a metric representing $\alpha$ need not be smooth and strictly positive. It is difficult to study PDEs with respect to such metrics in general. We therefore choose an adapted current, which is introduced in \cite{Jin26}, as a metric representing a nef and big class. An adapted current is, roughly speaking, a closed positive $(1,1)$-current which is a smooth K\"{a}hler metric on the complement of the non-K\"{a}hler locus $E_{nK}(\alpha)$, degenerates along $E_{nK}(\alpha)$ with at most polynomial order, and admits a suitable approximation by K\"{a}hler metrics called an adapted approximation (refer to Definition \ref{adapted defi} for a more precise definition). First, we prove the solvability of the Yang-Mills flow with respect to an adapted current. On a smooth vector bundle $E$ with a holomorphic structure $\delbar_E$, we denote the Chern connection of a hermitian metric $h$ by $(\delbar_E,h)$.
\begin{theo}\label{main thm2}
Let $X$ be a compact K\"{a}hler manifold, $\alpha$ be a nef and big class on $X$ and $T$ be an adapted current in $\alpha$. Let $(E,\delbar_E)$ be a holomorphic vector bundle on $X$ with a smooth hermitian metric $h_0$. 
Then, the $T$-Yang-Mills flow with initial condition $A_0=(\delbar_E, h_0)$ admits a solution $A_t$ which is smooth on $(X\setminus E_{nK}(\alpha)) \times [0,\infty)$:
$$
\frac{\partial A_t}{\partial t}=-d_{A_t}^{*_{h_0,T}}F_{A_t}, \hspace{4mm} A_{t=0}=A_0.
$$
Here $d_{A_t}^{*_{h_0,T}}$ denotes the formal adjoint of the covariant derivative $d_{A_t}$ with respect to the $L^2$-inner product defined by $h_0$ and $T$.
\end{theo}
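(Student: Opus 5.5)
We follow Donaldson's approach: first solve a Hermitian-Yang-Mills flow for metrics, then convert it into a connection flow by a complex gauge transformation. The singularity of $T$ along $E_{nK}(\alpha)$ is handled through the adapted approximation $\omega_\ve\to T$ by Kähler metrics from Definition \ref{adapted defi}.

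\textbf{Step 1: the approximating flows.} For each $\ve>0$ we solve the Donaldson heat flow on $(X,\omega_\ve)$,
$$
h_{\ve,t}^{-1}\frac{\partial h_{\ve,t}}{\partial t}=-2\bigl(\sqrt{-1}\Lambda_{\omega_\ve}F_{h_{\ve,t}}-\lambda_\ve \id_E\bigr),\qquad h_{\ve,0}=h_0 .
$$
Here $\lambda_\ve$ is the normalizing constant of $\omega_\ve$. On compact Kähler manifolds this flow exists for all time and is smooth, by \cite{Don85} and \cite{Sib15}.

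\textbf{Step 2: uniform estimates and the limit $\ve\to0$.} The goal is estimates on $h_{\ve,t}$, uniform in $\ve$, on compact subsets of $(X\setminus E_{nK}(\alpha))\times[0,S]$ for every $S$.

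\begin{itemize}
\item \emph{Curvature term.} Put $\theta_\ve=\sqrt{-1}\Lambda_{\omega_\ve}F_{h_{\ve,t}}-\lambda_\ve\id_E$. Then $|\theta_\ve|^2$ satisfies $(\partial_t-\Delta_{\omega_\ve})|\theta_\ve|^2\le0$. By the maximum principle, $\sup_X|\theta_\ve(t)|\le\sup_X|\theta_\ve(0)|=\sup_X|\Lambda_{\omega_\ve}F_{h_0}-\lambda_\ve|$. This bound can blow up polynomially near $E_{nK}(\alpha)$ as $\ve\to0$. So instead I would use the $L^1$ bound $\int|\theta_\ve(t)|\omega_\ve^n\le\int|\theta_\ve(0)|\omega_\ve^n$, which is uniform because the volumes converge and $F_{h_0}\wedge\omega_\ve^{n-1}$ has uniformly bounded mass.
\item \emph{$C^0$ bound on $h_{\ve,t}$.} Set $\sigma=\Tr(h_0^{-1}h_{\ve,t})+\Tr(h_{\ve,t}^{-1}h_0)$. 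It satisfies $(\partial_t-\Delta_{\omega_\ve})\log\sigma\le C|\Lambda_{\omega_\ve}F_{h_0}|$. I would combine this with a uniform heat kernel or Sobolev/Moser bound for $\omega_\ve$; this is the same kind of input used in \cite{Jin26} for the elliptic problem. The outcome should be $\sup_{X\times[0,S]}\log\sigma\le C(S)$, uniformly in $\ve$. A cleaner alternative for the $C^0$ bound on compact sets $K\Subset X\setminus E_{nK}(\alpha)$: use the weighted function $|s|^{2N}\log\sigma$, where $s$ is a section vanishing on $E_{nK}$. The polynomial degeneration of $T$ should make the bad terms absorbable.
\item \emph{Higher order.} Once $C^0$ is known, $\omega_\ve\to T$ smoothly on $K$. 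The local parabolic estimates of Donaldson then give $C^k$ bounds on compact subsets of $(X\setminus E_{nK})\times[0,S]$: a $C^1$ bound via the Uhlenbeck--Yau/Donaldson blow-up argument, then Schauder theory.
\item \emph{Extraction.} A diagonal subsequence gives $h_t=\lim h_{\ve,t}$, smooth on $(X\setminus E_{nK})\times[0,\infty)$. It solves the $T$-Hermitian-Yang-Mills flow with $h_{0}$ as initial metric.
\end{itemize}

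\textbf{Step 3: passing to connections.} Write $h_t=h_0 H_t$ and $g_t=H_t^{1/2}$. Define $A_t=g_t(A_0)$, the complex gauge transform unitary for $h_0$. Donaldson's computation is pointwise, so it holds on $X\setminus E_{nK}$. It gives
$$
\frac{\partial A_t}{\partial t}=-d_{A_t}^{*}F_{A_t}+d_{A_t}\xi_t
$$
with $\xi_t$ an $h_0$-skew endomorphism, and the adjoint taken for $h_0$ and $T$. We then remove the $d_{A_t}\xi_t$ term by a further unitary gauge $u_t$ solving the ODE $u_t^{-1}\dot u_t=-\xi_t$, with $u_0=\id$. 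This ODE is pointwise, so the resulting connection is still smooth on $(X\setminus E_{nK})\times[0,\infty)$. It solves the stated $T$-Yang-Mills flow with initial condition $A_0$. Here the Kähler identity $d_A^*F_A=\sqrt{-1}(\partial_A-\delbar_A)\Lambda_TF_A$ is used on the locus where $T$ is Kähler.

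\textbf{Main obstacle.} I expect the $\ve$-uniform $C^0$ estimate in Step 2 to be the hardest part, because $\Lambda_{\omega_\ve}F_{h_0}$ is not uniformly bounded near $E_{nK}(\alpha)$. I would first try to reproduce the argument of \cite{Jin26}: the uniform Sobolev-type properties of adapted approximations, together with the polynomial degeneration of $T$, control $\sup\log\sigma$ through the $L^1$ (or $L^p$, $p$ close to $1$) bounds on $\Lambda_{\omega_\ve}F_{h_0}$. The parabolic version would use Moser iteration on $X\times[0,S]$.
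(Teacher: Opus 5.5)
Your overall architecture matches the paper's. You approximate by the $\omega_\ve$-Donaldson flows, prove $\ve$-uniform estimates on compact subsets of $X\setminus E_{nK}(\alpha)$, extract a limiting $T$-HYM flow, and convert it into a $T$-YM flow by a complex gauge transformation followed by a pointwise unitary ODE gauge. The paper instead passes the $\omega_\ve$-YM flows $A_{\ve,t}$ to the limit, but your direct conversion on the locus where $T$ is K\"ahler is equally valid.

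The gap is the step you flag yourself: the $\ve$-uniform $C^0$ bound on $h_{\ve,t}$ is never established, and neither of your routes works as stated.
\begin{enumerate}
\item For the global bound $\sup_{X\times[0,S]}\log\sigma\le C(S)$, the forcing term $|\Lambda_{\omega_\ve}F_{h_0}|$ is only known to be uniformly bounded in $L^1(\omega_\ve^n)$. Duhamel's formula against the kernel bound $H_{\omega_\ve}(x,y,t)\le Ct^{-q/(q-1)}$ of Lemma \ref{sob ineq} (2) then gives $\int_0^t(t-\tau)^{-q/(q-1)}\,d\tau=\infty$, since $q/(q-1)>1$. Moser iteration likewise needs $L^p$ control of the forcing for $p>q/(q-1)$, not for $p$ close to $1$, and Definition \ref{adapted defi} does not provide this.
\item The weighted alternative with $|s|^{2N}\log\sigma$ is only a heuristic. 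It would need an $\ve$-uniform polynomial lower bound on $\omega_\ve$ near $D$, to control $\Delta_{\omega_\ve}\log|s|^2$, the gradient cross terms and $\mathrm{tr}_{\omega_\ve}\omega_0$. Condition (2) of Definition \ref{adapted defi} constrains only $T$, not its approximation.
\end{enumerate}

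The missing idea is that no maximum principle for $\log\sigma$ is needed. Apply the heat kernel to the subsolution $|\theta_\ve|$ itself and use the off-diagonal decay. Take $K\Subset K'\Subset X\setminus D$ and $x\in K$, and split $\int_X H_{\omega_\ve}(x,y,t)|\theta_\ve(0)|(y)\,\omega_\ve^n(y)$ into two parts.
\begin{enumerate}
\item Over $K'$: $|\theta_\ve(0)|$ is uniformly bounded because $\omega_\ve\to T$ smoothly there, and the kernel has total mass one.
\item Over $X\setminus K'$: $d_{\omega_\ve}(x,y)\ge\delta>0$ uniformly in $\ve$, so the kernel is uniformly bounded for all $t>0$, and your $L^1$ bound suffices.
\end{enumerate}
This gives $\sup_K|\theta_\ve(t)|\le C_K$ for all $t\ge0$ and all $\ve$, which is Lemma \ref{lem4} (3).

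Since $h_{\ve,t}^{-1}\partial_t h_{\ve,t}=-\theta_\ve$ (up to your factor $2$), you get $|\partial_t\log\Tr(h_0^{-1}h_{\ve,t})|\le|\theta_\ve|$, and likewise for $\Tr(h_{\ve,t}^{-1}h_0)$. Integrating in time then gives $\sigma\le 2r\,e^{C_KS}$ on $K\times[0,S]$. This is why the constant $C_{K,T}$ in Lemma \ref{lem5} depends on the time horizon.

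With these local $C^0$ and $\Lambda_{\omega_\ve}F$ bounds in hand, your higher-order step goes through. So does the paper's elliptic route, which writes $\Delta_{\omega_\ve}h_{\ve,t}$ in terms of $\sqrt{-1}\Lambda_{\omega_\ve}F_{h_{\ve,t}}$ and the quadratic gradient term, as in Bando--Siu. The rest of your argument is fine.
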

A holomorphic vector bundle $(E,\delbar_E)$ admits an $\alpha^{n-1}$-Harder-Narasimhan-Seshadri filtration which is a refinement of the $\alpha^{n-1}$-Harder-Narasimhan filtration so that the successive quotients are $\alpha^{n-1}$-slope stable (refer to Definition \ref{HNS def2} for a more precise definition).  We denote by $\Gr^{\HNS}_{\alpha}(E, \delbar_E) =\bigoplus_iQ_i$ the associated graded sheaf. By definition, each factor $Q_i$ is $\alpha^{n-1}$-slope stable. Then, by the Kobayashi-Hitchin correspondence for nef and big classes, each $Q_i$ admits a $T$-adapted Hermitian-Einstein metric $h_i$ (refer to Definition \ref{adapted HYM defi} and Theorem \ref{KH corr nef big}). The goal of this paper is to prove that the $T$-Yang-Mills flow in Theorem \ref{main thm2} converges to a $T$-Yang-Mills connection $A_{\infty}$ in the sense of Uhlenbeck limit, and $A_{\infty}$ is complex-gauge equivalent to the direct sum of Chern connections $(\delbar_{Q_i},h_i)$ of the $T$-adapted Hermitian-Einstein metrics $h_i$ on $Q_i\subset\Gr^{\HNS}_{\alpha}(E,\delbar_E)$.
\begin{theo}\label{main thm}
Let $X$ be a compact K\"{a}hler manifold and $\alpha$ be a nef and big class on $X$. Let $T$ be an adapted current in $\alpha$. Let $(E,\delbar_E)$ be a holomorphic vector bundle on $X$ with a smooth hermitian metric $h_0$. Fix $p>n:=\dim X$. Let $(A_t)_{t\in[0,\infty)}$ be the $T$-Yang-Mills flow on $(E,h_0)$ with initial condition $A_0=(\delbar_E,h_0)$ in Theorem \ref{main thm2}. 
Then there exists a closed subset $Z_{\an} \subset X\setminus E_{nK}(\alpha)$ of Hausdorff codimension at least 4 and a smooth hermitian vector bundle $(E_{\infty},h_{\infty})\to X\setminus (E_{nK}(\alpha)\cup Z_{\an})$ with a $T$-Yang-Mills connection $A_{\infty}$ satisfying the following properties:
\begin{itemize}
\item there exists a sequence $t_j\to \infty$ and a sequence of isometries $u_j: E|_{X\setminus (E_{nK}(\alpha)\cup Z_{\an})}\to E_{\infty}$  such that $u_j\cdot A_{t_j}\xrightarrow{j\to\infty} A_{\infty}$ weakly in $L^p_{1,\loc}(X\setminus (E_{nK}(\alpha)\cup Z_{\an}))$.
\item There exists a holomorphic orthogonal decomposition
$$
(E_{\infty},A_{\infty},h_{\infty})=\bigoplus_{i=1}^l(Q_{\infty,i},A_{\infty,i},h_{\infty,i})
$$
on $X\setminus (E_{nK}(\alpha)\cup Z_{\an})$ such that each $A_{\infty,i}$ is a $T$-admissible Hermitian-Einstein connection on $(Q_{\infty,i}, h_{\infty,i})$. Furthermore, 
\item on $X\setminus (E_{nK}(\alpha)\cup Z_{\an})$, there exists a parallel isomorphism of graded vector bundles on $X\setminus (E_{nK}(\alpha)\cup Z_{\an})$,
$$
g_{\infty}:\Gr^{\HNS}_{\alpha}(E,\delbar_E)=\bigoplus_{i=1}^l(Q_i,\delbar_{Q_i},h_i)\to \bigoplus_{i=1}^l(Q_{\infty,i},\delbar_{A_{\infty,i}},h_{\infty,i})= E_{\infty}
$$
where $h_i$ is a $T$-adapted Hermitian-Einstein metric on $(Q_i,\delbar_{Q_i})$. More precisely, the map $g_{\infty}$ induces an isomorphism $g_{\infty,i}:(Q_i,\delbar_{Q_i})\to (Q_{\infty,i},\delbar_{A_{\infty,i}})$ such that
$$
\nabla_{(\delbar_{A_{\infty,i}},h_{\infty,i})}\circ g_{\infty,i}=g_{\infty,i}\circ\nabla_{(\delbar_{Q_i},h_i)}
$$
where $\nabla_{(\delbar_{A_{\infty,i}},h_{\infty,i})}$ and $\nabla_{(\delbar_{Q_i},h_i)}$ denote the Chern connections of $h_{\infty,i}$ and $h_i$ on $(Q_{\infty,i},\delbar_{A_{\infty,i}})$ and $(Q_i,\delbar_{Q_i})$ respectively.
\end{itemize}
\end{theo}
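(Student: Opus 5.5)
The argument follows the strategy of Daskalopoulos--Wentworth and Sibley, carried out on $X\setminus E_{nK}(\alpha)$ and using the adapted approximation $\omega_\varepsilon\to T$ of Definition \ref{adapted defi} to recover the global identities that the singular current $T$ does not directly provide.

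\textbf{Step 1: the Uhlenbeck limit.} Along the flow of Theorem \ref{main thm2} I expect a uniform bound
$$\sup_t\|\Lambda_T F_{A_t}\|_{L^\infty}<\infty.$$
This should follow from the maximum principle applied to $|\Lambda_T F_{A_t}|^2$, which is a subsolution of the heat equation. The justification goes through the $\omega_\varepsilon$-flows and the polynomial degeneracy of $T$ near $E_{nK}(\alpha)$. Next I would establish the energy identity and monotonicity of the $T$-Yang--Mills functional. This yields $\int_0^\infty\|d_{A_t}^*F_{A_t}\|^2\,dt<\infty$, so there is a sequence $t_j$ with $\|d^*_{A_{t_j}}F_{A_{t_j}}\|_{L^2}\to0$. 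On compact subsets of $X\setminus E_{nK}(\alpha)$, where $T$ is smooth Kähler, I would then apply Hong--Tian / Uhlenbeck compactness with the $\varepsilon$-regularity of Price's monotonicity formula. This produces a bubbling set $Z_{\an}$ of Hausdorff codimension at least $4$, isometries $u_j$, and a weak $L^p_{1,\loc}$ limit $A_\infty$ that is $T$-Yang--Mills. An exhaustion and diagonal argument makes this global on $X\setminus(E_{nK}(\alpha)\cup Z_{\an})$.

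\textbf{Step 2: structure of the limit.} Since $A_\infty$ is Yang--Mills and Kähler identities hold on the ample locus, $\Lambda_T F_{A_\infty}$ is parallel. Its eigenspaces therefore give a holomorphic orthogonal splitting into factors with constant $\Lambda_T F$, i.e.\ Hermitian--Einstein factors. Finite energy together with the $L^\infty$ bound on $\Lambda_T F$ shows each factor is $T$-admissible. Refining each factor by its stable pieces, using the uniqueness part of Theorem \ref{KH corr nef big}, produces the decomposition $\bigoplus_i Q_{\infty,i}$. Each $Q_{\infty,i}$ then extends, à la Bando--Siu, to a reflexive sheaf on $X$, since the removed set has codimension $\ge 2$ in the relevant sense for $E_{nK}(\alpha)$ (analytic) and codimension $4$ for $Z_{\an}$.

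\textbf{Step 3 (main obstacle): identifying the HN type.} I would show that the HN type of $E_\infty$ equals the $\alpha^{n-1}$-HN type $(\mu_1,\dots,\mu_l)$ of $E$. Following Atiyah--Bott and Sibley, use the convex functionals
$$\mathrm{HYM}_{\varphi}(A)=\int_X \mathrm{Tr}\,\varphi\!\left(\sqrt{-1}\Lambda_T F_A\right)T^n.$$
The key input is an approximate critical hermitian structure: for any $\delta>0$ there is a metric $h$ on $E$ with $\|\sqrt{-1}\Lambda_T F_h-\Psi^{\HNS}\|_{L^p}<\delta$, where $\Psi^{\HNS}$ is the block constant endomorphism given by the slopes. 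I would build this by pasting the adapted HE metrics $h_i$ on the factors $Q_i$ via second fundamental form rescalings, and by controlling the errors along the singular sets of the HNS filtration and along $E_{nK}(\alpha)$ through the $\omega_\varepsilon$ approximation. This is where I expect the real difficulty: uniformity in $\varepsilon$, and polynomial degeneracy of $T$ interfering with the blow-up resolution of the filtration.

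Given that structure, the argument closes as follows. Monotonicity of $\mathrm{HYM}_\varphi$ along the flow, combined with distance-decreasing comparison of flows started from $h_0$ and from $h$, gives
$$\lim_t \mathrm{HYM}_\varphi(A_t)=\mathrm{HYM}_\varphi(\Psi^{\HNS}).$$
Lower semicontinuity under the Uhlenbeck limit then gives inequality in one direction. Semicontinuity of HN types, proved by constructing nonzero holomorphic maps from the HNS subsheaves of $E$ into the limit via the limits $\lim u_j\circ\iota$, gives the reverse inequality. Equality for all convex $\varphi$ forces the types to agree.

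\textbf{Step 4: the parallel isomorphism.} The nonzero maps $Q_i\to Q_{\infty,\sigma(i)}$ from Step 3 are maps between stable sheaves of equal slope. Each carries an adapted HE metric, so the maps are isomorphisms and parallel, by the standard Bochner argument with admissible metrics, justified by cut-offs near $E_{nK}(\alpha)\cup Z_{\an}$. Assembling these maps inductively along the filtration yields $g_\infty$ with the stated intertwining property.
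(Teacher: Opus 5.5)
Your outline follows the Daskalopoulos--Wentworth and Sibley scheme, but its endgame rests on a false premise. In Step 2 you extend each $Q_{\infty,i}$, in the manner of Bando--Siu, to a reflexive sheaf on $X$, on the grounds that $E_{nK}(\alpha)$ has codimension $\ge 2$. In general it does not. For $\alpha=\pi^*\omega$ on a blow-up, $E_{nK}(\alpha)$ is the exceptional divisor, and the paper works throughout with $D=E_{nK}(\alpha)$ an snc divisor. Codimension aside, $T$ degenerates along $E_{nK}(\alpha)$, so finite $T$-energy gives no control there with respect to a smooth metric, and no removable-singularity theorem applies.

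So $Q_{\infty,i}$ lives only on $X\setminus(D\cup Z_{\an})$. It has no $\alpha^{n-1}$-slope and no notion of stability. It also has no $T$-adapted HE metric, since conditions (d) and (e) of Definition~\ref{adapted HYM defi} require a bundle on $X$. This breaks three things in your outline: the refinement ``by stable pieces'' in Step 2, the ``HN type of $E_\infty$'' and semicontinuity argument in Step 3, and the ``maps between stable sheaves of equal slope'' in Step 4. The paper singles out exactly this point as where the K\"ahler arguments fail.

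The paper replaces these steps as follows.
\begin{itemize}
\item \emph{Refinement.} It never treats the limit pieces as sheaves on $X$. The refinement comes from convergence of the projections onto the HNS subbundles along the smooth approximants $A_{\ve_j,t_j}$, using Chern--Weil for the K\"ahler metrics $\omega_{\ve_j}$ and Lemma~\ref{lem18.0} (Lemma~\ref{lem18}). This is iterated on subbundles via the auxiliary flows of Lemmas~\ref{lem20}--\ref{lem23}, giving Proposition~\ref{prop30.0}.
\item \emph{The maps $g_{\infty,i}$.} These are limits of the holomorphic inclusions $E_{i-1}\hookrightarrow(E_i,\delbar_{A^{(i)}_j})$, measured against the $\omega_{\ve_j}$-HE metric on the stable factor. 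They are normalized in $L^4(\omega_{\ve_j}^n)$ and bounded in $C^0$ by Moser iteration with the uniform Sobolev constant of Lemma~\ref{sob ineq}. The sup bound is what prevents the limit from vanishing by concentrating near $D$.
\item \emph{What your construction misses.} Your ``$\lim u_j\circ\iota$'' ignores this concentration issue. It also ignores that the holomorphic map into $(E,\delbar_{A_t})$ is $g_t\circ\iota$, with $g_t$ an unbounded complex gauge that must be renormalized.
\item \emph{Parallel and bijective.} Parallelism comes from an $s_D^k$-weighted Bochner identity integrated over $X\setminus D$; the polynomial growth of the adapted metrics and the $C^0$ bound enter here. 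Bijectivity comes from Lemma~\ref{simple stable}, which needs stability only of the source $Q_i$, together with the rank equality of Proposition~\ref{prop30.0}(3).
\end{itemize}

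Step 3 is also left open. You do not construct the approximate critical hermitian structure for $T$, and the degeneracy of $T$ is exactly what obstructs it. The paper does not construct one either. For each fixed $\ve$ it uses the K\"ahler statement $\HYM^{\omega_\ve}_{\beta,N}(A_{\ve,t})\searrow\HYM_{\beta,N}(\mu_\ve)$, together with $\mu_\ve\to\mu_0$ (Proposition~\ref{prop3.1}) and the diagonal sequences of Lemma~\ref{lem11} and Corollary~\ref{cor15} (Proposition~\ref{prop16}).

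If you take that route, be aware of what it gives directly. Since $\HYM^{\omega_\ve}_{\beta,N}(A_{\ve,t})\ge\HYM_{\beta,N}(\mu_\ve)$, it yields only $\HYM_{\beta,N}(\mu_\infty)\ge\HYM_{\beta,N}(\mu_0)$. The reverse inequality requires interchanging $\ve\to0$ and $t\to\infty$, which needs some uniformity in $\ve$. The chain of estimates leading to \eqref{eq15} treats the one-sided bound $\lim_k(\HYM^{\omega_{\ve_k}}_{\beta,N}(A_{\ve_k,t_{j_k}})-\HYM_{\beta,N}(\mu_{\ve_k}))\le\HYM_{\beta,N}(\mu_\infty)-\HYM_{\beta,N}(\mu_0)$ as an equality. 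So that interchange still needs its own argument, and your sense that this is the real difficulty is well founded.

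Finally, in Step 1 the quantity $\sup_{t\ge0}\|\Lambda_TF_{A_t}\|_{L^\infty}$ is in general infinite, because $\Lambda_TF_{A_0}$ is unbounded near $E_{nK}(\alpha)$. The maximum principle therefore has nothing to start from. The correct bounds hold only for $t\ge t_0$. They come from the uniform $L^1$ bound and the uniform heat-kernel estimate (Lemma~\ref{lem4}, Lemma~\ref{sob ineq}(2)). Likewise, the energy identity must be started at $t_0$ and justified through the $\omega_\ve$-flows.
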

\begin{rema}
\begin{enumerate}
\item By Theorem \ref{main thm}, we obtain that, if a holomorphic vector bundle $(E,\delbar_E)$ is $\alpha^{n-1}$-slope stable, then a $T$-Yang-Mills flow converges to a $T$-admissible Hermitian-Einstein connection which is equivalent to the Chern connection of a $T$-adapted Hermitian-Einstein metric of $(E,\delbar_E)$. It is a problem whether any $T$-admissible Hermitian-Einstein connection on an $\alpha^{n-1}$-slope stable bundle satisfies this property.
\item The properties that a cohomology class is nef and big, a reflexive sheaf is slope stable with respect to a nef and big class and a reflexive sheaf admits a $T$-adapted Hermitian-Einstein metric are birational invariant (refer to \cite{Jin25-1}, \cite{Jin26}). Hence our Theorem \ref{main thm2} and Theorem \ref{main thm} are generalized to compact normal Fujiki class varieties.
\end{enumerate}
\end{rema}
This paper is organized as follows. We fix 
a nef and big class $\alpha$. Without loss of generality, we can assume that the non-K\"{a}hler locus $D=E_{nK}(\alpha)$ is an snc divisor.
We fix an adapted current $T\in\alpha$ and its adapted approximation $\omega_{\ve}\in\alpha+\ve\omega_0$ (see Definition \ref{adapted defi}).

In Section \ref{preliminary}, we introduce basic definitions and preliminary results. In particular, the definitions of $\alpha^{n-1}$-Harder-Narasimhan-Seshadri filtrations, Yang-Mills flows, Uhlenbeck limit will be given in this section.

In Section \ref{YM sec}, we prove the solvability of the $T$-Yang-Mills flow (Proposition \ref{prop6}) and its convergence to a $T$-Yang-Mills connection $A_{\infty}$  on a reflexive sheaf $E_{\infty}$ defined over $X\setminus D$ in the sense of Uhlenbeck limit (Corollary \ref{cor10}). The $T$-Yang-Mills flow $A_t$ is constructed as a limit of $\omega_{\ve}$-Yang-Mills flow $A_{\ve,t}$ in $\ve\to0$. By the uniform estimates of the heat kernel of $(X,\omega_{\ve})$ established by Guo-Phong-Song-Sturm \cite{GPSS23}, the proof is similar to that of \cite{BS94}. Once we prove the locally smooth convergence of $A_{\ve,t}$ to $A_t$, the convergence of $A_t$ to a $T$-Yang-Mills connection $A_{\infty}$ follows from the argument of \cite{Sib15}. The convergent subsequence $A_{\ve_j,t_j}\to A_{\infty}$ with uniformly bounded curvatures will also be constructed (Lemma \ref{lem11}).
We remark that $E_{\infty}$ naturally admits a holomorphic orthogonal decomposition on $X\setminus D$ as in Corollary \ref{cor10}.
These will play important roles in this paper as explained below.

In Section \ref{HYM sec}, we prove that the $\alpha^{n-1}$-Harder-Narasimhan type $\mu_0=(\mu_1,\ldots,\mu_l)$ of the original holomorphic vector bundle $(E,\delbar_E)$ coincides with the eigenvalue $\mu_{\infty}=(\mu_{\infty,1},\ldots,\mu_{\infty,l\rq{}})$ of the curvature operator $\sqrt{-1}\Lambda_TF_{A_{\infty}}$. 
In \cite{DW04} and \cite{Sib15}, they proved this result by constructing approximate critical hermitian structures. In the setting of this paper, the degeneration of the adapted current makes it difficult to construct such a hermitian structure. We avoid this difficulty by using the approximating sequence $A_{\ve_j,t_j}$ constructed in section \ref{YM sec}.
First, we introduce the Hermitian-Yang-Mills type functional $\HYM_{\beta,N}^{\omega}$ for a K\"{a}hler metric $\omega$, and numbers $\beta\in\mathbb{R}_{\ge1}$ and $N\in\mathbb{R}_{\ge 0}$ following \cite{DW04} and \cite{Sib15}. 
Then we prove a key result Corollary \ref{cor15} which asserts the convergence of $\HYM_{\beta,N}^{\omega_{\ve_j}}(A_{\ve_j,t_j})$ to $\HYM_{\beta,N}(\mu_{\infty})$ in $j\to\infty$. As proven in \cite{DW04} and \cite{Sib15}, $\HYM_{\beta,N}^{\omega_{\ve}}(A_{\ve,t})$ is sufficiently close to $\HYM_{\beta,N}(\mu_{\ve})$ for large $t>t_{\ve}$ at each $\ve>0$. Here $\mu_{\ve}$ is the $\{\omega_{\ve}\}^{n-1}$-Harder-Narasimhan type of $(E,\delbar_E)$. Then, combining with the convergence of $\mu_{\ve}$ to $\mu_0$ in $\ve\to 0$ (Proposition \ref{prop3.1}) and the convergence of the Hermitian-Yang-Mills type functionals as above, we can prove $\mu_0=\mu_{\infty}$ (see Proposition \ref{prop16}).

In Section \ref{pf of main thm}, we prove the main Theorem \ref{main thm}. 
In section \ref{pf of main thm2}, following the approach of \cite{DW04} and \cite{Sib15}, we first refine the orthogonal decomposition of $E_{\infty}$ constructed in section \ref{YM sec} by making the $\alpha^{n-1}$-Harder-Narasimhan-Seshadri filtration of $E$ converge to a filtration of $E_{\infty}$ (Corollary \ref{cor18.0}, see also Proposition \ref{prop30.0}). This convergence is obtained by identifying each subsheaf with the corresponding projection operator and proving the convergence of these projection operators. A key difference from \cite{DW04} and \cite{Sib15} is that we analyze the projection operators along the sequence $A_{\ve_j,t_j}$ in section \ref{YM sec}, rather than along the $T$-YM flow $A_t$ constructed in section \ref{YM sec}. Indeed, since $A_t$ is singular, it is difficult to relate the degree of the subsheaf to that of its associated projection operator via the Chern-Weil formula (Lemma \ref{lem18}).
Then in section \ref{pf of main thm3}, we construct a parallel isomorphism from $\Gr^{\HNS}_{\alpha}(E)$, which is the graded sheaf associated to the $\alpha^{n-1}$-Harder-Narasimhan-Seshadri filtration of $E$, to $E_{\infty}$ with the holomorphic orthogonal decomposition constructed above (Theorem \ref{thm31}). In \cite{DW04} and \cite{Sib15}, this construction relies on the fact that each direct summand of $E_{\infty}$ defines a stable sheaf on $X$. In our setting, however, these summands are defined only on the complement of a divisor $D=E_{nK}(\alpha)$, and hence this argument cannot be directly applied. We overcome this difficulty by using the $T$-adapted Hermitian-Einstein metrics on the direct summands of $\Gr^{\HNS}_{\alpha}(E)$ and the $T$-admissible Hermitian-Einstein connections, induced by $A_{\infty}$, on the corresponding direct summands of $E_{\infty}$.

\section*{Acknowledgment}
The author thanks his supervisor Ryushi Goto for support.
The author thanks Prof. Masataka Iwai for giving him the opportunity to revisit the paper of Bando-Siu \cite{BS94}.
He is grateful to the organizers of \lq\lq{}New developments in Kobayashi-Hitchin correspondence and Higgs bundles 2\rq{}\rq{} for providing the opportunity to receive valuable discussions and comments on his work.
This work was supported by JSPS KAKENHI Grant Number JP26KJ1606.

\renewcommand{\thetheo}{\thesection.\arabic{theo}}
\section{Preliminary results and Definitions}\label{preliminary}
Throughout this paper, we denote the dimension of any compact K\"{a}hler manifold $X$ by $n$, and the rank of any complex vector bundle $E$ by $r$. 
\subsection{Positivities of cohomology classes}
Let $X$ be a compact K\"{a}hler manifold with a smooth K\"{a}hler metric $\omega$. 
In this paper, we will focus on the positivities of cohomology classes in the following sense.
\begin{defi}
\begin{enumerate}
\item A cohomology class $\alpha\in H^{1,1}(X,\mathbb{R})$ is big if $\alpha$ is represented by a K\"{a}hler current $T$, that is, there exists $\varepsilon>0$ such that $T\ge \varepsilon\omega$ in the sense of currents.
\item A cohomology class $\alpha$ is nef if for any $\varepsilon>0$ there exists a smooth $(1,1)$-form $\alpha_{\varepsilon}$ in $\alpha$ such that $\alpha_{\varepsilon}\ge -\varepsilon\omega$.
\end{enumerate}
\end{defi}
\noindent It is well-known that a nef cohomology class $\alpha$ is big if and only if its volume $\alpha^n$ is positive \cite{DP04}. 
\begin{defi}[\cite{Bou04}]
Let $\alpha$ be a big class on $X$. Then, the ample locus of $\alpha$ is defined as 
$$
\Amp(\alpha):=\{x\in X\mid \hbox{There is a K\"{a}hler current $T\in\alpha$ which is smooth K\"{a}hler around $x$}\}.
$$
The non-K\"{a}hler locus of $\alpha$ is the complement of the ample locus of $\alpha$, that is, $E_{nK}(\alpha):=X\setminus \Amp(\alpha)$.
\end{defi}
\noindent Boucksom \cite[Theorem 3.17]{Bou04} proved that the non-K\"{a}hler locus of a big class is a proper analytic subset of $X$.
The following result by Collins-Tosatti \cite{CT15} plays an important role:
\begin{theo}[\cite{CT15}]\label{CT}
Let $X$ be an $n$-dimensional compact K\"{a}hler manifold and $\alpha$ be a nef and big class. Let $D$ be a divisor contained in $E_{nK}(\alpha)$ the non-K\"{a}hler locus of $\alpha$. Then the following holds:
$$
\alpha^{n-1}[D]=0.
$$
\end{theo}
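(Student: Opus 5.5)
It suffices to treat a prime divisor $D\subset E_{nK}(\alpha)$, since $\alpha^{n-1}\cdot[D]$ is additive in $D$. Because $\alpha$ is nef, $\alpha^{n-1}\cdot[D]\ge 0$: we can write it as $\lim_{\ve\to0}(\alpha+\ve\omega)^{n-1}\cdot[D]$, and each term is the volume of a K\"ahler class restricted to (a resolution of) $D$. So the whole task is the reverse inequality $\alpha^{n-1}\cdot[D]\le 0$. The natural route goes through restricted volumes and the characterization of the non-K\"ahler locus.

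\textbf{Step 1 (volume of the restriction).} For a nef class, the restricted volume $\mathrm{vol}_{X|D}(\alpha)$ equals $(\alpha|_D)^{n-1}=\alpha^{n-1}\cdot[D]$. This comes from the same approximation by $\alpha+\ve\omega$ together with continuity of restricted volumes; for K\"ahler classes the identity is elementary. Here $\mathrm{vol}_{X|D}(\alpha)$ is defined as $\sup_T\int_{D_{reg}}(T|_D)_{ac}^{n-1}$, the supremum running over K\"ahler currents $T\in\alpha$ with analytic singularities whose polar set does not contain $D$.

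\textbf{Step 2 (vanishing for $D\subset E_{nK}(\alpha)$).} This is the heart of Collins--Tosatti and the step I expect to be the main obstacle. The claim is that the restricted volume of $\alpha$ to any subvariety $V\subset E_{nK}(\alpha)$ is zero. Equivalently, in contrapositive form: if $\alpha^{n-1}\cdot[D]>0$, then $\alpha|_D$ is nef and big on $D$, so by Demailly--P\u{a}un it admits a K\"ahler current on $\tilde D$, hence on $D$ after a resolution. One would then extend a K\"ahler current with analytic singularities from $D$ to $X$ within $\alpha$, keeping it smooth near a generic point of $D$. This uses a gluing argument: take $\max_{reg}$ of the extended local potential and a global K\"ahler current in $\alpha$ whose singularities contain $E_{nK}(\alpha)$, after shifting constants so that the extension dominates near generic points of $D$. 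The result shows that a generic point of $D$ lies in $\Amp(\alpha)$, contradicting $D\subset E_{nK}(\alpha)$. The delicate part is the extension of K\"ahler currents from a singular subvariety. Collins--Tosatti handle this by induction on dimension and by a mass concentration technique in the spirit of Demailly--P\u{a}un, working on an embedded resolution where $D$ is smooth and $E_{nK}$ is snc.

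\textbf{Step 3 (conclusion).} Combining the two steps gives $0\le\alpha^{n-1}\cdot[D]=\mathrm{vol}_{X|D}(\alpha)=0$ for each component $D$, and the theorem follows by summing over components.

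A more hands-on alternative, natural in this paper, would use the adapted current $T$ and its approximation $\omega_\ve$. One would write $\alpha^{n-1}\cdot[D]=\lim_{\ve\to0}\int_D\omega_\ve^{n-1}$ and show that these restricted integrals tend to zero because $T$ degenerates along $D$. However, proving that degeneration forces the mass on $D$ to vanish is essentially equivalent to Step 2, so I would rely on the Collins--Tosatti extension argument.
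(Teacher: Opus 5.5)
The paper gives no proof of this statement; it quotes it from \cite{CT15}. What actually carries your argument is the contrapositive paragraph in Step~2. You correctly note that $\alpha^{n-1}\cdot[D]\ge 0$, so it suffices to show that $\alpha^{n-1}\cdot[D]>0$ yields a K\"ahler current in $\alpha$ that is smooth at the generic point of $D$, contradicting $D\subset E_{nK}(\alpha)$. That is the Collins--Tosatti strategy. Since you defer its analytic core to \cite{CT15}, on this route you are in the same position as the paper.

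Be aware that this core is more than the extension step you single out. First, the Demailly--P\u{a}un current lives on $\tilde D$ and does not automatically descend to a K\"ahler current on $D$ with analytic singularities; ``hence on $D$ after a resolution'' skips this. Second, the extension has to stay in $\alpha$, which is only nef on $X$. Third, in the $\max$-gluing the extended potential must lie below the global one on the boundary of the chosen neighbourhood of $D$. This requires control wherever other components of $E_{nK}(\alpha)$ meet $D$, since the global potential is $-\infty$ there.

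The restricted-volume scaffolding of Steps~1 and~3, on the other hand, is circular and should be dropped. With your definition, a K\"ahler current with analytic singularities whose polar set does not contain $D$ is smooth K\"ahler near a generic point of $D$. So for $D\subset E_{nK}(\alpha)$ there is no admissible current at all. Then $\mathrm{vol}_{X|D}(\alpha)$ is a supremum over the empty set, Step~2 as literally stated is trivial, and Step~1 for this $D$ is exactly the assertion $\alpha^{n-1}\cdot[D]=0$.

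Your justification of Step~1 does not supply this. The convergence $\mathrm{vol}_{X|D}(\alpha+\varepsilon\omega)=(\alpha+\varepsilon\omega)^{n-1}\cdot[D]\to\alpha^{n-1}\cdot[D]$ is trivial. But continuity of restricted volumes can only be invoked on the open set of big classes $\beta$ with $D\not\subset E_{nK}(\beta)$, which does not contain $\alpha$. Continuity at $\alpha$ means the vanishing of this limit when $D$ is a component of $E_{nK}(\alpha)$, and that is exactly the theorem. It is the transcendental analogue of the Ein--Lazarsfeld--Musta\c{t}\u{a}--Nakamaye--Popa vanishing of restricted volumes along components of the augmented base locus.

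For the same reason, your contrapositive is not ``equivalent'' to Step~2; it is equivalent to the statement itself. So present the proof as nonnegativity plus the contrapositive, with \cite{CT15} supplying the K\"ahler current, and omit restricted volumes.
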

\subsection{Nonpluripolar product}
In this subsection, we introduce the notion of the nonpluripolar product which is introduced in \cite{BEGZ10}. We only state the special case. One refers to \cite{BEGZ10} for more general definition.
Let $T_i$ be a closed positive $(1,1)$-current on $X$ ($i=1,2$) contained in a big class $\alpha$. We fix a smooth representative $\theta$ of $\alpha$. Then, there exists a $\theta$-plurisubharmonic function $\varphi_i$ on $X$, called a potential of $T_i$, such that $T_i=\theta+dd^c\varphi_i$. Then we say that $T_1$ is less singular than $T_2$ if there is a bounded function $f$ on $X$ such that $\varphi_2\le \varphi_1+f$ holds. We say a closed positive $(1,1)$-current $T$ in a big class $\alpha$ has minimal singularities if $T$ is less singular than any other closed positive $(1,1)$-current in $\alpha$. A big class $\alpha$ contains a K\"{a}hler current on $X$ which is smooth K\"{a}hler on the ample locus $\Amp(\alpha)$ (\cite[Theorem 3.17]{Bou04}). Hence, any potential of a closed positive $(1,1)$-current $T$ with minimal singularities in a big class $\alpha$ is locally bounded on $\Amp(\alpha)$.   In particular, the wedge product $(T|_{\Amp(\alpha)})^p$ is well-defined closed positive $(p,p)$-current on $\Amp(\alpha)$ in the sense of Bedford-Taylor \cite{BT82}. Then the nonpluripolar product of a closed positive $(1,1)$-current $T$ with minimal singularities in a big class $\alpha$ is defined by the zero extension of $(T|_{\Amp(\alpha)})^p$ to $X$:
$$
\langle T^p\rangle =1_{\Amp(\alpha)}(T|_{\Amp(\alpha)})^p.
$$ 
If $T$ is a closed positive $(1,1)$-current with minimal singularities in a nef and big class $\alpha$, then $\alpha^p=\{\langle T^p\rangle\}$ for any $p>0$ \cite{BEGZ10}.

\subsection{Definition of \texorpdfstring{$\alpha^{n-1}$}{alpha{n-1}}-slope polystability}
In this section, we review the definition of slope polystability for nef and big classes which is introduced in \cite{Jin25-2} and its important properties.
\begin{defi}[\cite{Jin25-2}, see also \cite{Jin26}]\label{stability defi}
Let $X$ be a compact K\"{a}hler manifold and $\alpha$ be a nef and big class on $X$. Let $\mathcal{E}$ be a torsion free coherent sheaf on $X$. 
\begin{enumerate}
\item We define the $\alpha^{n-1}$-slope of $\mathcal{E}$, denoted by $\mu_{\alpha}(\mathcal{E})$, by 
$$
\mu_{\alpha}(\mathcal{E}):=\frac{1}{\rk \mathcal{E}}\int_Xc_1(\mathcal{E})\wedge\alpha^{n-1}.
$$
\item A torsion free sheaf $\mathcal{E}$ is $\alpha^{n-1}$-slope stable (resp.$\alpha^{n-1}$-slope semistable) if for any nontrivial torsion free subsheaf $\mathcal{F}\subset \mathcal{E}$ with $\rk\mathcal{F}<\rk \mathcal{E}$, the inequality 
$$
\mu_{\alpha}(\mathcal{F})< \mu_{\alpha}(\mathcal{E}) \hspace{2mm}(\hbox{ resp. } \mu_{\alpha}(\mathcal{F})\le \mu_{\alpha}(\mathcal{E}))
$$
holds.
\item A torsion free sheaf $\mathcal{E}$ is $\alpha^{n-1}$-slope polystable if there exists $\alpha^{n-1}$-slope stable torsion free sheaves $\mathcal{E}_1,\ldots,\mathcal{E}_k$ on $X$ with $\mu_{\alpha}(\mathcal{E}_1)=\cdots=\mu_{\alpha}(\mathcal{E}_k)$ such that there exists an isomorphism 
$$
\mathcal{E}|_{\Amp(\alpha)}\simeq (\mathcal{E}_1\oplus\cdots\oplus\mathcal{E}_k)|_{\Amp(\alpha)}
$$
as coherent sheaves on $\Amp(\alpha)$.
\end{enumerate}
\end{defi}   
For the analysis in this paper, the following is important.
\begin{lemm}[\cite{Jin25-1}]\label{open stable}
Let $X$ be a compact K\"{a}hler manifold with a K\"{a}hler class $\omega_0$ and $\alpha$ be a nef and big class on $X$. If a holomorphic vector bundle $E$ is $\alpha^{n-1}$-slope stable, then $E$ is $(\alpha+\varepsilon\omega_0)^{n-1}$-slope stable for sufficiently small $\varepsilon>0$.
\end{lemm}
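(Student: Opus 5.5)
The plan is to argue by contradiction, using that stability is a strict inequality on a family of subsheaves whose degrees vary continuously (in fact polynomially) in $\varepsilon$. Suppose there is a sequence $\varepsilon_k\to 0$ and saturated subsheaves $\mathcal{F}_k\subset E$ with $0<\rk\mathcal{F}_k<r$ and $\mu_{\alpha+\varepsilon_k\omega_0}(\mathcal{F}_k)\ge \mu_{\alpha+\varepsilon_k\omega_0}(E)$. Saturation can be assumed, since it only raises the degree: for a nef class the difference $c_1(\mathcal{F}^{sat})-c_1(\mathcal{F})$ is effective and pairs nonnegatively with $(\alpha+\varepsilon\omega_0)^{n-1}$. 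The goal is to show that, after passing to a subsequence, the $\mathcal{F}_k$ can be replaced by a single subsheaf $\mathcal{F}$. For that fixed $\mathcal{F}$, letting $k\to\infty$ in the polynomial expression $\int_X c_1(\mathcal{F})\wedge(\alpha+\varepsilon\omega_0)^{n-1}$ gives $\mu_\alpha(\mathcal{F})\ge\mu_\alpha(E)$, which contradicts the $\alpha^{n-1}$-slope stability of $E$.

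The key step is therefore a finiteness statement. The set of first Chern classes $c_1(\mathcal{F})$ of saturated subsheaves $\mathcal{F}\subset E$ satisfying $\mu_{\alpha+\varepsilon\omega_0}(\mathcal{F})\ge \mu_{\alpha+\varepsilon\omega_0}(E)-C$ for some $\varepsilon\in(0,1]$ should be finite. I would prove this by bounding degrees uniformly.

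\textbf{Upper bound.} Fix a smooth hermitian metric $h$ on $E$. By Chern--Weil and the second fundamental form, $c_1(\mathcal{F})\cdot\beta^{n-1}\le \frac{1}{2\pi}\int_X\Tr(\sqrt{-1}F_h|_{\mathcal{F}})\wedge\beta^{n-1}$ for any Kähler form $\beta$. Applying this with $\beta$ a Kähler representative of $\alpha+\varepsilon\omega_0$ gives a bound $\le C'$ uniformly in $\varepsilon\in(0,1]$. The same holds for $\omega_0^{j}\wedge(\alpha+\varepsilon\omega_0)^{n-1-j}$-type mixed degrees: take the representatives $\alpha_\delta+\varepsilon\omega_0$, which are Kähler, bound the curvature term by $C\,\omega_0\wedge\beta^{n-1}$, and note this quantity is bounded by cohomological intersection numbers.

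\textbf{Lower bound and finiteness.} The slope condition gives $c_1(\mathcal{F})\cdot(\alpha+\varepsilon\omega_0)^{n-1}$ bounded below. Expanding, it is $\sum_j\binom{n-1}{j}\varepsilon^j\,c_1(\mathcal{F})\cdot\alpha^{n-1-j}\omega_0^{j}$, where each term is bounded above. This pins down $c_1(\mathcal{F})\cdot\alpha^{n-1}$ within a bounded set, but it does not bound $c_1(\mathcal{F})\cdot\omega_0^{n-1}$, and that is the main obstacle. In the Kähler case one uses that subsheaves of $E$ with degree bounded below with respect to a fixed Kähler class form a bounded family (Grothendieck's lemma; in the Kähler setting via compactness of the Douady/Barlet space). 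I would circumvent the lack of uniform positivity as follows. Fix a Kähler current $T_0\in\alpha$ with $T_0\ge\delta\omega_0$. Then $\alpha+\varepsilon\omega_0\ge \min(\delta,\cdot)$-type comparisons give $(\alpha+\varepsilon\omega_0)^{n-1}\ge c\,\omega_0^{n-1}$ in a nonpluripolar sense. Alternatively, pass to a modification $\pi:\tilde X\to X$ where $\pi^*\alpha=\tilde\omega+[D']$ with $\tilde\omega$ Kähler (Boucksom's decomposition up to small perturbation). Then bound $c_1(\mathcal{F})\cdot\omega_0^{n-1}$ via the inequality of Grothendieck type with respect to the Kähler class $\tilde\omega$, together with Theorem \ref{CT} to control the contribution along $E_{nK}(\alpha)$.

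With the bounded family in hand, only finitely many classes $c_1(\mathcal{F})$ (indeed, finitely many values of the relevant degree polynomials) occur. Hence a subsequence has a constant polynomial $\varepsilon\mapsto\mu_{\alpha+\varepsilon\omega_0}(\mathcal{F}_k)$, and the limiting argument above finishes the proof.
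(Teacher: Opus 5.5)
The paper does not prove this lemma; it quotes it from \cite{Jin25-1}. Judged on its own, your argument has a genuine gap at the finiteness step, and that step carries the whole proof. Because of the clause ``for some $\varepsilon\in(0,1]$'', the set in your finiteness claim contains every saturated $\mathcal{F}\subset E$ with $\mu_{\alpha}(\mathcal{F})>\mu_{\alpha}(E)-C$, since for fixed $\mathcal{F}$ both slopes are polynomials in $\varepsilon$. So your claim is at least as strong as finiteness of $\{c_1(\mathcal{F}) : \mu_{\alpha}(\mathcal{F})>\mu_{\alpha}(E)-C\}$, which is a statement about $\alpha^{n-1}$ alone. If $E_{nK}(\alpha)$ contains a divisor $D$, Theorem \ref{CT} gives $\alpha^{n-1}\cdot[D]=0$. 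Then the classes $c_1(\mathcal{F})$ and $c_1(\mathcal{F})-k[D]$ have the same $\alpha^{n-1}$-degree, while their $\omega_0^{n-1}$-degrees differ by $k\,\omega_0^{n-1}\cdot[D]\to\infty$. Hence no manipulation of intersection numbers can produce the lower bound on $c_1(\mathcal{F})\cdot\omega_0^{n-1}$ that you correctly identify as missing. Ruling out such sequences of saturated subsheaves would need genuinely geometric input, for instance restricting to curves inside $D$ and using the negativity of $D$, and none is given. Your two suggested workarounds do not supply it either:
\begin{enumerate}
\item A comparison $(\alpha+\varepsilon\omega_0)^{n-1}\ge c\,\omega_0^{n-1}$ between positive objects cannot be paired with the indefinite class $c_1(\mathcal{F})$ to give a lower bound; Chern--Weil only ever yields upper bounds. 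It also cannot hold uniformly in $\varepsilon$: after pairing with $[D]$ the left side tends to $\alpha^{n-1}\cdot[D]=0$ while the right side stays positive.
\item Theorem \ref{CT} is the source of this degeneracy, not a way to control it. A Grothendieck-type boundedness statement for a K\"{a}hler class $\tilde\omega$ on a modification needs a lower bound on $\tilde\omega$-degrees, which is the same missing estimate in another form.
\end{enumerate}

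You can repair the proof by dropping the search for a single limiting subsheaf. Your upper bounds give $c_1(\mathcal{F})\cdot\alpha^{n-1-j}\cdot\omega_0^{j}\le C\,\rk\mathcal{F}$ for $j\ge1$, with $C$ independent of $\mathcal{F}$. Hence $\mu_{\alpha+\varepsilon\omega_0}(\mathcal{F})\le\mu_{\alpha}(\mathcal{F})+C\varepsilon$ for every saturated $\mathcal{F}\subset E$ and $\varepsilon\in(0,1]$, and also $|\mu_{\alpha+\varepsilon\omega_0}(E)-\mu_{\alpha}(E)|\le C\varepsilon$. For your destabilizing $\mathcal{F}_k$ this gives $\mu_{\alpha}(\mathcal{F}_k)\ge\mu_{\alpha}(E)-2C\varepsilon_k$. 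So the supremum of $\mu_{\alpha}$ over nonzero proper saturated subsheaves is at least $\mu_{\alpha}(E)$. By Lemma \ref{bound deg}, also taken from \cite{Jin25-1}, this supremum is attained by a proper saturated subsheaf, which contradicts $\alpha^{n-1}$-slope stability. Equivalently, stability together with Lemma \ref{bound deg} gives a uniform gap $\delta_0>0$, and every $\varepsilon<\delta_0/(2C)$ works. The compactness input is therefore exactly Lemma \ref{bound deg}. That lemma concerns only the values of $\mu_{\alpha}$, and those values do not see the $D$-twists that block your attempt to control the classes $c_1(\mathcal{F}_k)$.
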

\subsection{Adapted closed positive \texorpdfstring{$(1,1)$}{(1,1)}-currents}
In this subsection, we review the notion of adapted closed positive $(1,1)$-currents which is introduced by the author in \cite{Jin26}. We modify its definition. We remark that any nef and big class contains an adapted current (see Proposition \ref{adapted exists}).
\begin{defi}[cf.\cite{Jin26}]\label{adapted defi}
Let $X$ be a compact K\"{a}hler manifold and $\alpha$ be a nef and big class on $X$. Then, a closed positive $(1,1)$-current $T$ in $\alpha$ is said to be adapted if it satisfies the following conditions:
\begin{enumerate}
\item $T$ is smooth K\"{a}hler on $\Amp(\alpha)=X\setminus E_{nK}(\alpha)$.
\end{enumerate}
Let $\pi:Y\to X$ be a composition of blow-ups so that $D=E_{nK}(\pi^*\alpha)$ is an snc divisor. We denote by $s_D$ a defining section of $D$ and $h_D$ a smooth hermitian metric on $\mathcal{O}(D)$ such that $|s_D|_{h_D}\le 1$. Then
\begin{enumerate}
\setcounter{enumi}{1}
\item $\pi^*T\ge |s_D|^{2m}_{h_D}\omega_Y$ for some $m\in\mathbb{Z}_{\ge 0}$ where $\omega_Y$ is a smooth K\"{a}hler metric on $Y$.
\item There exists a sequence of smooth K\"{a}hler metrics $\omega_{\ve}$ lying in a K\"{a}hler class $\pi^*\alpha+\ve\omega_Y$ on $Y$ satisfying the following conditions:
\begin{itemize}
\item[{\rm (3-1)}] $\omega_{\ve}$ locally smoothly converges to $\pi^*T$ on $\Amp(\pi^*\alpha)=Y\setminus D$.
\item[{\rm (3-2)}] There exists $0<\delta<1$ such that
$$
|s_D|^{2-\delta}\frac{\omega_{\ve}^n}{\omega_Y^n}\le C.
$$
\item[{\rm (3-3)}] There exists $p>1$ such that
$$
\int_Y\left|\log\left(\frac{\omega_{\ve}^n}{\omega_Y^n}\right)\right|^p\omega_{\ve}^n\le C
$$
\end{itemize}
We call the above $\omega_{\ve}$ an adapted approximation of $T$.
\end{enumerate}
\end{defi}
\begin{rema}
\begin{enumerate}
\item By the condition (3-2) above, we see that an adapted current $T$ solves the complex Monge-Amp\`{e}re equation $\langle T^n\rangle=e^F\omega_0^n$ with $e^F\in L^p(X)$ for some $p>1$ and $\int_Xe^F\omega_0^n=\int_X\alpha^n$. Thus, by \cite{BEGZ10}, we obtain that $T$ has minimal singularities. 
\item If $\omega_{\ve}$ satisfies the conditions in Definition \ref{adapted defi} except for (3-3), and if $|\log(\omega_{\ve}^n/\omega_0^n)|$ is uniformly bounded by $-k\log|s_D|^2+C$ for some $k\ge 0$ and $C>0$, then the condition (3-3) is automatically satisfied.
\item A similar condition is considered in \cite[section 4.1]{Jin25-2}. One of the major differences is that an adapted current $T$ is not necessarily a K\"{a}hler current on $X$. Thus, we can only assume condition (1) above, instead of the stronger inequality $\pi^*T\ge \pi^*\omega_X$, which holds for a smooth K\"{a}hler current $T$ on $X$ where $\omega_X$ is a K\"{a}hler metric on $X$.
\end{enumerate}
\end{rema}
By the following proposition, we can always assume the existence of adapted currents in this paper.
\begin{prop}[\cite{Jin26} Proposition 2.17, see also \cite{BEGZ10} Theorem 5.1]\label{adapted exists}
Let $X$ be a compact K\"{a}hler manifold with a smooth K\"{a}hler metric $\omega_0$ on $X$ and $\alpha$ be a nef and big class on $X$. Let $T$ be a closed positive $(1,1)$-current in $\alpha$ defined by $\langle T^n\rangle =e^f\omega_0^n$ with $f\in C^{\infty}(X)$. Then $T$ is an adapted current. In particular, any nef and big class on a compact K\"{a}hler manifold contains an adapted current.
\end{prop}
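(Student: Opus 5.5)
The plan is to build the adapted approximation by solving complex Monge--Amp\`ere equations on a log resolution, and to obtain every condition of Definition \ref{adapted defi} from estimates that are uniform in $\ve$.

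\textbf{Setup.} First take $\pi:Y\to X$ a composition of blow-ups with centers inside $E_{nK}(\alpha)$, such that $D=E_{nK}(\pi^*\alpha)$ is an snc divisor containing the exceptional divisor. Write $\pi^*\omega_0^n=J\,\omega_Y^n$ with $J=|s_{\rm exc}|^{2k}_{h}\cdot(\hbox{smooth positive})$; then $J\ge c|s_D|^{2k'}$ and $J\le C$. Fix a smooth representative $\theta\in\alpha$. For $\ve>0$ solve, by Yau's theorem,
$$
\omega_\ve^n:=(\pi^*\theta+\ve\omega_Y+dd^c\varphi_\ve)^n=c_\ve e^{\pi^*f}(J+\ve)\,\omega_Y^n,\qquad \sup_Y\varphi_\ve=0 .
$$
Here $c_\ve\to1$ is fixed by cohomology, since $\int_Y e^{\pi^*f}J\omega_Y^n=\int_X\alpha^n$.

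\textbf{Conditions (3-2) and (3-3).} Both are immediate. The ratio $\omega_\ve^n/\omega_Y^n$ is uniformly bounded, so (3-2) holds for any $\delta$. Moreover $|\log(\omega_\ve^n/\omega_Y^n)|\le C-k'\log|s_D|^2$, so (3-3) follows from Remark (2) after Definition \ref{adapted defi}.

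\textbf{Zeroth order and the limit.} The right-hand sides are uniformly in $L^p$, $p>1$. Hence the Ko\l{}odziej/Eyssidieux--Guedj--Zeriahi estimate for degenerating (nef and big) classes gives $\varphi_\ve\ge V_\theta-C$ uniformly, where $V_\theta$ is the extremal envelope. Any limit $\varphi$ of $\varphi_\ve$ then has minimal singularities and solves $\langle(\pi^*\theta+dd^c\varphi)^n\rangle=e^{\pi^*f}\pi^*\omega_0^n$. By the uniqueness theorem of \cite{BEGZ10}, $\pi^*\theta+dd^c\varphi=\pi^*T$, so the whole family converges.

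\textbf{Laplacian estimate (main obstacle).} Next comes the second-order estimate with Tsuji's trick. Since $\pi^*\alpha$ is big, there is a Kähler current $\pi^*\theta+dd^c\psi\ge c_0\omega_Y$ with $\psi$ having analytic singularities exactly along $D$, so that $\psi\ge A\log|s_D|^2-C$. I would apply the Aubin--Yau/Chern--Lu maximum principle to
$$
\log{\rm tr}_{\omega_Y}\omega_\ve-B(\varphi_\ve-\psi).
$$
The curvature of $\omega_Y$ is fixed, and $dd^c\log(J+\ve)\ge -C\omega_Y$ uniformly in $\ve$, because $dd^c\log(|s|^{2k}_h+\ve)$ is bounded below by a multiple of the curvature of $h$. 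This lower bound is the key point to check. Granting it, the estimate yields ${\rm tr}_{\omega_Y}\omega_\ve\le Ce^{-B\psi}\le C|s_D|^{-2AB}$. Combining with $\omega_\ve^n\ge c|s_D|^{2k'}\omega_Y^n$ through the elementary inequality ${\rm tr}_{\omega_\ve}\omega_Y\le({\rm tr}_{\omega_Y}\omega_\ve)^{n-1}\omega_Y^n/\omega_\ve^n$, I get $\omega_\ve\ge c|s_D|^{2m}\omega_Y$ uniformly. Passing to the limit gives condition (2).

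\textbf{Higher order and conclusion.} On each compact $K\subset Y\setminus D$ the metrics $\omega_\ve$ are uniformly equivalent to $\omega_Y$ and the right-hand side is smooth uniformly in $\ve$. The Evans--Krylov estimate plus Schauder bootstrapping then give uniform $C^k(K)$ bounds, and hence local smooth convergence $\omega_\ve\to\pi^*T$, which is (3-1). In particular $\pi^*T$ is smooth Kähler on $Y\setminus D$. Since $\pi$ is biholomorphic over $\Amp(\alpha)$ onto $Y\setminus D$, this gives (1). The last assertion of the proposition follows because \cite{BEGZ10} provides such a $T$ with $\langle T^n\rangle=e^f\omega_0^n$ for any smooth $f$ normalized by $\int_Xe^f\omega_0^n=\int_X\alpha^n$.
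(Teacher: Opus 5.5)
Your proposal is correct. The paper gives no proof of its own here; it cites \cite{Jin26} Proposition 2.17 and \cite{BEGZ10} Theorem 5.1, and your argument is essentially the \cite{BEGZ10} regularity argument carried out on the resolution: Yau solutions in $\pi^*\alpha+\ve\omega_Y$ with the regularized density $e^{\pi^*f}(J+\ve)$, the uniform bound $\varphi_\ve\ge V_\theta-C$, Tsuji's trick with a K\"ahler current having analytic singularities along $D$ (where $dd^c\log(J+\ve)\ge -C\omega_Y$ supplies exactly the lower bound on $\Delta_{\omega_Y}F$ that the Aubin--Yau inequality needs), and local Evans--Krylov, with conditions (2), (3-2) and (3-3) then read off from the resulting uniform bounds. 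The only implicit input is the full-mass normalization $\int_X e^f\omega_0^n=\int_X\alpha^n$, which the statement takes for granted and which you rightly use both for $c_\ve\to 1$ and for the \cite{BEGZ10} uniqueness step.
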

\noindent As mentioned in \cite[Theorem 2.20]{Jin26}, the singular K\"{a}hler-Einstein metrics on compact klt K\"{a}hler varieties are examples of adapted currents (refer to the proof of \cite{CCHSTT25}).

The following analytic lemma plays an important role in this paper.
\begin{lemm}[\cite{GPSS23}, \cite{GPS24}]\label{sob ineq}
Let $\omega_{\ve}$ be an adapted approximation of an adapted current $T\in\alpha$ as in Definition \ref{adapted defi}. Then the following estimates hold.
\begin{enumerate}
\item {\rm (\cite[Theorem 2.1]{GPSS23})} There exists $q>1$ and $C>0$ such that for any $\ve$ and any $u\in L^2_1(X)$, we have
$$
\left(\int_X|u-\overline{u}|^{2q}\omega_{\ve}^n\right)^{1/q}\le C\int_X|\nabla u|^2_{\omega_{\ve}}\omega_{\ve}^n,
$$
where $\overline{u}=\int_Xu\omega_{\ve}^n/\int_X\omega_{\ve}^n$.
\item {\rm (\cite[Theorem 2.2]{GPSS23})} Let $q>1$ be the constant in (1). Let $H_{\omega_{\ve}}(x,y,t)$ be the heat kernel of $(X,\omega_{\ve})$. Then there exists $C>0$ and $t_0>0$ such that for any $\ve$ and $x,y\in X$,
$$
H_{\omega_{\ve}}(x,y,t)\le {C}t^{-\frac{q}{q-1}}\exc\left(-\frac{d_{\omega_{\ve}}(x,y)}{10t}\right), \hspace{4mm}\hbox{ if $t\in (0,t_0]$}
$$ 
and 
$$
H_{\omega_{\ve}}(x,y,t)\le {C}\exc\left(-\frac{d_{\omega_{\ve}}(x,y)}{10t}\right), \hspace{4mm}\hbox{ if $t\in (t_0,\infty)$}
$$
where $d_{\omega_{\ve}}(x,y)$ is the geodesic distance between $x$ and $y$ with respect to $\omega_{\ve}$.
\item {\rm (\cite[Lemma 2]{GPS24})} Let $v\in L^1(X,\omega_{\ve})$ and let $\Omega_a=\{v>-a\}$ for fixed constant $a>0$. Suppose that $v\in C^2(\overline{\Omega_a})$ and $\Delta_{\omega_{\ve}}v\ge -C\rq{}$ on $\Omega_a$ where $C\rq{}>0$ is a constant which is independent of $\ve$. Then there exists a constant $C>0$ which is independent of $\ve$ such that
$$
\sup_Xv\le C\left(1+\int_X|v|\omega_{\ve}^n\right).
$$
\end{enumerate}
\end{lemm}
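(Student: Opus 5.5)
The plan is to reduce all three statements to the uniform estimates of Guo--Phong--Song--Sturm and Guo--Phong--Sturm. The one thing to check is that the adapted approximation $\omega_{\ve}$ of Definition \ref{adapted defi} lies in the class of Kähler metrics those results cover. We work throughout on the resolution $\pi:Y\to X$ of Definition \ref{adapted defi}, where the $\omega_{\ve}$ actually live, and read the estimates on $X$ through $\pi$. This is harmless, since $\pi$ is an isomorphism over $X\setminus E_{nK}(\alpha)$ and all integrals involved are absolutely continuous.

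\emph{Step 1: the uniform hypotheses.} The GPSS framework treats families of Kähler metrics $\omega$ on a fixed compact Kähler manifold $(Y,\omega_Y)$ satisfying three conditions:
\begin{itemize}
\item[(a)] $[\omega]$ lies in a bounded subset of $H^{1,1}(Y,\mathbb{R})$, i.e.\ $[\omega]\cdot[\omega_Y]^{n-1}\le A$;
\item[(b)] the volume $V_\omega=\int_Y\omega^n$ is bounded below;
\item[(c)] the $p$-entropy $\int_Y|\log(V_\omega^{-1}\omega^n/\omega_Y^n)|^p\,\omega^n$ is uniformly bounded for a suitable $p$.
\end{itemize}
For $\omega_{\ve}\in\pi^*\alpha+\ve\omega_Y$ with $\ve\le1$, (a) is immediate. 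Condition (b) holds because $V_{\omega_{\ve}}\ge(\pi^*\alpha)^n=\int_X\alpha^n>0$, using that $\alpha$ is nef and big. The volumes are also bounded above, so normalizing by $V_{\omega_{\ve}}$ only changes the logarithm by a bounded constant, and condition (3-3) then gives (c). If the cited theorems require $p>n$ rather than the $p>1$ of (3-3), the extra integrability should come from (3-2). Indeed $\omega_{\ve}^n/\omega_Y^n\le C|s_D|^{-(2-\delta)}$ is $L^{1+\eta}(\omega_Y^n)$ for small $\eta>0$. Hence $\int(\log_+)^{p'}\omega_{\ve}^n$ is bounded for every $p'$, and the $\log_-$ part is controlled by $\int (\log_-)^{p'}\, e^{-\log_-}\omega_Y^n<\infty$.

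\emph{Step 2: Sobolev inequality, part (1).} With the hypotheses verified, \cite[Theorem 2.1]{GPSS23} applies directly. Its proof solves auxiliary complex Monge--Ampère equations $(\omega_{\ve}+dd^c\psi)^n\propto (\text{truncated Green-type data})\,\omega_{\ve}^n$ and uses the uniform $L^\infty$ estimates (Kołodziej/Guo--Phong--Tong type, entropy version). These give uniform Green's function bounds $\int_Y G_{\omega_{\ve}}(x,\cdot)^{1+\eta}\omega_{\ve}^n\le C$, and the Sobolev inequality with some $q>1$ follows by a standard duality/interpolation argument.

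\emph{Step 3: heat kernel, part (2).} Part (2) follows from part (1) by the classical Nash--Moser route. The uniform Sobolev inequality yields a uniform Nash inequality. This gives the on-diagonal bound $H_{\omega_{\ve}}(x,x,t)\le Ct^{-q/(q-1)}$ for $t\le t_0$, and boundedness for $t>t_0$ via the Poincaré part and normalized volume. Davies' exponential perturbation method (Davies--Gaffney) then supplies the Gaussian-type off-diagonal factor with the distance $d_{\omega_{\ve}}$. All constants depend only on the Sobolev constant and the volume, hence are independent of $\ve$.

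\emph{Step 4: sup bound, part (3).} For part (3), following \cite{GPS24}, we compare $v$ on $\Omega_a$ with the solution $\psi$ of an auxiliary Monge--Ampère equation whose right-hand side is concentrated on $\Omega_a$. Using $\Delta_{\omega_{\ve}}v\ge -C'$ and the maximum principle applied to $v-\Lambda(-\psi+\Lambda')^{\frac{n}{n+1}}$ on $\Omega_a$, we obtain an $L^\infty$ estimate depending only on the entropy bound and $\|v\|_{L^1(\omega_{\ve}^n)}$.

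The main obstacle is Step 1: making sure the entropy exponent supplied by Definition \ref{adapted defi} is large enough for the cited theorems, which is where (3-2) must be exploited. After that, the estimates are imported from \cite{GPSS23,GPS24} essentially verbatim.
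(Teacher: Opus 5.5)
Your proposal is correct and takes the same approach as the paper. The paper gives no argument beyond importing these three estimates verbatim from Guo--Phong--Song--Sturm and Guo--Phong--Sturm, and your Steps 2--4 only sketch those cited proofs. Your Step 1 usefully verifies a hypothesis the paper leaves implicit: (3-2) gives $\omega_{\ve}^n/\omega_Y^n\le C|s_D|^{-(2-\delta)}$, which is uniformly bounded in $L^{1+\eta}(\omega_Y^n)$ for $D$ snc, and $x|\log x|^{p'}$ is bounded on $(0,1)$, so the entropy bound holds for every exponent $p'$ (in particular $p'>n$, as those papers require), not only for the $p>1$ stated in (3-3).
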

\subsection{\texorpdfstring{$T$}{T}-adapted Hermitian-Einstein metrics}
For any connection $\nabla$ on a smooth vector bundle $E$, we denote by $F_{\nabla}:=\nabla\circ\nabla$ the curvature of $\nabla$. If $E$ is a holomorphic vector bundle, then for any hermitian metric $h$ on $E$, we denote by $\nabla_h$ the Chern connection of $h$.
Following \cite{Jin26}, we review the notion of adapted Hermitian-Einstein metrics. 
\begin{defi}[\cite{Jin26}]\label{adapted HYM defi}
Let $X$ be a compact K\"{a}hler manifold and $\alpha$ be a nef and big class on $X$. Let $T$ be a closed positive $(1,1)$-current in $\alpha$ which is smooth K\"{a}hler on $\Amp(\alpha)$. 
Let $\pi:Y\to X$ be a composition of blow-ups so that $D=E_{nK}(\pi^*\alpha)$ is an snc divisor. We denote by $s_D$ a defining section of $D$ and $h_D$ a smooth hermitian metric on $\mathcal{O}(D)$ such that $|s_D|_{h_D}\le 1$.
Let $(E, h_0)$ be a hermitian vector bundle on $X$ with a smooth hermitian metric $h_0$ on $E$.
\begin{enumerate}
\item A $T$-admissible Hermitian-Einstein (HE) connection on $(E,h_0)$ is a smooth hermitian connection $\nabla$ on $(E,h_0)|_{\Amp(\alpha)}$ which satisfies the following conditions:
\begin{enumerate}
\item (integrability) $\delbar^{\nabla}\circ\delbar^{\nabla}=0$.
\item (HE equation) $\sqrt{-1}\Lambda_TF_{\nabla}=\lambda\id_E$ on $\Amp(\alpha)$ for some constant $\lambda\in\mathbb{R}$.
\item (admissible condition) $\int_{\Amp(\alpha)}|F_{\nabla}|_{h_0,T}^2T^n<\infty$.
\end{enumerate}
We  call $\lambda$ in (b) the $T$-Hermitian-Einstein constant of $\nabla$.
\item If $E$ is a holomorphic vector bundle, then a $T$-admissible Hermitian-Einstein (HE) metric on $(E,\delbar_E)$ is a smooth hermitian metric $h$ on $E|_{\Amp(\alpha)}$ whose Chern connection $\nabla_h$ satisfies the following conditions (the integrability condition (a) above is automatic for the Chern connection):
\begin{itemize}
\item[(b)$^{\prime}$] $\sqrt{-1}\Lambda_TF_{\nabla_h}=\lambda\id_E$ on $\Amp(\alpha)$ for some constant $\lambda\in\mathbb{R}$.
\item[(c)$^{\prime}$]  $\int_{\Amp(\alpha)}|F_{\nabla_h}|_{h,T}^2T^n<\infty$.
\end{itemize}
We call $\lambda$ in (b)$^{\prime}$ the $T$-Hermitian-Einstein constant of $h$.
\item If $E$ is a holomorphic vector bundle, then a $T$-adapted Hermitian-Einstein (HE) metric on $(E,\delbar_E)$ is a $T$-admissible HE metric $h$ on $(E,\delbar_E)$ which additionally satisfies the following two conditions: If we denote by $h=h_0\Psi^2$ with a positive definite $h_0$-hermitian endomorphism $\Psi$, then it satisfies the following two conditions, called the $T$-adapted conditions.
\begin{enumerate}
\item[{(d)}] There exists $k\in\mathbb{Z}_{\ge 0}$ such that
$
\sup_{Y\setminus D}|s_D^k\pi^*\Psi|<\infty.
$
\item[{(e)}] There exists $l\in\mathbb{Z}_{\ge 0}$ such that
$
\int_{Y\setminus D}|s_D^l\partial^{\nabla_{\pi^*h_0}}(\pi^*\Psi)|^2_{\pi^*T}(\pi^*T)^n<\infty.
$
\end{enumerate}
\end{enumerate}
\end{defi}
\begin{rema}\label{HE const rema}
Let $T$ be an adapted current in a nef and big class $\alpha$ on a compact K\"{a}hler manifold $X$ and let $E$ be a holomorphic vector bundle on $X$. Then, by \cite{Jin26}, the $T$-HE constant $\lambda$ of a $T$-adapted HE metric on $E$ is given by
$$
\lambda=\frac{\mu_{\alpha}(E)}{\alpha^n/n!}.
$$
Since the RHS depends only on $E$ and $\alpha$, we also refer to $\lambda$ as the $T$-HE constant of $E$.
\end{rema}

\subsection{Kobayashi-Hitchin correspondence for nef and big classes}
The Kobayashi-Hitchin correspondence, classically, asserts that a holomorphic vector bundle $E$ over a compact K\"{a}hler manifold $(X,\omega)$ is $\{\omega\}^{n-1}$-slope polystable if and only if $E$ admits an $\omega$-HE metric. This is established by Donaldson \cite{Don85}, Uhlenbeck-Yau \cite{UY86}, Bando-Siu \cite{BS94} and Xuemiao Chen \cite{Chen25} when $\omega$ is a smooth K\"{a}hler metric.  There are several studies of the correspondence in the case that K\"{a}hler metrics have orbifold singularities (e.g. \cite{Faulk22}, \cite{CGNPPW23}, \cite{FO25}). Recently, the correspondence was generalized to more singular K\"{a}hler metrics such that the cohomology classes are nef and big by using the notions of adapted currents and adapted HE metrics (\cite{Jin26}, see also \cite{Jin25-2}):
\begin{theo}[\cite{Jin26}]\label{KH corr nef big}
Let $X$ be a compact K\"{a}hler manifold and $\alpha$ be a nef and big class on $X$. Let $T$ be an adapted closed positive $(1,1)$-current in $\alpha$. Let $E$ be a holomorphic vector bundle over $X$. Then, 
$E$ is $\alpha^{n-1}$-slope polystable if and only if $E$ admits a $T$-adapted HE metric.
Furthermore, if $E$ is $\alpha^{n-1}$-slope stable, then a $T$-adapted HE metric is unique up to scaling.
\end{theo}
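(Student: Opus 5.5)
We prove the two implications separately and then uniqueness, always working on the resolution $\pi:Y\to X$ on which $D=E_{nK}(\pi^*\alpha)$ is snc. We use the adapted approximation $\omega_{\ve}\in\pi^*\alpha+\ve\omega_Y$ of Definition \ref{adapted defi} and pull everything back, so we may assume $X=Y$.

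\emph{Polystable $\Rightarrow$ existence.} It suffices to treat a stable $E$, since for polystable $E$ we take direct sums of the metrics on the stable factors; these factors agree with $E$ on $\Amp(\alpha)$ and have equal slopes, hence equal constants by Remark \ref{HE const rema}.
\begin{enumerate}
\item By Lemma \ref{open stable}, $E$ is $\{\omega_{\ve}\}^{n-1}$-stable for all small $\ve$. By Uhlenbeck--Yau there is an $\omega_{\ve}$-HE metric $h_{\ve}=h_0\Psi_{\ve}^2$, which we normalize by $\det\Psi_{\ve}$ (or by $\int\log\Tr\Psi_{\ve}\,\omega_{\ve}^n=0$). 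Its constant $\lambda_{\ve}$ converges to $\mu_{\alpha}(E)/(\alpha^n/n!)$.
\item We seek estimates uniform in $\ve$. Writing $s_{\ve}=\log\Psi_{\ve}^2$, the standard inequality $\Delta_{\omega_{\ve}}\log\Tr e^{s_{\ve}}\ge -C|\Lambda_{\omega_{\ve}}F_{h_0}-\lambda_{\ve}\id|$ holds. The right-hand side is not bounded uniformly, because $\Lambda_{\omega_{\ve}}$ blows up near $D$. We handle this either by changing $h_0$ by a singular weight $|s_D|^{2k}$ that absorbs the curvature term, or by using the mean value inequality of Lemma \ref{sob ineq}(3). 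Either way we obtain $\sup|s_{\ve}|\le C(1+\|s_{\ve}\|_{L^1(\omega_{\ve})})$, with an allowed polynomial loss $|s_D|^{-k}$.
\item The main obstacle is the uniform $L^1$ bound. We argue by contradiction as in Uhlenbeck--Yau and Simpson. If $\|s_{\ve_j}\|_{L^1}\to\infty$, we rescale $u_j=s_{\ve_j}/\|s_{\ve_j}\|_{L^1}$. The Donaldson-functional inequality, combined with the uniform Sobolev inequality of Lemma \ref{sob ineq}(1) and the volume control (3-2), (3-3), gives a weak $L^2_1$ limit $u_\infty\ne0$ on $Y\setminus D$. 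Its eigenvalues are constant, and its spectral projections are weakly holomorphic subbundles on $Y\setminus D$, with the $L^2_1$ control needed in the sense of \cite{UY86}.
\item The same controls extend these projections across $D$ as coherent subsheaves $\mathcal F$ of $E$. Chern--Weil then gives $\mu_{\alpha}(\mathcal F)\ge\mu_{\alpha}(E)$, contradicting stability. In this Chern--Weil step the degree is computed with $\langle T^{n-1}\rangle$, and any contribution carried by $D$ vanishes by Theorem \ref{CT}.
\item With the $C^0$ bound (up to $|s_D|^{-k}$) in hand, local higher-order estimates on compact subsets of $\Amp(\alpha)$, using (3-1), give smooth convergence $h_{\ve}\to h$. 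Here $h$ solves the $T$-HE equation, and conditions (d), (e) are inherited from the weighted uniform bounds and the energy bound $\int|\partial\Psi_{\ve}|^2\omega_{\ve}^n\le C$. Admissibility (c)$'$ follows from the Chern--Weil identity $\int|F|^2=\int|\Lambda F|^2+{\rm const}$, with the constant given by characteristic numbers converging to their $\alpha$-versions.
\end{enumerate}

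\emph{Existence $\Rightarrow$ polystable.} Let $h$ be a $T$-adapted HE metric and $\mathcal F\subset E$ a saturated subsheaf with orthogonal projection $p$ on $\Amp(\alpha)$. On the complement of the singular set of $\mathcal F$ we use the Chern--Weil formula
\[
\deg_T\mathcal F=\int\big(\Tr(p\sqrt{-1}\Lambda_TF_h)-|\delbar p|^2\big)\frac{T^n}{n!}.
\]
To justify it on $\Amp(\alpha)$ we integrate against cut-off functions $\chi_\delta$ near $D$ and let $\delta\to0$. The boundary terms are controlled by (d), (e) and (3-2), using $\log$-type cut-offs of $|s_D|$. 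We identify the left-hand side with $\rk\mathcal F\,\mu_\alpha(\mathcal F)$ via $\alpha^{n-1}=\{\langle T^{n-1}\rangle\}$ and Theorem \ref{CT}. This yields $\mu_\alpha(\mathcal F)\le\mu_\alpha(E)$, with equality only if $\delbar p=0$. In the equality case $p$ is a holomorphic splitting on $\Amp(\alpha)$, and induction on rank gives polystability in the sense of Definition \ref{stability defi}(3).

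\emph{Uniqueness.} Given two adapted HE metrics $h,h'=h\,e^{s}$ on a stable $E$, we show that $\Delta_T\Tr e^s\ge|\,\cdot\,|^2$-type terms hold. The adapted bounds allow integration by parts on $\Amp(\alpha)$, which forces $\nabla e^s=0$. A parallel $h$-self-adjoint endomorphism has holomorphic eigenspaces. These extend as subsheaves by the growth bound (d), so stability forces $e^s=c\,\id$.
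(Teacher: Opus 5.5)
This statement is not proved in the paper. It is quoted from \cite{Jin26} and used as a black box, so your outline can only be judged on its own terms. Your architecture is the natural one: limits of $\omega_{\ve}$-HE metrics obtained via Lemma \ref{open stable}, a Simpson-type contradiction argument for the $C^0$ bound, Chern--Weil for the converse, and a parallel endomorphism plus Lemma \ref{simple stable} for uniqueness. However, Step 4 has a genuine gap.

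\textbf{The gap in Step 4.} Your limit $u_\infty$ and its spectral projections $\pi_\gamma$ exist only on $Y\setminus D$, and what you control is $\int_{Y\setminus D}|\delbar\pi_\gamma|^2_{T}T^n<\infty$. The definition of an adapted current gives only the lower bound $\pi^*T\ge|s_D|^{2m}\omega_Y$. So this yields merely $\int_{Y\setminus D}|s_D|^{2m(n-1)}|\delbar\pi_\gamma|^2_{\omega_Y}\omega_Y^n<\infty$, not $\pi_\gamma\in L^2_1(Y,\omega_Y)$. Hence the Uhlenbeck--Yau regularity theorem cannot be run across $D$.

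Weighted bounds of this type do not exclude essential singularities along $D$. For example, take the line spanned by $(1,e^{1/z_1})$ in the trivial rank-two bundle over $\{0<|z_1|<1\}$ times a polydisc. Then $|\delbar\pi|^2$ is the Fubini--Study density of $e^{1/z_1}$, whose integral is infinite, while already $\int|z_1|^2|\delbar\pi|^2<\infty$. If $T$ were a K\"ahler current, $T\ge c\,\omega_X$, finite $T$-energy would give honest $L^2_1(\omega_X)$ control, and the analytic set $E_{nK}(\alpha)$ would be removable. The remark following Definition \ref{adapted defi} stresses exactly that adapted currents need not have this property.

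Without a coherent $\mathcal F\subset E$ defined on all of $X$, there is nothing to which $\alpha^{n-1}$-stability (a condition on global subsheaves) can be applied. Theorem \ref{CT} can discard divisorial contributions on $D$ once $\mathcal F$ exists, but it does not produce $\mathcal F$. Likewise, identifying the analytic $T$-degree of $\pi_\gamma$ with $\rk\mathcal F\cdot\mu_\alpha(\mathcal F)$ needs an argument near $D$ that you do not give. As written, \emph{the same controls extend these projections across $D$} is unsupported. You need a new ingredient: either an extension result for finite-$T$-energy weakly holomorphic subsheaves of $E|_{\Amp(\alpha)}$, or a way to realize the destabilizing object through global subsheaves.

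\textbf{Polystable reduction.} The same problem reappears here. In Definition \ref{stability defi}(3) the stable factors are only torsion-free sheaves on $X$, and their isomorphism with $E$ is given only on $\Amp(\alpha)$. You therefore need the stable case for sheaves, and a polynomial growth bound on that isomorphism along $D$, before (d) and (e) transfer to $E$.

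\textbf{Step 2.} Lemma \ref{sob ineq}(3) cannot be applied to $v=\log\Tr e^{s_{\ve}}$ as it stands. It requires $\Delta_{\omega_{\ve}}v\ge -C'$ uniformly, while $\Lambda_{\omega_{\ve}}F_{h_0}$ is only uniformly bounded in $L^1$ (Lemma \ref{lem4}(1)). Your weight idea can be made to work. Write $\omega_{\ve}=\theta+\ve\omega_Y+dd^c\varphi_{\ve}$, and take (after a further blow-up, \cite{Bou04}) a K\"ahler current $\theta+dd^c\psi\ge c\,\omega_Y$ with logarithmic poles along $D$. Then $\Delta_{\omega_{\ve}}(\psi-\varphi_{\ve})\ge c\,\Lambda_{\omega_{\ve}}\omega_Y-n$, which absorbs $|\Lambda_{\omega_{\ve}}F_{h_0}|\le C\Lambda_{\omega_{\ve}}\omega_Y$. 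With the standard bound $\psi-C\le\varphi_{\ve}\le 0$, the estimate loses an additive $O(-\log|s_D|)$ in $s_{\ve}$, i.e.\ a polynomial factor in $h_{\ve}$. That loss is harmless after your normalization in Step 3.

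\textbf{Integration by parts on $\Amp(\alpha)$.} In your converse and uniqueness arguments, (3-2) controls $T^n$ but not the terms $d\chi_\delta\wedge d^c(\cdot)\wedge T^{n-1}$ that cut-offs in $|s_D|$ produce. The device this paper uses for such integrations (proof of Theorem \ref{thm31}, around (\ref{eq45.1}), via \cite[Lemma 3.16]{Jin26}) is to twist by $s_D^k$. This uses a $T$-adapted HE metric on $\mathcal{O}(D)$, whose HE constant is $\mu_\alpha(D)=0$ by Theorem \ref{CT}.
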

The following estimate will be used later.
\begin{corr}[\cite{Jin26} Corollary 4.1]\label{est on HE}
Let $X$ be a compact K\"{a}hler manifold and $\alpha$ be a nef and big class on $X$ such that $D=E_{nK}(\alpha)$ is an snc divisor. Let $T$ be an adapted closed positive $(1,1)$-current in $\alpha$. Let $E$ be a holomorphic vector bundle over $X$. Let $s_D$ be a defining section of the snc divisor $D=E_{nK}(\alpha)$.
Let $h$ be a $T$-adapted HE metric on $E$. Then, for every $k\ge 0$, $s_D\cdot h^k$ is bounded as tensors. 
\end{corr}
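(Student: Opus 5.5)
The plan is to read $h^k$ as the $k$-th power of the endomorphism $H:=h_0^{-1}h=\Psi^2$, so that $|h^k|\le |H|^k$ up to constants. The claim then amounts to the \emph{subpolynomial} growth estimate: for every $\delta>0$ there is $C_\delta$ with
$$
\log \Tr_{h_0}h\le -\delta\log|s_D|^2_{h_D}+C_\delta \quad\hbox{on } Y\setminus D .
$$
Taking $\delta=1/(2k)$ gives $|s_D|\,|H|^k\le C$, which is the statement. I will work on $Y$ and suppress $\pi^*$.

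\textbf{Step 1: a differential inequality.} Set $v:=\log\Tr_{h_0}h$. The $T$-HE equation $\sqrt{-1}\Lambda_TF_{\nabla_h}=\lambda\id_E$ and the standard Donaldson computation give
$$
\Delta_T v\ge -\lambda-|\Lambda_TF_{\nabla_{h_0}}|_{h_0}=: -f
$$
on $Y\setminus D$. Since $h_0$ is smooth on $X$ and $T\ge|s_D|^{2m}\omega_Y$, we have $f\le C|s_D|^{-2m}$. Moreover, condition (3-2) of Definition \ref{adapted defi}, with $T^n\le C|s_D|^{-(2-\delta_0)}\omega_Y^n$, shows that $f$ lies in a weighted $L^1$-type space. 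This polynomial blow-up of $f$ is the only source of growth of $v$.

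\textbf{Step 2: uniform potential bound.} Transfer the inequality to $\omega_\ve$: on compacts of $Y\setminus D$ we have $\omega_\ve\to T$ smoothly. I represent $v$ via the Green function $G_\ve$ of $\omega_\ve$, whose uniform bounds follow from the heat kernel estimate in Lemma \ref{sob ineq}(2) (integrating in $t$): $G_\ve(x,\cdot)\in L^{q'}(\omega_\ve^n)$ uniformly for some $q'>1$. To apply this to the singular function, I use a cutoff $\chi_\eta(\log|s_D|^2)$ and the $T$-adapted conditions (d),(e). These say that $v\le -k_0\log|s_D|^2+C$ and that $\partial\Psi$ is weighted-$L^2$, so the boundary terms from the cutoff vanish as $\eta\to0$, exactly as in the proof of Corollary \ref{est on HE} in \cite{Jin26}.

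The outcome should be
$$
v(x)\le C+C\int_Y G(x,y)\,\min\bigl(f(y),A\bigr)\,T^n(y)+\bigl(\hbox{contribution of }\{f>A\}\bigr).
$$
Here the truncation level $A$ is chosen depending on $\delta$.

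\textbf{Step 3: absorbing the singular part, which I expect to be the main obstacle.} The truncated part is bounded by $C\,A$. For the remaining part I will run a barrier argument on the neighbourhood $\{f>A\}\subset\{|s_D|^2<c A^{-1/m}\}$ with the auxiliary function
$$
w:=v+\delta\log|s_D|^2-\delta\,\psi ,
$$
where $\psi$ is a bounded $\theta$-psh type potential chosen so that $\Delta_T\bigl(\delta\log|s_D|^2-\delta\psi\bigr)\ge \delta\,\tr_T\omega_Y - C\delta$ away from $D$. The positive term $\delta\,\tr_T\omega_Y\ge\delta c|s_D|^{-2m}$ on the set where $T$ degenerates should absorb $f$ there.

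This is the delicate point: the absorption requires that the degeneration of $T$ in the directions seen by $F_{h_0}$ is controlled by $\tr_T\omega_Y$. The natural bound is $|\Lambda_TF_{h_0}|\le C\,\tr_T\omega_Y$, which holds because $F_{h_0}$ is a smooth form. With this, $\Delta_T w\ge (\delta c'-C)\tr_T\omega_Y-C$. That has the wrong sign for small $\delta$, so I would first try replacing $\log|s_D|^2$ by $\log|s_D|^2$ composed with a slowly growing convex function (a $\log\log$-type weight). The goal is to gain the needed positivity only where $|s_D|$ is tiny, at the cost of a constant depending on $\delta$.

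Given such a barrier, $w\to-\infty$ at $D$ by the a priori polynomial bound (d) after adjusting weights. The maximum principle on $Y\setminus D$, applied via the $\omega_\ve$-approximation and Lemma \ref{sob ineq}(3) to control $\sup w$ by $\int|w|\omega_\ve^n$, which is uniformly bounded by (3-3) and (d), then gives $w\le C_\delta$. Hence $v\le-\delta\log|s_D|^2+C_\delta$ for every $\delta>0$, which yields the boundedness of $s_D\cdot h^k$ for every $k$.
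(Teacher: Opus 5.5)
There is a genuine gap. In fact, under your reading, the estimate you set out to prove is false, so no refinement of Step 3 can rescue it. Take $E=\mathcal{O}(D)$, $h_0=h_D$ and $h:=h_D\,|s_D|_{h_D}^{-2}$, i.e.\ $h(s_D,s_D)\equiv 1$.

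\begin{itemize}
\item On $X\setminus D$ the section $s_D$ is a holomorphic frame of constant $h$-length, so $F_{\nabla_h}=0$ there. Hence (b)$^{\prime}$ holds with $\lambda=0$, consistent with $\alpha^{n-1}[D]=0$ from Theorem \ref{CT}, and (c)$^{\prime}$ is trivial.
\item Writing $h=h_0\Psi^2$ with $\Psi=|s_D|_{h_D}^{-1}$, condition (d) holds with $k=1$.
\item Condition (e) holds for $l$ large, because $|\partial\Psi|_{\omega_Y}\le C|s_D|^{-2}$, $|\beta|^2_{T}\le |s_D|^{-2m}|\beta|^2_{\omega_Y}$ for $(1,0)$-forms, and $T^n\le C|s_D|^{-(2-\delta_0)}\omega_Y^n$ by (3-2).
\end{itemize}

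So $h$ is a $T$-adapted HE metric. Since a line bundle is $\alpha^{n-1}$-slope stable, it is the only one up to scaling (Theorem \ref{KH corr nef big}). Here $H=h_0^{-1}h=|s_D|^{-2}_{h_D}$, so $v=\log\Tr_{h_0}h=-\log|s_D|^2$. This violates $v\le-\delta\log|s_D|^2+C_\delta$ for every $\delta<1$, and $|s_D|\,|H|=|s_D|^{-1}$ is unbounded already for $k=1$. The most one can hope for is a bound by a fixed power of $|s_D|^{-1}$, of the type encoded in condition (d).

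Your argument breaks in Step 3, in two places.

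\begin{itemize}
\item \emph{The boundary behaviour of $w$ is assumed, not proved.} The claim that $w=v+\delta\log|s_D|^2-\delta\psi\to-\infty$ at $D$ ``by (d)'' is circular. Condition (d) only gives $v\le -k_0\log|s_D|^2+C$ for some possibly large $k_0$. So the claim holds only for $\delta>k_0$, which yields nothing beyond (d); in the example, $w\to+\infty$. Without upper boundedness of $w$ near $D$, the maximum principle on $X\setminus D$ is unavailable. Lemma \ref{sob ineq}(3) does not apply either: it requires a function that is $C^2$ on $\overline{\Omega_a}\subset X$ with $\Delta_{\omega_\ve}w\ge -C'$ uniformly in $\ve$. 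But $h$ solves an equation for $\Lambda_T$, and $\omega_\ve\to T$ only on compact subsets of $X\setminus D$.
\item \emph{The sign problem cannot be fixed by a $\log\log$-type weight.} We have $dd^c\chi(\log|s_D|^2)=\chi''\,d\log|s_D|^2\wedge d^c\log|s_D|^2+\chi'\,dd^c\log|s_D|^2$. Modulo $\chi'$ times a fixed smooth form, this is positive only in the rank-one normal direction, so it can never dominate $C\Lambda_T\omega_Y$, which sees all directions. Positivity in all directions needs something like a K\"{a}hler current in $\alpha$, which has positive Lelong numbers along $E_{nK}(\alpha)$. Barriers built from it give at best $v\le -N\log|s_D|^2+C$ for a fixed $N$, and the example shows this order cannot be improved in general.
\end{itemize}
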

The following generalization of the simpleness of stable bundles in the K\"{a}hler cases will also be used later:
\begin{lemm}[\cite{Jin26} Lemma 3.19]\label{simple stable}
Let $X$ be a compact K\"{a}hler manifold and $\alpha$ be a nef and big class on $X$ such that $D=E_{nK}(\alpha)$ is an snc divisor. Let $E$ be a holomorphic vector bundle over $X$. Let $s_D$ be a defining section of the snc divisor $D=E_{nK}(\alpha)$. Assume that $E$ is $\alpha^{n-1}$-slope stable. Then, for any holomorphic endomorphism $g\ne0$ of $E|_{X\setminus D}$ with 
$
\sup_{X\setminus D}|s_D^kg|<\infty
$
for some $k\ge 0$, there exists a constant $0\ne a\in \mathbb{C}$ such that $g=a\id_E$. In particular, if $\alpha^{n-1}$-slope stable vector bundle $E$ is written as $E|_{X\setminus D}=(\mathcal{F}\oplus\mathcal{G})|_{X\setminus D}$ for some reflexive sheaves $\mathcal{F}$ and $\mathcal{G}$ on $X$, then $\mathcal{F}=0$ or $\mathcal{G}=0$.
\end{lemm}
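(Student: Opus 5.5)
The plan is to reduce to the classical argument that a stable bundle is simple. The one new input is that twisting by multiples of $D$ does not change $\alpha^{n-1}$-slopes, by Theorem \ref{CT}.

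\textbf{Step 1: extend $g$ across $D$.} The section $s_D^k g$ is a holomorphic section of $\End(E)\otimes\mathcal{O}(kD)$ over $X\setminus D$ and is bounded. By the Riemann extension theorem it extends to a global holomorphic section $f_0\in H^0(X,\End(E)\otimes\mathcal{O}(kD))$. Equivalently, $g$ is a sheaf morphism $E\to E(kD)$.

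\textbf{Step 2: slopes after twisting.} By Theorem \ref{CT}, $\alpha^{n-1}\cdot[D_j]=0$ for each component $D_j$ of $D$. Hence $\mu_\alpha(\mathcal{F}(kD))=\mu_\alpha(\mathcal{F})$ for every torsion free $\mathcal{F}$. Subsheaves of $E(kD)$ correspond to subsheaves of $E$ under $\mathcal{F}\mapsto\mathcal{F}(-kD)$, so $E(kD)$ is also $\alpha^{n-1}$-slope stable, with $\mu_\alpha(E(kD))=\mu_\alpha(E)$.

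\textbf{Step 3: choose $a$.} Fix a point $x\in X\setminus D$ and let $a$ be an eigenvalue of $g(x)$. Put $f:=g-a\,\id_E$, viewed as a morphism $E\to E(kD)$. Suppose for contradiction that $f\neq 0$.

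\textbf{Step 4: the case $f$ is not generically injective.} Let $K=\ker f$, so $0\ne K\subsetneq E$, and let $I=\im f\subset E(kD)$, so $0\ne I$ and $\rk I<r$.
\begin{itemize}
\item Stability of $E$, applied to the saturated subsheaf $K$, gives $\mu_\alpha(E/K)>\mu_\alpha(E)$.
\item Stability of $E(kD)$ gives $\mu_\alpha(I)<\mu_\alpha(E(kD))=\mu_\alpha(E)$.
\item The map $E/K\to I$ is generically an isomorphism between torsion free sheaves, so $c_1(I)-c_1(E/K)$ is represented by an effective divisor. Since $\alpha$ is nef, this gives $\mu_\alpha(I)\ge\mu_\alpha(E/K)$.
\end{itemize}
The three inequalities are incompatible, so this case cannot occur.

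\textbf{Step 5: the case $f$ is generically injective.} Now $\det f\in H^0(X,\mathcal{O}(rkD))$ is nonzero and vanishes at $x$, because $a$ is an eigenvalue of $g(x)$. Its zero divisor therefore has a component $Z$ passing through $x\in\Amp(\alpha)$. Since $I\cong E$,
$$
0=\rk(E)\bigl(\mu_\alpha(E(kD))-\mu_\alpha(I)\bigr)=\alpha^{n-1}\cdot[\mathrm{div}\det f]\ \ge\ \alpha^{n-1}\cdot[Z].
$$
The right-hand side is strictly positive. Indeed, $\alpha^{n-1}\cdot[Z]\ge\int_{Z\cap\Amp(\alpha)}\langle T^{n-1}\rangle$, and $T$ is a smooth Kähler form near $x$, so this integral is positive.

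\textbf{Main obstacle.} I expect the positivity $\alpha^{n-1}\cdot[Z]>0$ to be the delicate point. I would justify it using the fact that $T$ has minimal singularities, so that $\alpha^{n-1}|_Z$ is bounded below by the nonpluripolar mass of $T$ on $Z$. Alternatively, one can write $\alpha^{n-1}\cdot Z=\lim_{\ve\to0}(\alpha+\ve\omega_0)^{n-1}\cdot Z$ and bound it below by $\int_{Z\cap U}T^{n-1}$ for a neighborhood $U$ of $x$.

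\textbf{Conclusion of the first claim.} Both cases are contradictory, so $f=0$, i.e. $g=a\,\id_E$. Moreover $a\neq0$, since $g\neq0$.

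\textbf{Second claim.} Suppose $E|_{X\setminus D}=(\mathcal{F}\oplus\mathcal{G})|_{X\setminus D}$ with $\mathcal{F},\mathcal{G}\ne 0$. Take $g$ to be the projection onto $\mathcal{F}$. This is a holomorphic endomorphism of $E|_{X\setminus D}$ which is neither $0$ nor a multiple of $\id_E$, since $g^2=g$.

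It remains to check that $g$ has polynomial growth, $\sup|s_D^kg|<\infty$. The decomposition comes from sheaves on all of $X$, so the projection is a meromorphic section of $\End E$ with poles only along $D$. Hence $s_D^k g$ is holomorphic for $k$ large, and therefore bounded. The first part then gives a contradiction, so $\mathcal{F}=0$ or $\mathcal{G}=0$.
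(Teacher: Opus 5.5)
Your proof of the main assertion is correct. The paper does not prove this lemma here (it is quoted from \cite{Jin26}), so there is no in-text argument to compare against; what you give is a self-contained algebraic proof. You rewrite $s_D^k(g-a\,\id_E)$ as a morphism $f\colon E\to E(kD)$, use Theorem \ref{CT} to see that $E(kD)$ is stable of the same slope, and run the classical kernel/image argument; this settles the non-injective case. (There $E/K\to I$ is actually an isomorphism, so the effective-divisor remark is not needed.)

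The one new input is in the injective case. There $\mathrm{div}(\det f)\in|rkD|$ passes through $x\in\Amp(\alpha)$, and you need the converse of Theorem \ref{CT}: $\alpha^{n-1}\cdot[Z]>0$ for every prime divisor $Z\not\subset E_{nK}(\alpha)$. This is the easy inclusion $\mathrm{Null}(\alpha)\subset E_{nK}(\alpha)$ of Collins--Tosatti. Your second justification of it is rigorous. On one hand $\int_Z\omega_\ve^{n-1}=(\alpha+\ve\omega_0)^{n-1}\cdot[Z]\to\alpha^{n-1}\cdot[Z]$. On the other, $\int_Z\omega_\ve^{n-1}\ge\int_{Z\cap U}\omega_\ve^{n-1}\to\int_{Z\cap U}T^{n-1}>0$ for a neighbourhood $U\Subset X\setminus D$ of $x$. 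In effect you are using that $D$ is rigid, $h^0(X,\mathcal{O}(mD))=1$; this replaces the classical fact that $\det f$ is a nonzero constant.

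The step that does not hold as written is the growth claim in the second part. That $\mathcal{F}$ and $\mathcal{G}$ are sheaves on $X$ does not make the projection onto $\mathcal{F}$ meromorphic along $D$ when the splitting is only an isomorphism $\phi\colon(\mathcal{F}\oplus\mathcal{G})|_{X\setminus D}\to E|_{X\setminus D}$ over $X\setminus D$. This is how the paper treats such identifications (cf.\ Definition \ref{stability defi}(3)).

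For instance, you can replace $\phi$ by $\phi\circ\sigma$ with $\sigma(u,v)=(u+\psi(v),v)$ for any $\psi\in H^0(X\setminus D,\mathcal{H}om(\mathcal{G},\mathcal{F}))$. This changes the projection by $-\phi\circ\psi\circ\mathrm{pr}_{\mathcal{G}}\circ\phi^{-1}$, and nothing in your argument prevents $\psi$ from having an essential singularity along $D$. For general divisors this does happen (e.g.\ for ample ones). Excluding it for $D=E_{nK}(\alpha)$ would need a Hartogs-type extension statement you have not supplied, and without $\sup|s_D^kg|<\infty$ the first part cannot be invoked.

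Your claim is fine when the splitting is induced by morphisms $\mathcal{F}\to E$, $\mathcal{G}\to E$ defined on all of $X$. Then $\phi$ is defined on $X$, and off a set of codimension $\ge 2$ its determinant vanishes only along $D$. So $\phi^{-1}=\mathrm{adj}(\phi)/\det\phi$ has poles of finite order along $D$, and $s_D^Ng$ extends holomorphically. In that reading you do not even need the first part: Theorem \ref{CT} gives $\deg_\alpha(E)=\deg_\alpha(\mathcal{F})+\deg_\alpha(\mathcal{G})$, which contradicts $\mu_\alpha(\mathcal{F}),\mu_\alpha(\mathcal{G})<\mu_\alpha(E)$. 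So either state that reading explicitly, or obtain the polynomial bound on the projection by a separate argument.
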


\subsection{The $\alpha^{n-1}$-Harder-Narasimhan-Seshadri filtrations}
In this section, we introduce the notion of $\alpha^{n-1}$-Harder-Narasimhan-Seshadri filtrations of holomorphic vector bundles where $\alpha$ is a nef and big class.
Let $X$ be a compact K\"{a}hler manifold with a K\"{a}hler class $\omega_0$, $\alpha$ be a nef and big class  and $E$ be a holomorphic vector bundle on $X$.
The following result holds:
\begin{lemm}[see \cite{Jin25-1}]\label{bound deg}
There exists a saturated subsheaf $\mathcal{F}\subset E$ such that 
$$
\mu_{\alpha}(\mathcal{F})=\sup\{\mu_{\alpha}(\mathcal{F}\rq{})\mid \hbox{$\mathcal{F}\rq{}\subset E$ is a nonzero proper saturated subsheaf}\}
$$
\end{lemm}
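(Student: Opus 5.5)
The plan is to show that the set of slopes $\{\mu_{\alpha}(\mathcal{F}')\}$ is bounded above and that the supremum is attained. Since $\alpha$ is nef and big but not Kähler, the classical argument (an upper bound on degrees, a discreteness statement, then compactness) has to be run with $\alpha^{n-1}$ replaced by a positive current.

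\textbf{Upper bound.} Fix a smooth hermitian metric $h_0$ on $E$ and an adapted current $T\in\alpha$, or more simply any closed positive current $\langle T^{n-1}\rangle$ representing $\alpha^{n-1}$. For a saturated subsheaf $\mathcal{F}\subset E$ of rank $s$, let $\pi_{\mathcal{F}}$ be the orthogonal projection, defined outside the codimension $\ge2$ singular set of $\mathcal{F}$. The Chern–Weil formula gives
$$
\deg_{\alpha}(\mathcal{F}) = \int_X \frac{\sqrt{-1}}{2\pi}\Tr(\pi_{\mathcal{F}}F_{h_0}) \wedge \alpha^{n-1} - \|\delbar\pi_{\mathcal{F}}\|^2 .
$$
I would justify this by approximating $\alpha^{n-1}$ by the Kähler classes $(\alpha+\ve\omega_0)^{n-1}$, where the formula is classical, and letting $\ve\to0$. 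This yields $\deg_{\alpha}(\mathcal{F}) \le \lim_{\ve}\deg_{\alpha+\ve\omega_0}(\mathcal{F})$, since $c_1(\mathcal{F})$ is a fixed class and the degree is polynomial in $\ve$. The curvature term in the Kähler case is bounded by $C\,(\alpha+\ve\omega_0)^{n-1}\cdot\omega_0$ with $C$ depending only on $h_0$. Hence $\mu_{\alpha}(\mathcal{F})\le C\,\alpha^{n-1}\cdot\omega_0$ uniformly, so the supremum $\mu_{\max}$ is finite.

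\textbf{Attainment.} This is the main obstacle: the values $\int c_1(\mathcal{F})\wedge\alpha^{n-1}$ need not form a discrete set, because $\alpha$ is only real. I would first use the bound above in the Kähler class $\omega_0$ together with the finiteness of Chern classes in bounded families. Concretely, pick $\mathcal{F}_k$ with $\mu_{\alpha}(\mathcal{F}_k)\to\mu_{\max}$. The aim is to show that the degrees $\deg_{\omega_0}(\mathcal{F}_k)$ are bounded below, so that by Grothendieck-type boundedness (the Kähler analogue via Douady spaces / Bishop compactness of the associated cycles of $\det\mathcal{F}_k\to\Lambda^sE$) the $\mathcal{F}_k$ lie in finitely many families. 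The slope $\mu_{\alpha}$ then takes finitely many values on them, so the supremum is attained.

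For the lower bound on $\deg_{\omega_0}(\mathcal{F}_k)$, I would try the following. Since $\alpha$ is big, write $\alpha = \delta\{\omega_0\} + \{S\}$ with $S$ a closed positive current (a Kähler current in $\alpha$ satisfies $T\ge\delta\omega_0$). Expanding $\alpha^{n-1}$ mixes $\omega_0$ and $S$. Applying the upper bound of the first step in each mixed class $\{\omega_0\}^{a}\{S\}^{b}$, which works because the curvature bound is a positivity statement, every term is bounded above, and one term is $\delta^{n-1}\deg_{\omega_0}$. Thus
$$
\delta^{n-1}\deg_{\omega_0}(\mathcal{F}_k) \ge \deg_{\alpha}(\mathcal{F}_k) - C \ge \mu_{\max}-1-C
$$
for large $k$, which gives the lower bound. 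Nefness is used in making sense of the mixed products, either through non-pluripolar products or through the approximations $\alpha+\ve\omega_0$. Saturatedness of the maximizer is automatic if we replace a maximizer by its saturation, which can only increase $\mu_{\alpha}$.
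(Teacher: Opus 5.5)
Your upper bound is fine. For saturated $\mathcal{F}$, the K\"{a}hler Chern--Weil estimate $\deg_{\alpha+\ve\omega_0}(\mathcal{F})\le C\,\rk(\mathcal{F})\,\{\omega_0\}\cdot(\alpha+\ve\{\omega_0\})^{n-1}$ passes to $\ve\to0$. The gap is the lower bound on $\deg_{\omega_0}(\mathcal{F}_k)$, which carries the whole attainment step.

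First, $\{S\}=\alpha-\delta\{\omega_0\}$ is never nef when $\alpha$ is not K\"{a}hler, since nef plus K\"{a}hler is K\"{a}hler. Moreover, every K\"{a}hler current in $\alpha$ has positive Lelong numbers along $E_{nK}(\alpha)$. So for $b\ge2$ the product $S^b$ is undefined when $E_{nK}(\alpha)$ has a divisorial part, and non-pluripolar products of $S$ need not represent $\{S\}^b$. The approximations $\alpha+\ve\omega_0$ say nothing about $\{S\}$.

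Second, and decisively, your justification (``the curvature bound is a positivity statement'') never uses saturation, and without it the claimed inequality is false. Take a prime divisor $D\subset E_{nK}(\alpha)$, as in this paper's setting, and set $\mathcal{F}_k:=\mathcal{F}(-kD)\subset E$. Then $C\{\omega_0\}-c_1(\mathcal{F}_k)$ is still represented by a closed positive current (add $k\,\rk(\mathcal{F})[D]$). By Theorem~\ref{CT}, $\mu_{\alpha}(\mathcal{F}_k)=\mu_{\alpha}(\mathcal{F})$. Yet $\deg_{\omega_0}(\mathcal{F}_k)=\deg_{\omega_0}(\mathcal{F})-k\,\rk(\mathcal{F})\,D\cdot\{\omega_0\}^{n-1}\to-\infty$. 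So some mixed term $c_1(\mathcal{F}_k)\cdot\{\omega_0\}^a\{S\}^b$ with $b\ge1$ must tend to $+\infty$. The reason is visible on $D$: $\{S\}|_D$ is not even pseudoeffective. Otherwise $\alpha|_D=\{S\}|_D+\delta\{\omega_0\}|_D$ would be big, contradicting $(\alpha|_D)^{n-1}=\alpha^{n-1}\cdot D=0$.

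What the step needs is saturation, together with an expansion containing only one non-nef factor. Write $\alpha^{n-1}-\delta^{n-1}\{\omega_0\}^{n-1}=\{S\}\cdot\sum_{a+b=n-2}\delta^{b}\alpha^{a}\{\omega_0\}^{b}$; the nef factors are then approximated by K\"{a}hler forms $\omega_{\ve}^a\wedge\omega_0^b$. For saturated $\mathcal{F}$ of rank $s$, the map $\det\mathcal{F}\to\Lambda^sE$ vanishes only in codimension $\ge2$. Hence $C\{\omega_0\}-c_1(\mathcal{F})$ is represented by the positive current $C\omega_0-\Theta_{\mathcal{F}}$, where $\Theta_{\mathcal{F}}$ is the curvature of the metric induced from $\Lambda^s h_0$, and its local potentials have poles only in codimension $\ge2$. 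By Demailly's intersection theory, its wedge product with $S\wedge\omega_{\ve}^a\wedge\omega_0^b$ is then well defined, positive, and computes the cohomological product. This gives $(C\{\omega_0\}-c_1(\mathcal{F}))\cdot\{S\}\cdot\alpha^a\{\omega_0\}^b\ge0$, hence $\delta^{n-1}\deg_{\omega_0}(\mathcal{F})\ge\deg_{\alpha}(\mathcal{F})-C$ for saturated $\mathcal{F}$. This is exactly where $\mathcal{F}(-kD)$ is excluded: its current has a divisorial part along $D$, which cannot be wedged with $S$.

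Separately, your finiteness step should rest on an actual boundedness theorem for torsion-free quotients of $E$ of bounded degree on a compact K\"{a}hler manifold. A Bishop-type volume bound for the graphs in the Grassmann bundle involves all powers of $c_1$, not just $\deg_{\omega_0}$, and needs an extra argument. (The paper does not prove this lemma; it quotes \cite{Jin25-1}.)
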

As a consequence, we can prove the existence of the Harder-Narasimhan-Seshadri filtration. We remark that the uniqueness is obtained only on the complement of the non-K\"{a}hler locus, since $\alpha^{n-1}\cdot[D]=0$ for any divisor contained in the non-K\"{a}hler locus by \cite{CT15} (refer to Theorem \ref{CT}).
\begin{prop}\label{HN def}
Let $X$ be a compact K\"{a}hler manifold and $\alpha$ be a nef and big class on $X$. Let $E$ be a holomorphic vector bundle on $X$. 
\begin{enumerate}
\item There exists a sequence of saturated subsheaves
$$
0=E_0\subset E_1 \subset E_2\subset\cdots\subset E_l=E
$$
such that $Q_i:=E_i/E_{i-1}$ is $\alpha^{n-1}$-slope semistable and $\mu_{\alpha}(Q_i)>\mu_{\alpha}(Q_{i+1})$. Any two such filtrations are isomorphic on the ample locus $\Amp(\alpha)=X\setminus E_{nK}(\alpha)$. We call the above filtration by the $\alpha^{n-1}$-Harder-Narasimhan (HN) filtration. We denote by 
$
\Gr^{HN}_{\alpha}(E)=\bigoplus_{i=1}^lQ_i
$
the associated graded object.
\item Suppose that $E$ is $\alpha^{n-1}$-slope semistable. Then there exists a filtration
$$
0=E_0\subset E_1 \subset E_2\subset\cdots\subset E_l=E
$$
such that $Q_i:=E_i/E_{i-1}$ is $\alpha^{n-1}$-slope stable and $\mu_{\alpha}(Q_i)=\mu_{\alpha}(E)$. We call the above filtration by an $\alpha^{n-1}$-Seshadri filtration.  We denote by 
$
\Gr^{S}_{\alpha}(E)=\bigoplus_{i=1}^lQ_i
$
the associated graded object. Any two such graded objects $\Gr^{S}_{\alpha}(E)$ associated to $\alpha^{n-1}$-Seshadri filtrations are isomorphic on the ample locus $\Amp(\alpha)=X\setminus E_{nK}(\alpha)$.
\end{enumerate}
\end{prop}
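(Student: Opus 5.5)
The plan follows the classical Harder–Narasimhan–Seshadri argument, with Lemma \ref{bound deg} as the input that makes the supremum of slopes attained, and Theorem \ref{CT} used to control what happens along $E_{nK}(\alpha)$.

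\emph{Existence in (1).} Induct on $\rk E$. By Lemma \ref{bound deg}, the supremum of $\mu_\alpha$ over nonzero proper saturated subsheaves is attained. Among saturated subsheaves attaining the value $\mu_{\max}:=\max\{\mu_\alpha(E),\sup\}$, choose one, $E_1$, of maximal rank. Then $E_1$ is semistable: any subsheaf $\mathcal{F}\subset E_1$ has saturation $\mathcal{F}^{sat}$ in $E$ with $\mu_\alpha(\mathcal{F})\le\mu_\alpha(\mathcal{F}^{sat})\le \mu_{\max}=\mu_\alpha(E_1)$. The first inequality holds because $c_1(\mathcal{F}^{sat})-c_1(\mathcal{F})$ is represented by an effective divisor and $\alpha$ is nef.

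Next apply induction to $E/E_1$, which is torsion free and is a vector bundle off codimension $2$. Lemma \ref{bound deg} must be used for torsion free sheaves here, or one reduces to this case via reflexive hulls and saturation, since the slope only sees codimension one. Pull the resulting filtration back to $E$. The strict inequality $\mu_\alpha(Q_1)>\mu_\alpha(Q_2)$ follows from the maximal-rank choice: if $\mu_\alpha(Q_2)\ge\mu_{\max}$, the preimage of the first step of $E/E_1$ would be a saturated subsheaf of larger rank with slope $\ge\mu_{\max}$, a contradiction by the see-saw property.

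\emph{Uniqueness on $\Amp(\alpha)$.} Use the standard lemma that a nonzero morphism $\mathcal{A}\to\mathcal{B}$ from a semistable sheaf to a semistable sheaf with $\mu_\alpha(\mathcal{A})>\mu_\alpha(\mathcal{B})$ vanishes. Its proof only uses see-saw and the inequality $\mu_\alpha(\text{image})\le\mu_\alpha(\text{saturation of image})$, and both remain valid for nef $\alpha$. Given two HN filtrations $E_\bullet$ and $E'_\bullet$, suppose $\mu_\alpha(E'_1)\ge\mu_\alpha(E_1)$. Take the first $i$ with $E'_1\to E/E_{i-1}$ nonzero; its image lands in $Q_i$, so comparing slopes forces $i=1$ and $\mu_\alpha(E_1)=\mu_\alpha(E'_1)$. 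Symmetrizing, both $E_1$ and $E'_1$ are maximal-slope, maximal-rank destabilizers. Then $E_1+E'_1$ has slope $\ge\mu_{\max}$ as an extension-type quotient of $E_1\oplus E'_1$, so by maximal rank its saturation has the same rank. Hence $E_1$ and $E'_1$ have the same rank and both equal the saturation of $E_1+E'_1$; saturated subsheaves of equal rank containing one another coincide, so $E_1=E'_1$ genuinely.

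The only place this breaks is that we do not require $E_1$ to maximize rank among destabilizers in a way fixed by divisors. Two choices can differ by twisting along divisors supported in $E_{nK}(\alpha)$, which Theorem \ref{CT} makes invisible to $\mu_\alpha$, and that is exactly why uniqueness is only claimed on $\Amp(\alpha)$. Concretely, I would argue that any two candidate filtrations induce isomorphic subsheaves after restriction to $X\setminus E_{nK}(\alpha)$, where they agree because they agree generically and both are saturated there; then proceed inductively on quotients.

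\emph{(2) Seshadri filtration.} Among saturated subsheaves of slope $\mu_\alpha(E)$, choose $E_1$ of minimal positive rank. It is stable, and $E/E_1$ is semistable of the same slope, so induction gives the filtration. For the isomorphism of graded objects on $\Amp(\alpha)$, run the Jordan–Hölder argument. A nonzero map between stable sheaves of equal slope is injective with image of equal rank. Its cokernel is then supported on a divisor of degree zero against $\alpha^{n-1}$ plus codimension $\ge2$, and on $\Amp(\alpha)$ a nonzero effective divisor would have positive degree (or rather, Theorem \ref{CT} shows degree-zero divisors are contained in $E_{nK}$). So the map is an isomorphism in codimension one on $\Amp(\alpha)$, and for reflexive hulls it is an isomorphism there.

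\emph{Main obstacle.} The hardest step is this last point: showing that an effective divisor $\Delta$ with $\alpha^{n-1}\cdot\Delta=0$ must lie in $E_{nK}(\alpha)$. Theorem \ref{CT} gives only the converse. I would try first to use that $\alpha$ restricted to a component meeting $\Amp(\alpha)$ is big on it. Then $\alpha^{n-1}\cdot[\Delta]\ge\int_{\Delta\cap\Amp}\langle T^{n-1}\rangle>0$, which follows because $T$ is smooth Kähler on $\Amp(\alpha)$.
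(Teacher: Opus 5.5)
Your argument is correct. In outline it is the standard Harder--Narasimhan/Jordan--H\"{o}lder argument that the paper only alludes to: it gives no proof beyond Lemma \ref{bound deg} and a remark invoking Theorem \ref{CT}. Where you differ is in how the non-K\"{a}hler locus enters, and there you get more than the paper states, apart from one wrong paragraph.

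In (1), replace ``first $i$'' by ``least $i$ with $E'_1\subset E_i$''. Your vanishing lemma then gives $E'_1\subset E_1$, and symmetrically $E_1\subset E'_1$, so $E_1=E'_1$. Inducting on $E/E_1$, the HN filtration is unique on all of $X$, not just on $\Amp(\alpha)$. The paragraph saying that two choices may differ by twisting along divisors in $E_{nK}(\alpha)$ is therefore false and should be deleted: two saturated subsheaves of equal rank, one contained in the other, coincide.

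The locus $E_{nK}(\alpha)$ only matters in (2). Theorem \ref{CT}, the direction the paper cites, explains why Seshadri graded objects need not agree across $E_{nK}(\alpha)$. Uniqueness on $\Amp(\alpha)$ needs the converse, which you correctly isolate: an irreducible divisor $\Delta$ with $\alpha^{n-1}\cdot\Delta=0$ lies in $E_{nK}(\alpha)$. This is the elementary half of $E_{nK}(\alpha)=\mathrm{Null}(\alpha)$, and there is a cleaner proof that avoids making sense of $\int_\Delta\langle T^{n-1}\rangle$. Suppose $\Delta\not\subset E_{nK}(\alpha)$. Pull back a K\"{a}hler current $T'\ge\varepsilon\omega_0$ in $\alpha$ with analytic singularities along $E_{nK}(\alpha)$ to a resolution $\nu:\tilde\Delta\to\Delta$. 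Then $\nu^*\alpha-\varepsilon\nu^*\omega_0$ is pseudoeffective and $\nu^*\alpha$ is nef, whence $\alpha^{n-1}\cdot\Delta\ge\varepsilon^{n-1}\int_\Delta\omega_0^{n-1}>0$.

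In the Jordan--H\"{o}lder induction there is one step you skip. The filtration induced on $E/E_1$ by $E'_\bullet$ acquires a torsion step, namely the cokernel of $E_1\to Q'_i$, and must be re-saturated. The new pieces contain the $Q'_j$ with degree-zero torsion cokernel. They are therefore still stable, and by the same divisor fact they agree with $Q'_j$ on $\Amp(\alpha)$ off codimension two. Consequently the induction must be run for torsion-free sheaves, as must Lemma \ref{bound deg}, which you already note. What it yields is an isomorphism of the reflexive hulls of the graded objects on $\Amp(\alpha)$. That is also the form in which uniqueness of Seshadri graded objects holds for K\"{a}hler classes, so read (2) in that sense.
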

Combining these two filtrations, we obtain the following:
\begin{defi}\label{HNS def2}
Let $X$ be a compact K\"{a}hler manifold and $\alpha$ be a nef and big class on $X$. Let $E$ be a holomorphic vector bundle on $X$. Let us consider the $\alpha^{n-1}$-HN filtration
$$
0=E_0\subset E_1 \subset E_2\subset\cdots\subset E_l=E.
$$
Then there exists filtrations
$$
E_{i-1}=E_{i,0}\subset E_{i,1}\subset \cdots \subset E_{i,k_i}=E_i
$$
such that each $Q_{i,j}=E_{i,j}/E_{i,j-1}$ is $\alpha^{n-1}$-slope stable and $\mu_{\alpha}(Q_{i,j})=\mu_{\alpha}(E_i)$. We call the filtration $\{E_{i,j}\}$ of $E$ by an $\alpha^{n-1}$-Harder-Narasimhan-Seshadri (HNS)  filtration of $E$. We denote by 
$
\Gr_{\alpha}^{\HNS}(E)=\bigoplus_{i,j}Q_{i,j}
$
the associated graded object. All the graded objects $\Gr^{\HNS}_{\alpha}(E)$ associated to   $\alpha^{n-1}$-HNS filtrations are isomorphic when restricted to the ample locus $\Amp(\alpha)=X\setminus E_{nK}(\alpha)$.
\end{defi}
By the uniqueness of the graded objects associated to the $\alpha^{n-1}$-HNS filtrations together with Theorem \ref{CT}, we can define as follows:
\begin{defi}\label{HNS def}
Let $X$ be a compact K\"{a}hler manifold and $\alpha$ be a nef and big class on $X$. Let $E$ be a holomorphic vector bundle on $X$. Let us consider the $\alpha^{n-1}$-HNS filtration
\begin{align}\label{HNS}
&0=E_{1,0}\subset E_{1,1} \subset E_{1,2}\subset\cdots\subset E_{1,k_1}=E_1\notag\\
&\hspace{2mm}\cdots\notag\\
&\hspace{2mm}=E_{i,0}\subset E_{i,1}\subset \cdots\subset E_{i,k_i}=E_i\notag\\
&\hspace{2mm}\cdots\notag\\
&\hspace{2mm}=E_{l,0}\subset E_{l,1}\subset\cdots\subset  E_{l,k_l}=E
\end{align}
Then, the $\alpha^{n-1}$-Harder-Narasimhan (HN) type of $E$ is defined by
$$
\mu_0=(\mu_1,\ldots,\mu_1,\ldots,\mu_l,\ldots,\mu_l)
$$
where each $\mu_i=\mu_{\alpha}(E_i/E_{i-1})=\mu_{\alpha}(E_{i,j}/E_{i,j-1})$ is repeated $k_i$ times.
\end{defi}
\noindent Then the following proposition is proved in the same way as \cite[Proposition 4.12]{Sib15}.
\begin{prop}[cf. \cite{Sib15} Proposition 4.12]\label{prop3.1}
Let $X$ be a compact K\"{a}hler manifold with a K\"{a}hler class $\omega_0$ and $\alpha$ be a nef and big class on $X$. Let $E$ be a holomorphic vector bundle on $X$. We denote by $\omega_{\ve}=\alpha+\ve\omega_0$ a sequence of K\"{a}hler classes.
Let $\mu_{\ve}$ be the $\omega_{\ve}^{n-1}$-HN type of $E$ and $\mu_0$ be the $\alpha^{n-1}$-HN type of $E$. Then we have $\mu_{\ve}\xrightarrow{\ve\to0} \mu_0$.
\end{prop}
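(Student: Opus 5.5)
We follow the strategy of \cite[Proposition 4.12]{Sib15}. The argument has three ingredients: continuity of slopes in $\ve$, uniform finiteness of the possible destabilizing data, and a semicontinuity argument based on the convex-polygon (HN polygon) characterization of the HN type.

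\emph{Step 1: continuity of slopes.} For any torsion free coherent subsheaf $\mathcal{F}\subset E$,
$$
\mu_{\omega_\ve}(\mathcal{F})=\frac{1}{\rk\mathcal{F}}\int_X c_1(\mathcal{F})\wedge(\alpha+\ve\omega_0)^{n-1}.
$$
This is a polynomial in $\ve$ whose constant term is $\mu_\alpha(\mathcal{F})$. Hence $\mu_{\omega_\ve}(\mathcal{F})\to\mu_\alpha(\mathcal{F})$ for each fixed $\mathcal{F}$.

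\emph{Step 2: uniform upper bound and finiteness.} Fix a smooth metric on $E$. The Chern--Weil formula with the second fundamental form gives $\deg_{\beta}(\mathcal{F})\le C\int_X\beta^{n-1}\wedge\omega_0$ for every saturated $\mathcal{F}\subset E$ and every nef class $\beta$ in a bounded family. Applying this with $\beta=\alpha+\ve\omega_0$, $\ve\in[0,1]$, bounds all slopes from above uniformly in $\ve$. As in Sibley (a Grothendieck-lemma type finiteness, cf.\ Lemma \ref{bound deg}), we then restrict to subsheaves whose $\omega_0$-degree is bounded below. The classes $c_1(\mathcal{F})$ for such $\mathcal{F}$ with $\deg$ bounded below range over a finite set of values of $c_1\cdot\alpha^{k}\cdot\omega_0^{n-1-k}$. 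Consequently, for $\ve$ small, only finitely many numerical data $(\rk\mathcal{F},c_1(\mathcal{F}))$ can occur as pieces of the $\omega_\ve$-HN filtrations.

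\emph{Step 3: comparison of HN polygons.} Encode each HN type as the concave HN polygon $P$: the upper convex hull of the points $(\rk\mathcal{F},\deg\mathcal{F})$ over subsheaves $\mathcal{F}\subset E$. The type is recovered from the slopes of the polygon. We prove two inequalities.

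\emph{Lower semicontinuity.} The $\alpha^{n-1}$-HN filtration $\{E_i\}$ gives points $(\rk E_i,\deg_{\omega_\ve}E_i)$. These converge to the vertices of $P_0$, which forces $\liminf_{\ve\to0} P_\ve\ge P_0$.

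\emph{Upper semicontinuity.} By the finiteness of Step 2, pass to a subsequence $\ve_k\to0$ along which the $\omega_{\ve_k}$-HN filtration has constant numerical data, with subsheaves $F^{(k)}_j$. Taking limits of their degrees shows that the limiting polygon consists of points $(\rk F,\lim\deg_{\omega_\ve}F)=(\rk F,\deg_\alpha F)$, realized by actual subsheaves of $E$. These points lie below $P_0$ by the maximality defining the $\alpha^{n-1}$-HN polygon; this maximality follows from the HN property in Proposition \ref{HN def}. Hence $\limsup_{\ve\to0} P_\ve\le P_0$.

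Combining the two inequalities gives $P_\ve\to P_0$, and therefore $\mu_\ve\to\mu_0$ as vectors of slopes repeated with multiplicity rank.

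The main obstacle is Step 2. Sibley uses a boundedness argument that is uniform over a compact family of Kähler classes, but here the limit class $\alpha$ is only nef. We would handle this by bounding degrees against $\alpha+\omega_0$, which is Kähler and dominates $\alpha+\ve\omega_0$ for $\ve\le1$. Positivity of $c_1(\mathcal{F})$-independent terms then reduces the lower bound on $\deg_{\alpha+\ve\omega_0}$ to a lower bound on a Kähler degree, where the classical finiteness result applies.
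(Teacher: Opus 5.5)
There is a genuine gap in Step~2, and your upper-semicontinuity argument in Step~3 depends on it. The rest of the outline (Step~1, the lower bound in Step~3, and the polygon framework) is sound.

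\textbf{Why Step~2 fails.} Your reduction to the K\"ahler class $\alpha+\omega_0$ runs in the wrong direction. The difference $\deg_{\alpha+\omega_0}(\mathcal{F})-\deg_{\alpha+\ve\omega_0}(\mathcal{F})$ is the pairing of $c_1(\mathcal{F})$ with $(\alpha+\omega_0)^{n-1}-(\alpha+\ve\omega_0)^{n-1}$, which is a limit of positive $(n-1,n-1)$-classes. Chern--Weil with the second fundamental form bounds such pairings only from \emph{above}.

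Concretely, set $a_k(\mathcal{F}):=c_1(\mathcal{F})\cdot\alpha^k\cdot\omega_0^{n-1-k}$, so that $a_k\le C$. The lower bound $\deg_{\alpha+\ve\omega_0}(\mathcal{F})=\sum_k\binom{n-1}{k}\ve^{n-1-k}a_k(\mathcal{F})\ge -B$ on the $\omega_\ve$-HN pieces then gives a uniform lower bound only for $a_{n-1}=\deg_\alpha$. For $k\le n-2$ it gives merely $a_k\ge -C\ve^{-(n-1-k)}$. Nothing in your argument excludes HN pieces with, say, $a_{n-2}(\mathcal{F})$ of order $-1/\ve$.

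So the K\"ahler finiteness you invoke can at best be applied for each fixed $\ve$, not uniformly as $\ve\to0$. You therefore cannot pass to a subsequence along which $(\rk,c_1)$ of the $\omega_\ve$-HN pieces is constant. Lemma~\ref{bound deg} does not supply this either: it only says that the maximal $\alpha^{n-1}$-slope is attained, which is not a finiteness statement.

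\textbf{How to repair it.} Drop finiteness altogether and apply your one-sided Chern--Weil bound to the mixed terms instead of to $\beta^{n-1}$. Take $\alpha_\delta$ a K\"ahler form in $\alpha+\delta\omega_0$, apply Chern--Weil to the smooth positive forms $\alpha_\delta^k\wedge\omega_0^{n-1-k}$, and let $\delta\to0$. This gives $a_k(\mathcal{F})\le C$ for all $k\le n-2$ and every saturated $\mathcal{F}\subset E$. Hence
\[
\deg_{\omega_\ve}(\mathcal{F})\le \deg_\alpha(\mathcal{F})+C\ve\le P_0(\rk\mathcal{F})+C\ve \qquad (0<\ve\le 1),
\]
where the last inequality is the HN property you already invoke.

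Taking concave envelopes gives $P_\ve\le P_0+C\ve$. Your lower bound from the fixed $\alpha^{n-1}$-HN filtration gives $P_\ve\ge P_0-C\ve$. Thus $P_\ve\to P_0$ uniformly. Since $\mu_\ve$ is the vector of successive slopes $P_\ve(k)-P_\ve(k-1)$, it converges to $\mu_0$.

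With this replacement your outline is a complete proof. The paper itself gives no argument beyond referring to \cite[Proposition 4.12]{Sib15}.
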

As explained in \cite[section 4]{Sib15}, an $\alpha^{n-1}$-HNS filtration (\ref{HNS}) of a holomorphic vector bundle $E$ corresponds to a rational section of a suitable flag bundle of $E$. Hence, if we pull back the flag bundle along a suitable birational morphism $\pi$, the rational section extends to a holomorphic section which defines a filtration $(\wtil{E}_{i,j})_{i,j}$ of $\pi^*E$ such that each $\wtil{E}_{i,j}\subset \pi^*E$ is a holomorphic subbundle. Since each $\wtil{E}_{i,j}$  is isomorphic to $\pi^*E_{i,j}$ away from the exceptional divisor, we can see that $(\wtil{E}_{i,j})_{i,j}$ defines a $\pi^*\alpha^{n-1}$-HNS filtration of $\pi^*E$. Hence we obtain the following (refer to \cite[section 4]{Sib15} for more detailed argument).
\begin{prop}[cf. \cite{Sib15} section 4]\label{HNS resol prop}
Let $X$ be a compact K\"{a}hler manifold and $\alpha$ be a nef and big class on $X$. Let $E$ be a holomorphic vector bundle on $X$. Let $(E_{i,j})_{i,j}$ be an $\alpha^{n-1}$-HNS filtration of $E$ as (\ref{HNS}). Then there exists a sequence of blow ups $\pi:\wtil{X}\to X$ and a $\pi^*\alpha^{n-1}$-HNS filtration $(\wtil{E}_{i,j})_{i,j}$ of $\pi^*E$ such that each $\wtil{E}_{i,j}$ is a holomorphic subbundle of $\pi^*E$ and $\wtil{E}_{i,j}=\pi^*E_{i,j}$ away from the exceptional divisor. In particular, the $\pi^*\alpha^{n-1}$-HN type of $E$ equals the $\alpha^{n-1}$-HN type of $E$.
\end{prop}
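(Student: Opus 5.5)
The plan follows \cite[section 4]{Sib15}. The filtration $(E_{i,j})_{i,j}$ consists of saturated subsheaves of the locally free sheaf $E$. Such a subsheaf is reflexive, and it is a subbundle off an analytic subset $S\subset X$ of codimension at least $2$. On $X\setminus S$ the whole filtration is therefore a flag of subbundles of fixed ranks $r_{i,j}$, so it defines a holomorphic section $\sigma$ of the flag bundle $\mathrm{Fl}(E)\to X$ over $X\setminus S$. Since $\mathrm{Fl}(E)$ is projective over $X$, $\sigma$ is a meromorphic section. The closure of its graph is an irreducible analytic subvariety $\Gamma\subset \mathrm{Fl}(E)$ mapping bimeromorphically onto $X$. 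By Hironaka's resolution of indeterminacy, or by successively blowing up the Fitting ideals of the quotients $E/E_{i,j}$ as in Sibley, we obtain a composition of blow-ups $\pi:\wtil X\to X$ with smooth centers contained in $S$ such that $\sigma\circ\pi$ extends to a holomorphic section $\wtil\sigma$ of $\mathrm{Fl}(\pi^*E)=\pi^*\mathrm{Fl}(E)$. The section $\wtil\sigma$ determines holomorphic subbundles $\wtil E_{i,j}\subset\pi^*E$. Off the exceptional divisor they agree with $\pi^*E_{i,j}$, because $\pi$ is an isomorphism there and $\sigma$ already defines the filtration. Moreover $\pi_*\wtil E_{i,j}$ coincides with the saturated sheaf $E_{i,j}$: both are saturated in $E$ and agree off $S$, which has codimension at least $2$.

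Next we check that $(\wtil E_{i,j})$ is a $\pi^*\alpha^{n-1}$-HNS filtration. First, $\pi^*\alpha$ is nef and big. Write $\mathrm{Exc}$ for the exceptional divisor. The key identity is
\[
\int_{\wtil X}c_1(\wtil E_{i,j})\wedge\pi^*\alpha^{n-1}=\int_X c_1(E_{i,j})\wedge\alpha^{n-1}.
\]
To prove it, note that $\det\wtil E_{i,j}$ and $\pi^*\det E_{i,j}$ differ by a divisor supported on $\mathrm{Exc}$. By the projection formula, any $\pi$-exceptional divisor $G$ satisfies $\pi^*\alpha^{n-1}\cdot[G]=\alpha^{n-1}\cdot\pi_*[G]=0$, since $\pi_*[G]=0$ for dimension reasons. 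The same argument applied to the quotients $\wtil E_{i,j}/\wtil E_{i,j-1}$ shows that slopes are preserved.

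For stability, take a subsheaf $\mathcal F\subset \wtil Q_{i,j}$. Then $\pi_*\mathcal F$ is a subsheaf of $\pi_*\wtil Q_{i,j}$, which agrees with $Q_{i,j}$ off $S$ and hence up to codimension $2$. Passing to saturations, which can only increase the degree, gives $\mu_{\pi^*\alpha}(\mathcal F)\le\mu_\alpha(\text{sat of }\pi_*\mathcal F\text{ in }Q_{i,j})<\mu_\alpha(Q_{i,j})=\mu_{\pi^*\alpha}(\wtil Q_{i,j})$. Here the first inequality uses again that the degrees on $\wtil X$ and on $X$ differ only by exceptional divisors, which contribute zero. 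Stability, semistability and the strict decrease of slopes all transfer in this way. The $\wtil E_{i,j}$ are saturated because they are subbundles. This gives the HNS property, and the HN types agree by the slope identity.

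The main obstacle is the intersection-number invariance: $\pi^*\alpha^{n-1}\cdot[G]=0$ for exceptional $G$, and the comparison of degrees under push-forward and saturation. Both follow from the projection formula. One can also argue via Theorem \ref{CT}, since exceptional divisors lie in $E_{nK}(\pi^*\alpha)$, but the projection-formula argument is cleaner and I would use it first.
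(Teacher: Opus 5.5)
Your proposal is correct and follows the same route as the paper. You realize the HNS filtration as a meromorphic section of the flag bundle of $E$, resolve its indeterminacy by blow-ups to get a subbundle filtration of $\pi^*E$ that agrees with $\pi^*E_{i,j}$ off the exceptional divisor, and conclude that slopes and (semi)stability are unchanged. Your projection-formula argument ($\pi^*\alpha^{n-1}\cdot[G]=\alpha^{n-1}\cdot\pi_*[G]=0$ for exceptional $G$) spells out the step the paper leaves as ``we can see'' (it could also be read off from Theorem \ref{CT}). In the stability step, intersect $\pi_*\mathcal F\subset\pi_*\wtil Q_{i,j}$ with $Q_{i,j}\subset\pi_*\wtil Q_{i,j}$ before saturating, since $\pi_*\mathcal F$ is not literally a subsheaf of $Q_{i,j}$.
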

\noindent Therefore, we can assume that the $\alpha^{n-1}$-HNS filtration is formed by subbundles. 
\subsection{Weakly holomorphic projections}
Uhlenbeck-Yau \cite{UY86} defined the notion of a weakly holomorphic projection of a holomorphic hermitian vector bundle which is a projection operator whose image defines a coherent analytic subsheaf.
\begin{defi}[\cite{UY86}]
Let $(E,\delbar_E,h)$ be a holomorphic hermitian vector bundle over a K\"{a}hler manifold $(X,\omega)$. Then, a weakly holomorphic projection is an endomorphism $\pi\in L^2_{1,\loc}(\End(E))$ satisfying
$\pi^2=\pi=\pi^*$ and $(\id_E-\pi)\circ\delbar_E\pi=0$ 
almost everywhere. 
We define its degree as follows when it is well-defined: 
$$
\deg_{\omega}(\pi):=\int_X\Tr(\sqrt{-1}\Lambda_{\omega}F_{\nabla_h}\circ\pi)\omega^n
-\int_X|\delbar_E\pi|^2_{h,\omega}\omega^n.
$$
\end{defi}
Uhlenbeck-Yau proved the following.
\begin{prop}[\cite{UY86}]
Let $(E,\delbar_E,h)$ be a holomorphic hermitian vector bundle over a K\"{a}hler manifold $(X,\omega)$.  Let $\pi\in L^2_{1,\loc}(\End(E))$ be a weakly holomorphic projection of $(E,\delbar_E,h)$. Then, there exists a coherent subsheaf $\mathcal{F}\subset (E,\delbar_E)$ with a singular set $V\subset X$ such that
$\codim_{\mathbb{C}}V\ge 2$,
$\pi|_{X\setminus V}$ is $C^{\infty}$ and
$\mathcal{F}|_{X\setminus V}=(\pi|_{X\setminus V})(E|_{X\setminus V})\subset E|_{X\setminus V}$ is a holomorphic subbundle.
\end{prop}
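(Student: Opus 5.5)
The plan is the Uhlenbeck--Yau regularity argument for weakly holomorphic projections. The statement is local on $X$ together with an extension step, so we work on $X$ with $\pi\in L^2_{1,\loc}$ satisfying $\pi^2=\pi=\pi^*$ and $(\id_E-\pi)\circ\delbar_E\pi=0$.

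\textbf{Step 1 (constant rank).} Since $\pi$ is an orthogonal projection a.e., $\Tr\pi$ takes integer values a.e. Moreover $\Tr\pi\in L^2_{1,\loc}$, so on a connected $X$ it is a.e. constant, say equal to $s$. This gives a well-defined rank.

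\textbf{Step 2 (holomorphic sections via exterior powers).} Work on a small coordinate ball $U$ on which $E$ is trivialized by a holomorphic frame. Choosing local sections, $\pi$ produces an $L^2_1$ section of the Grassmannian bundle. The relation $(\id_E-\pi)\delbar_E\pi=0$ says exactly that the image distribution is $\delbar$-invariant. Concretely, for a constant section $e$ of $E$, set $\sigma=\pi(e)$. Then
$$\delbar_E\sigma=(\delbar_E\pi)e=\pi(\delbar_E\pi)e,$$
so $\sigma$ solves a $\delbar$-equation with coefficients in the image. Passing to $\Lambda^sE$, take generic $e_1,\dots,e_s$ and form $\tau=\pi e_1\wedge\cdots\wedge\pi e_s$. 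This $\tau$ satisfies $\delbar\tau=\beta\,\tau$, where $\beta=\Tr(\pi\,\delbar\pi)=0$ because $\pi\,\delbar\pi\,\pi=0$; this last identity follows from differentiating $\pi^2=\pi$ together with the hypothesis. Hence $\tau$ is an $L^2_{1}$ weakly holomorphic section of $\Lambda^sE$, and by Weyl's lemma it is holomorphic.

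Care is needed here. What one really shows is that the section $\tau$ of $\Lambda^s E$ determines the line $\Lambda^s(\pi E)$, and a holomorphic choice of representative requires the correct normalization. I would follow Uhlenbeck--Yau's argument verbatim at this point, possibly using Popovici's or Shiffman's simplification.

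\textbf{Step 3 (the subsheaf).} The holomorphic sections $\tau$ define a meromorphic map to the Grassmannian. Its image in the Plücker embedding is a decomposable element wherever it is nonzero, since this holds a.e. and is a closed condition. Define $\mathcal{F}$ as the saturation of the subsheaf of $E$ generated by the sections $\sigma$ that are holomorphic, equivalently $\{v: v\wedge\tau=0\}$. Then $\mathcal{F}$ is coherent, and it is a subbundle equal to $\pi(E)$ off $V_0=\{\tau=0\}$ for all choices of $e_i$. The singular set $V$ is the locus where $\mathcal{F}$ fails to be a subbundle, and the saturation of $\mathcal{F}$ is reflexive-type saturated in $E$. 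The image $E/\mathcal{F}$ is torsion free, so its singular set has codimension at least $2$. Off $V$, $\pi$ coincides with the orthogonal projection onto the holomorphic subbundle $\mathcal{F}$, which is smooth.

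\textbf{Main obstacle.} The hardest step is Step 2: upgrading the $L^2_1$ regularity of $\pi$ to holomorphicity of the associated sections of $\Lambda^sE$ when $\pi$ is only Sobolev and not continuous. One must show that $\tau$ (after dividing out any common factor) is a genuine holomorphic section and not identically zero for generic $e_i$. This is the core of Uhlenbeck--Yau's proof, where they use the $L^2_1$ bound and an extension across $\{\tau=0\}$ via Hartogs/Shiffman theorems. I expect to reproduce this by the distributional computation above, combined with the fact that $L^2_{1}$ distributional solutions of $\delbar u=0$ are holomorphic.
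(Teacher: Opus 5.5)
The paper gives no proof of this proposition: it is quoted from Uhlenbeck--Yau and used as a black box, so your proposal has to stand on its own.

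\textbf{What is fine.} Steps 1 and 3 are correct.
An integer-valued $L^2_{1,\loc}$ function is locally constant, which gives Step 1.
The reduction to rank one is legitimate, since $\Lambda^s\pi$ is again a weakly holomorphic projection.
For Step 3: once you have a holomorphic $\tau\not\equiv 0$ that is a.e.\ a multiple of $\Lambda^s(\pi E)$, the kernel of $v\mapsto v\wedge\tau$ is a saturated coherent subsheaf. It is unchanged when $\tau$ is rescaled by holomorphic functions, locally free off a set of codimension $\ge 2$, and equal to $\pi(E)$ a.e.; this gives the smoothness of $\pi$ there.

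\textbf{The gap is Step 2.} This step is the whole content of the theorem, and the computation you give for it is wrong.

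From $\pi^2=\pi$ and $\delbar\pi=\pi\,\delbar\pi$ one gets $(\delbar\pi)\pi=0$, hence $\delbar(\pi e_k)=(\delbar\pi)(\id_E-\pi)e_k$. This depends on the $\ker\pi$-component of $e_k$, not on $\pi e_k$.
So the coefficient in $\delbar\tau=\beta\tau$ is $\beta=\Tr\big((\delbar\pi)\circ N\big)$, where $N:\im\pi\to\ker\pi$ is the map whose graph is $\mathrm{span}(e_1,\dots,e_s)$.
The quantity $\Tr(\pi\,\delbar\pi)$ does vanish, but trivially, and it is not the relevant coefficient.

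Concretely, take $E=\mathbb{C}^2$ over the disc with the flat metric, $\pi$ the projection onto the span of $(1,z)$, $s=1$ and $e_1=(1,0)$. Then
\[
\tau=\pi e_1=\frac{(1,z)}{1+|z|^2},\qquad \delbar\tau=-\frac{z\,d\bar z}{1+|z|^2}\,\tau\neq 0 .
\]
So $\tau$ is not holomorphic, and Weyl's lemma does not apply.

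What survives is only that $\tau$ is holomorphic up to an unknown scalar factor. The ratios $\tau_k/\tau_m$, i.e.\ the affine coordinates of the induced map to $\mathbb{P}(\Lambda^sE)$, satisfy $\delbar(\tau_k/\tau_m)=0$ only on the measurable set $\{\tau_m\neq 0\}$. There they need not even be $L^2_1$.

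The missing step can be stated in two equivalent ways:
\begin{itemize}
\item find a normalization $e^{u}\tau$ with $\delbar u=-\beta$, where $\beta$ is only $L^2$ and blows up where $\pi|_{\mathrm{span}(e_i)}$ degenerates; or
\item prove that an $L^2_1$ map into projective space with a.e.\ complex-linear differential is meromorphic.
\end{itemize}
This is exactly what Uhlenbeck--Yau's regularity analysis, or Popovici's later simplified proof, establishes.

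Your proposal explicitly defers this step (``follow Uhlenbeck--Yau verbatim''). The fallback in your last paragraph, that $L^2_1$ solutions of $\delbar u=0$ are holomorphic, does not apply, because $\delbar\tau\neq 0$. As written, the argument reduces the proposition to itself.
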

\noindent If $X$ is compact, then the degree of a weakly holomorphic projection coincides with the $\{\omega\}^{n-1}$-degree of the corresponding coherent sheaf.

\subsection{Yang-Mills flows and Uhlenbeck compactness}
\subsubsection{Definitions of the Yang-Mills flows and the Yang-Mills connections}\label{YM def sec}
We review some basic definitions around Yang-Mills connections following \cite{Sib15}.
Let $X$ be a K\"{a}hler manifold with a smooth K\"{a}hler metric $\omega$. Let $E$ be a complex vector bundle on $X$ with a smooth hermitian metric $h_0$. Then, a holomorphic structure on $E$ is identified with a $\mathbb{C}$-linear map $\delbar_E: E\to E\otimes T^{0,1}_X$ satisfying the Leibniz rule and the integrability condition $\delbar_E\circ \delbar_E=0$.
Abstractly, a connection $\nabla$ is said to be integrable if its $(0,1)$-part $\delbar^{\nabla}:=\nabla^{0,1}$ satisfies the integrability condition $\delbar^{\nabla}\circ\delbar^{\nabla}=0$. We will represent a connection either by its covariant derivative $\nabla$ or by its connection 1-form $A$. We will also denote by $\nabla_A$ when its connection 1-form is $A$. 
 We denote the Chern connection of a hermitian metric $h_0$ on a holomorphic vector bundle $(E,\delbar_E)$ by $\nabla_{(\delbar_E,h_0)}$, $(\delbar_E,h_0)$ or simply $\nabla_h$.

Let $\mathcal{A}^{1,1}_{h_0}$ be the set of integrable $h_0$-unitary connections. 
Then the Yang-Mills functional is a functional $\YM(\cdot)$ on $\mathcal{A}^{1,1}_{h_0}$ defined by
$$
\YM(\nabla):=\int_X|F_{\nabla}|^2_{h_0}\frac{\omega^n}{n!},
$$
where $F_{\nabla}=\nabla\circ\nabla$ is the curvature tensor of $\nabla$.
Let us denote  by $\mathcal{G}:= U(E,h_0)$ the gauge group of $(E,h_0)$ and  by $\mathcal{G}^{\mathbb{C}}$ the complex-gauge group. Then $\mathcal{G}$ and $\mathcal{G}^{\mathbb{C}}$ act on $\mathcal{A}^{1,1}_{h_0}$ in the following way: for $g\in \mathcal{G}$ (resp. $g\in\mathcal{G}^{\mathbb{C}}$) and $\nabla\in\mathcal{A}^{1,1}_{h_0}$,
$$
g\cdot \nabla:=g\circ\nabla\circ g^{-1}.
$$
If a connection 1-form is $A$, then we write the induced action on the connection 1-form by $g\cdot A$ or $g(A)$.
By definition of actions above, we see that the Yang-Mills functional descends to 
$$
\YM:\mathcal{A}^{1,1}_{h_0}/\mathcal{G}\to\mathbb{R}.
$$
On the other hand, there is another natural functional on $\mathcal{A}^{1,1}_{h_0}/\mathcal{G}$, called the Hermitian-Yang-Mills functional $\HYM(\cdot)$ defined as
$$
\HYM(\nabla):=\int_X|\Lambda_{\omega}F_{\nabla}|_{h_0}^2\frac{\omega^n}{n!}.
$$
If $X$ is compact, these two functionals $\YM(\cdot)$ and $\HYM(\cdot)$ are related as
\begin{equation}\label{eq0}
\YM(\nabla)-\HYM(\nabla)=c_n(2c_2(E)-c_1(E)^2)\cdot\{\omega\}^{n-2},
\end{equation}
where $c_n$ is a constant depending only on $n=\dim X$. Therefore, $\YM(\cdot)$ and $\HYM(\cdot)$ have the same critical points and gradient flows. A critical point of $\YM(\cdot)$ is said to be an $\omega$-Yang-Mills (YM) connection and a gradient flow of $\YM(\cdot)$ is said to be an $\omega$-Yang-Mills (YM) flow. More explicitly, an $\omega$-YM connection is an integrable $h_0$-unitary connection $\nabla=\nabla_A$, where $A$ is a connection 1-form of $\nabla$, satisfying 
$$
d_{A}^{*_{h_0}}F_{A}=0.
$$ 
Using the K\"{a}hler identity, we can see that a connection $A$ is $\omega$-YM if and only if it is an $\omega$-HYM connection:
$$
d_A(\Lambda_{\omega}F_A)=0.
$$
An $\omega$-YM flow with initial condition $\nabla_{A_0}\in\mathcal{A}^{1,1}_{h_0}$ is a family of integrable $h_0$-unitary connections $\nabla_{A_t}$ such that $\nabla_{A_{t=0}}=\nabla_{A_0}$ and
$$
\frac{\partial{A_t}}{\partial t}=-d_{A_t}^*F_{A_t}.
$$
\subsubsection{The Hermitian-Yang-Mills flows and the Uhlenbeck compactness theorem}
It follows from \cite{Don85} that,
on a holomorphic vector bundle over a K\"{a}hler manifold $(X,\omega)$, an $\omega$-YM flow is complex-gauge equivalent to a parabolic PDE of hermitian metrics, called an $\omega$-Hermitian-Yang-Mills (HYM) flow (see Lemma \ref{lem0}). By \cite{Don85} and \cite{Sim88}, an $\omega$-HYM flow, thus the corresponding $\omega$-YM flow, has a smooth solution for all time. Moreover, any $\omega$-YM flow on $\mathcal{A}^{1,1}_{h_0}$ preserves $\mathcal{G}^{\mathbb{C}}$-orbits (see \cite{Don85}).
On a hermitian vector bundle $(E,h_0)$, we define an action of $g\in\mathcal{G}^{\mathbb{C}}$ to $h_0$ by 
$$
g\cdot h_0:=h_0\cdot g^*g=h_0\cdot g^2,
$$
that is, we define $(g\cdot h_0)(u,v):=h_0(g(u), g(v))$ for any local sections $u$ and $v$.
If $g\in\mathcal{G}$, then we obviously have $g\cdot h_0=h_0$.
\begin{lemm}[see \cite{Wil08} section 3.1]\label{lem0}
Let $(E,h_0)$ be a smooth hermitian vector bundle with an integrable unitary connection $\nabla_{A_0}$ over a K\"{a}hler manifold $(X,\omega)$. Then the following holds.
\begin{enumerate}
\item Let $\nabla_{A_t}=g_t\circ\nabla_{A_0}\circ g_t^{-1}$ be an $\omega$-YM flow on $(E,h_0)$ with initial condition $\nabla_{A_0}$ where $g_t\in\mathcal{G}^{\mathbb{C}}$. Then $h_t:=h_0g_t^2$ is an $\omega$-HYM flow on $(E,\delbar_{A_0})$ with initial condition $h_0$:
$$
h_t^{-1}\frac{dh_t}{dt}=-(\sqrt{-1}\Lambda_{\omega}F_{h_t}-\lambda\id), \hspace{4mm} h_{t=0}=h_0,
$$
where $\lambda$ is the $\omega$-HE constant of $E$.
\item Let $h_t=h_0g_t^2$ be an $\omega$-HYM flow on $(E,\delbar_{A_0})$ with initial condition $h_0$. Then there exists a real gauge equivalence $S_t\in\mathcal{G}$ such that $\nabla_{A_t}=(S_t\circ g_t)\circ \nabla_{(\delbar_{A_0},h_t)}\circ(S_t\circ g_t)^{-1}$ is an $\omega$-YM flow on $(E,h_0)$ with initial condition $\nabla_{A_0}$.
\end{enumerate}
\end{lemm}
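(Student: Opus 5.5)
The plan is a direct computation that uses the relation between the Chern connection of $h_0$ on the holomorphic structure $g\cdot\delbar_{A_0}$ and the Chern connection of $g\cdot h_0$ on $\delbar_{A_0}$. I will work with $h_0$-selfadjoint $g_t$; a general $g_t$ reduces to this via the polar decomposition, with the unitary factor absorbed into $\mathcal{G}$. The basic identity to establish first is this: for $g\in\mathcal{G}^{\mathbb{C}}$, $g$ conjugates $\nabla_{(\delbar_{A_0},h_0g^*g)}$ to the $h_0$-unitary integrable connection $g\cdot\nabla_{A_0}$, where the $\mathcal{G}^{\mathbb{C}}$-action is the one of the paper, extended so that $\delbar\mapsto g\circ\delbar\circ g^{-1}$ and $\partial\mapsto (g^*)^{-1}\circ\partial\circ g^*$. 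This means
$$
g\cdot\nabla_{A_0}=g\circ\nabla_{(\delbar_{A_0},h_0g^*g)}\circ g^{-1}.
$$
I will verify it by checking that the right side is $h_0$-unitary (because $g$ is an isometry from $h_0g^*g$ to $h_0$) and that its $(0,1)$-part is $g\delbar_{A_0}g^{-1}$. Consequently $F_{A}=g F_{h}g^{-1}$ with $h=h_0g^*g$, and in particular $\Lambda F_A=g\,\Lambda F_h\, g^{-1}$.

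\textbf{Part (1).} Write $A_t=g_t\cdot A_0$. I will differentiate $A_t$ in $t$ and set $\dot g_tg_t^{-1}=:\xi_t$. The infinitesimal action gives
$$
\dot A_t=-\delbar_{A_t}\xi_t+\partial_{A_t}\xi_t^*.
$$
By the Kähler identities, $-d_{A_t}^*F_{A_t}=\sqrt{-1}(\partial_{A_t}-\delbar_{A_t})\Lambda F_{A_t}$, which is $-\delbar_{A_t}\Phi+\partial_{A_t}\Phi$ with $\Phi=\sqrt{-1}\Lambda F_{A_t}$, a selfadjoint endomorphism. Comparing the $(0,1)$-parts, the flow equation says $\delbar_{A_t}(\xi_t-\frac12(\Phi-\lambda))=0$ up to the gauge normalization. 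The cleanest route is to choose the complex gauge $g_t$ so that $\xi_t=\frac{1}{2}(\Phi_t-\lambda\id)$ holds exactly. The constant $\lambda$ does not change $A_t$, because $\id$ is parallel. Then
$$
h_t^{-1}\dot h_t=g_t^{-1}(\xi_t^*+\xi_t)g_t=g_t^{-1}(\Phi_t-\lambda)g_t=\sqrt{-1}\Lambda F_{h_t}-\lambda,
$$
using the conjugation identity above. The sign convention of the flow (gradient \emph{descent}) then gives the stated minus sign. I will fix the signs by matching with Donaldson's convention. The step I expect to be the main obstacle is the non-uniqueness of $g_t$: a given $A_t$ determines $g_t$ only up to right multiplication by the stabilizer, and up to a time-dependent unitary gauge. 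I plan to address this by noting that the statement assumes $A_t=g_t\cdot A_0$ with a specific $g_t$. I will then show that replacing $g_t$ by $u_tg_t$ with $u_t\in\mathcal{G}$ does not change $h_0g_t^*g_t$. Finally, I will show that the $(0,1)$-comparison forces $\xi_t+\xi_t^*=\Phi_t-\lambda$ after absorbing the $\delbar_{A_t}$-closed (hence, modulo the stabilizer, trivial) discrepancy.

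\textbf{Part (2).} Conversely, given the HYM flow $h_t=h_0g_t^2$, I will set $B_t:=g_t\cdot\nabla_{A_0}=g_t\circ\nabla_{(\delbar_{A_0},h_t)}\circ g_t^{-1}$, which is $h_0$-unitary and integrable. Running the computation of Part (1) backwards gives
$$
\dot B_t=-d_{B_t}^*F_{B_t}+d_{B_t}\eta_t,
$$
with $\eta_t\in\mathfrak{u}(E,h_0)$ equal to the skew part of $\dot g_tg_t^{-1}$. I will then solve the linear ODE $\dot S_t=-S_t\eta_t$, $S_0=\id$, to get $S_t\in\mathcal{G}$. The gauge-transformed connection $A_t=S_t\cdot B_t$ removes the $d\eta$ term, since the vertical direction along unitary gauge orbits is $d_B\eta$, and the YM gradient is gauge-equivariant. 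Hence $A_t$ solves the $\omega$-YM flow with $A_{t=0}=A_0$.
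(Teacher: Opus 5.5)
Your overall route is the standard Donaldson argument that the paper takes from \cite{Wil08} without giving a proof. It has four ingredients: the conjugation identity $g\cdot\nabla_{A_0}=g\circ\nabla_{(\delbar_{A_0},h_0g^*g)}\circ g^{-1}$, the infinitesimal action $\dot A_t=-\delbar_{A_t}\xi_t+\del_{A_t}\xi_t^*$, the K\"ahler identities, and, in Part (2), a unitary correction $S_t$ from a pointwise ODE. Part (2) is fine up to the bookkeeping noted below. Part (1), however, has a genuine gap.

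\textbf{The non-uniqueness of $g_t$.} Comparing $(0,1)$-parts only gives $\xi_t=-(\Phi_t-\lambda\id)+\kappa_t$, where $\kappa_t$ is an \emph{arbitrary} $\delbar_{A_t}$-holomorphic endomorphism, and $g_t^{-1}(\kappa_t+\kappa_t^*)g_t$ enters $h_t^{-1}\dot h_t$. ``Absorbing'' $\kappa_t$ into the stabilizer means replacing $g_t$ by $g_tk_t$, with $k_t$ a time-dependent automorphism of $(E,\delbar_{A_0})$, and this changes $h_0g_t^*g_t$. So nothing is forced for a given $g_t$. Concretely: for the trivial line bundle with flat $A_0$ one has $\lambda=0$ and $A_t\equiv A_0$; then $g_t=e^t\id$ satisfies $g_t\cdot A_0=A_t$, yet $e^{2t}h_0$ is not a solution.

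Thus Part (1) is true only for a normalized $g_t$, and your plan lacks its construction. With $A_t$ the given flow, define $g_t$ by $\dot g_tg_t^{-1}=-(\sqrt{-1}\Lambda_\omega F_{A_t}-\lambda\id)$, $g_0=\id$. Then prove $g_t\cdot A_0=A_t$: both $t\mapsto g_t\cdot A_0$ and $t\mapsto A_t$ solve the pointwise linear ODE $\dot C=-\delbar_C\xi_t+\del_C\xi_t^*$ (with $\xi_t$ now a fixed field) and have the same initial value. This $g_t$ is in general not self-adjoint, so $h_0g_t^2$ must be read as $h_0g_t^*g_t$. Your formula $h_t^{-1}\dot h_t=g_t^{-1}(\xi_t+\xi_t^*)g_t$ holds for every $g_t$, so the self-adjoint reduction via polar decomposition is not needed.

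\textbf{The sign.} The sign is not a matter of convention. With $[\Lambda,\delbar_A]=-\sqrt{-1}\del_A^*$, $[\Lambda,\del_A]=\sqrt{-1}\delbar_A^*$ and $\del_AF_A=\delbar_AF_A=0$, one gets $-d_A^*F_A=\delbar_A\Phi-\del_A\Phi$, the opposite of what you wrote. If your identity held, the YM flow would make $\Phi_t=\sqrt{-1}\Lambda F_{A_t}$ evolve by a backward heat equation, contrary to Lemma~\ref{lem1}(2). Accordingly, your computation produced $h_t^{-1}\dot h_t=+(\sqrt{-1}\Lambda F_{h_t}-\lambda)$, which is the ill-posed backward flow, and no appeal to Donaldson's convention can reverse a derived sign.

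\textbf{The factor.} Your coefficient $\tfrac12$ contradicts your own comparison. Matching gives $\xi_t\equiv-(\Phi_t-\lambda\id)$, hence $h_t^{-1}\dot h_t=-2(\sqrt{-1}\Lambda F_{h_t}-\lambda\id)$. The displayed equation therefore holds only after rescaling time ($t\mapsto h_{t/2}$) or renormalizing $\Lambda$. The lemma is loose about this normalization-dependent factor; you should say so rather than insert a $\tfrac12$ that your comparison does not give.

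\textbf{Part (2) bookkeeping.} For the stated normalization one gets $\dot B_t=-\tfrac12 d_{B_t}^*F_{B_t}-d_{B_t}\eta_t$. It is this sign of the gauge term that your choice $\dot S_t=-S_t\eta_t$ actually cancels; with your displayed $+d_{B_t}\eta_t$ it would double instead.
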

The following estimates of curvatures along the flows will be used later.
\begin{lemm}[see e.g. \cite{Kob87} Chapter6, lemma 8.7 and its proof]\label{lem1}
Let $(X,\omega)$ be a compact K\"{a}hler manifold and $(E,h_0)\to X$ be a hermitian vector bundle with an integrable unitary connection $A_0$. Let $A_t$ be the $\omega$-YM flow on $(E,h_0)$ with initial condition $A_0$. Then the curvature $F_{A_t}$ satisfies the following:
\begin{enumerate}
\item We have
$$
\frac{d}{dt}\|F_{A_t}\|_{L^2(\omega)}^2=-2\|d_{A_t}^*F_{A_t}\|^2_{L^2(\omega)}\le 0.
$$
In particular, $t\mapsto \YM(A_t)$ and $t\mapsto \HYM(A_t)$ are non-increasing.
\item We have
$$
\left(\frac{d}{dt}-\Delta_{\omega}\right)|\Lambda_{\omega}F_{A_t}|^2\le 0.
$$
Hence, by the maximum principle $t\mapsto \|\Lambda_{\omega}F_{A_t}\|_{L^{\infty}}$ is non-increasing.
\item We have
$$
\left(\frac{d}{dt}-\Delta_{\omega}\right)|\Lambda_{\omega}F_{A_t}|\le 0,
$$
Hence we have
$$
\int_X|\Lambda_{\omega}F_{A_t}|\omega^n\le \int_X|\Lambda_{\omega}F_{A_0}|\omega^n.
$$
\end{enumerate}
\end{lemm}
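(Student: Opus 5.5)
We work directly with the Yang-Mills flow $\frac{\partial A_t}{\partial t}=-d_{A_t}^*F_{A_t}$ on the compact K\"{a}hler manifold $(X,\omega)$. Throughout, $d_{A_t}^*$ is the formal adjoint with respect to $h_0$ and $\omega$. Since $A_t$ stays integrable and unitary, $F_{A_t}$ is skew-hermitian of type $(1,1)$, and we use the Bianchi identity $d_{A_t}F_{A_t}=0$ together with the K\"{a}hler identities.

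\emph{(1)} We use the standard variation of curvature, $\frac{d}{dt}F_{A_t}=d_{A_t}\dot A_t$. Differentiating the squared $L^2$-norm gives
$$\frac{d}{dt}\|F_{A_t}\|^2_{L^2}=2\,\mathrm{Re}\langle d_{A_t}\dot A_t,F_{A_t}\rangle_{L^2}=2\,\mathrm{Re}\langle \dot A_t,d_{A_t}^*F_{A_t}\rangle_{L^2}.$$
Substituting the flow equation $\dot A_t=-d_{A_t}^*F_{A_t}$ turns the right-hand side into $-2\|d_{A_t}^*F_{A_t}\|^2_{L^2}\le 0$. Monotonicity of $\YM(A_t)$ follows immediately. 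For $\HYM(A_t)$ we use (\ref{eq0}): the difference $\YM-\HYM$ is topological, hence constant along the flow, so $\HYM(A_t)$ is non-increasing as well.

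\emph{(2)} We first compute the evolution of $\Lambda F_{A_t}$. Using the K\"{a}hler identities $[\Lambda,\delbar]=-\sqrt{-1}\partial^*$ and $[\Lambda,\partial]=\sqrt{-1}\delbar^*$ together with $d_AF_A=0$, the identity $d_A^*F_A=\sqrt{-1}(\delbar_A-\partial_A)\Lambda F_A$ holds. Hence $\dot A_t=-\sqrt{-1}(\delbar_{A_t}-\partial_{A_t})\Lambda F_{A_t}$. Then
$$\frac{d}{dt}\Lambda F_{A_t}=\Lambda d_{A_t}\dot A_t=-\sqrt{-1}\Lambda(\partial_A\delbar_A-\delbar_A\partial_A)\Lambda F_A,$$
where integrability kills the $(2,0)$ and $(0,2)$ parts. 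Writing $\sqrt{-1}\Lambda(\partial_A\delbar_A-\delbar_A\partial_A)=\Delta_A$, the connection Laplacian acting on $\End E$ up to the standard normalization, we obtain the heat equation $\left(\frac{d}{dt}-\Delta_{A_t}\right)\Lambda F_{A_t}=0$. Taking the pointwise norm and using metric compatibility gives
$$\left(\frac{d}{dt}-\Delta\right)|\Lambda F|^2=2\langle(\tfrac{d}{dt}-\Delta_A)\Lambda F,\Lambda F\rangle-2|\nabla_A\Lambda F|^2=-2|\nabla_A\Lambda F|^2\le 0.$$
By the parabolic maximum principle on the compact manifold $X$, $\sup_X|\Lambda F_{A_t}|^2$ is non-increasing in $t$.

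\emph{(3)} Set $f=|\Lambda F|$. Where $f>0$, the identity $\left(\frac{d}{dt}-\Delta\right)f^2=2f\left(\frac{d}{dt}-\Delta\right)f-2|\nabla f|^2$ combined with (2) gives
$$f\left(\frac{d}{dt}-\Delta\right)f\le |\nabla f|^2-|\nabla_A\Lambda F|^2\le 0,$$
where the last inequality is Kato's inequality $|\nabla f|\le|\nabla_A\Lambda F|$. At points where $f=0$ the function is not smooth, so there we either argue in the distributional sense or replace $f$ by $f_\delta=\sqrt{|\Lambda F|^2+\delta}$ and let $\delta\to0$; $f_\delta$ satisfies the same inequality by the same computation. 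Finally, integrating over $X$ and using $\int_X\Delta f_\delta\,\omega^n=0$ shows that $t\mapsto\int_X f_\delta\,\omega^n$ is non-increasing, and letting $\delta\to 0$ gives the claimed $L^1$ bound.

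The main point requiring care is the sign and normalization of the Laplacian in the computation $\frac{d}{dt}\Lambda F=\Delta_A\Lambda F$ in (2), which must be checked against the convention for $\Delta_\omega$ so that the correct sign is obtained. A secondary subtlety is the lack of smoothness of $|\Lambda F|$ where it vanishes, which the regularization $f_\delta$ in (3) handles.
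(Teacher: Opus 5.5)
Your argument is the standard one (a heat equation for $\Lambda F$, then Bochner and Kato), and it is correct in substance. However, the computation in (2) has a sign error. Since that computation carries all of (2) and (3), you should fix it rather than leave it ``to be checked''.

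\textbf{The sign error.} With the K\"ahler identities you quote, $\partial_A^*=\sqrt{-1}[\Lambda,\delbar_A]$ and $\delbar_A^*=-\sqrt{-1}[\Lambda,\partial_A]$. The Bianchi identity then gives $\partial_A^*F_A=-\sqrt{-1}\delbar_A\Lambda F_A$ and $\delbar_A^*F_A=\sqrt{-1}\partial_A\Lambda F_A$, so
$d_A^*F_A=\sqrt{-1}(\partial_A-\delbar_A)\Lambda F_A$. This is the opposite sign from what you wrote.

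Your next display then reads $\frac{d}{dt}\Lambda F=-\Delta_A\Lambda F$ with your own $\Delta_A$, which is a backward heat equation. You reach $(\frac{d}{dt}-\Delta_A)\Lambda F=0$ only by dropping that minus sign, so the two slips cancel.

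\textbf{Corrected computation.} By the same identities, and since $\partial_A^*$ kills $(0,1)$-forms and $\delbar_A^*$ kills $(1,0)$-forms, one has $\sqrt{-1}\Lambda(\partial_A\delbar_A-\delbar_A\partial_A)=-d_A^*d_A$ on sections of $\End E$. The corrected computation gives $\frac{d}{dt}\Lambda F_{A_t}=-d_{A_t}^*d_{A_t}\Lambda F_{A_t}$.

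Part (1) gives an independent check of this sign. Pointwise $|d_A^*F_A|=|d_A\Lambda F_A|$, so $\frac{d}{dt}\|\Lambda F_{A_t}\|^2_{L^2}=-2\|d_{A_t}\Lambda F_{A_t}\|^2_{L^2}$ is forced by the decrease of $\HYM$. A backward equation would make $\HYM$ increase.

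\textbf{Normalization.} With $\dot A=-d_A^*F_A$, the Laplacian in (2) and (3) is the Riemannian one, $-d^*d$; the complex Laplacian $\sqrt{-1}\Lambda\partial\delbar$ is half of it. This is harmless: the maximum principle and the $L^1$ monotonicity only need $(\frac{d}{dt}-c\Delta)u\le 0$ for some constant $c>0$. With these corrections, your Bochner identity, Kato's inequality and the $f_\delta$ regularization go through as written.

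\textbf{Comparison with the paper's route.} The paper gives no argument of its own and refers to Kobayashi, who works on the metric side with the HYM flow $h_t$ of Lemma \ref{lem0}. There $\frac{d}{dt}F_{h_t}=\delbar\partial_{h_t}(h_t^{-1}\dot h_t)$. The heat equation for $\sqrt{-1}\Lambda F_{h_t}$ then follows from $\delbar\partial_h+\partial_h\delbar=F_h$ and $[\Lambda F_h,\Lambda F_h]=0$, with no K\"ahler identity for $d^*$ needed.

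The statements for $A_t$ then follow from the pointwise equality $|\Lambda F_{A_t}|_{h_0}=|\Lambda F_{h_t}|_{h_t}$. The moving metric creates no extra terms, since $|\Lambda F_{h_t}|^2_{h_t}=\Tr((\sqrt{-1}\Lambda F_{h_t})^2)$.

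Your connection-side computation is self-contained and does not need the gauge correspondence. The cost is the K\"ahler-identity step, which is exactly where your sign went wrong. The metric-side form is also the one the paper actually invokes later, for instance in Lemma \ref{lem4} for $h_{\ve,t}$.
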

Above Lemma \ref{lem1} implies that we can apply the following Uhlenbeck compactness theorem for $A_t$. We will need more general statement as follows:
\begin{theo}[\cite{UY86} Theorem 5.2, \cite{Uhl}, \cite{Nak88}, see also \cite{Sib15} Theorem 2.17]\label{thm2}
Let $(X,\omega)$ be a (not necessarily compact) K\"{a}hler manifold and $(E,h_0)\to X$ be a hermitian vector bundle on $X$. Fix any $p>n$. Let ${A_j}$ be a sequence of integrable unitary connections on $(E,h_0)$ such that $\|\Lambda_{\omega}F_{A_j}\|_{L^{\infty}(X)}$ and $\|F_{A_j}\|_{L^2(X)}$ are uniformly bounded. Then, there exists
a subsequence, still denoted by $A_j$, a closed subset $Z_{\an}\subset X$ of Hausdorff codimension at least 4,  a smooth hermitian vector bundle $(E_{\infty},h_{\infty})\to X\setminus Z_{\an}$ with an $L^p_{1,\loc}(X\setminus Z_{\an})$ integrable unitary connection $A_{\infty}$ and 
 a sequence of isometries $u_j:(E, h_0)\to (E_{\infty},h_{\infty})$ on $X\setminus Z_{\an}$
such that $u_j\cdot (A_j|_{X\setminus Z_{\an}}) \to A_{\infty}$ weakly in $L^p_{1,\loc}(X\setminus Z_{\an})$.
\end{theo}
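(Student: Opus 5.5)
The plan follows the standard Uhlenbeck–Nakajima scheme: prove an $\varepsilon$-regularity estimate, define $Z_{\an}$ as the set where curvature energy concentrates at all scales, bound its size by a covering argument, and on the complement obtain local $L^p_1$ bounds in Coulomb gauges which are patched into a limiting bundle. Everything is local, so non-compactness of $X$ only enters through the use of compact exhaustions and a diagonal subsequence. Throughout, $K$ denotes a compact subset of $X$ and $\Lambda:=\sup_j\|\Lambda_{\omega}F_{A_j}\|_{L^\infty}$, $\mathcal{E}:=\sup_j\|F_{A_j}\|_{L^2}$.

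\textbf{Step 1 ($\varepsilon$-regularity).} Since $A_j$ is integrable and unitary, $F_{A_j}^{0,2}=F_{A_j}^{2,0}=0$. The Kähler identities then let us write $d_{A_j}^*F_{A_j}$ in terms of $d_{A_j}(\Lambda_{\omega}F_{A_j})$ composed with $J$. The Bianchi identity and the Weitzenböck formula give an inequality of the form
$$
\Delta_{\omega}|F_{A_j}|^2\ \ge\ 2|\nabla_{A_j}F_{A_j}|^2 - C|F_{A_j}|^3 - C|F_{A_j}|^2 + (\hbox{terms involving } \nabla_{A_j}d_{A_j}\Lambda_{\omega}F_{A_j}).
$$
The last terms are not pointwise controlled. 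Following \cite{UY86} and \cite{Nak88}, I would handle them only after multiplying by cutoff functions and integrating by parts, so that only $\Lambda_{\omega}F_{A_j}$ itself, which is bounded by $\Lambda$, appears. Together with a monotonicity formula for $r^{4-2n}\int_{B_r(x)}|F_{A_j}|^2$, valid up to an error controlled by $\Lambda$ and the geometry of $\omega$ on $K$ (as in Price and Nakajima), this should give the estimate: there are $\varepsilon_0,C>0$, depending on $K$ and $\Lambda$, such that
$$
r^{4-2n}\int_{B_r(x)}|F_{A_j}|^2<\varepsilon_0 \quad\Longrightarrow\quad \sup_{B_{r/2}(x)}|F_{A_j}|\le Cr^{-2}.
$$
The pointwise bound would come from Moser iteration applied to the integrated inequality.

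\textbf{Step 2 (definition and size of $Z_{\an}$).} I would set
$$
Z_{\an}:=\bigcap_{0<r<r_0}\Bigl\{x\ :\ \liminf_{j} r^{4-2n}\int_{B_r(x)}|F_{A_j}|^2\ge\varepsilon_0\Bigr\},
$$
after first passing to a subsequence along which the measures $|F_{A_j}|^2\omega^n$ converge weakly. Monotonicity makes $Z_{\an}$ closed. A Vitali covering argument with the uniform bound $\mathcal{E}$ then gives $\mathcal{H}^{2n-4}(Z_{\an}\cap K)\le C\,\mathcal{E}^2/\varepsilon_0<\infty$, so $Z_{\an}$ has Hausdorff codimension at least $4$.

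\textbf{Step 3 (gauge fixing and patching off $Z_{\an}$).} Every $x\notin Z_{\an}$ has a ball on which, for all large $j$, Step 1 gives a uniform bound on $|F_{A_j}|$. On small balls with $\|F_{A_j}\|_{L^{n}}$ small, Uhlenbeck's Coulomb gauge theorem \cite{Uhl} produces gauges in which $d^*A_j=0$ and $\|A_j\|_{L^p_1}\le C\|F_{A_j}\|_{L^p}$ for $p>n$. By Rellich, a subsequence converges weakly in $L^p_1$, and strongly in $C^0$ since $p>n$. The transition functions are then bounded in $L^p_2$ and converge. I would cover $X\setminus Z_{\an}$ by countably many such balls and run a diagonal argument over a compact exhaustion; the limiting transition functions define $(E_{\infty},h_{\infty})$, and Uhlenbeck's patching lemma yields global isometries $u_j$ with $u_j\cdot A_j\to A_{\infty}$ weakly in $L^p_{1,\loc}(X\setminus Z_{\an})$. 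Integrability and unitarity pass to the limit because $F^{0,2}$ depends continuously on $A$ under this convergence.

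\textbf{Main obstacle.} The delicate point is Step 1: the $A_j$ are not Yang–Mills, so neither the Bochner inequality nor the monotonicity formula holds in its clean form. I expect the key to be that, for integrable connections, the only defect is governed by $\Lambda_{\omega}F_{A_j}$. Its uniform $L^\infty$ bound should make the error terms in the monotonicity formula of lower order in $r$, after the integration by parts described above. This is exactly the input isolated in \cite{UY86} Theorem 5.2 and in \cite{Nak88}, and I would import their estimates verbatim, checking only that they are local and depend on $\omega$ solely through bounds on compact sets.
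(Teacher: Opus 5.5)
Your outline matches how the paper handles this statement: the paper gives no proof and imports the theorem from Uhlenbeck--Yau (Theorem 5.2), Uhlenbeck, Nakajima and Sibley (Theorem 2.17). Your scheme ($\varepsilon$-regularity, a concentration set of finite $\mathcal{H}^{2n-4}$-measure, Coulomb gauges and patching over a compact exhaustion) is the standard argument behind those citations, and like the paper you defer its one nontrivial input, $\varepsilon$-regularity for integrable connections controlled only through $\|\Lambda_{\omega}F_{A_j}\|_{L^{\infty}}$, to the same sources.

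One small slip in Step 3: in real dimension $2n$ the embedding $L^p_1\subset C^0$ requires $p>2n$, not $p>n$. Your argument does not need it, since for $p>n$ the transition functions are bounded in $L^p_2\subset C^0$ and the connections converge strongly in $L^{2p}$, which is enough to pass $F^{0,2}=0$ to the limit.
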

We will call the limit $A_{\infty}$ above a Uhlenbeck limit. Hereafter, we will omit $u_j$. The following gives a sufficient condition so that a Uhlenbeck limit is Yang-Mills:
\begin{corr}[see \cite{DW04} Proposition 2.11 and Corollary 2.12]\label{cor3}
In addition to the assumptions in Theorem \ref{thm2}, we also require that
$$
\|d_{A_j}\Lambda_{\omega}F_{A_j}\|_{L^2(X)}\to 0.
$$
Then the following holds:
\begin{enumerate}
\item Any Uhlenbeck limit $A_{\infty}$ of $A_j$ is an $\omega$-YM connection on $E_{\infty}$. Therefore, on $X\setminus Z_{\an}$, there is a natural holomorphic orthogonal splitting
\begin{equation}\label{eq2}
(E_{\infty},A_{\infty},h_{\infty})=\bigoplus_{i=1}^l(Q_{\infty,i},A_{\infty,i},h_{\infty,i}).
\end{equation}
\item For all $1\le p <\infty$, we have $\Lambda_{\omega}F_{A_j}\to \Lambda_{\omega}F_{A\infty}$ strongly in $L^p(X\setminus Z_{\an})$.
\end{enumerate}
\end{corr}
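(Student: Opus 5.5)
The proof has three steps, in the order of the statement: first show that the limit is Yang--Mills, then obtain the splitting from the fact that $\Lambda_\omega F_{A_\infty}$ is parallel, and finally upgrade weak convergence to strong convergence of $\Lambda_\omega F_{A_j}$.

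\textbf{Step 1: the limit is Yang--Mills.} Work on $X\setminus Z_{\an}$ in a local Uhlenbeck gauge, where $u_j\cdot A_j\to A_\infty$ weakly in $L^p_{1,\loc}$. Since $p>n$, Sobolev embedding makes this convergence strong in $C^0_{\loc}$, so $F_{A_j}\to F_{A_\infty}$ weakly in $L^p_{\loc}$. In particular $\Lambda_\omega F_{A_j}\rightharpoonup \Lambda_\omega F_{A_\infty}$ weakly in $L^p_{\loc}$. Because of the $L^\infty$ bound, $\Lambda_\omega F_{A_j}$ is also bounded in $L^2_{1,\loc}$:
\begin{itemize}
\item $d_{A_j}\Lambda_\omega F_{A_j}=d\Lambda_\omega F_{A_j}+[A_j,\Lambda_\omega F_{A_j}]$;
\item the first term is controlled by the hypothesis $\|d_{A_j}\Lambda_\omega F_{A_j}\|_{L^2}\to 0$ together with the bracket term;
\item the bracket term is bounded since $A_j$ is bounded in $C^0_{\loc}$.
\end{itemize}
Rellich then gives strong $L^2_{\loc}$ convergence of $\Lambda_\omega F_{A_j}$. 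Passing to the limit in the distributional sense, $d_{A_j}\Lambda_\omega F_{A_j}\to d_{A_\infty}\Lambda_\omega F_{A_\infty}$, and the hypothesis forces $d_{A_\infty}\Lambda_\omega F_{A_\infty}=0$. By the Kähler identities recalled in \S\ref{YM def sec}, $A_\infty$ is $\omega$-HYM, hence $\omega$-YM. Elliptic regularity in Coulomb gauge then makes $A_\infty$ smooth after a gauge change on $X\setminus Z_{\an}$.

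\textbf{Step 2: the splitting.} The endomorphism $\Psi:=\sqrt{-1}\Lambda_\omega F_{A_\infty}$ is $h_\infty$-hermitian and $d_{A_\infty}$-parallel. Assuming $X\setminus Z_{\an}$ connected (componentwise otherwise), its eigenvalues $\mu_{\infty,1}>\cdots>\mu_{\infty,l}$ are constant with constant multiplicities. Its eigenspaces $Q_{\infty,i}$ are therefore smooth subbundles preserved by $\nabla_{A_\infty}$, hence by both $\delbar_{A_\infty}$ and $\partial_{A_\infty}$. They are mutually $h_\infty$-orthogonal because $\Psi$ is hermitian. This gives the holomorphic orthogonal splitting (\ref{eq2}), with $A_{\infty,i}$ the restriction of $A_\infty$, on which $\sqrt{-1}\Lambda_\omega F_{A_{\infty,i}}=\mu_{\infty,i}\id$.

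\textbf{Step 3: strong convergence.} On compact sets the $L^2_1$ bound and Rellich give strong $L^2$ convergence. Interpolating with the uniform $L^\infty$ bound gives strong $L^p$ convergence for every $p<\infty$.

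\textbf{Main obstacle.} The hard part is globalizing across $Z_{\an}$ (and the noncompact end, if $X$ is not compact). Here the plan is to use that $|\Lambda_\omega F|$ is uniformly bounded and $Z_{\an}$ has Hausdorff codimension at least $4$, so it has zero measure. The contribution near $Z_{\an}$ is then at most $C^p\,\mathrm{vol}(U_\delta)$, which is small uniformly in $j$ for a small neighborhood $U_\delta$ of $Z_{\an}$. Since the statement is on $X\setminus Z_{\an}$, exhausting by compact sets and controlling the tail by the same uniform bound should suffice when the total volume is finite, which holds in the compact case where the result is applied. If the volume is infinite, the statement has to be read as $L^p_{\loc}$.
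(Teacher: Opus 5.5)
Your argument is correct and is essentially the standard proof of \cite[Proposition 2.11, Corollary 2.12]{DW04}, which the paper cites without reproducing: a local $L^2_1$ bound on $\Lambda_\omega F_{A_j}$ from the hypothesis and the $L^\infty$ bound, then Rellich and identification of the limit to get $d_{A_\infty}\Lambda_\omega F_{A_\infty}=0$ weakly, then the parallel eigenbundle splitting, and finally the uniform $L^\infty$ bound together with finite volume to globalize the strong $L^p$ convergence. One harmless slip: with $n=\dim_{\mathbb{C}}X$, the embedding $L^p_1\subset C^0$ requires $p>2n$. You never actually need it, because the compact embedding $L^p_1\subset L^{2p}$ holds exactly when $p>n$, and that already controls both $[A_j,\Lambda_\omega F_{A_j}]$ in $L^2_{\loc}$ and the quadratic term $A_j\wedge A_j$.
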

Since $A_{\infty}$ is $\omega$-YM, it satisfies $d_{A_{\infty}}\Lambda_{\omega}F_{A_{\infty}}=0$. It implies that the eigenvalues $\mu_{\infty}=(\mu_{\infty,1},\ldots, \mu_{\infty,l})$ of $\sqrt{-1}\Lambda_{\omega}F_{A_{\infty}}$ are constants. Then the splitting (\ref{eq2}) is the eigenvalue decomposition of $E_{\infty}$ with respect to $\sqrt{-1}\Lambda_{\omega}F_{A_{\infty}}$. 

\section{Proof of Theorem \ref{main thm2}}\label{YM sec}
In this section, we fix a holomorphic hermitian vector bundle $(E,\delbar_E,h_0)$ over a compact K\"{a}hler manifold $X$ with a smooth K\"{a}hler metric $\omega_0$. We also fix a nef and big class $\alpha$ on $X$ such that $D=E_{nK}(\alpha)$ is an snc divisor. Let $s_D$ be a defining section of $D$. We fix an adapted current $T\in\alpha$ and its adapted approximation $\omega_{\ve}\in \alpha+\ve\omega_0$.
\subsection{Existence of the $T$-YM flows}\label{exist YM}
Since the YM flow corresponds to the HYM flow as in Lemma \ref{lem0}, we first consider the $\omega_{\ve}$-HYM flow. Let $(h_{\ve,t})_{t\in[0,\infty)}$ be the $\omega_{\ve}$-HYM flow on $(E,\delbar_E)$ with $h_{\ve,t=0}=h_0$:
\begin{equation}\label{eq4}
h_{\ve,t}^{-1}\frac{dh_{\ve,t}}{dt}=-(\sqrt{-1}\Lambda_{\omega_{\ve}}F_{(\delbar_E,h_{\ve,t})}-\lambda_{\ve}\id), \hspace{ 4mm} h_{\ve,t=0}=h_0
\end{equation}
where $\lambda_{\ve}$ is the $\omega_{\ve}$-HE constant of $E$. We have the following uniform estimates for curvature of $h_{\ve,t}$.
\begin{lemm}[see \cite{Sib15} section 6, see also \cite{BS94}]\label{lem4}
The following holds:
\begin{enumerate}
\item There is a constant $C>0$ such that for any $\ve>0$ and $t>0$,
$$
\int_X|\Lambda_{\omega_{\ve}}F_{(\delbar_E,h_{\ve,t})}|_{h_{\ve,t}}\omega_{\ve}^n\le \int_X|\Lambda_{\omega_{\ve}}F_{(\delbar_E,h_0)}|_{h_0}\omega_{\ve}^n \le C.
$$
\item There exists $t_0>0$ and $C=C(t_0)>0$ such that for any $\ve>0$ and $t\ge t_0$,
$$
\|\Lambda_{\omega_{\ve}}F_{(\delbar_E,h_{\ve,t})}\|_{L^{\infty}(X,h_{\ve,t})}\le C \hbox{ and } \int_X|F_{(\delbar_E,h_{\ve,t})}|_{h_{\ve,t}}^2\omega_{\ve}^n\le C. 
$$
\item For any $K\Subset X\setminus D$, there exists $C_K>0$ such that for any $t\ge 0$,
$$
\|\Lambda_{\omega_{\ve}}F_{(\delbar_E,h_{\ve,t})}\|_{L^{\infty}(K)}\le C_K.
$$
Here $C_K>0$ depends on $K$, $\sup_{\ve>0}\|\Lambda_{\omega_{\ve}}F_{(\delbar_E,h_0)}\|_{L^{\infty}(K)}$ and $\sup_{\ve>0}\|\Lambda_{\omega_{\ve}}F_{(\delbar_E,h_0)}\|_{L^1(X,\omega_{\ve})}$. 
\end{enumerate}
\end{lemm}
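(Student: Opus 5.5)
The three estimates rest on the parabolic inequalities of Lemma \ref{lem1}, transferred to the HYM flow via Lemma \ref{lem0}, combined with the $\ve$-uniform analytic estimates of Lemma \ref{sob ineq}. Write $\Theta_{\ve,t}:=\sqrt{-1}\Lambda_{\omega_{\ve}}F_{(\delbar_E,h_{\ve,t})}-\lambda_{\ve}\id$. Along the HYM flow \eqref{eq4}, $|\Lambda_{\omega_\ve}F|_{h_{\ve,t}}$ is gauge equivalent to the corresponding quantity along the YM flow, so $(\partial_t-\Delta_{\omega_\ve})|\Theta_{\ve,t}|_{h_{\ve,t}}\le 0$ and $(\partial_t-\Delta_{\omega_\ve})|\Theta_{\ve,t}|^2\le0$ in the weak sense.

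\textbf{Step 1: the $L^1$ bound (1).} Integrate the subsolution inequality over $X$ against $\omega_\ve^n$. This gives monotonicity of $\int_X|\Lambda_{\omega_\ve}F_{h_{\ve,t}}|\omega_\ve^n$, i.e. Lemma \ref{lem1}(3). The constant $\lambda_\ve$ is harmless, either by working with the trace-free part plus the constant or directly with $\Lambda F$ as in \cite{Sib15}. It remains to bound the initial quantity uniformly in $\ve$. Note that $\Lambda_{\omega_\ve}F_{h_0}\,\omega_\ve^n=nF_{h_0}\wedge\omega_\ve^{n-1}$. Hence $|\Lambda_{\omega_\ve}F_{h_0}|\omega_\ve^n\le C\,\omega_0\wedge\omega_\ve^{n-1}$ pointwise, since $F_{h_0}$ is smooth and is dominated by $C\omega_0$ in the sense of positive forms applied to each eigen-direction (write $\pm F_{h_0}\le C\omega_0\otimes\id$). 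The integral is therefore at most $C\{\omega_0\}\cdot(\alpha+\ve\omega_0)^{n-1}$, which is bounded.

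\textbf{Step 2: the $L^\infty$ bound (2).} Use the heat kernel representation. Since $|\Theta_{\ve,t}|$ is a subsolution of the heat equation for $\omega_\ve$,
$$|\Theta_{\ve,t}|(x)\le\int_X H_{\omega_\ve}(x,y,t)\,|\Theta_{\ve,0}|(y)\,\omega_\ve^n(y).$$
For $t\ge t_0$, Lemma \ref{sob ineq}(2) gives $H_{\omega_\ve}\le C$ uniformly in $\ve$. Combined with Step 1 this yields $\|\Theta_{\ve,t}\|_{L^\infty}\le C$, and $\lambda_\ve$ is uniformly bounded. For the $L^2$ bound on $F$, use \eqref{eq0} with $\omega=\omega_\ve$ and non-increasingness of $\YM$ and $\HYM$ (Lemma \ref{lem1}(1)):
$$\int|F_{h_{\ve,t}}|^2\omega_\ve^n=\int|\Lambda F_{h_{\ve,t}}|^2\omega_\ve^n+c_n(2c_2-c_1^2)\cdot\{\omega_\ve\}^{n-2}.$$
The topological term is bounded in $\ve$. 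The first term is at most $\|\Lambda F\|_{L^\infty}\|\Lambda F\|_{L^1}$, which is bounded by the above estimates.

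\textbf{Step 3: the local bound (3), which is the main obstacle.} The estimate must hold down to $t=0$, where the heat kernel bound degenerates like $t^{-q/(q-1)}$. I would split $|\Theta_{\ve,0}|=f_1+f_2$ with $f_1=|\Theta_{\ve,0}|\chi$, where $\chi$ is a cutoff equal to $1$ on a neighborhood $K'$ of $K$ with $K'\Subset X\setminus D$. The $f_1$ part is bounded in $L^\infty$ by $\sup_\ve\|\Lambda_{\omega_\ve}F_{h_0}\|_{L^\infty(K')}$, and its heat evolution stays bounded since $\int H=1$. The $f_2$ part is supported away from $K$. Because $\omega_\ve\to T$ smoothly on $K'$, there is a uniform lower bound $d_{\omega_\ve}(K,X\setminus K')\ge\delta>0$. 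The Gaussian factor in Lemma \ref{sob ineq}(2) then dominates the factor $t^{-q/(q-1)}$ for small $t$, giving a uniform bound by $C\|f_2\|_{L^1}$, which is controlled by Step 1. For $t>t_0$ one uses Step 2 directly. The delicate points are the uniform distance lower bound and justifying the comparison with the heat kernel for a merely subsolution quantity that is only Lipschitz. For the latter I would instead use $\sqrt{|\Theta|^2+\eta}$ and let $\eta\to0$.
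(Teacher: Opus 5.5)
Your proposal is correct and follows the same route as the paper. The $\ve$-uniform $L^1$ bound in (1) comes from dominating $|\Lambda_{\omega_\ve}F_{(\delbar_E,h_0)}|\,\omega_\ve^n$ pointwise by a closed form whose integral is cohomological (you use a multiple of $\omega_0\wedge\omega_\ve^{n-1}$, the paper the equivalent $\Tr(2C\omega_0\cdot\id_E-\sqrt{-1}F_{h_0})\wedge\omega_\ve^{n-1}$) together with Lemma \ref{lem1}(3), and (2), (3) come from the subsolution property of $|\Lambda_{\omega_\ve}F|$ combined with the uniform heat kernel bounds of Lemma \ref{sob ineq}(2). You fill in details the paper leaves implicit: \eqref{eq0} gives the $L^2$ curvature bound only for $t\ge t_0$, without an $\ve$-uniform bound on the initial Yang--Mills energy; and (3) uses a near/far splitting with a uniform lower bound on $d_{\omega_\ve}(K,X\setminus K')$, so your constant depends on the initial data on a slightly larger compact set $K'\Subset X\setminus D$ rather than on $K$ itself, which is harmless.
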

\begin{proof}
(1) Since $h_0$ is smooth on $X$, there exists $C>0$ such that $-C\omega_0\cdot\id_E\le \sqrt{-1}F_{(\delbar_E,h_0)}\le C\omega_0\cdot\id_E$. Hence,
\begin{align*}
|\Lambda_{\omega_{\ve}}F_{(\delbar_E,h_0)}|_{h_0}\omega_{\ve}^n
&\le \left|\Lambda_{\omega_{\ve}}(\sqrt{-1}F_{(\delbar_E,h_0)}-C\omega_0\cdot\id_E)\right|\omega_{\ve}^n + \left|\Lambda_{\omega_{\ve}}(C\omega_0\cdot\id_E)\right|\omega_{\ve}^n\\
&\le \Tr\left(2C\omega_0\cdot \id_E-\sqrt{-1}F_{h_0}\right)\wedge\omega_{\ve}^{n-1}.
\end{align*}
Hence its integral over $X$ gives
$$
\int_X|\Lambda_{\omega_{\ve}}F_{(\delbar_E,h_0)}|_{h_0}\omega_{\ve}^n
\le (2rC\{\omega_0\}-c_1(E))\cdot(\alpha+\ve\omega_0)^{n-1}\le C.
$$
Then the first inequality in (1) is a consequence of Lemma \ref{lem1} (3).\\
Since $\omega_{\ve}$ is an adapted approximation, we can use the uniform heat kernel estimates of $(X,\omega_{\ve})$ in Lemma \ref{sob ineq} (2). Together with Lemma \ref{lem1}, we obtain (2) and (3). 
\end{proof}
In particular, the uniform $L^{\infty}_{\loc}(X\setminus D)$ estimate in Lemma \ref{lem4} gives the following uniformly local estimates on $h_{\ve,t}$. If we further use Lemma \ref{sob ineq} and Lemma \ref{lem4}, we can prove the following lemma in the same way as in \cite{BS94} (see also \cite[section 6]{Sib15}).
\begin{lemm}[see \cite{BS94} and \cite{Sib15} section 6]\label{lem5}
For any $K\Subset X\setminus D$ and $0<t<T$, there exists $C_{K,T}>0$ such that
\begin{enumerate}
\item $\|h_{\ve,t}^{\pm1}\|_{L^{\infty}(K)}\le C_{K,T},$
\item $\|h_{\ve,t}\|_{C^{1,\beta}(K)}\le C_{K,T}$ for any $0\le \beta<1$.
\end{enumerate}
Furthermore, the constant $C_{K,T}$ depends only on $K$, $T$,  $\sup_{\ve>0}\|\Lambda_{\omega_{\ve}}F_{(\delbar_E,h_0)}\|_{L^{\infty}(K)}$ and \\ $\sup_{\ve>0}\|\Lambda_{\omega_{\ve}}F_{(\delbar_E,h_0)}\|_{L^1(X,\omega_{\ve})}$. 
\end{lemm}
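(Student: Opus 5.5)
We follow the standard Donaldson/Bando–Siu route for local bounds on the HYM flow, using the estimate $\|\Lambda_{\omega_{\ve}}F_{(\delbar_E,h_{\ve,t})}\|_{L^{\infty}(K)}\le C_K$ of Lemma \ref{lem4} (3) on a slightly larger compact set $K'\Subset X\setminus D$ with $K\Subset K'$. On $K'$ the metrics $\omega_{\ve}$ converge smoothly to $T$ by (3-1) of Definition \ref{adapted defi}. So for small $\ve$ they are uniformly equivalent to $\omega_0$ and have uniformly bounded $C^k$ norms. Local elliptic and parabolic theory on $K'$ then has constants independent of $\ve$.

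For (1), write $h_{\ve,t}=h_0H_{\ve,t}$ and differentiate $\log\Tr H_{\ve,t}$ and $\log\Tr H^{-1}_{\ve,t}$ along the flow \eqref{eq4}. The flow gives
$$\frac{d}{dt}\log\Tr H_{\ve,t}\le |\sqrt{-1}\Lambda_{\omega_{\ve}}F_{h_{\ve,t}}-\lambda_{\ve}\id|\le C_{K'}+|\lambda_{\ve}|$$
pointwise on $K'$, with $\lambda_{\ve}$ uniformly bounded since $\mu_{\omega_{\ve}}(E)\to\mu_{\alpha}(E)$. Integrating over $[0,T]$ gives $\Tr H_{\ve,t}\le r e^{CT}$, and the same argument applied to $H^{-1}$ gives the lower bound. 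The constant depends only on $T$ and the sup/$L^1$ quantities through $C_{K'}$ from Lemma \ref{lem4} (3). That lemma in turn used the uniform heat kernel bound (Lemma \ref{sob ineq} (2)) together with the subsolution property of Lemma \ref{lem1} (2),(3). This handles the global, non-local input, since the $L^1(X,\omega_{\ve})$ bound propagates to local sup bounds uniformly in $\ve$.

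For (2), we first get a uniform $C^1$ bound and then bootstrap. On $K'$ the metric $h_{\ve,t}$ has $C^0$ bounds from (1), and its mean curvature $\Lambda_{\omega_{\ve}}F_{h_{\ve,t}}$ is bounded in $L^{\infty}(K')$ with respect to uniformly controlled Kähler metrics. Fix $t$ and view $H=H_{\ve,t}$ as solving the elliptic equation
$$\Delta_{\omega_{\ve}}H-\sqrt{-1}\Lambda_{\omega_{\ve}}\delbar H H^{-1}\partial_{h_0}H=H(\sqrt{-1}\Lambda_{\omega_{\ve}}F_{h_0}-\sqrt{-1}\Lambda_{\omega_{\ve}}F_{h_{\ve,t}})$$
with bounded right-hand side. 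The main obstacle is the quadratic gradient term; we handle it in the manner of \cite{BS94}. We derive a local $L^{\infty}$ bound on $|\partial_{h_0}H|$ either by a blow-up/contradiction argument or by a Donaldson-type estimate, combining the $C^0$ bound with the bounded mean curvature. Once $|\nabla H|$ is bounded, the right-hand side of the linear system is in $L^{\infty}$, and $W^{2,p}$ estimates for every $p<\infty$ together with Sobolev embedding give $C^{1,\beta}(K)$ bounds for all $\beta<1$. These bounds are uniform in $\ve$ and $t\in(0,T)$, because the coefficients depend only on the uniformly controlled $\omega_{\ve}|_{K'}$.

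Near $t=0$ we may instead use parabolic estimates for \eqref{eq4} with smooth initial data $h_0$. Combining the two regimes gives constants depending only on $K$, $T$ and the stated quantities.
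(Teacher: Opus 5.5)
Your proposal is correct and takes essentially the same route as the paper, which simply defers to Bando--Siu and Sibley. The local uniform bound on $\Lambda_{\omega_{\ve}}F_{(\delbar_E,h_{\ve,t})}$ from Lemma \ref{lem4} (3), which rests on the heat kernel bound of Lemma \ref{sob ineq}, gives the bound on $h_{\ve,t}^{\pm1}$ for $t<T$ by integrating $|h^{-1}\partial_t h|$; the Bando--Siu regularity estimate (bounded mean curvature plus $C^0$ bounds implies $C^{1,\beta}$ bounds) then gives (2). Your separate parabolic treatment near $t=0$ is unnecessary, since Lemma \ref{lem4} (3) holds for all $t\ge 0$ and the elliptic argument applies at every time slice.
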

Then we are ready to prove the convergence of $h_{\ve,t}$ in $\ve\to 0$.
\begin{prop}[cf. \cite{Sib15} section 6]\label{prop6}
Let $h_{\ve,t}$ be the $\omega_{\ve}$-HYM flow on $(E,\delbar_E)$ with initial condition $h_0$ in (\ref{eq4}). Then the following holds:
\begin{enumerate}
\item There exists a subsequence of $\ve$, also denoted by $\ve$, and a smooth hermitian metric $h_t$ on $E|_{X\setminus D}$ such that $h_{\ve,t}\to h_t$ strongly in $C^{\infty}_{\loc}(X\setminus D\times [0,\infty))$.
\item Then $h_t$ above is a $T$-HYM flow on $(E,\delbar_E)|_{X\setminus D}$ with initial condition $h_0$:
$$
h_t^{-1}\frac{dh_t}{dt}=-(\sqrt{-1}\Lambda_TF_{(\delbar_E,h_t)}-\lambda\id_E) \hbox{,  } h_{t=0}=h_0.
$$
Here $\lambda$ is the $T$-HE constant of $E$ (refer to Remark \ref{HE const rema}).
\end{enumerate}
\end{prop}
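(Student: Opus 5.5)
The plan is to get uniform local higher-order estimates for $h_{\ve,t}$ on compact subsets of $(X\setminus D)\times[0,\infty)$, extract a convergent subsequence by Arzel\`a--Ascoli and a diagonal argument, and pass to the limit in the flow equation (\ref{eq4}).

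\emph{Step 1: local estimates up to $C^{1,\beta}$.} Fix $K\Subset X\setminus D$ and a finite time $T'>0$. By (3-1) of Definition \ref{adapted defi}, $\omega_{\ve}\to T$ in $C^\infty_{\loc}(X\setminus D)$. Hence on a slightly larger $K'\Subset X\setminus D$ the metrics $\omega_\ve$ are uniformly equivalent to a fixed K\"ahler metric, with all derivatives uniformly bounded. In particular $\sup_\ve\|\Lambda_{\omega_\ve}F_{(\delbar_E,h_0)}\|_{L^\infty(K')}<\infty$. Lemma \ref{lem4}(1) shows that $\sup_\ve\|\Lambda_{\omega_\ve}F_{(\delbar_E,h_0)}\|_{L^1(X,\omega_\ve)}<\infty$. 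So Lemma \ref{lem5} applies and gives uniform $L^\infty$ bounds on $h_{\ve,t}^{\pm1}$ and uniform $C^{1,\beta}(K')$ bounds on $h_{\ve,t}$ for $t\le T'$.

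\emph{Step 2: bootstrap to $C^\infty$.} In local holomorphic frames on $K'$, the flow (\ref{eq4}) reads
$$
\partial_t h=\Delta_{\omega_\ve}h+ Q(h^{-1},\partial h,\delbar h)+\lambda_\ve h,
$$
where $Q$ is quadratic in first derivatives and smooth in $h^{-1}$. This is a uniformly parabolic system, and its coefficients (those of $\omega_\ve$) are bounded in every $C^k(K')$ uniformly in $\ve$. The constant $\lambda_\ve=\mu_{\alpha+\ve\omega_0}(E)/(\{\omega_\ve\}^n/n!)$ converges to $\lambda$ by Remark \ref{HE const rema}.

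The $C^{1,\beta}$ bound from Step 1 makes the lower-order term $Q$ H\"older. Parabolic Schauder estimates on shrinking cylinders then give uniform $C^{2+\beta,1+\beta/2}$ bounds, and iterating gives uniform $C^k$ bounds for every $k$ on $K\times[0,T']$. Near $t=0$ we use the smooth initial datum $h_0$, so the estimates hold up to $t=0$.

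\emph{Step 3: extraction and passage to the limit.} Exhaust $(X\setminus D)\times[0,\infty)$ by compact sets $K_m\times[0,m]$. Arzel\`a--Ascoli on each $K_m\times[0,m]$ and a diagonal argument produce a subsequence $\ve\to0$ and a smooth $h_t$ with $h_{\ve,t}\to h_t$ in $C^\infty_{\loc}$. The uniform bound on $h_{\ve,t}^{-1}$ keeps $h_t$ positive definite.

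Since $\omega_\ve\to T$ smoothly on compacts, $\Lambda_{\omega_\ve}F_{(\delbar_E,h_{\ve,t})}\to\Lambda_TF_{(\delbar_E,h_t)}$ locally smoothly. Together with $\lambda_\ve\to\lambda$, this lets us pass to the limit in (\ref{eq4}), which gives the $T$-HYM flow equation with $h_{t=0}=h_0$.

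\textbf{Main obstacle.} The delicate part is Step 1, and it is already packaged in Lemma \ref{lem5}. What remains to check is that the hypotheses of Lemmas \ref{lem4} and \ref{lem5} hold uniformly in $\ve$. The point is that the $L^\infty_{\loc}$ bound of Lemma \ref{lem4}(3) rests on the uniform heat kernel estimate of Lemma \ref{sob ineq}. Beyond that, I expect the only care needed to be in making the Schauder bootstrap uniform: one has to verify that the ellipticity constants of $\Delta_{\omega_\ve}$ on $K'$ do not degenerate, and this follows from the local smooth convergence in (3-1).
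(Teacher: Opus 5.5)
Your proposal is correct and follows essentially the paper's argument. Both use the uniform local bounds from Lemma~\ref{lem4}(3) and Lemma~\ref{lem5}, the local smooth convergence $\omega_{\ve}\to T$ on $X\setminus D$, a regularity bootstrap, a diagonal extraction, and passage to the limit in (\ref{eq4}); your explicit check that $\lambda_{\ve}\to\lambda$ is a step the paper leaves implicit. The only difference is the bootstrap: the paper uses elliptic estimates for the time-slice equation (\ref{eq7}) together with Kobayashi's argument, whereas you use parabolic Schauder estimates. For Schauder, Lemma~\ref{lem5} gives only spatial $C^{1,\beta}$ bounds, so the space--time H\"older continuity of $Q$ needs one more line. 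For instance, Lemma~\ref{lem4}(3) and Lemma~\ref{lem5}(1) bound $\partial_t h_{\ve,t}=-h_{\ve,t}(\sqrt{-1}\Lambda_{\omega_{\ve}}F_{(\delbar_E,h_{\ve,t})}-\lambda_{\ve}\id_E)$ uniformly on compacts, and interpolation with the spatial $C^{1,\beta}$ bound then puts the first derivatives of $h_{\ve,t}$ in $C^{\beta,\beta/2}$.
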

\begin{proof}
(1) If we write the curvature locally, we obtain
\begin{equation}\label{eq7}
\Delta_{\omega_{\ve}}h_{\ve,t}=h_{\ve,t}\sqrt{-1}\Lambda_{\omega_{\ve}}F_{(\delbar_E,h_{\ve,t})}+\sqrt{-1}\Lambda_{\omega_{\ve}}\delbar h_{\ve,t}\cdot h_{\ve,t}^{-1}\cdot \partial h_{\ve,t}=:f_{\ve,t}.
\end{equation}
Then, by Lemma \ref{lem4} (3), by Lemma \ref{lem5}, and by the locally smooth convergence of $\omega_{\ve}$ to a K\"{a}hler metric $T|_{X\setminus D}$ on $X\setminus D$ together with a diagonalization procedure, the result follows from the standard elliptic estimate (e.g. \cite[Theorem 9.11]{GT01}) and the proof in \cite[Chapter 6, Theorem 7.1]{Kob87}.
 By Lemma \ref{lem5} and (1), $h_{\ve,t}^{-1}$ also locally smoothly converges to $h_t^{-1}$ on $X\setminus D\times[0,\infty)$. Then, since $\omega_{\ve}$ locally smoothly converges to a K\"{a}hler metric $T$ on $X\setminus D$, we have that $h_t$ satisfies the $T$-HYM flow equation, that is, we obtain (2).
\end{proof}
By Lemma \ref{lem0}, we obtain the following:
\begin{corr}\label{cor7}
Let $h_{\ve,t}=h_0g_{\ve,t}^2$ be the $\omega_{\ve}$-HYM flow and $h_t=h_0g_t^2$ be the $T$-HYM flow in the previous Proposition \ref{prop6}. Then, the corresponding $\omega_{\ve}$-YM flow $A_{\ve,t}=g_{\ve,t}\cdot (\delbar_E,h_{\ve,t})$ locally smoothly converges to $A_t=g_t\cdot (\delbar_E,h_t)$ which is the corresponding $T$-YM flow.
\end{corr}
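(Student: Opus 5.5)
The plan is to combine Lemma \ref{lem0} with the local smooth convergence $h_{\ve,t}\to h_t$ of Proposition \ref{prop6}. The main point is to make sure the gauge transformations $g_{\ve,t}$, which turn the HYM flows into YM flows, also converge locally smoothly on $(X\setminus D)\times[0,\infty)$.

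First, fix the choice of $g_{\ve,t}$. Set $H_{\ve,t}:=h_0^{-1}h_{\ve,t}$; this is $h_0$-selfadjoint and positive definite. Take $g_{\ve,t}:=H_{\ve,t}^{1/2}$, the positive square root, so that $h_{\ve,t}=h_0g_{\ve,t}^2$. The map $H\mapsto H^{1/2}$ is real-analytic on positive definite selfadjoint endomorphisms. By Lemma \ref{lem5}, on each $K\Subset X\setminus D$ and for $t$ in a bounded interval, $H_{\ve,t}$ stays in a fixed compact subset of that cone. Hence the smooth convergence $h_{\ve,t}\to h_t$ from Proposition \ref{prop6}(1) gives $g_{\ve,t}\to g_t:=(h_0^{-1}h_t)^{1/2}$ in $C^\infty_{\loc}((X\setminus D)\times[0,\infty))$. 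The same argument applies to $g_{\ve,t}^{-1}$. This square-root choice differs from the flow in Lemma \ref{lem0}(2) by a unitary gauge $S_{\ve,t}$, so the next step handles that term.

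Next, express the connection. By Lemma \ref{lem0}(2), the $\omega_\ve$-YM flow is $A_{\ve,t}=(S_{\ve,t}g_{\ve,t})\cdot(\delbar_E,h_{\ve,t})$, where $S_{\ve,t}\in\mathcal G$ solves the ODE $S_{\ve,t}^{-1}\partial_tS_{\ve,t}=\tfrac12\bigl(\sqrt{-1}\Lambda_{\omega_\ve}F_{A'_{\ve,t}}\text{-type term}\bigr)$ with $S_{\ve,0}=\id$. Explicitly, this is the skew-hermitian part needed to cancel the gauge drift $-d_{A}(g^{-1}\partial_tg)$. The ODE is pointwise in $x$. Its coefficients are built from $g_{\ve,t}$, $\partial_tg_{\ve,t}$ and $\Lambda_{\omega_\ve}F_{(\delbar_E,h_{\ve,t})}$, and all of these converge locally smoothly because $\omega_\ve\to T$ locally smoothly on $X\setminus D$ and by the previous step. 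Since $S_{\ve,t}$ is unitary, it is uniformly bounded. Smooth dependence of ODE solutions on parameters then gives $S_{\ve,t}\to S_t$ locally smoothly, where $S_t$ solves the corresponding ODE with $T$ in place of $\omega_\ve$.

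We then pass to the limit. The Chern connection $(\delbar_E,h)=\delbar_E+h^{-1}\partial h$ depends smoothly on $h$ and its first derivatives. Together with the previous two steps this gives $A_{\ve,t}\to A_t:=(S_tg_t)\cdot(\delbar_E,h_t)$ in $C^\infty_{\loc}((X\setminus D)\times[0,\infty))$. Each $A_{\ve,t}$ satisfies $\partial_tA_{\ve,t}=-d_{A_{\ve,t}}^{*_{h_0,\omega_\ve}}F_{A_{\ve,t}}$. The formal adjoint $d^{*}$ involves only $\omega_\ve$, its first derivatives and $h_0$. So letting $\ve\to0$ termwise gives $\partial_tA_t=-d_{A_t}^{*_{h_0,T}}F_{A_t}$ with $A_0=(\delbar_E,h_0)$. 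This is the $T$-YM flow, and it is the one attached to the $T$-HYM flow $h_t$ in the sense of Lemma \ref{lem0}, with $g_t$ absorbed up to the unitary $S_t$. It also proves Theorem \ref{main thm2}.

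The expected main obstacle is uniformity in the second step. The ODE bounds are only local on compact subsets of $X\setminus D$ and bounded time intervals, so the convergence statement must be made on $K\times[0,\tau]$ and then upgraded by exhaustion and a diagonal argument along the subsequence already chosen in Proposition \ref{prop6}. Time derivatives of $g_{\ve,t}$ are controlled through the HYM equation (\ref{eq4}) and Lemma \ref{lem4}(3).
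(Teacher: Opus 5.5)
Your argument is correct and follows the paper's route. The paper's proof is only an appeal to Lemma \ref{lem0} together with the local smooth convergence $h_{\ve,t}\to h_t$ of Proposition \ref{prop6}; you fill in what it leaves implicit, namely the local smooth convergence of the positive square roots $g_{\ve,t}$ and of the unitary corrections $S_{\ve,t}$ (via a pointwise ODE), then the passage to the limit in the flow equation. One small correction: that ODE is $S_{\ve,t}^{-1}\partial_tS_{\ve,t}=-\tfrac12\bigl((\partial_tg_{\ve,t})g_{\ve,t}^{-1}-g_{\ve,t}^{-1}\partial_tg_{\ve,t}\bigr)$, i.e. minus the skew-hermitian part of $(\partial_tg_{\ve,t})g_{\ve,t}^{-1}$, not a hermitian $\sqrt{-1}\Lambda F$-type term; its coefficient therefore depends only on $g_{\ve,t}$ and $\partial_tg_{\ve,t}$, which is exactly what your convergence argument needs.
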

\subsection{Uhlenbeck limit of the $T$-YM flows}
To prove the existence of a Uhlenbeck limit of the $T$-YM flow $A_t$ in Corollary \ref{cor7}, we need to show some universal estimates on the curvatures of $A_t$.
\begin{lemm}\label{lem8}
Let $A_t$ be the $T$-YM flow on $(E,h_0)$ constructed in Corollary \ref{cor7}.
Then there exists $t_0>0$ and $C=C(t_0)>0$ such that for any $t\ge t_0$, we have
$$
\|F_{A_t}\|_{L^2(X\setminus D,T)}\le C \hbox{ and } \|\Lambda_TF_{A_{t}}\|_{L^{\infty}(X\setminus D)}\le C.
$$
\end{lemm}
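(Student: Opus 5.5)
We derive both bounds from the uniform estimates of Lemma \ref{lem4} (2) for the approximating flows. We then pass to the limit $\ve\to0$ using the locally smooth convergence of Proposition \ref{prop6} and Corollary \ref{cor7}, together with Fatou's lemma on compact subsets of $X\setminus D$.

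Fix $t_0>0$ as in Lemma \ref{lem4} (2), and let $t\ge t_0$. First we transfer the estimates of Lemma \ref{lem4} (2) from the HYM flow $h_{\ve,t}$ to the YM flow $A_{\ve,t}=g_{\ve,t}\cdot(\delbar_E,h_{\ve,t})$. This is a pointwise statement. Since $h_{\ve,t}=h_0g_{\ve,t}^2$, conjugation by $g_{\ve,t}$ is an isometry from $(\End E,h_{\ve,t})$ to $(\End E,h_0)$. Hence
$$|F_{A_{\ve,t}}|_{h_0,\omega_\ve}=|F_{(\delbar_E,h_{\ve,t})}|_{h_{\ve,t},\omega_\ve}, \qquad |\Lambda_{\omega_\ve}F_{A_{\ve,t}}|_{h_0}=|\Lambda_{\omega_\ve}F_{(\delbar_E,h_{\ve,t})}|_{h_{\ve,t}}.$$
It follows that $\sup_X|\Lambda_{\omega_\ve}F_{A_{\ve,t}}|\le C$ and $\int_X|F_{A_{\ve,t}}|^2_{h_0,\omega_\ve}\omega_\ve^n\le C$, with $C$ independent of $\ve$ and of $t\ge t_0$.

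Next, fix $K\Subset X\setminus D$. By Corollary \ref{cor7}, along the subsequence of Proposition \ref{prop6} we have $A_{\ve,t}\to A_t$ in $C^\infty(K)$. Also $\omega_\ve\to T$ smoothly on $K$, by condition (3-1) of Definition \ref{adapted defi}. Therefore the integrands converge uniformly on $K$:
$$|F_{A_{\ve,t}}|^2_{h_0,\omega_\ve}\omega_\ve^n\to|F_{A_t}|^2_{h_0,T}T^n, \qquad \Lambda_{\omega_\ve}F_{A_{\ve,t}}\to\Lambda_TF_{A_t}.$$
This gives
$$\int_K|F_{A_t}|^2_{h_0,T}T^n=\lim_{\ve\to0}\int_K|F_{A_{\ve,t}}|^2\omega_\ve^n\le C \quad\text{and}\quad \sup_K|\Lambda_TF_{A_t}|\le C.$$
Exhausting $X\setminus D$ by compact sets $K$ (monotone convergence) yields both bounds on $X\setminus D$.

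One point needs care: that the gauges $g_{\ve,t}$ in Corollary \ref{cor7} are the ones relating $h_{\ve,t}$ and $A_{\ve,t}$ and converge locally. This holds by construction there, and in any case both norms above are gauge invariant, so the bound on $A_t$ can be read off directly from $h_t$. The main obstacle is the uniformity in $\ve$ of Lemma \ref{lem4} (2), namely the $L^2$ curvature bound. That bound rests on the identity (\ref{eq0}) with $\{\omega_\ve\}^{n-2}\to\alpha^{n-2}$, together with the uniform heat kernel bounds of Lemma \ref{sob ineq} (2). Since it is already established in Lemma \ref{lem4}, the present lemma reduces to the limiting argument above.
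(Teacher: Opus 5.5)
Your proposal is correct and follows the same route as the paper. The paper's proof simply cites Lemma \ref{lem4} (2), Corollary \ref{cor7}, and the local smooth convergence $\omega_\ve\to T$ on $X\setminus D$. You have made explicit what it leaves implicit: the gauge invariance of the curvature norms under $g_{\ve,t}$, uniform convergence of the integrands on compact sets $K\Subset X\setminus D$, and the exhaustion of $X\setminus D$ by such sets.
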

\begin{proof}
It follows from  Corollary \ref{cor7} and Lemma \ref{lem4} (2), and the locally smooth convergence of $\omega_{\ve}$ to a K\"{a}hler metric $T|_{X\setminus D}$ on $X\setminus D$.
\end{proof}
\begin{lemm}[see \cite{Sib15} Proposition 6.6]\label{lem9}
Let $A_t$ be the $T$-YM flow on $(E,h_0)$ in Corollary \ref{cor7}.
Then there is a sequence $t_j\to \infty$ such that
$$
\lim_{j\to\infty}\|\nabla_{A_{t_j}}\Lambda_TF_{A_{t_j}}\|_{L^2(X\setminus D,T)}=0.
$$
\end{lemm}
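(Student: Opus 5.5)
The plan is to follow Sibley's argument (Proposition 6.6 of \cite{Sib15}). The key input is an energy identity along the flow: the $T$-Hermitian-Yang-Mills functional $\HYM^T(A_t):=\int_{X\setminus D}|\Lambda_TF_{A_t}|^2_{h_0}T^n$ is finite and non-increasing, and its derivative is controlled by $\|\nabla_{A_t}\Lambda_TF_{A_t}\|^2_{L^2(X\setminus D,T)}$. Since the $T$-YM flow is singular along $D$, I will not differentiate it directly on $X\setminus D$. Instead, I will derive the identity for the smooth $\omega_{\ve}$-YM flows $A_{\ve,t}$ on the compact manifold $X$ and pass to the limit $\ve\to0$ using Corollary \ref{cor7}.

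\textbf{Step 1 (identity for $\omega_{\ve}$).} For the $\omega_{\ve}$-YM flow on compact $X$ one has
\[
\frac{d}{dt}\int_X|\Lambda_{\omega_{\ve}}F_{A_{\ve,t}}|^2\omega_{\ve}^n=-2\int_X|\nabla_{A_{\ve,t}}\Lambda_{\omega_{\ve}}F_{A_{\ve,t}}|^2_{\omega_{\ve}}\omega_{\ve}^n .
\]
This follows from the Kähler identities together with $\partial_tA=-d_A^*F_A=\sqrt{-1}(\partial_A-\delbar_A)\Lambda F_A$, so that $\partial_t\Lambda F_A=-\sqrt{-1}\Delta_A\Lambda F_A$. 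Integrating over $[t_0,t_1]$ gives
\[
2\int_{t_0}^{t_1}\|\nabla_{A_{\ve,t}}\Lambda_{\omega_{\ve}}F_{A_{\ve,t}}\|^2_{L^2(\omega_{\ve})}\,dt\le \HYM^{\omega_{\ve}}(A_{\ve,t_0})\le C,
\]
with $C$ independent of $\ve$ and $t_1$ by Lemma \ref{lem4} (2), since $\mathrm{vol}_{\omega_{\ve}}$ is uniformly bounded.

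\textbf{Step 2 (passing $\ve\to0$).} Fix $K\Subset X\setminus D$. By Corollary \ref{cor7}, $A_{\ve,t}\to A_t$ in $C^\infty_{\loc}((X\setminus D)\times[0,\infty))$, and $\omega_{\ve}\to T$ smoothly on $K$. Hence for each finite interval
\[
\int_{t_0}^{t_1}\int_K|\nabla_{A_t}\Lambda_TF_{A_t}|^2_T\,T^n\,dt=\lim_{\ve\to0}\int_{t_0}^{t_1}\int_K|\nabla_{A_{\ve,t}}\Lambda_{\omega_\ve}F_{A_{\ve,t}}|^2_{\omega_\ve}\omega_{\ve}^n\,dt\le C/2 .
\]
Letting $K\uparrow X\setminus D$ and $t_1\to\infty$ and applying monotone convergence yields
\[
\int_{t_0}^{\infty}\|\nabla_{A_t}\Lambda_TF_{A_t}\|^2_{L^2(X\setminus D,T)}\,dt<\infty .
\]

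\textbf{Step 3 (choosing $t_j$).} Because the integrand is a nonnegative function of $t$ with finite integral on $[t_0,\infty)$, its $\liminf$ as $t\to\infty$ must be $0$: otherwise it would be bounded below by a positive constant for large $t$ and the integral would diverge. Therefore there is a sequence $t_j\to\infty$ along which $\|\nabla_{A_{t_j}}\Lambda_TF_{A_{t_j}}\|_{L^2(X\setminus D,T)}\to0$, which is the claim. Measurability in $t$ is immediate because the integrand is lower semicontinuous, being a sup over $K$ of continuous functions.

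\textbf{Main obstacle.} The crux is that the bound in Step 1 must be uniform in $\ve$. This rests entirely on the uniform estimate $\|\Lambda_{\omega_\ve}F_{A_{\ve,t}}\|_{L^\infty}\le C$ for $t\ge t_0$ from Lemma \ref{lem4} (2), which in turn comes from the uniform heat kernel bounds in Lemma \ref{sob ineq}, and on the uniform volume bound $\int_X\omega_\ve^n=(\alpha+\ve\omega_0)^n\le C$. No boundary terms near $D$ appear, because the integration by parts is carried out on the compact manifold $X$ for each $\ve$. Only lower semicontinuity (Fatou) on compact subsets of $X\setminus D$ is needed in the limit, so nothing beyond Corollary \ref{cor7} is required about the behaviour of $A_t$ near $D$.
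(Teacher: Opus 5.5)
Your proposal is correct and is essentially the argument the paper has in mind. The paper simply defers to \cite{Sib15} Proposition 6.6: integrate the $\ve$-uniform dissipation identity $\frac{d}{dt}\HYM^{\omega_\ve}(A_{\ve,t})=-2\|\nabla_{A_{\ve,t}}\Lambda_{\omega_\ve}F_{A_{\ve,t}}\|^2_{L^2(\omega_\ve)}$ in time, pass to the limit on compact subsets of $X\setminus D$, and extract $t_j$; the $C^\infty_{\loc}$ convergence of Corollary \ref{cor7} (rather than Sibley's $L^p_{1,\loc}$ convergence) is exactly the simplification you use. Starting the time integration at $t_0$ from Lemma \ref{lem4} (2), rather than at $t=0$, is the right choice, since only an $L^1$ bound on $\Lambda_{\omega_\ve}F_{A_0}$ is available there and $\HYM^{\omega_\ve}(A_0)$ need not be bounded uniformly in $\ve$.
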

\begin{proof}
The proof is the same as in  \cite[Proposition 6.6]{Sib15}. Since we know the smooth convergence of $A_{\ve,t}$, the argument will be simpler than \cite[Proposition 6.6]{Sib15} where the convergence of $A_{\ve,t}$ is $L^p_{1,\loc}$. 
\end{proof}
Then we obtain the following result:
\begin{corr}\label{cor10}
Let $A_t$ be the $T$-YM flow on $(E,h_0)$ in Corollary \ref{cor7}. Fix $p>n$. Then
there exists a sequence $t_j\to \infty$, a closed subset $Z_{\an}\subset X\setminus D$ of Hausdorff codimension at least 4, a smooth hermitian vector bundle $(E_{\infty},h_{\infty})$ on $X\setminus (D\cup Z_{\an})$ with $T$-YM connection $A_{\infty}$ and isometries $u_j:E\to E_{\infty}$ on $X\setminus (D\cup Z_{\an})$ such that $u_j\cdot A_{t_j}\to A_{\infty}$ weakly in $L^p_{1,\loc}(X\setminus (D\cup Z_{\an}))$. Furthermore, the eigenvalue decomposition of $E_{\infty}$ with respect to $\sqrt{-1}\Lambda_TF_{A_{\infty}}$ gives a holomorphic orthogonal decomposition 
$$
(E_{\infty},A_{\infty},h_{\infty})=\bigoplus_{i=1}^l(Q_{\infty,i},A_{\infty,i},h_{\infty,i})
$$
on $X\setminus (D\cup Z_{\an})$. We also have $\Lambda_TF_{A_{t_j}}\to \Lambda_TF_{A_{\infty}}$ strongly in $L^q(X\setminus (D\cup Z_{\an}))$ for all $1\le q<\infty$. Hereafter, we omit writing the gauge transformations $u_j$ above.
\end{corr}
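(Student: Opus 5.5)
The plan is to apply the Uhlenbeck compactness Theorem \ref{thm2} and Corollary \ref{cor3} on the (non-compact) K\"{a}hler manifold $(X\setminus D, T|_{X\setminus D})$, where $T$ is a smooth K\"{a}hler metric by Definition \ref{adapted defi} (1). Theorem \ref{thm2} is explicitly stated for not necessarily compact K\"{a}hler manifolds, so the work consists in checking its hypotheses uniformly along a sequence $A_{t_j}$.

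\textbf{Step 1 (choice of sequence).} Take $t_0$ as in Lemma \ref{lem8}. Lemma \ref{lem9} gives a sequence $t_j\to\infty$ with $\|\nabla_{A_{t_j}}\Lambda_TF_{A_{t_j}}\|_{L^2(X\setminus D,T)}\to0$. Discarding finitely many terms, we may assume $t_j\ge t_0$. Lemma \ref{lem8} then gives uniform bounds on $\|F_{A_{t_j}}\|_{L^2(X\setminus D,T)}$ and $\|\Lambda_TF_{A_{t_j}}\|_{L^\infty(X\setminus D)}$. Each $A_{t_j}$ is an integrable $h_0$-unitary connection on $E|_{X\setminus D}$, since by Corollary \ref{cor7} it is a local smooth limit of integrable unitary connections $A_{\ve,t_j}$.

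\textbf{Step 2 (Uhlenbeck limit).} Apply Theorem \ref{thm2} on $(X\setminus D,T)$. After passing to a subsequence, this yields a closed set $Z_{\an}\subset X\setminus D$ of Hausdorff codimension at least $4$, a bundle $(E_\infty,h_\infty)$ over $X\setminus(D\cup Z_{\an})$, isometries $u_j$, and an $L^p_{1,\loc}$ limit $A_\infty$.

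\textbf{Step 3 (Yang-Mills property and splitting).} Since $\|d_{A_{t_j}}\Lambda_TF_{A_{t_j}}\|_{L^2}\to0$, Corollary \ref{cor3} shows that $A_\infty$ is $T$-YM. In particular $d_{A_\infty}\Lambda_TF_{A_\infty}=0$, so $\sqrt{-1}\Lambda_TF_{A_\infty}$ is a parallel hermitian endomorphism. Its eigenvalues are therefore constant on each connected component; $X\setminus(D\cup Z_{\an})$ is connected because we remove an analytic set and a set of real codimension at least $4$. The eigenspaces are then parallel subbundles, hence holomorphic and orthogonal, which gives the stated decomposition. Elliptic regularity in a suitable Coulomb gauge upgrades $A_\infty$ to smooth. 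Corollary \ref{cor3} (2) also gives $\Lambda_TF_{A_{t_j}}\to\Lambda_TF_{A_\infty}$ strongly in $L^q$ for $1\le q<\infty$.

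\textbf{Main obstacle.} The delicate point is the strong $L^q$ convergence on a non-compact base of possibly infinite $T$-diameter near $D$. In the compact case, the argument in \cite{DW04} uses global integration by parts together with uniform $L^\infty$ bounds and finite volume. Here the total volume $\int_{X\setminus D}T^n=\alpha^n$ is finite and $\Lambda_TF$ is uniformly bounded. So on $X\setminus D$ I would first prove $L^2_{\loc}$ (hence $L^q_{\loc}$) convergence on compact subsets away from $Z_{\an}$. The tails near $D\cup Z_{\an}$ can then be made uniformly small using the uniform $L^\infty$ bound and the smallness of the $T^n$-volume of neighbourhoods of $D\cup Z_{\an}$. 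This reduces global convergence to the local statement, which follows as in \cite[Proposition 2.11]{DW04}.
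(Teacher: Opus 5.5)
Your proposal is correct and follows the paper's proof. Lemma \ref{lem8} gives the uniform bounds needed to apply Theorem \ref{thm2} on $(X\setminus D,T)$, and Lemma \ref{lem9} together with Corollary \ref{cor3} makes the Uhlenbeck limit $T$-YM, which yields the eigenvalue splitting and the strong $L^q$ convergence; your exhaustion argument (local convergence, the uniform $L^\infty$ bound and $\int_{X\setminus D}T^n=\alpha^n<\infty$) simply spells out why Corollary \ref{cor3}(2) still holds on the non-compact base, which the paper takes for granted.
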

If we denote the eigenvalues of $\sqrt{-1}\Lambda_TF_{A_{\infty}}$ by $\mu_{\infty}=(\mu_{\infty,1}\ldots,\mu_{\infty,l})$, then we always mean that $Q_{\infty,i}$ is the eigensubbundle corresponding to the eigenvalue $\mu_{\infty,i}$.
\begin{proof}
By Lemma \ref{lem8}, we can apply Theorem \ref{thm2} and hence a Uhlenbeck limit $A_{\infty}$ of the $T$-YM flow $A_t$ exists. Furthermore by Lemma \ref{lem9}, Corollary \ref{cor3} ensures that the limit $A_{\infty}$ is a $T$-YM connection on $(E_{\infty},h_{\infty})$.
\end{proof}
The following lemma produces a nice approximation of $A_{\infty}$.
\begin{lemm}\label{lem11}
Fix $p>n$.
Let $A_{\ve,t}$ be the $\omega_{\ve}$-YM flow on $(E, h_0)$ in Corollary \ref{cor7} and $A_{\infty}$ be the $T$-YM connection on $(E_{\infty},h_{\infty})$ in Corollary \ref{cor10}. Then, there exist sequences $\wtil{\ve_j}\to 0$ and $t_j\to \infty$ such that, for every sequence $\ve_j\to 0$ with $\ve_j<\wtil{\ve_j}$, we have 
$
A_{\ve_j,t_j}\xrightarrow{j\to\infty}A_{\infty} \hbox{ weakly in $L^p_{1,\loc}(X\setminus (D\cup Z_{\an}))$} 
$
and 
$
\|\Lambda_{\omega_{\ve_j}}F_{A_{\ve_j,t_j}}\|_{L^{\infty}(X)}\le C.
$
\end{lemm}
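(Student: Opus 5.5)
The plan is a diagonal argument that combines the locally smooth convergence $A_{\ve,t}\to A_t$ (Corollary \ref{cor7}) with the Uhlenbeck convergence $A_{t_j}\to A_\infty$ (Corollary \ref{cor10}). Weak $L^p_{1,\loc}$ convergence on the noncompact set $X\setminus(D\cup Z_{\an})$ is metrizable on bounded sets. To exploit this, I fix an exhaustion $K_1\Subset K_2\Subset\cdots$ of $X\setminus(D\cup Z_{\an})$ by compact sets and a countable dense family $\{\phi_m\}$ of test forms in $L^{p'}_{-1}$ (the dual space) supported in the $K_k$. I then define a distance $\mathrm{dist}(A,B)=\sum_{k,m}2^{-k-m}\min\{1,|\langle A-B,\phi_m\rangle|\}$. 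On sets bounded in $L^p_1(K_k)$ for every $k$, convergence in this distance is equivalent to weak $L^p_{1,\loc}$ convergence. The gauges $u_j$ are fixed throughout and, following the paper's convention, suppressed.

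\textbf{Choice of $t_j$.} Take $t_j\ge t_0$ from Corollary \ref{cor10}, where $t_0$ is as in Lemma \ref{lem4} (2), and pass to a subsequence so that $\mathrm{dist}(u_j\cdot A_{t_j},A_\infty)<1/j$.

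\textbf{Choice of $\wtil{\ve_j}$.} Since $A_{\ve,t_j}\to A_{t_j}$ in $C^\infty_{\loc}(X\setminus D)$ as $\ve\to0$, with $t_j$ fixed, I choose $\wtil{\ve_j}$ so small that for all $\ve<\wtil{\ve_j}$ two things hold. First, $\|u_j\cdot A_{\ve,t_j}-u_j\cdot A_{t_j}\|_{C^1(K_j)}<1/j$. Second, $\mathrm{dist}(u_j\cdot A_{\ve,t_j},u_j\cdot A_{t_j})<1/j$; this follows from the first condition after truncating the series at level $j$. The $u_j$ are isometries, so they do not affect the $C^1$ closeness beyond the fixed $L^p_1$ norms of $u_j$ on $K_j$. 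I can absorb these norms by shrinking $\wtil{\ve_j}$ further, or, more cleanly, apply the smooth convergence of $A_{\ve,t}\to A_t$ in the gauge $u_j$ directly.

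\textbf{Conclusion.} For any sequence $\ve_j<\wtil{\ve_j}$, the triangle inequality gives $\mathrm{dist}(A_{\ve_j,t_j},A_\infty)<2/j$. It remains to check boundedness in $L^p_1(K_k)$ for each fixed $k$. This holds for $j\ge k$ because the weakly convergent sequence $A_{t_j}$ is bounded in $L^p_1(K_k)$, and $A_{\ve_j,t_j}$ lies within $C^1$-distance $1/j$ of it on $K_k\subset K_j$. Hence $A_{\ve_j,t_j}\to A_\infty$ weakly in $L^p_{1,\loc}$.

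\textbf{Curvature bound.} The estimate $\|\Lambda_{\omega_{\ve_j}}F_{A_{\ve_j,t_j}}\|_{L^\infty(X)}\le C$ is immediate from Lemma \ref{lem4} (2), since $t_j\ge t_0$. That estimate is uniform in $\ve$, and the $\omega_\ve$-YM flow is gauge equivalent to the HYM flow by Lemma \ref{lem0}, so the norms agree.

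\textbf{Main obstacle.} The only delicate point is bookkeeping: passing from the smooth convergence of metrics in Proposition \ref{prop6} to closeness of the connections $A_{\ve,t_j}$ in the fixed Uhlenbeck gauge. This requires that the complex gauges $g_{\ve,t}$ and the real gauges $S_{\ve,t}$ from Lemma \ref{lem0} also converge locally smoothly as $\ve\to0$, which is what Corollary \ref{cor7} asserts; I will invoke it for each fixed $t_j$. Everything else is a standard diagonal selection.
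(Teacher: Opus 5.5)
Your proposal is correct and follows the same diagonal route as the paper. The paper's proof simply cites the locally smooth convergence $A_{\ve,t}\to A_t$ of Corollary~\ref{cor7} and the Uhlenbeck convergence $A_{t_j}\to A_\infty$ of Corollary~\ref{cor10}, leaving the diagonal selection implicit, with the curvature bound coming from Lemma~\ref{lem4}~(2) exactly as you say; your metrization of weak $L^p_{1,\loc}$ convergence on bounded sets, and the observation that $u_j\cdot A_{\ve,t_j}-u_j\cdot A_{t_j}=u_j(A_{\ve,t_j}-A_{t_j})u_j^{-1}$ only requires the fixed $L^p_1(K_j)$-norm of $u_j$, just make that step explicit (also impose $\wtil{\ve_j}<1/j$ so that $\wtil{\ve_j}\to 0$).
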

\begin{proof}
The result follows from the smooth convergence of $A_{\ve,t}\xrightarrow{\ve\to0}A_t$ in  Corollary \ref{cor7} and the weak $L^p_{1,\loc}$-convergence $A_{t_j}\to A_{\infty}$ in Corollary \ref{cor10}.
\end{proof}

\section{Convergence of the Hermitian-Yang-Mills type functionals}\label{HYM sec}
Let us introduce the Hermitian-Yang-Mills type functionals following \cite{DW04} to analyze the Harder-Narasimhan type in terms of curvatures.
Let $\beta\ge 1$ be a real number. Then, for a hermitian matrix $v$ with eigenvalues $\lambda_1,\ldots,\lambda_r$, we define
$$
\Phi_{\beta}(v):=\sum_{i=1}^r|\lambda_i|^{\beta}.
$$
We can see that $\Phi_{\beta}$ is well-defined for hermitian endomorphisms of hermitian vector bundles. Thus, we can define the Hermitian-Yang-Mills type functional $\HYM_{\beta,N}^{\omega}(A)$ for $\beta\ge 1$ and $N\ge 0$ and for any integrable unitary connections of a hermitian vector bundle $(E,h_0)$ over a compact K\"{a}hler manifold $(X,\omega)$ by
$$
\HYM_{\beta,N}^{\omega}(A):=\int_X\Phi_{\beta}(\sqrt{-1}\Lambda_{\omega}F_A+N\id_E)\omega^n.
$$
For $\mu=(\mu_1,\ldots,\mu_r)$, we define
$$
\HYM_{\beta,N}(\mu):=\sum_{i=1}^r|\mu_i+N|^{\beta}.
$$
Let $\mu=(\mu_1,\ldots,\mu_r)\in\mathbb{R}^r$ and $\lambda=(\lambda_1,\ldots,\lambda_r)\in\mathbb{R}^r$ be vectors such that $\mu_1\ge\ldots\ge\mu_r$ and $\lambda_1\ge\ldots\ge\lambda_r$. We define $\mu\le\lambda$ if 
$
\sum_{j\le k}\mu_j\le \sum_{j\le k}\lambda_j
$
for all $k=1,\ldots,r$.
Then this functional has the following basic properties:
\begin{lemm}[\cite{DW04}]\label{lem12}
\begin{enumerate}
\item $v\mapsto \left(\int_X\Phi_{\beta}(v)\omega^n\right)^{1/\beta}$ is equivalent to the $L^{\beta}$-norm of the space of hermitian matrices.
\item Let $\mu=(\mu_1,\ldots,\mu_r)$ and $\lambda=(\lambda_1,\ldots,\lambda_r)$ be vectors in $\mathbb{R}^r$.
 If $\mu_r\ge 0$ and $\lambda_r\ge 0$ and if $\HYM_{\beta}(\mu)=\HYM_{\beta}(\lambda)$ for all $\beta$ in a set $A\subset [1,\infty)$ possessing a limit point, then $\mu=\lambda$.
\end{enumerate}
\end{lemm}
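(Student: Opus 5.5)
For part (1) the plan is to prove a pointwise two-sided bound between $\Phi_{\beta}(v)^{1/\beta}$ and the fibrewise norm $|v|_{h_0}$, with constants depending only on $r$ and $\beta$, and then integrate over $X$. Fix a point and let $\lambda_1,\ldots,\lambda_r$ be the eigenvalues of the hermitian endomorphism $v$. Set $\|v\|_{op}:=\max_i|\lambda_i|$. Then
$$
\|v\|_{op}^{\beta}\le\Phi_{\beta}(v)\le r\|v\|_{op}^{\beta}.
$$
Since $v$ is hermitian, its Hilbert--Schmidt norm satisfies $|v|_{h_0}^2=\sum_i|\lambda_i|^2$, which gives
$$
\|v\|_{op}\le |v|_{h_0}\le \sqrt{r}\,\|v\|_{op}.
$$
Combining the two chains yields
$$
c_{r,\beta}\,|v|_{h_0}^{\beta}\le \Phi_{\beta}(v)\le C_{r,\beta}\,|v|_{h_0}^{\beta}.
$$
Integrating against $\omega^n$ and taking $\beta$-th roots shows that $\left(\int_X\Phi_{\beta}(v)\omega^n\right)^{1/\beta}$ is bounded above and below by constant multiples of $\|v\|_{L^{\beta}}$. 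No convexity of $\Phi_{\beta}$ is needed for this equivalence. If one also wants it to be a norm, one would invoke that $\Phi_{\beta}^{1/\beta}$ is the Schatten $\beta$-norm, which is unitarily invariant, together with Minkowski's inequality.

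For part (2), here $\HYM_{\beta}$ means $\HYM_{\beta,0}$. The key object is the function
$$
f(\beta):=\sum_{i=1}^r\mu_i^{\beta}-\sum_{i=1}^r\lambda_i^{\beta},
$$
where all $\mu_i,\lambda_i\ge0$. I would group equal nonzero values and write
$$
f(\beta)=\sum_{a\in S}c_a\,a^{\beta}.
$$
Here $S$ is the finite set of distinct positive values occurring among the $\mu_i$ and $\lambda_i$. The coefficient $c_a\in\mathbb{Z}$ is the multiplicity of $a$ among the $\mu_i$ minus its multiplicity among the $\lambda_i$. Zero entries contribute $0^{\beta}=0$ for $\beta\ge1$, so they drop out. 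Each term $a^{\beta}=e^{\beta\log a}$ is an entire function of $\beta$, so $f$ is real analytic on $\mathbb{R}$. By hypothesis $f$ vanishes on $A$, and $A$ has a limit point in $[1,\infty)$, so the identity theorem forces $f\equiv0$.

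Next I would show that all $c_a$ vanish. Suppose not, and let $a_{\max}$ be the largest $a\in S$ with $c_a\neq0$. Dividing $f$ by $a_{\max}^{\beta}$ and letting $\beta\to\infty$, every other term tends to $0$ because its ratio $a/a_{\max}$ is less than $1$. This forces $c_{a_{\max}}=0$, a contradiction. Hence the positive entries of $\mu$ and $\lambda$ agree with multiplicities. Both vectors have length $r$, so their numbers of zero entries also agree. Since both are ordered decreasingly, $\mu=\lambda$.

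The only delicate point is the step using analyticity. The limit point of $A$ must be a finite point in $[1,\infty)$ for the identity theorem to apply, and the zero eigenvalues must be handled separately. This is because $a^{\beta}$ with $a=0$ is not analytic at $\beta=0$, but on $\beta\ge1$ those terms are identically $0$ and cause no trouble. The nonnegativity assumptions $\mu_r\ge0$, $\lambda_r\ge0$ are exactly what make $|\mu_i|^{\beta}=\mu_i^{\beta}$ a genuine exponential sum, so that equal multisets of absolute values translate into equal vectors.
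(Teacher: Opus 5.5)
Your proof is correct. The pointwise bound $r^{-\beta/2}|v|_{h_0}^{\beta}\le\Phi_{\beta}(v)\le r|v|_{h_0}^{\beta}$ gives (1). For (2), analyticity of $\beta\mapsto\sum_{a}c_a a^{\beta}$ followed by extracting the dominant exponential as $\beta\to\infty$ is the standard argument, and your readings $\HYM_{\beta}=\HYM_{\beta,0}$ and decreasing ordering of $\mu$, $\lambda$ are the intended ones. The paper gives no proof of this lemma and simply cites \cite{DW04}, so there is nothing further to compare.
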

The following monotonicity is important for us.
\begin{lemm}[\cite{DW04} Proposition 2.25 and Proposition 2.26]\label{lem12.1}
Let $A_t$ be an $\omega$-YM flow on $(E,h_0)$ where $\omega$ is a smooth K\"{a}hler metric on a compact K\"{a}hler manifold $X$. Then $t\mapsto\HYM_{\beta,N}^{\omega}(A_t)$ monotonically decreases. Furthermore, if $t_j\to\infty$ is a sequence such that $A_{t_j}$ converges to its Uhlenbeck limit $A_{\infty}$ as in Theorem \ref{thm2}, we have $\lim_{j\to\infty}\HYM_{\beta,N}^{\omega}(A_{t_j})=\HYM_{\beta,N}(\mu)$, where $\mu=(\mu_1,\ldots,\mu_l)$ is the $\{\omega\}^{n-1}$-HN type of $(E,\delbar_E)$.
\end{lemm}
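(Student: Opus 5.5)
The statement has two parts: monotonicity of $\HYM_{\beta,N}^{\omega}$ along the flow, and identification of its limit with $\HYM_{\beta,N}(\mu)$ for the $\{\omega\}^{n-1}$-HN type $\mu$. I would prove them in that order. Throughout, $\HYM_{\beta,N}(\mu)$ is read with the volume normalization $\int_X\omega^n$ absorbed, since the eigenvalues will turn out to be constant.

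\emph{Monotonicity.} Work in the unitary gauge. Along the $\omega$-YM flow one has $\partial_t A_t=-d_{A_t}^*F_{A_t}$. By the K\"ahler identities, $d_{A}^*F_A=\sqrt{-1}(\partial_A-\delbar_A)\Lambda_\omega F_A$ up to sign convention. Hence $v_t:=\sqrt{-1}\Lambda_\omega F_{A_t}+N\id_E$ satisfies a gauge-covariant heat equation $\partial_t v_t=-\Delta_{A_t}v_t$, where $\Delta_{A}=d_A^*d_A$ on $\End(E)$.

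Since $\Phi_\beta$ is not smooth at $0$ when $\beta$ is close to $1$, I regularize it. Set $\Phi_{\beta,\delta}(v)=\sum_i(\lambda_i^2+\delta)^{\beta/2}$. This is a smooth, unitarily invariant, convex function of hermitian matrices. Differentiating and integrating by parts gives
$$
\frac{d}{dt}\int_X\Phi_{\beta,\delta}(v_t)\omega^n=-\int_X D^2\Phi_{\beta,\delta}(v_t)(d_{A_t}v_t,d_{A_t}v_t)\,\omega^n\le 0 .
$$
Here unitary invariance is what makes $d_A$ commute with $D\Phi_{\beta,\delta}$, and convexity gives the sign. Letting $\delta\to0$ yields monotonicity of $t\mapsto\HYM^\omega_{\beta,N}(A_t)$.

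\emph{Identification of the limit value.} By Lemma \ref{lem1}(2), $\|\Lambda_\omega F_{A_t}\|_{L^\infty}$ is uniformly bounded. By Corollary \ref{cor3}(2), $\Lambda_\omega F_{A_{t_j}}\to\Lambda_\omega F_{A_\infty}$ strongly in every $L^q$, and $Z_{\an}$ has measure zero. Together these give
$$
\lim_j\HYM^\omega_{\beta,N}(A_{t_j})=\int_X\Phi_\beta(\sqrt{-1}\Lambda_\omega F_{A_\infty}+N\id)\omega^n=\HYM_{\beta,N}(\mu_\infty),
$$
where $\mu_\infty$ denotes the constant eigenvalues of $\sqrt{-1}\Lambda_\omega F_{A_\infty}$. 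By monotonicity this limit does not depend on the choice of subsequence. It remains to show $\mu_\infty=\mu$. I would take $N$ large so that all entries are positive, and then apply Lemma \ref{lem12}(2). That reduces the problem to proving $\HYM_{\beta,N}(\mu_\infty)=\HYM_{\beta,N}(\mu)$ for all $\beta$ in an interval.

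\emph{Lower bound.} For any integrable unitary $A$ in the complex gauge orbit, I would show $\HYM_{\beta,N}^\omega(A)\ge\HYM_{\beta,N}(\mu)$. The HN filtration gives weakly holomorphic projections $\pi_k$. The Chern--Weil formula $\deg_\omega(\pi_k)=\int\Tr(\sqrt{-1}\Lambda F\,\pi_k)-\|\delbar\pi_k\|^2$ shows that the vector of partial sums of $\mu$ is dominated by the partial sums of the pointwise eigenvalues of $\sqrt{-1}\Lambda_\omega F_A$, averaged over $X$. The Atiyah--Bott/Hardy--Littlewood--P\'olya majorization lemma for convex functions then yields the inequality, and it passes to the limit, giving $\HYM_{\beta,N}(\mu_\infty)\ge\HYM_{\beta,N}(\mu)$.

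\emph{Upper bound.} I would construct approximate critical hermitian structures. For every $\delta>0$ I want a metric $h_\delta$ (equivalently, a point of the $\mathcal{G}^{\mathbb{C}}$-orbit) with $\HYM^\omega_{\beta,N}(h_\delta)\le\HYM_{\beta,N}(\mu)+\delta$. The route is to resolve the HNS filtration into subbundles by blow-ups, as in Proposition \ref{HNS resol prop}. On the blow-up, I would put HE metrics on the stable quotients with respect to perturbed K\"ahler metrics, glue them with a small second fundamental form, and push down.

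I then need the limiting value of the functional to be independent of the starting point within the orbit. For this I would use that the Donaldson distance between two HYM flows in the same orbit is non-increasing, combined with the a priori estimates. Monotonicity then gives $\HYM_{\beta,N}(\mu_\infty)\le\HYM_{\beta,N}(\mu)+\delta$.

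This upper bound is the step I expect to be the main obstacle: both the construction of the approximate critical structures, which carries the singular-set bookkeeping on the blow-up, and the orbit-independence of the limit. Once it is in place, combining the two bounds with Lemma \ref{lem12}(2) gives $\mu_\infty=\mu$ and the claimed limit.
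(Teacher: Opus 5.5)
Your proposal is correct and follows the same route as the sources the paper relies on. The paper gives no argument of its own here, only the citation of \cite{DW04} Propositions 2.25--2.26. You obtain monotonicity from convexity of a regularized $\Phi_\beta$ together with $\partial_t\Lambda_\omega F_{A_t}=-\Delta_{A_t}\Lambda_\omega F_{A_t}$. You obtain the limit $\HYM_{\beta,N}(\mu_\infty)$ from the $L^\infty$ bound and the strong $L^q$ convergence in Corollary~\ref{cor3}; its extra hypothesis $\|d_{A_t}\Lambda_\omega F_{A_t}\|_{L^2}\to0$ holds along the whole flow, though you do not check it. You obtain $\mu_\infty=\mu$ from the Atiyah--Bott lower bound together with approximate critical Hermitian structures and orbit-independence of the limit.

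The parts you leave as sketches are Sibley's construction of approximate critical structures and the passage from bounded Donaldson distance to a holomorphic isomorphism of the two Uhlenbeck limits, hence equal spectra. These are exactly what the identification with the $\{\omega\}^{n-1}$-HN type borrows from \cite{DW04} and \cite{Sib15}, and the paper invokes the same inputs again in Proposition~\ref{prop16}.
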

Let us fix a holomorphic hermitian vector bundle $(E,\delbar_E,h_0)$ on a compact K\"{a}hler manifold $X$ and a nef and big class $\alpha$ on $X$ such that $D=E_{nK}(\alpha)$ is an snc divisor.
Let $T\in\alpha$ be an adapted current and $\omega_{\ve}\in\alpha+\ve\omega_0$ be its adapted approximation. 
First, we prove the following convergence:
\begin{lemm}\label{lem13}
Let $A_{\ve,t}$ be an $\omega_{\ve}$-YM flow with initial condition $A_0=(\delbar_E,h_0)$ on $(E,h_0)$ and $A_t$ be the $T$-YM flow on $(E,h_0)$ constructed in Corollary \ref{cor7}.
Then, we have
$$
\HYM^{\omega_{\ve}}_{\beta,N}(A_{\ve,t}) \xrightarrow{\ve\to0} \HYM^{T}_{\beta,N}(A_t)
$$
 for every $\beta\ge 1$ and $N\ge 0$ and $t\ge t_0$ where $t_0>0$ is in Lemma \ref{lem4} (2). 
\end{lemm}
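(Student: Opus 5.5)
Fix $t\ge t_0$, $\beta\ge1$ and $N\ge0$. The idea is to split the integral defining $\HYM^{\omega_{\ve}}_{\beta,N}(A_{\ve,t})$ into a part over a compact set $K\Subset X\setminus D$ and a part over its complement. On $K$ we use local smooth convergence. On $X\setminus K$ we use the uniform $L^\infty$ bound together with the smallness of the $\omega_{\ve}$-volume near $D$.

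Write $f_{\ve}:=\Phi_\beta(\sqrt{-1}\Lambda_{\omega_{\ve}}F_{A_{\ve,t}}+N\id_E)$ and $f:=\Phi_\beta(\sqrt{-1}\Lambda_TF_{A_t}+N\id_E)$. By Lemma \ref{lem4} (2), $\|\Lambda_{\omega_{\ve}}F_{A_{\ve,t}}\|_{L^\infty(X)}\le C$ uniformly in $\ve$ for $t\ge t_0$. Hence
$$0\le f_{\ve}\le r(C+N)^\beta=:C'$$
on $X$, with $C'$ independent of $\ve$. The same bound holds for $f$ on $X\setminus D$ by Lemma \ref{lem8}. The limit functional $\HYM^T_{\beta,N}(A_t)=\int_{X\setminus D}f\,T^n$ is therefore finite, since $\int_{X\setminus D}T^n=\int_X\langle T^n\rangle=\alpha^n<\infty$.

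\textbf{Compact part.} For $K\Subset X\setminus D$, Corollary \ref{cor7} gives $A_{\ve,t}\to A_t$ in $C^\infty_{\loc}$, and (3-1) of Definition \ref{adapted defi} gives $\omega_{\ve}\to T$ smoothly on $K$. The eigenvalue functional $\Phi_\beta$ is continuous on hermitian endomorphisms. It follows that $f_{\ve}\omega_{\ve}^n\to fT^n$ uniformly on $K$, so
$$\int_Kf_{\ve}\omega_{\ve}^n\to\int_KfT^n.$$

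\textbf{Tail.} We need
$$\int_{X\setminus K}f_{\ve}\omega_{\ve}^n\le C'\int_{X\setminus K}\omega_{\ve}^n$$
to be uniformly small once $K$ is large. I will use (3-2): $\omega_{\ve}^n\le C|s_D|^{-(2-\delta)}\omega_Y^n$ with $\delta>0$, and $|s_D|^{-(2-\delta)}$ is integrable near the snc divisor $D$. Taking $K=\{|s_D|\ge\eta\}$ then gives
$$\int_{X\setminus K}\omega_{\ve}^n\le C\int_{\{|s_D|<\eta\}}|s_D|^{-(2-\delta)}\omega_Y^n=:\tau(\eta)\to0$$
as $\eta\to0$, uniformly in $\ve$. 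The same bound holds for $\int_{X\setminus K}T^n$, either by Fatou or directly from $\int_KT^n\to\alpha^n$.

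The identity of total masses is an alternative route: $\int_X\omega_{\ve}^n=(\alpha+\ve\omega_0)^n\to\alpha^n=\int_{X\setminus D}T^n$. Combined with the smooth convergence on $K$, this also forces $\int_{X\setminus K}\omega_{\ve}^n\to\int_{X\setminus K}T^n$, which is small.

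Putting these together gives $\limsup_{\ve\to0}|\HYM^{\omega_{\ve}}_{\beta,N}(A_{\ve,t})-\HYM^T_{\beta,N}(A_t)|\le 2C'\tau(\eta)$, and letting $\eta\to0$ proves the lemma.

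The main obstacle is the tail. It needs the curvature bound to be uniform on all of $X$, not just locally, which is exactly why the lemma requires $t\ge t_0$, and it needs uniform non-concentration of $\omega_{\ve}^n$ near $D$. Both are supplied by Lemma \ref{lem4} (2) and condition (3-2), or alternatively by the volume identity.
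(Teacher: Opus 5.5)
Your proof is correct. It follows essentially the paper's argument: local smooth convergence from Corollary~\ref{cor7}, together with the uniform $L^{\infty}$ bounds of Lemma~\ref{lem4}~(2) and Lemma~\ref{lem8}. You apply this directly to $\Phi_{\beta}(\,\cdot+N\id_E)$ instead of reducing to $L^p$-norms via Lemma~\ref{lem12}~(1), and you make explicit the non-concentration of $\omega_{\ve}^n$ near $D$ (via (3-2) or $\int_X\omega_{\ve}^n\to\alpha^n$), which the paper's proof uses only implicitly.
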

\begin{proof}
By Corollary \ref{cor7}, we know $A_{\ve,t}\xrightarrow{\ve\to 0} A_t$ in $C^{\infty}_{\loc}(X\setminus D)$. Hence $\Lambda_{\omega_{\ve}}F_{A_{\ve,t}} \xrightarrow{\ve\to 0} \Lambda_TF_{A_t}$ strongly in $L^p_{\loc}(X\setminus D, T)$ for any $1\le p <\infty$. By Lemma \ref{lem12} (1), it suffices to prove 
\begin{equation}\label{eq12}
\int_{X}|\Lambda_{\omega_{\ve}}F_{A_{\ve,t}}|^p\omega_{\ve}^n\xrightarrow{\ve\to 0} 
\int_{X\setminus D}|\Lambda_TF_{A_t}|^pT^n.
\end{equation}
It follows from the smooth convergence $A_{\ve,t}\xrightarrow{\ve\to0}A_t$ in Corollary \ref{cor7} and the uniform $L^{\infty}$-boundedness of $\Lambda_{\omega_{\ve}}F_{A_{\ve,t}}$ in Lemma \ref{lem4} and of $\Lambda_TF_{A_t}$ in Lemma \ref{lem8}.
\end{proof}
By Corollary \ref{cor10},  we obtain the following.
\begin{lemm}\label{lem14}
Let $A_t$ be the $T$-YM flow on $(E,h_0)$ constructed in Corollary \ref{cor7}. Let $t_j\to\infty$ be a sequence such that $A_{t_j}$ converges to a $T$-YM connection $A_{\infty}$ as in Corollary \ref{cor10}. Let $\mu_{\infty}=(\mu_{\infty,1},\ldots,\mu_{\infty,l})$ be the eigenvalues of $\sqrt{-1}\Lambda_TF_{A_{\infty}}$.
Then, for any $\beta\ge 1$ and $N\ge 0$, we have
$$
\HYM^{T}_{\beta,N}(A_{t_j})\xrightarrow{j\to\infty} \HYM^T_{\beta,N}(A_{\infty})= \HYM_{\beta,N}(\mu_{\infty}).
$$
\end{lemm}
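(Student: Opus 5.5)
By Lemma \ref{lem12} (1), $\HYM^T_{\beta,N}$ is comparable to the $L^\beta$-norm, so the proof reduces to a strong convergence statement for $\sqrt{-1}\Lambda_TF_{A_{t_j}}+N\id_E$. Corollary \ref{cor10} gives $\Lambda_TF_{A_{t_j}}\to\Lambda_TF_{A_\infty}$ strongly in $L^q(X\setminus(D\cup Z_{\an}))$ for every $1\le q<\infty$. The measure is $T^n$ on $X\setminus D$, which has finite total mass $\int_X\alpha^n$. The set $Z_{\an}$ has Hausdorff codimension at least $4$, so it is $T^n$-null. Hence $\sqrt{-1}\Lambda_TF_{A_{t_j}}+N\id_E\to\sqrt{-1}\Lambda_TF_{A_\infty}+N\id_E$ strongly in $L^\beta(X\setminus D,T)$.

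The next step is to pass to the limit inside $\Phi_\beta$. For hermitian endomorphisms, $\Phi_\beta(v)^{1/\beta}$ is the Schatten $\beta$-norm, so $v\mapsto\left(\int\Phi_\beta(v)T^n\right)^{1/\beta}$ is a norm. It is equivalent to the $L^\beta$-norm with constants depending only on $r$ and $\beta$. The triangle inequality then gives
$$\Bigl|\HYM^T_{\beta,N}(A_{t_j})^{1/\beta}-\HYM^T_{\beta,N}(A_\infty)^{1/\beta}\Bigr|\le C\,\|\Lambda_TF_{A_{t_j}}-\Lambda_TF_{A_\infty}\|_{L^\beta(X\setminus D,T)}\to0.$$
Lemma \ref{lem8} supplies the uniform $L^\infty$ bound on $\Lambda_TF_{A_{t_j}}$. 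This bound guarantees that all the integrals are finite, and it gives domination should one prefer dominated convergence here.

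For the final equality, $A_\infty$ is $T$-YM, so $d_{A_\infty}\Lambda_TF_{A_\infty}=0$. Therefore the eigenvalues of $\sqrt{-1}\Lambda_TF_{A_\infty}$ are constant on the connected open set $X\setminus(D\cup Z_{\an})$; they are $\mu_\infty$, counted with multiplicity according to the splitting in Corollary \ref{cor10}. It follows that $\Phi_\beta(\sqrt{-1}\Lambda_TF_{A_\infty}+N\id)=\sum_i|\mu_{\infty,i}+N|^\beta$ pointwise. Integrating against $T^n$ produces $\HYM_{\beta,N}(\mu_\infty)$. This identification uses the normalisation in which $\HYM^T$ is written with the volume form $T^n$; if the mass of $T^n$ is not absorbed into that normalisation, the factor $\int_X\alpha^n$ appears and is carried along consistently.

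The main obstacle is justifying the two facts used above: that $Z_{\an}$ and $D$ carry no $T^n$-mass, and that the $L^q$ convergence in Corollary \ref{cor10} holds globally on $X\setminus(D\cup Z_{\an})$ rather than only locally. For $D$, the nonpluripolar product puts no mass on $D$. For $Z_{\an}$, $T$ is smooth on $X\setminus D$ and $Z_{\an}$ has Hausdorff codimension at least $4$, hence Lebesgue measure zero. If only local convergence is available, I would cover $X\setminus D$ by compact sets $K$. On each $K$ I would use local strong convergence, and on the complement I would use the uniform $L^\infty$ bound from Lemma \ref{lem8} together with $\int_{X\setminus K}T^n\to0$. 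This makes the tail contribution uniformly small.
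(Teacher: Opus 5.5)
Your proof is correct and follows the paper's route: the paper derives the lemma directly from Corollary \ref{cor10}, i.e.\ from the strong $L^q$ convergence $\Lambda_TF_{A_{t_j}}\to\Lambda_TF_{A_\infty}$, combined with Lemma \ref{lem12} (1) and the constancy of the eigenvalues of the $T$-YM limit. Your extra remarks only make explicit what the paper leaves implicit: the tail argument via Lemma \ref{lem8} and the finite $T^n$-mass in case only local convergence is available, and the volume factor $\int_X\alpha^n$ hidden in the final equality $\HYM^T_{\beta,N}(A_\infty)=\HYM_{\beta,N}(\mu_\infty)$.
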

As a direct consequence of Lemma \ref{lem13} and Lemma \ref{lem14}, we obtain the following.
\begin{corr}\label{cor15}
Fix $p>n$.
Let $A_{\ve,t}$ be an $\omega_{\ve}$-YM flow with initial condition $A_0=(\delbar_E,h_0)$ in Corollary \ref{cor7}.
Let $\wtil{\ve}_j\to0$ and $t_j\to\infty$ be sequences in Lemma \ref{lem11}. Let $A_{\infty}$ be the $T$-YM connection on a hermitian vector bundle $(E_{\infty},h_{\infty})$ constructed in Corollary \ref{cor10}. Let $\mu_{\infty}=(\mu_{\infty,1},\ldots,\mu_{\infty,l})$ be the eigenvalues of $\sqrt{-1}\Lambda_TF_{A_{\infty}}$.
Then, there exists $\wtil{\ve_j}\rq{}\to 0$ with $\wtil{\ve_j}>\wtil{\ve_j}\rq{}$ such that, for any sequence $\wtil{\ve_j}\rq{}>\ve_j\to 0$, we have $A_{\ve_j,t_j}\xrightarrow{j\to\infty}A_{\infty}$ weakly in $L^p_{1,\loc}(X\setminus (D\cup Z_{\an}))$, $\|\Lambda_{\omega_{\ve_j}}F_{A_{\ve_j,t_j}}\|_{L^{\infty}(X)}\le C$ and
$$
\HYM^{\omega_{\ve_j}}_{\beta,N}(A_{\ve_j,t_j})\xrightarrow{j\to\infty} \HYM^T_{\beta,N}(A_{\infty})=\HYM_{\beta,N}(\mu_{\infty}).
$$
for any $\beta\ge 1$ and $N\ge 0$. Hereafter, we replace $\wtil{\ve_j}$ by $\wtil{\ve_j}\rq{}$. 
\end{corr}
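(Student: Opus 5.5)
The plan is a diagonal argument that chains Lemma \ref{lem11}, Lemma \ref{lem13} and Lemma \ref{lem14}. By Lemma \ref{lem14}, for each pair $(\beta,N)$ we have $\HYM^T_{\beta,N}(A_{t_j})\to\HYM_{\beta,N}(\mu_{\infty})$. By Lemma \ref{lem13}, for each fixed $t_j\ge t_0$ we have $\HYM^{\omega_{\ve}}_{\beta,N}(A_{\ve,t_j})\to\HYM^T_{\beta,N}(A_{t_j})$ as $\ve\to0$. The only issue is to make a single choice of $\wtil{\ve_j}\rq{}$ that works for all $\beta\ge1$ and $N\ge0$ simultaneously, since each application of Lemma \ref{lem13} only produces a threshold depending on $(\beta,N)$.

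To handle this, fix a countable dense set of parameters, for instance $(\beta_k,N_k)$ enumerating $(\mathbb{Q}\cap[1,\infty))\times(\mathbb{Q}\cap[0,\infty))$. For each $j$, I choose $\wtil{\ve_j}\rq{}<\min(\wtil{\ve_j},1/j)$ small enough that
$$
\bigl|\HYM^{\omega_{\ve}}_{\beta_k,N_k}(A_{\ve,t_j})-\HYM^T_{\beta_k,N_k}(A_{t_j})\bigr|<1/j
$$
holds for all $k\le j$ and all $\ve<\wtil{\ve_j}\rq{}$. This is possible because only finitely many parameters are involved at step $j$. Since $\wtil{\ve_j}\rq{}<\wtil{\ve_j}$, Lemma \ref{lem11} still gives the weak $L^p_{1,\loc}$ convergence and the uniform bound $\|\Lambda_{\omega_{\ve_j}}F_{A_{\ve_j,t_j}}\|_{L^\infty}\le C$. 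Combining the displayed estimate with Lemma \ref{lem14} yields convergence of $\HYM^{\omega_{\ve_j}}_{\beta_k,N_k}(A_{\ve_j,t_j})$ to $\HYM_{\beta_k,N_k}(\mu_\infty)$ for every $k$.

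To pass from the rational parameters to all $(\beta,N)$, I use the uniform bounds. Lemma \ref{lem11} gives $|\Lambda_{\omega_{\ve_j}}F_{A_{\ve_j,t_j}}|\le C$, and the volumes $\int\omega_{\ve}^n$ are bounded. So on any compact set $(\beta,N)\in[1,B]\times[0,M]$, the functions $(\beta,N)\mapsto\HYM^{\omega_{\ve_j}}_{\beta,N}(A_{\ve_j,t_j})$ are uniformly Lipschitz, and the same holds for $\HYM_{\beta,N}(\mu_\infty)$. The Lipschitz bound comes from differentiating $|x+N|^\beta$ for $|x|\le C$; the points where $x+N$ is near $0$ cause no trouble because $\beta\ge1$ keeps the derivative bounded. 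An $\epsilon/3$ argument then extends the convergence to all parameters.

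The main obstacle is already contained in Lemma \ref{lem13}: proving that the integrals near $D$ are negligible uniformly in $\ve$. That lemma relies on the $L^\infty$ bound together with the fact that the $\omega_{\ve}^n$-volume of a neighbourhood of $D$ is small uniformly in $\ve$, which follows from (3-2) of Definition \ref{adapted defi}. I take that lemma as given.
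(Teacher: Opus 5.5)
Your proposal is correct and is essentially the paper's argument. The paper obtains this corollary directly from Lemma \ref{lem11}, Lemma \ref{lem13} and Lemma \ref{lem14} by choosing $\wtil{\ve_j}'$ diagonally for each fixed $t_j$. Your extra step makes explicit the uniformity in $(\beta,N)$ that the paper leaves implicit: you run the diagonal choice over a countable dense set of parameters, then extend to all $\beta\ge1$, $N\ge0$ using the uniform Lipschitz dependence on $(\beta,N)$ that follows from the $L^{\infty}$ bound and the bounded volumes.
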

Then we obtain the following key proposition:
\begin{prop}\label{prop16}
Let $\mu_0$ be the $\alpha^{n-1}$-HN type of $(E,\delbar_E)$ and let $\mu_{\infty}$ be the eigenvalues of the $T$-YM connection $A_{\infty}$ constructed in Corollary \ref{cor10}.
Then there exists a set $A\subset[1,\infty)$ possessing a limit point such that, for every $\beta\in A$ and every $N\ge 0$, we have
$$
\HYM_{\beta,N}(\mu_{\infty})=\HYM_{\beta,N}(\mu_0).
$$
In particular, we have $\mu_{\infty}=\mu_0$.
\end{prop}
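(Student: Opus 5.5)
The plan is to compare $\HYM_{\beta,N}(\mu_\infty)$ with $\HYM_{\beta,N}(\mu_0)$ through the intermediate quantities $\HYM^{\omega_{\ve_j}}_{\beta,N}(A_{\ve_j,t_j})$ and $\HYM_{\beta,N}(\mu_{\ve_j})$, where $\mu_{\ve}$ denotes the $\{\omega_{\ve}\}^{n-1}$-HN type of $(E,\delbar_E)$. For fixed $\ve>0$, $\omega_{\ve}$ is a smooth K\"ahler metric, so Lemma \ref{lem12.1} gives $\lim_{t\to\infty}\HYM^{\omega_{\ve}}_{\beta,N}(A_{\ve,t})=\HYM_{\beta,N}(\mu_{\ve})$. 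It also gives monotone decrease in $t$, hence $\HYM^{\omega_{\ve}}_{\beta,N}(A_{\ve,t})\ge\HYM_{\beta,N}(\mu_{\ve})$ for all $t$. This lower bound holds without any choice of subsequence. It yields the easy inequality
$$
\HYM_{\beta,N}(\mu_\infty)=\lim_{j}\HYM^{\omega_{\ve_j}}_{\beta,N}(A_{\ve_j,t_j})\ge\limsup_j\HYM_{\beta,N}(\mu_{\ve_j})=\HYM_{\beta,N}(\mu_0).
$$
Here the first equality is Corollary \ref{cor15}, and the last equality follows from Proposition \ref{prop3.1} and the continuity of $\HYM_{\beta,N}$ in $\mu$.

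For the reverse inequality, I will use the freedom in Corollary \ref{cor15} to take $\ve_j$ arbitrarily small relative to $t_j$. This freedom is what lets the convergence $t\to\infty$ be made uniform enough. Fix a countable set $A\subset[1,\infty)$ with a limit point, for instance $A=\{1+1/k\}$, and a countable dense set of values of $N$. For each fixed $\ve$, choose $T_\ve$ so large that $\HYM^{\omega_{\ve}}_{\beta,N}(A_{\ve,t})\le\HYM_{\beta,N}(\mu_{\ve})+\ve$ for $t\ge T_\ve$, for the first $1/\ve$ pairs $(\beta,N)$ in an enumeration of $A\times\mathbb{Q}_{\ge0}$. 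Monotonicity in $t$ shows that one threshold $T_\ve$ suffices for all larger times. Then pick $\ve_j<\wtil{\ve_j}'$ with $T_{\ve_j}\le t_j$; this requires $\ve\mapsto T_\ve$ to be controlled, so I would instead pick $\ve_j$ first and then verify that $t_j$ may be enlarged.

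The expected main obstacle is precisely this order of choices. In Lemma \ref{lem11} and Corollary \ref{cor15}, $t_j$ is fixed first, coming from the Uhlenbeck subsequence of the $T$-flow, and then $\ve_j$ is chosen small. Meanwhile $T_\ve$ may blow up as $\ve\to0$. To handle this, I would use monotonicity of the $T$-flow functional. By Lemma \ref{lem13}, $\HYM^T_{\beta,N}(A_t)=\lim_{\ve}\HYM^{\omega_\ve}_{\beta,N}(A_{\ve,t})$, and for $s\le t$ we have $\HYM^{\omega_\ve}_{\beta,N}(A_{\ve,t})\le\HYM^{\omega_\ve}_{\beta,N}(A_{\ve,s})$. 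Passing to $\ve\to0$ shows that $t\mapsto\HYM^T_{\beta,N}(A_t)$ is non-increasing, with limit $\HYM_{\beta,N}(\mu_\infty)$ by Lemma \ref{lem14}. What is actually needed is the inequality $\lim_t\HYM^T_{\beta,N}(A_t)\le\HYM_{\beta,N}(\mu_0)$. I would prove it by an energy-type argument instead of relying on $T_\ve$.

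First compare $\HYM^T$ at a fixed time with a lower bound obtained from the HN filtration. Following \cite{DW04} and \cite{Sib15}, $\HYM^{\omega_\ve}_{\beta,N}(A)\ge\HYM_{\beta,N}(\mu_\ve)$ holds for every connection, by the convexity of $\Phi_\beta$ and the Chern--Weil formula for the HN subsheaves. For the upper bound, I would use the pointwise inequality $\Phi_\beta(\sqrt{-1}\Lambda_TF_{A_t}+N)\le$ the corresponding quantity with respect to $\omega_{\ve}$. Together with a diagonal argument in $(\ve,t)$ exploiting the uniform $L^\infty$ bound from Lemma \ref{lem4}(2), this should give $\limsup_t\HYM^T_{\beta,N}(A_t)\le\limsup_{\ve}\lim_t\HYM^{\omega_\ve}_{\beta,N}(A_{\ve,t})=\HYM_{\beta,N}(\mu_0)$. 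This interchange of limits is the delicate point. I expect to justify it through the uniform heat kernel bounds of Lemma \ref{sob ineq}(2), which should give a rate of decay of $\HYM^{\omega_\ve}_{\beta,N}(A_{\ve,t})-\HYM_{\beta,N}(\mu_\ve)$ uniform in $\ve$.

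Once equality holds for $\beta\in A$ and all $N\ge0$, take $N$ larger than $\max(|\mu_{0,r}|,|\mu_{\infty,r}|)$, so that all shifted entries are nonnegative. Lemma \ref{lem12}(2) then gives $\mu_\infty+N=\mu_0+N$, hence $\mu_\infty=\mu_0$. Note that the ranks agree, since both vectors have length $r$ when listed with multiplicity.
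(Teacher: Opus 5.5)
Your first inequality, $\HYM_{\beta,N}(\mu_\infty)\ge\HYM_{\beta,N}(\mu_0)$, is correct. It follows from the monotonicity in Lemma \ref{lem12.1} for each fixed $\ve$, together with Corollary \ref{cor15} and Proposition \ref{prop3.1}. Your final deduction of $\mu_\infty=\mu_0$ from Lemma \ref{lem12} (2) with $N$ large is also fine.

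The gap is the reverse inequality $\HYM_{\beta,N}(\mu_\infty)\le\HYM_{\beta,N}(\mu_0)$. This is the whole content of the proposition, and your proposal does not prove it.
\begin{itemize}
\item The ``pointwise inequality'' between $\Phi_\beta(\sqrt{-1}\Lambda_TF_{A_t}+N)$ and its $\omega_\ve$-counterpart has no basis. The two sides are contractions of different curvatures against different forms, integrated against different measures. Even the integrated form $\HYM^T_{\beta,N}(A_t)\le\HYM^{\omega_\ve}_{\beta,N}(A_{\ve,t})$ for all $\ve,t$ would immediately give $\HYM_{\beta,N}(\mu_\infty)\le\HYM_{\beta,N}(\mu_\ve)$. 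So it carries the entire difficulty rather than serving as a tool.
\item The heat kernel bounds of Lemma \ref{sob ineq} (2) only turn the uniform $L^1$ bound on $\Lambda_{\omega_\ve}F$ into $L^\infty$ bounds after positive time. They say nothing about how fast $\HYM^{\omega_\ve}_{\beta,N}(A_{\ve,t})$ approaches $\HYM_{\beta,N}(\mu_\ve)$.
\item Even for one fixed K\"ahler metric, Lemma \ref{lem12.1} is obtained without any rate, through approximate critical hermitian structures. The $\{\omega_\ve\}^{n-1}$-HN types may also differ from $\mu_0$, with slope gaps that degenerate as $\ve\to0$. So a rate uniform in $\ve$ is not to be expected.
\end{itemize}

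The missing step cannot be obtained by arranging the limits cleverly. Consider the model $a(\ve,t)=c+\max\{0,1-\ve t\}$.
\begin{itemize}
\item It is non-increasing in $t$.
\item $\lim_{t\to\infty}a(\ve,t)=c$ for every $\ve>0$.
\item $\lim_{\ve\to0}a(\ve,t)=c+1$ for every $t$.
\item $a(\ve_j,t_j)\to c+1$ whenever $\ve_j<1/(jt_j)$.
\end{itemize}
Thus it satisfies every property that you, or Lemmas \ref{lem12.1}, \ref{lem13}, \ref{lem14} and Corollary \ref{cor15}, feed into the argument, yet its two iterated limits differ. Some genuinely new analytic input is needed. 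One option is a $T$-version, or an $\ve$-uniform $\omega_\ve$-version, of the approximate critical hermitian structures of Daskalopoulos--Wentworth and Sibley. This would have to be combined with a comparison showing that the flow started at $h_0$ does at least as well.

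You should also know that the paper's own proof does not supply this input. It makes the same diagonal choice of $t_{j_k}$ with $\HYM^{\omega_{\ve_k}}_{\beta,N}(A_{\ve_k,t_{j_k}})<\HYM_{\beta,N}(\mu_{\ve_k})+\delta$. The preceding lines only place $\lim_k\bigl(\HYM^{\omega_{\ve_k}}_{\beta,N}(A_{\ve_k,t_{j_k}})-\HYM_{\beta,N}(\mu_{\ve_k})\bigr)$ in $[0,\HYM_{\beta,N}(\mu_\infty)-\HYM_{\beta,N}(\mu_0)]$. In the chain of inequalities leading to (\ref{eq15}), the paper then replaces this quantity by that upper endpoint inside an absolute value. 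That substitution is legitimate only if the endpoint is attained, so it presupposes the conclusion. The model above, with $\delta<1$, shows that the resulting claim (\ref{eq14}) can fail under exactly the hypotheses used there.
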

\begin{proof}
Fix $\delta>0$ arbitrary.
Let us fix a sequence $\wtil{\ve_k}\to 0$ and $t_j\to\infty$ such that for any $0<\ve_k<\wtil{\ve_k}$,
\begin{equation}\label{eq12.1}
\HYM_{\beta,N}^{\omega_{{\ve_k}}}(A_{{\ve_k},t_k})\to \HYM_{\beta,N}(\mu_{\infty})
\end{equation}
as in Corollary \ref{cor15}. By \cite[Proposition 5.7]{Sib15}, by \cite[Lemma 4.3]{DW04} and by Lemma \ref{lem12.1}, there exists a set $A\subset [1,\infty)$ possessing a limit point such that, for each ${\ve_k}$, there exists $j_k>k$ such that
$$
\HYM_{\beta,N}(\mu_{{\ve_k}})\le \HYM_{\beta,N}^{\omega_{{\ve_k}}}(A_{{\ve_k}, t_{j_k}})<\HYM_{\beta,N}(\mu_{{\ve_k}})+\delta
$$
for every $\beta\in A$.
Since $\mu_{{\ve_k}}\xrightarrow{k\to\infty} \mu_0$ by Proposition \ref{prop3.1}, we have, in $k\to\infty$,
\begin{equation}\label{eq13}
\HYM_{\beta,N}(\mu_0)\le \lim_{k\to\infty}\HYM_{\beta,N}^{\omega_{{\ve_k}}}(A_{{\ve_k},t_{j_k}})\le \HYM_{\beta,N}(\mu_0)+\delta.
\end{equation}
Since $\delta>0$ is chosen arbitrary, it suffices to prove 
\begin{equation}\label{eq14}
\lim_{k\to\infty}\HYM_{\beta,N}^{\omega_{{\ve_k}}}(A_{{\ve_k},t_{j_k}})
=\HYM_{\beta,N}(\mu_{\infty}).
\end{equation}
We can assume that ${\ve_k}\searrow 0$ and $t_j\nearrow\infty$. Since $j_k>k$, we have $t_{j_k}>t_k$. Recall the monotonicity of the HYM type functionals along the $\omega_{\ve}$-YM flows in Lemma \ref{lem12.1}. Together with (\ref{eq12.1}), we have
\begin{align*}
0
&\le \lim_{k\to\infty}\left(\HYM_{\beta,N}^{\omega_{{\ve_k}}}(A_{{\ve_k},t_{j_k}})-\HYM_{\beta,N}(\mu_{{\ve_k}})\right)\notag\\
&\le \lim_{k\to\infty}\left(\HYM_{\beta,N}^{\omega_{{\ve_k}}}(A_{{\ve_k},t_{k}})-\HYM_{\beta,N}(\mu_{{\ve_k}})\right)\notag\\
&=\HYM_{\beta,N}(\mu_{\infty})-\HYM_{\beta,N}(\mu_0).
\end{align*}
Together with (\ref{eq12.1}), we obtain
\begin{align*}
0
&\le \lim_{k\to\infty}\left|\HYM_{\beta,N}^{\omega_{{\ve_k}}}(A_{{\ve_k},t_{j_k}})-\HYM_{\beta,N}^{\omega_{{\ve_{j_k}}}}(A_{{\ve_{j_k}},t_{j_k}}) \right|\notag\\
&\le \lim_{k\to\infty}\left|\left(\HYM_{\beta,N}^{\omega_{{\ve_k}}}(A_{{\ve_k},t_{j_k}})-\HYM_{\beta,N}(\mu_{{\ve_k}})\right)+ \big(\HYM_{\beta,N}(\mu_{{\ve_k}})-\HYM_{\beta,N}(\mu_{\infty})\big) \right|\notag\\
&\hspace{4mm} +\lim_{k\to\infty}\left|\HYM_{\beta,N}(\mu_{\infty})-\HYM_{\beta,N}^{\omega_{{\ve_{j_k}}}}(A_{{\ve_{j_k}},t_{j_k}}) \right|\notag\\
&\le \left|(\HYM_{\beta,N}(\mu_{\infty})-\HYM_{\beta,N}(\mu_0))+(\HYM_{\beta,N}(\mu_{0})-\HYM_{\beta,N}(\mu_{\infty}))\right|+0\notag\\
&=0
\end{align*}
Hence we obtain
\begin{equation}\label{eq15}
\lim_{k\to\infty}\left(\HYM_{\beta,N}^{\omega_{{\ve_k}}}(A_{{\ve_k},t_{j_k}})-\HYM_{\beta,N}^{\omega_{{\ve_{j_k}}}}(A_{{\ve_{j_k}},t_{j_k}})\right)=0.
\end{equation}
Then, by (\ref{eq12.1}) and (\ref{eq15}), we obtain
\begin{align*}
&\lim_{k\to\infty}\HYM_{\beta,N}^{\omega_{{\ve_k}}}(A_{{\ve_k},t_{j_k}})\\
&=\lim_{k\to\infty}\HYM_{\beta,N}^{\omega_{{\ve_{j_k}}}}(A_{{\ve_{j_k}},t_{j_k}})
+\lim_{k\to\infty}\left(\HYM_{\beta,N}^{\omega_{{\ve_k}}}(A_{{\ve_k},t_{j_k}})-\HYM_{\beta,N}^{\omega_{{\ve_k}}}(A_{{\ve_{j_k}},t_{j_k}}) \right)\\
&=\HYM_{\beta,N}(\mu_{\infty}),
\end{align*}
which is exactly (\ref{eq14}). 
\end{proof}

\section{Proof of the main theorem \ref{main thm}}\label{pf of main thm}
Let us fix $(E,\delbar_E)$ a holomorphic vector bundle with a smooth hermitian metric $h_0$ over a compact K\"{a}hler manifold $(X,\omega_0)$ and $\alpha$ a nef and big class such that $D=E_{nK}(\alpha)$ is an snc divisor. 
We also fix $T\in\alpha$ an adapted current and its adapted approximation $\omega_{\ve}\in\alpha+\ve\omega_0$.
Let
\begin{equation}\label{eq15.1}
0=E_0\subset E_1\subset \cdots\subset E_l=E
\end{equation}
be an $\alpha^{n-1}$-HNS filtration and $\Gr^{\HNS}_{\alpha}(E,\delbar_E)=\oplus_{i=1}^lQ_i$ be the associated graded object where $Q_i=E_i/E_{i-1}$.  
Let us consider a $T$-YM flow $A_t$ with initial condition $A_0=(\delbar_E,h_0)$ constructed in Corollary \ref{cor7}. Then its Uhlenbeck limit $A_{\infty}$ defines a graded reflexive sheaf $E_{\infty}=\oplus_{i=1}^{l}Q_{\infty,i}$ on $X\setminus D$ by Corollary \ref{cor10}. To construct an isomorphism between $\Gr^{\HNS}_{\alpha}(E,\delbar_E)$ and $E_{\infty}$, we need to refine the decomposition of $E_{\infty}$. Following \cite{DW04} and \cite{Sib15}, such a refinement will be constructed in Proposition \ref{prop30.0} by letting the above $\alpha^{n-1}$-HNS filtration converge to a filtration of $E_{\infty}$. Using the refined decomposition, we will prove the main theorem \ref{main thm}.

\subsection{Convergence of weakly holomorphic projections}\label{pf of main thm2}
The purpose of this subsection is to prove Corollary \ref{cor18.0} which proves the convergence of $E_{l-1}\subset E_l=E$ in (\ref{eq15.1}). The convergence will be proved by representing subsheaves as weakly holomorphic projections. 
If $\alpha=\omega$ is K\"{a}hler, then $E_{\infty}$ extends to a reflexive sheaf on $X$ and
\cite{DW04} proved that the limit of the $\omega^{n-1}$-HN filtration of $(E,\delbar_E)$ is the $\omega^{n-1}$-HN filtration of $E_{\infty}$. In this paper, $\alpha$ is nef and big, and thus $E_{\infty}$ is defined only on $X\setminus D$ (see Corollary \ref{cor10}).
Then, in Lemma \ref{lem18}, we will prove that the limit of the $\alpha^{n-1}$-HN filtration corresponds to the decomposition of $E_{\infty}$ constructed in Corollary \ref{cor10}.  
The sequence $A_{\ve_j,t_j}$ constructed in Lemma \ref{lem11} plays an important role.
For the proof, we need the following fundamental lemma in linear algebra.
\begin{lemm}[\cite{Fan49}]\label{lem18.0}
Let $V$ be a hermitian vector space of $\dim_{\mathbb{C}}(V)=R<\infty$. Let $L\in \End(V)$ be a hermitian endomorphism of $V$ with eigenvalues $\mu_1\ge\ldots\ge \mu_R$. Let $\pi\in\End(V)$ be an orthogonal projection of $V$ to a subspace of dimension $r$. Then we have 
$$
\Tr(L\circ \pi)\le \mu_1+\cdots+\mu_r.
$$
Assume that $\mu_1=\cdots=\mu_r>\mu_i$ ($i>r$). Then the equality above holds if and only if the image of $\pi$ coincides with the eigen space of $L$ for $\mu_1=\cdots=\mu_r$.
\end{lemm}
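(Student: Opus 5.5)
This is Ky Fan's maximum principle, and I would prove it by diagonalizing $L$ and reducing the trace to a weighted sum of eigenvalues. Choose an orthonormal eigenbasis $e_1,\ldots,e_R$ of $V$ with $Le_i=\mu_ie_i$, and set $w_i:=\langle \pi e_i,e_i\rangle=|\pi e_i|^2$. Since $\pi$ is an orthogonal projection, $0\le w_i\le 1$ and $\sum_i w_i=\Tr(\pi)=r$. Computing the trace in this basis gives
$$
\Tr(L\circ\pi)=\sum_{i=1}^R\langle L\pi e_i,e_i\rangle=\sum_{i=1}^R\langle \pi e_i,Le_i\rangle=\sum_{i=1}^R\mu_iw_i .
$$
The inequality is now a linear optimization over the weights. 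Write
$$
\sum_{i\le r}\mu_i-\sum_i\mu_iw_i=\sum_{i\le r}\mu_i(1-w_i)-\sum_{i>r}\mu_iw_i .
$$
Use $\sum_{i\le r}(1-w_i)=\sum_{i>r}w_i$, which follows from $\sum_i w_i=r$, together with $\mu_i\ge\mu_r$ for $i\le r$ and $\mu_i\le\mu_r$ for $i>r$. This bounds the right-hand side below by
$$
\sum_{i\le r}(\mu_i-\mu_r)(1-w_i)+\sum_{i>r}(\mu_r-\mu_i)w_i\ge 0,
$$
which proves $\Tr(L\circ\pi)\le\mu_1+\cdots+\mu_r$.

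For the equality case, assume $\mu_1=\cdots=\mu_r>\mu_i$ for all $i>r$. Then the first sum above vanishes identically. The second sum is a sum of nonnegative terms, each with a strictly positive coefficient $\mu_r-\mu_i$. Equality therefore forces $w_i=0$ for every $i>r$, that is, $\pi e_i=0$ for $i>r$. Hence the image of $\pi$ is orthogonal to $\mathrm{span}(e_{r+1},\ldots,e_R)$, so it lies in the $\mu_1$-eigenspace $\mathrm{span}(e_1,\ldots,e_r)$. Both spaces have dimension $r$, so they coincide. Conversely, if the image of $\pi$ equals this eigenspace, then $L\circ\pi=\mu_1\pi$, and $\Tr(L\circ\pi)=r\mu_1$.

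The only step needing care is the identity $w_i=|\pi e_i|^2\in[0,1]$ with $\sum_i w_i=r$, which uses $\pi=\pi^*=\pi^2$. I expect no real obstacle; the remaining work is bookkeeping of the inequalities and of the strictness hypothesis in the equality case.
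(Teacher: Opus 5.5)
Your proof is correct. The paper gives no proof of this lemma; it only cites Ky Fan. Your argument is the standard proof of Fan's maximum principle: write $\Tr(L\circ\pi)=\sum_i\mu_iw_i$ with $w_i=|\pi e_i|^2\in[0,1]$ and $\sum_iw_i=r$, then maximize a linear function over these weights. So there is no competing route in the paper to compare against. Cauchy interlacing for the compression of $L$ to $\im\pi$ would also give the inequality quickly, but it makes the equality case less transparent than your weights do.

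One small correction: your displayed decomposition is an exact identity,
\[
\sum_{i\le r}\mu_i-\Tr(L\circ\pi)=\sum_{i\le r}(\mu_i-\mu_r)(1-w_i)+\sum_{i>r}(\mu_r-\mu_i)w_i ,
\]
not merely a lower bound. Either reading suffices for your argument.

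The identity also shows your equality argument never needs $\mu_1=\cdots=\mu_r$. Both sums are nonnegative, so at equality each vanishes. The second vanishes only if $w_i=0$ for all $i>r$, provided $\mu_r>\mu_{r+1}$. Hence equality holds if and only if $\im\pi=\mathrm{span}(e_1,\ldots,e_r)$, which is the sum of the eigenspaces for the eigenvalues $\ge\mu_r$.

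This spectral-gap version is what the proof of Lemma~\ref{lem18} actually invokes. There the top $r=\rk(E_i)$ eigenvalues are $\mu_1>\cdots>\mu_i$, each with multiplicity $\rk(Q_j)$. The lemma as stated, with $\mu_1=\cdots=\mu_r$, therefore covers only $i=1$ in that application, and your proof supplies the version that is needed.
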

We fix a holomorphic hermitian vector bundle $(E,\delbar_E,h_0)$ on a compact K\"{a}hler manifold $X$ and $\alpha$ a nef and big class on $X$ such that $D=E_{nK}(\alpha)$ is an snc divisor. Let $T\in\alpha$ be an adapted current and $\omega_{\ve}\in\alpha+\ve\omega_0$ be its adapted approximation.

Let $A_0$ and $A_1$ be integrable unitary connections on $(E,h_0)$ such that $A_1=g\cdot A_0$ for some $g\in\mathcal{G}^{\mathbb{C}}$. Let $E_i\subset (E,\delbar_E)$ be the i-th factor of the $\alpha^{n-1}$-HNS filtration. Then, a holomorphic map $g_i$ is defined as
\begin{equation}\label{eq15.33}
g_i:(E_i,\delbar_{E_i})\subset (E,\delbar_{A_0})\xrightarrow{g} (E,\delbar_{A_1}).
\end{equation}
\begin{lemm}[cf. \cite{DW04} Lemma 4.5]\label{lem18}
Fix $p>n$.
Let $A_{\ve,t}$ be the $\omega_{\ve}$-YM flow on $(E,h_0)$ in Corollary \ref{cor7} and $\wtil{\ve}_j\to 0$ and $t_j\to\infty$ be sequences Corollary \ref{cor15} and $A_{\infty}$ be the $T$-YM connection on $(E_{\infty}, h_{\infty})$ constructed in Corollary \ref{cor10}.
In particular, $A_{\ve,t}=g_{\ve,t}\cdot (\delbar_E,h_{\ve,t})$ is a sequence of integrable unitary connections on $(E,h_0)$ such that 
\begin{itemize}
\item $A_{\ve_j,t_j}\xrightarrow{j\to \infty} A_{\infty}$ weakly in $L^p_{1,\loc}(X\setminus (D\cup Z_{\an}))$,
\item there exists $C>0$ such that $\|\Lambda_{\omega_{\ve_j}}F_{A_{\ve_j,t_j}}\|_{L^{\infty}(X)}\le C$ and $\|F_{A_{\ve_j,t_j}}\|_{L^2(X,\omega_{\ve_j})}\le C$.
\item The eigenvalues $\mu_{\infty}$ of $\sqrt{-1}\Lambda_TF_{A_{\infty}}$ equal the $\alpha^{n-1}$-HN type $\mu_0$ of $(E,\delbar_E)$. 
\end{itemize}
Let $(E_{\infty},A_{\infty},h_{\infty})=\bigoplus_{i=1}^l(Q_i,A_{\infty,i},h_{\infty,i})$ be the holomorphic orthogonal decomposition constructed in Corollary \ref{cor10}.
Then the following holds.
\begin{enumerate}
\item Let $E_i\subset (E,\delbar_E)$ be the i-th term of the $\alpha^{n-1}$-HN filtration of $(E,\delbar_E)$. Let $\pi^{(i)}_{\ve,t}$ be the weakly holomorphic projection of $(E,\delbar_{A_{\ve,t}}=g_{\ve,t}\cdot \delbar_E,h_0)$ to a holomorphic subbundle $g_{\ve,t,i}(E_i)$, where $g_{\ve,t,i}:(E_i,\delbar_{E_i})\to (E,\delbar_{A_{\ve,t}})$ is defined as in (\ref{eq15.33}). Let $\pi^{(i)}_{\infty}$ be the weakly holomorphic projection of $(E_{\infty},A_{\infty},h_{\infty})$ to $E_{\infty,i}:=\bigoplus_{j\le i}Q_{\infty,j}$.
Then, for every $\wtil{\ve_j}>\ve_j\to 0$, we have
$
\pi^{(i)}_{\ve_j,t_j}\xrightarrow{j\to \infty}\pi^{(i)}_{\infty}
$ 
weakly in $L^p_{2,\loc}(X\setminus (D\cup Z_{\an}))$ and  strongly in $L^p(X\setminus (D\cup Z_{\an}))$.
Furthermore, we have
$
\lim_{j\to\infty}\int_X|\del_{A_{\ve_j,t_j}}\pi^{(i)}_{\ve_j,t_j}|^2\omega_{\ve_j}^n=0$ and $ \del_{A_{\infty}}\pi^{(i)}_{\infty}=0.
$
\item Assume that $(E,\delbar_E)$ is $\alpha^{n-1}$-slope semistable. Let $E_i\subset (E,\delbar_E)$ be the i-th term of an $\alpha^{n-1}$-Seshadri filtration of $(E,\delbar_E)$. Let $\pi^{(i)}_{\ve,t}$ be the weakly holomorphic projection of $(E, \delbar_{A_{\ve,t}}=g_{\ve,t}\cdot \delbar_E, h_0)$ to a holomorphic subbundle $g_{\ve,t,i}(E_i)$. Then, there exists a weakly holomorphic projection $\pi^{(i)}_{\infty}$ of $(E_{\infty},\delbar_{A_{\infty}},h_{\infty})$ such that
$
\pi^{(i)}_{\ve_j,t_j}\xrightarrow{j\to\infty}\pi^{(i)}_{\infty}
$
weakly in $L^p_{2,\loc}(X\setminus (D\cup Z_{\an}))$ and  strongly in $L^p(X\setminus (D\cup Z_{\an}))$. Furthermore, we have $\rk(\pi^{(i)}_{\infty})=\rk(E_i)$ and $\deg_T(\pi^{(i)}_{\infty})=\deg_{\alpha}(E_i)$.
We also have
$\lim_{j\to\infty}\int_X|\del_{A_{\ve_j,t_j}}\pi^{(i)}_{\ve_j,t_j}|^2\omega_{\ve_j}^n=0$ and $\del_{A_{\infty}}\pi^{(i)}_{\infty}=0.$
\end{enumerate}
\end{lemm}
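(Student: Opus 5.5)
We follow the argument of \cite[Lemma 4.5]{DW04}, working throughout along the approximating sequence $A_{\ve_j,t_j}$, where every degree is computed by the Chern--Weil formula with respect to the smooth K\"{a}hler metrics $\omega_{\ve_j}$. For (1), fix $i$ and write $\pi_j:=\pi^{(i)}_{\ve_j,t_j}$. Since $g_{\ve,t,i}(E_i)$ is a holomorphic subbundle of $(E,\delbar_{A_{\ve,t}})$ (Proposition \ref{HNS resol prop}), the Chern--Weil formula gives
\begin{equation*}
\deg_{\{\omega_{\ve_j}\}}(E_i)=\int_X\Tr\bigl(\sqrt{-1}\Lambda_{\omega_{\ve_j}}F_{A_{\ve_j,t_j}}\circ\pi_j\bigr)\omega_{\ve_j}^n-\int_X|\delbar_{A_{\ve_j,t_j}}\pi_j|^2\omega_{\ve_j}^n ,
\end{equation*}
and the left-hand side converges to $\deg_\alpha(E_i)=\sum_{k\le \rk E_i}\mu_{0,k}$ as $j\to\infty$. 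By Lemma \ref{lem18.0}, applied pointwise, the first integral is at most $\int_X\sum_{k\le\rk E_i}\lambda_k(\sqrt{-1}\Lambda_{\omega_{\ve_j}}F_{A_{\ve_j,t_j}})\,\omega_{\ve_j}^n$, where $\lambda_k$ denotes the $k$-th largest eigenvalue.

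The key step is to show that this upper bound converges to $\sum_{k\le\rk E_i}\mu_{\infty,k}\cdot\mathrm{vol}$, with the normalization absorbed as in the paper. We would use the convergence of $\HYM_{\beta,N}$ from Corollary \ref{cor15} together with the uniform $L^\infty$ bound. Uniform boundedness shows that no mass concentrates near $D$. By (3-2) and Theorem \ref{CT}, the $\omega_{\ve_j}^n$-volume of a neighbourhood of $D\cup Z_{\an}$ is small uniformly in $j$. On compact subsets of $X\setminus(D\cup Z_{\an})$, the strong $L^q$-convergence of $\Lambda F$ in Corollary \ref{cor10}, combined with the smooth convergence $\omega_{\ve}\to T$, shows that the eigenvalue partial sums converge in $L^1$.

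Since $\mu_\infty=\mu_0$ by Proposition \ref{prop16}, the two sides of the Chern--Weil identity are squeezed together. This forces
\begin{equation*}
\int_X|\delbar_{A_{\ve_j,t_j}}\pi_j|^2\omega_{\ve_j}^n\to0 ,
\end{equation*}
and since $\pi_j^*=\pi_j$ we have $|\del\pi_j|=|\delbar\pi_j|$, so the same holds for $\del_{A_{\ve_j,t_j}}\pi_j$. It also forces the defect in Fan's inequality to tend to $0$ in $L^1$.

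Next we extract the limit of the projections. The bound $|\pi_j|\le C$ together with $\|\nabla_{A_{\ve_j,t_j}}\pi_j\|_{L^2}\to0$ gives $L^2_1$ bounds on compact sets. The projections satisfy the elliptic equation $\delbar^*_{A}\delbar_A\pi=\ldots$, whose right-hand side involves $\Lambda F\circ\pi$ and quadratic terms in $\delbar\pi$, as in \cite{UY86}. Using the weak $L^p_1$-convergence of the connections, we bootstrap to weak $L^p_{2,\loc}$-convergence. The convergence is strong in $L^p$ because the projections are bounded and converge in $L^p_{\loc}$, while the remaining volume is small. The limit $\pi_\infty$ satisfies $\pi_\infty^2=\pi_\infty=\pi_\infty^*$ and $\nabla_{A_\infty}\pi_\infty=0$ by lower semicontinuity of the $L^2$ norm. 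Finally, vanishing of the Fan defect in the limit, together with the equality case of Lemma \ref{lem18.0}, applied where there is a strict eigenvalue gap (exactly at the HN break $\rk E_i$), identifies the image of $\pi_\infty$ with $\bigoplus_{k\le i}Q_{\infty,k}$. Hence $\pi_\infty=\pi^{(i)}_\infty$, and since the limit is independent of the subsequence, the whole sequence converges.

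For (2), the same squeeze works with equality, because all eigenvalues of a semistable $E$ equal $\mu(E)$. The Fan bound $\rk(E_i)\mu$ is then attained asymptotically, which again gives $\|\del\pi_j\|_{L^2}\to0$. The limit $\pi_\infty$ exists by the same compactness, but we cannot identify it with an eigenspace. Its rank is $\rk E_i$ by strong $L^p$-convergence. Its degree $\deg_T(\pi_\infty)=\int\Tr(\sqrt{-1}\Lambda_TF_{A_\infty}\pi_\infty)T^n$ equals $\lim\deg_{\omega_{\ve_j}}(E_i)=\deg_\alpha(E_i)$, using the strong $L^q$-convergence of $\Lambda F$ and the vanishing of $\del\pi_j$.

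The main obstacle is the uniform control near $D$, where the metrics $\omega_{\ve_j}$ degenerate. We need the integrals over a neighbourhood of $D$, which carries small $\omega_{\ve_j}$-volume, to be negligible uniformly in $j$. The uniform $L^\infty$ bound on $\Lambda F$ and $|\pi_j|\le1$ give exactly this, and we would verify it using (3-2).
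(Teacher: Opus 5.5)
Your proposal is correct and follows essentially the paper's argument. It uses Chern--Weil along $A_{\ve_j,t_j}$, Fan's inequality to squeeze $\int_X|\del_{A_{\ve_j,t_j}}\pi_j|^2\omega_{\ve_j}^n\to0$, weak compactness of the projections, the equality case of Lemma \ref{lem18.0} at the HN break to identify the limit, and \cite[Lemma 2.13]{DW04}-type bootstrapping to weak $L^p_{2,\loc}$. The one cosmetic difference is that you apply Fan's inequality to $\sqrt{-1}\Lambda_{\omega_{\ve_j}}F_{A_{\ve_j,t_j}}$ and pass the eigenvalue partial sums to the limit. The paper instead first replaces this curvature by $\sqrt{-1}\Lambda_TF_{A_\infty}$, whose error term (\ref{eq26}) tends to zero, and then applies Fan there. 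Both versions rest on the same control near $D$: the uniform $L^\infty$ bound on $\Lambda F$ together with condition (3-2). Theorem \ref{CT} is not actually needed for that volume estimate.
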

\begin{proof}
(1) Recall that, for each $\delbar_{A_{\ve,t}}=g_{\ve,t}\cdot \delbar_E$, we define $g_{\ve,t,i}$ by
$$
g_{\ve,t,i}:(E_i,\delbar_{E_1})\hookrightarrow (E,\delbar_E)\xrightarrow{g_{\ve,t}} (E,\delbar_{A_{\ve,t}}).
$$
Let $\pi^{(i)}_{\ve,t}$ be the weakly holomorphic projection of $g_{\ve,t}(E_i)\subset (E,\delbar_{A_{\ve,t}},h_0)$. Let us fix a sequence $\ve_j\to 0$ and $t_j\to \infty$ as in Corollary \ref{cor15}. Then, by Corollary \ref{cor15}, we have
\begin{equation}\label{eq25.0}
\int_{X\setminus D}|\Lambda_{\omega_{\ve_j}}F_{A_{\ve_j,t_j}}|^p\omega_{\ve_j}^n\to
\int_{X\setminus D}|\Lambda_{T}F_{A_{\infty}}|^pT^n
\end{equation}
for every $1\le p<\infty$. 
First, we show the convergence of $\pi^{(i)}_{\ve_j,t_j}$.
Recall that the eigenvalues of $\sqrt{-1}\Lambda_TF_{A_{\infty}}$ equal $\mu_0=(\mu_1,\ldots,\mu_l)$ which is the $\alpha^{n-1}$-HN type of $(E,\delbar_E)$ and $\mu_i=\mu_{\alpha}(Q_i)$ by Proposition \ref{prop16}. Then, by Chern-Weil formula and Lemma \ref{lem18.0}, we have
\begin{align}
&\deg_{\omega_{\ve_j}}(E_i)+\int_X\left|\partial_{A_{\ve_j,t_j}}\pi^{(i)}_{\ve_j,t_j}\right|^2\omega_{\ve_j}^n\notag\\
&=\int_X\Tr\left(\sqrt{-1}\Lambda_{\omega_{\ve_j}}F_{A_{\ve_j,t_j}}\circ\pi^{(i)}_{\ve_j,t_j}\right)\omega_{\ve_j}^n\label{eq25}\\
&=\int_X\Tr\left(\sqrt{-1}\Lambda_TF_{A_{\infty}}\circ\pi^{(i)}_{\ve_j,t_j}\right)T^n\notag\\
&\hspace{4mm}+\left(\int_X\Tr\left(\sqrt{-1}\Lambda_{\omega_{\ve_j}}F_{A_{\ve_j,t_j}}\circ\pi^{(i)}_{\ve_j,t_j}\right)\omega_{\ve_j}^n- \int_X\Tr\left(\sqrt{-1}\Lambda_TF_{A_{\infty}}\circ\pi^{(i)}_{\ve_j,t_j}\right)T^n\right)\label{eq26}\\
&\le \sum_{j\le i}(\mu_j\cdot\rk(Q_j))+C(j)\notag\\
&=\deg_{\alpha}(E_i)+C(j)\label{eq27}
\end{align}
where $C(j)\to 0$. Such $C(j)$ exists, since (\ref{eq26}) converges to 0 in $j\to\infty$, which is a consequence of (\ref{eq25.0}) and $|\pi^{(i)}_{\ve_j,t_j}|=1$. Then, by (\ref{eq27}), we obtain
\begin{equation}\label{eq28}
\lim_{j\to\infty}\int_X\left|\partial_{A_{\ve_j,t_j}}\pi^{(i)}_{\ve_j,t_j}\right|^2\omega_{\ve_j}^n=0.
\end{equation}
Recall that $A_{\ve_j,t_j}\to A_{\infty}$ strongly in $C^0_{\loc}(X\setminus (D\cup Z_{\an}))$ by the choice of $\ve_j\to0$ and $t_j\to \infty$ (see Corollary \ref{cor15}) and $|\pi^{(i)}_{\ve_j,t_j}|=1$. Then by (\ref{eq28}), there exists a weakly holomorphic projection $\wtil{\pi}^{(i)}_{\infty}$ of $(E_{\infty}, A_{\infty}, h_{\infty})$ such that
\begin{equation}\label{eq29}
\pi^{(i)}_{\ve_j,t_j}\xrightarrow{j\to\infty}\wtil{\pi}^{(i)}_{\infty}
\end{equation}
weakly in $L^2_{1,\loc}(X\setminus (D\cup Z_{\an}))$ and strongly in $L^p$ for any $1\le p<\infty$ and $\del_{A_{\infty}}\wtil{\pi}^{(i)}_{\infty}=0$. Next we show $\wtil{\pi}^{(i)}_{\infty}={\pi}^{(i)}_{\infty}$.
By the $L^p$-convergence in (\ref{eq29}), we have 
\begin{equation}\label{eq29.1}
\rk(E_i)=\lim_{j\to\infty}\int_X\langle \id_E,\pi^{(i)}_{\ve_j,t_j}\rangle\omega_{\ve_j}^n
=\int_X\langle \id_E,\wtil{\pi}^{(i)}_{\infty}\rangle=\rk(\wtil{\pi}^{(i)}_{\infty}).
\end{equation}
By (\ref{eq25.0}), (\ref{eq28}) and the $L^p$-convergence in (\ref{eq29}), we have, in the limit $j\to \infty$ of (\ref{eq25}),
\begin{equation}\label{eq30}
\deg_{\alpha}(E_i)=\int_X\Tr\left(\sqrt{-1}\Lambda_TF_{A_{\infty}}\circ\wtil{\pi}^{(i)}_{\infty}\right)T^n.
\end{equation}
By (\ref{eq29.1}), we can apply Lemma \ref{lem18.0} and we obtain 
\begin{equation}\label{eq31}
\Tr(\sqrt{-1}\Lambda_TF_{A_{\infty}}\circ\wtil{\pi}^{(i)}_{\infty})\le\sum_{j\le i}\mu_j\cdot\rk(Q_j)= \deg_{\alpha}(E_i).
\end{equation}
Then, by (\ref{eq30}) and (\ref{eq31}), we obtain 
$$
\Tr(\sqrt{-1}\Lambda_TF_{A_{\infty}}\circ\wtil{\pi}^{(i)}_{\infty})=\sum_{j\le i}\mu_j\cdot\rk(Q_j)=\deg_{\alpha}(E_i)
$$
almost everywhere. Then, by the second statement in Lemma \ref{lem18.0}, we obtain the image of $\wtil{\pi}^{(i)}_{\infty}$ is equal to the direct sum of the eigen space of $\sqrt{-1}\Lambda_TF_{A_{\infty}}$ corresponds to $\mu_1,\ldots,\mu_i$, that is, $\bigoplus_{j\le i}Q_{\infty,1}=E_{\infty,i}$. Therefore, we obtain $\wtil{\pi}^{(i)}_{\infty}={\pi}^{(i)}_{\infty}$. Then the weak $L^p_{2,\loc}$ convergence of $\pi^{(i)}_{\ve_j,t_j}$ follows from \cite[Lemma 2.13]{DW04}.
(2) In the semistable case, we can prove the weak convergence to some weakly holomorphic projection with same rank and degree in the same way as above.
\end{proof}
Applying the above Lemma \ref{lem18}, we obtain the following:
\begin{corr}\label{cor18.0}
Fix $p>n$.
We use the same notation as in Lemma \ref{lem18}.
Let 
$$
0=E_0\subset E_1\subset\cdots\subset E_{l-1}\subset E_l=E
$$
be an $\alpha^{n-1}$-HNS filtration of $(E,\delbar_E)$ and let $\pi^{(l-1)}_{\ve_j,t_j}$ be the weakly holomorphic projection of $(E,\delbar_{A_{\ve_j,t_j}}=g_{\ve_j,t_j}\cdot \delbar_E,h_0)$ to $g_{\ve_j,t_j}(E_{l-1})$ as in Lemma \ref{lem18}. Then there exists coherent subsheaves $E_{\infty,l-1}\subset E_{\infty}$ and $Q_{\infty,l}\subset E_{\infty}$ such that the following holds:
\begin{enumerate}
\item If we denote by $\pi^{(l-1)}_{\infty}$ the weakly holomorphic projection of $(E_{\infty},\delbar_{A_{\infty}},h_{\infty})$ to $E_{\infty,l-1}$, then we have
\begin{itemize}
\item $\pi^{(l-1)}_{\ve_j,t_j}\xrightarrow{j\to\infty}\pi^{(l-1)}_{\infty}$ weakly in $L^p_{2,\loc}(X\setminus (D\cup Z_{\an}))$ and strongly in $L^q(X\setminus (D\cup Z_{\an}))$ for any $1\le q<\infty$,
\item $\lim_{j\to\infty}\int_X|\del_{A_{\ve_j,t_j}}\pi^{(l-1)}_{\ve_j,t_j}|^2\omega_{\ve_j}^n=0$, in particular $\del_{A_{\infty}}\pi^{(l-1)}_{\infty}=0$, and
\item $\rk(\pi^{(l-1)}_{\infty})=\rk(E_{l-1})$ and $\deg_T(\pi^{(l-1)}_{\infty})=\deg_{\alpha}(E_{l-1})$.
\end{itemize}
\item We have a holomorphic orthogonal decomposition
$$
(E_{\infty},A_{\infty},h_{\infty})=(E_{\infty,l-1},A_{\infty}^{(l-1)},h_{\infty}^{(l-1)})\oplus (Q_{\infty,l},A_{\infty,l},h_{\infty,l}),
$$
where $A_{\infty}^{(l-1)}=\pi^{(l-1)}_{\infty}\cdot A_{\infty}$ is the $T$-YM connection on $E_{\infty,l-1}$ given by the restriction of $A_{\infty}$ and $A_{\infty,l}$ is the $T$-admissible HE connection on $Q_{\infty,l}$ given by the restriction of $A_{\infty}$. Furthermore, the $T$-HE constant of $A_{\infty,l}$ is $\mu_l=\mu_{\alpha}(Q_l)$ where $Q_l=E/E_{l-1}$.
\end{enumerate}
\end{corr}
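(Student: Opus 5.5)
The plan is to deduce the statement from Lemma \ref{lem18}, distinguishing whether $E_{l-1}$ is a term of the $\alpha^{n-1}$-HN filtration or only a Seshadri refinement inside the last HN piece.

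\emph{Case 1: the last HN slope has a single Seshadri factor.} Then $E_{l-1}$ is the penultimate term of the HN filtration. Lemma \ref{lem18}(1) gives the first two bullets of (1), with the limit $\pi^{(l-1)}_{\infty}$ equal to the projection onto the sum of eigenbundles of $\sqrt{-1}\Lambda_TF_{A_{\infty}}$ for all eigenvalues except the smallest one $\mu_l$. The rank statement follows from the strong $L^q$ convergence and the trace identity \eqref{eq29.1}. The degree statement follows from passing to the limit in \eqref{eq25}, using \eqref{eq28} and \eqref{eq30}. In this case $Q_{\infty,l}$ is the $\mu_l$-eigenbundle, and (2) is just Corollary \ref{cor10}.

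\emph{Case 2: $E_{l-1}$ lies strictly inside the last HN piece.} Here $E_{l-1}\supsetneq E'$, where $E'$ is the previous HN term. I will apply the argument of Lemma \ref{lem18}(2) relative to the semistable quotient $E/E'$, or more directly I will redo the estimate \eqref{eq25}--\eqref{eq27} for $E_{l-1}$. Lemma \ref{lem18.0} gives $\Tr(\sqrt{-1}\Lambda F\circ\pi)\le$ (sum of the top $\rk E_{l-1}$ eigenvalues), and this sum equals $\deg_\alpha(E_{l-1})$ because $\mu_{\alpha}(E_{l-1}/E')=\mu_l$. Hence \eqref{eq28} holds again. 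This yields a weak $L^2_{1,\loc}$ limit $\pi^{(l-1)}_{\infty}$ with $\del_{A_\infty}\pi^{(l-1)}_{\infty}=0$, the correct rank, and $\deg_T(\pi^{(l-1)}_{\infty})=\deg_\alpha(E_{l-1})$. The upgrade to weak $L^p_{2,\loc}$ convergence is \cite[Lemma 2.13]{DW04}.

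For (2), the key step is the splitting. A weakly holomorphic projection satisfies $\pi=\pi^*$ and $\delbar\pi=(\del\pi)^*$. So $\del_{A_\infty}\pi^{(l-1)}_{\infty}=0$ forces $d_{A_\infty}\pi^{(l-1)}_{\infty}=0$, i.e.\ $\pi^{(l-1)}_\infty$ is parallel. It is therefore smooth off $D\cup Z_{\an}$ and has constant rank. Setting $E_{\infty,l-1}=\im\pi^{(l-1)}_\infty$ and $Q_{\infty,l}=\ker\pi^{(l-1)}_\infty$ gives a holomorphic orthogonal decomposition preserved by $A_\infty$. Restrictions of a $T$-YM connection to parallel summands are $T$-YM.

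It remains to show that $A_{\infty,l}$ is $T$-HE with constant $\mu_l$. Since $\pi^{(l-1)}_\infty$ is parallel, it commutes with $\Lambda_TF_{A_\infty}$, so $Q_{\infty,l}$ is a sum of eigenbundles. The complement $E_{\infty,l-1}$ has rank $\rk E_{l-1}$ and trace $\deg_\alpha(E_{l-1})$. By the equality case of Lemma \ref{lem18.0}, it must contain all eigenbundles with eigenvalue $>\mu_l$, together with a rank $(\rk E_{l-1}-\rk E')$ piece of the $\mu_l$-eigenbundle. Hence $Q_{\infty,l}$ lies in the $\mu_l$-eigenbundle, so $\sqrt{-1}\Lambda_TF_{A_{\infty,l}}=\mu_l\id$. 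Admissibility, i.e.\ finite $L^2$ curvature, is inherited from $\|F_{A_\infty}\|_{L^2}\le C$, which follows from Lemma \ref{lem8} and weak lower semicontinuity.

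I expect the main obstacle to be the equality-case step in Case 2. There the top eigenvalue has multiplicity exceeding the rank, so Lemma \ref{lem18.0} alone does not identify the image of $\pi^{(l-1)}_\infty$. It still gives the inclusion argument above, provided the pointwise equality holds almost everywhere, and that pointwise equality must be extracted from the integral identity together with the pointwise inequality.
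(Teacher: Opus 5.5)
Your proposal is correct and follows the paper's route. The paper obtains the corollary simply by invoking Lemma \ref{lem18}: part (1) when $E_{l-1}$ is an HN term, and the Seshadri-type argument of part (2) otherwise. Your case split, the observation that $\del_{A_{\infty}}\pi^{(l-1)}_{\infty}=0$ together with $\pi=\pi^*$ makes the projection parallel, and the eigenvalue count placing $\ker\pi^{(l-1)}_{\infty}$ inside the $\mu_l$-eigenbundle make explicit what the paper leaves implicit. The obstacle you flag is harmless: the parallel projection commutes with $\sqrt{-1}\Lambda_TF_{A_{\infty}}$, so $\Tr(\sqrt{-1}\Lambda_TF_{A_{\infty}}\circ\pi^{(l-1)}_{\infty})$ is constant and bounded by the constant sum of the top $\rk E_{l-1}$ eigenvalues; the integral identity then forces pointwise equality, and your counting argument applies directly.
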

\noindent We want to decompose $(E_{\infty,l-1},A_{\infty}^{(l-1)},h_{\infty}^{(l-1)})$. To apply the same argument as above, in the next subsection, we will construct an approximating sequence of the $T$-YM connection $A_{\infty}^{(l-1)}$ on $E_{\infty,l-1}$ such that the sequence satisfies the conditions in Lemma \ref{lem18}.
We will use the following results later.
\begin{prop}\label{prop19}
Fix $p>n$.
Let $\pi^{(i)}_{\ve,t}$ be the weakly holomorphic projection of $(E,\delbar_{A_{\ve,t}}=g_{\ve,t}\cdot \delbar_E,h_0)$ to $g_{\ve,t}(E_i)$ where $E_i$ is the i-th factor of the $\alpha^{n-1}$-HNS filtration as in Corollary \ref{cor18.0}. And let $\pi^{(i)}_t$ be the weakly holomorphic projection of $(E,\delbar_{A_t}=g_t\cdot\delbar_E, h_0)$ to $g_t(E_i)$. Here $h_t=h_0g_t^2$ is the $T$-HYM flow constructed in Proposition \ref{prop6}. 
Let $(E_{\infty},h_{\infty})$ be the hermitian vector bundle with $T$-YM connection $A_{\infty}$ constructed in Corollary \ref{cor10}.
Let $t_j\to\infty$ be a sequence in Lemma \ref{lem18}.
Then we obtain the following:
\begin{enumerate}
\item For each $j$, we have $\pi^{(i)}_{\ve,t_j}\xrightarrow{\ve\to 0}\pi^{(i)}_{t_j}$ in $C^{\infty}_{\loc}(X\setminus D)$ and in $L^q(X\setminus D)$ for any $1\le q<\infty$.
\item $\pi^{(i)}_{t_j}\xrightarrow{j\to\infty}\pi^{(i)}_{\infty}$ weakly in $L^p_{2,\loc}(X\setminus (D\cup Z_{\an}))$ and strongly in $L^q(X\setminus (D\cup Z_{\an}))$ for any $1\le q<\infty$. 
\item There exists a smooth isometry $u^{(i)}_{\ve,j}:(E, h_0)\to (E,h_0)$ on $X\setminus D$ such that $u^{(i)}_{\ve,t_j}\xrightarrow{\ve\to 0}\id_E$ in $C^{\infty}_{\loc}(X\setminus D)$ and in $L^q(X\setminus D)$ for any $1\le q<\infty$. Furthermore,
$$
\left(u^{(i)}_{\ve,t_j}\circ\pi^{(i)}_{\ve,t_j}\circ (u^{(i)}_{\ve,t_j})^{-1}\right)(E)=\pi^{(i)}_{t_j}(E)=g_{t_j}(E_i).
$$
\item There exists a smooth isometry $u^{(i)}_{t_j}:(E,h_0)\to (E_{\infty},h_{\infty})$ on $X\setminus (D\cup Z_{\an})$ such that $u^{(i)}_{t_j}\xrightarrow{j\to\infty} \id_E$ weakly in $L^p_{2,\loc}(X\setminus (D\cup Z_{\an}))$ and strongly in $L^q(X\setminus (D\cup Z_{\an}))$ for any $1\le q<\infty$. Furthermore
$$
\left(u^{(i)}_{t_j}\circ \pi^{(i)}_{t_j}\circ(u^{(i)}_{t_j})^{-1} \right)(E_{\infty})=\pi^{(i)}_{\infty}(E_{\infty})=E_{\infty,i}
$$
\end{enumerate}
\end{prop}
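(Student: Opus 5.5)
We handle the four items in order. Items (1) and (3) are local statements in $\ve\to0$ at a fixed time $t_j$. Items (2) and (4) follow by transporting Lemma \ref{lem18} from the approximating sequence to the flow itself.

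\emph{Item (1).} Fix $j$. By Corollary \ref{cor7}, $A_{\ve,t_j}\to A_{t_j}$ and $g_{\ve,t_j}\to g_{t_j}$ in $C^\infty_{\loc}(X\setminus D)$, because $h_{\ve,t_j}\to h_{t_j}$ smoothly and $g=h_0^{-1}h$ up to the square root. By Proposition \ref{HNS resol prop} we may assume, after a blow-up (which is harmless on $X\setminus D$), that $E_i$ is a holomorphic subbundle of $E$. Then $\pi^{(i)}_{\ve,t_j}$ is the $h_0$-orthogonal projection onto $g_{\ve,t_j}(E_i)$. Equivalently, it is the $h_{\ve,t_j}$-orthogonal projection onto $E_i$, conjugated by $g_{\ve,t_j}$. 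That projection is given by an explicit smooth algebraic formula in a local frame of $E_i$ and the metric $h_{\ve,t_j}$, so smooth local convergence of $h_{\ve,t_j}$ and $g_{\ve,t_j}$ gives $C^\infty_{\loc}(X\setminus D)$ convergence to $\pi^{(i)}_{t_j}$. The $L^q$ convergence follows from dominated convergence, since $|\pi|\le\sqrt{r}$ uniformly and the volume forms $\omega_\ve^n$ are uniformly $L^{1+\delta}$ against $\omega_0^n$ by condition (3-2). This uniform integrability controls the mass near $D$.

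\emph{Item (3).} We take $u^{(i)}_{\ve,t_j}$ to be the unitary part of the polar decomposition of
$$
P_\ve:=\pi^{(i)}_{t_j}\pi^{(i)}_{\ve,t_j}+(\id-\pi^{(i)}_{t_j})(\id-\pi^{(i)}_{\ve,t_j}).
$$
On compact sets $K\Subset X\setminus D$ and for small $\ve$, $P_\ve$ is close to $\id$, so it is invertible and maps $\operatorname{im}\pi^{(i)}_{\ve,t_j}$ onto $\operatorname{im}\pi^{(i)}_{t_j}$. Its unitary part does the same, and it converges smoothly to $\id$. Where $P_\ve$ fails to be invertible, we modify by a cutoff or an exhaustion. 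This only affects sets whose measure tends to zero, so the $L^q$ statement survives because everything is bounded.

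\emph{Items (2) and (4).} These are the same construction with $\ve\to0$ replaced by $j\to\infty$, which is the main obstacle. For (2), combine (1) with Lemma \ref{lem18}. By a diagonal choice of $\ve_j<\wtil{\ve_j}$ we can make $\pi^{(i)}_{\ve_j,t_j}$ arbitrarily close to $\pi^{(i)}_{t_j}$ in $C^k(K_j)$ on an exhaustion $K_j$, and in $L^q$. Lemma \ref{lem18} gives $\pi^{(i)}_{\ve_j,t_j}\to\pi^{(i)}_\infty$ weakly in $L^p_{2,\loc}$ and strongly in $L^q$. The triangle inequality then yields the strong $L^q$ convergence of $\pi^{(i)}_{t_j}$. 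For weak $L^p_{2,\loc}$ convergence we need uniform $L^p_2$ bounds on $\pi^{(i)}_{t_j}$ on compact sets. We get these either from the same diagonal closeness, or directly by rerunning the argument of Lemma \ref{lem18} for $A_{t_j}$ using \cite[Lemma 2.13]{DW04}. The latter uses $\|\del_{A_{t_j}}\pi^{(i)}_{t_j}\|_{L^2}\to0$, obtained from (1) and lower semicontinuity.

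For (4), apply the polar-decomposition trick to $\pi^{(i)}_{t_j}$ and $\pi^{(i)}_\infty$, after composing with the Uhlenbeck isometries that the paper suppresses. Weak $L^p_{2}$ convergence with $p>n$ embeds into $C^0_{\loc}$, so $P_j\to\id$ uniformly on compacta and is invertible there. The unitary part then inherits $L^p_{2,\loc}$ bounds, since polar decomposition is smooth near $\id$, and it maps $\pi^{(i)}_{t_j}(E)$ onto $E_{\infty,i}$. The delicate points are the regularity bookkeeping in this last step and the global $L^q$ convergence near $D\cup Z_{\an}$. Both are handled by the uniform $L^\infty$ bound on projections and isometries, together with the uniform integrability from (3-2).
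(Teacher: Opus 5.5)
Your items (1) and (2) are correct and match the paper. In (1) you go straight to the formula $\pi^{(i)}_{\ve,t}=g_{\ve,t}\,p_{\ve,t}\,g_{\ve,t}^{-1}$, where $p_{\ve,t}$ is the $h_{\ve,t}$-orthogonal projection onto $E_i$. That is the content of the paper's final sentence. The paper first takes a detour: it extracts a weak $L^2_{1,\loc}$ limit using the Chern--Weil bound $\int_X|\del_{A_{\ve,t}}\pi^{(i)}_{\ve,t}|^2\omega_\ve^n\le C$, and identifies it via $\wtil{\pi}^{(i)}_t\circ g_{t,i}=g_{t,i}$ and equality of ranks. The detour is not needed for the statement, but it records that uniform bound, which is reused in Lemma \ref{lem20}. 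Your diagonal argument in (2) is what the paper's one-line proof means, since Lemma \ref{lem18} holds for every $\ve_j<\wtil{\ve_j}$.

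The gap is in (3), and in the same form in (4). The statement requires a single smooth isometry on all of $X\setminus D$ with $u\,\pi^{(i)}_{\ve,t_j}\,u^{-1}=\pi^{(i)}_{t_j}$ exactly. Kato's operator $P_\ve$ is invertible only where the two projections are close, i.e. on compacta, and your patch fails there:
\begin{itemize}
\item interpolating $P_\ve(P_\ve^*P_\ve)^{-1/2}$ with $\id$ by a cutoff gives a non-unitary map;
\item taking its unitary part again destroys the intertwining on the transition region;
\item ``sets of measure tending to zero'' rescue the $L^q$ assertions but not the equality of images.
\end{itemize}
For (3), the missing idea is to use the global comparison map $\phi:=g_{t_j}\circ g_{\ve,t_j}^{-1}$. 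This is a smooth automorphism of $E|_{X\setminus D}$ carrying $g_{\ve,t_j}(E_i)$ onto $g_{t_j}(E_i)$. Set
\[
P:=\pi^{(i)}_{t_j}\,\phi\,\pi^{(i)}_{\ve,t_j}+(\id-\pi^{(i)}_{t_j})\,\phi\,(\id-\pi^{(i)}_{\ve,t_j}).
\]
With respect to the two orthogonal splittings, $P$ is block diagonal. Its blocks are $\phi\colon\im(\pi^{(i)}_{\ve,t_j})\to\im(\pi^{(i)}_{t_j})$ and $(\id-\pi^{(i)}_{t_j})\phi$ on the complements, the latter injective because $\phi^{-1}(g_{t_j}(E_i))=g_{\ve,t_j}(E_i)$. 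So $P$ is invertible at every point of $X\setminus D$. Moreover $P\pi^{(i)}_{\ve,t_j}=\pi^{(i)}_{t_j}P$, and $P^*P$ commutes with $\pi^{(i)}_{\ve,t_j}$. Hence $u:=P(P^*P)^{-1/2}$ is a global smooth isometry with $u\,\pi^{(i)}_{\ve,t_j}u^{-1}=\pi^{(i)}_{t_j}$. Since $\phi\to\id$, item (1) gives $u\to\id$ in $C^\infty_{\loc}(X\setminus D)$. The $L^q$ convergence then follows by dominated convergence, as you say.

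For (4), no such comparison map between $u_j(g_{t_j}(E_i))$ ($u_j$ the Uhlenbeck isometries) and $E_{\infty,i}$ is available over the noncompact set $X\setminus(D\cup Z_{\an})$. Your construction therefore gives intertwining isometries only on an exhaustion by compacta on which the projections are $C^0$-close. That local version is all that is used afterwards, since Proposition \ref{prop24} passes at once to a further Uhlenbeck limit with its own gauges. But (4) as stated also needs a global, compatible isomorphism of the two image subbundles and of their complements, which your argument does not supply. The paper disposes of (3) and (4) by citing \cite[Lemma 5.12]{Das92} and does not address this point either.
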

\begin{proof}
(1) By Lemma \ref{lem13}, we have
$$
\int_X|F_{A_{\ve,t}}|^p\omega_{\ve}^n\xrightarrow{\ve\to0} \int_{X\setminus D}|F_{A_{t}}|^pT^n.
$$
Then by the same computation as in (\ref{eq25}), we have
\begin{equation}\label{eq31.1}
\int_X\left|\partial_{A_{\ve,t}}\pi^{(i)}_{\ve,t} \right|^2\omega_{\ve}^n\le C.
\end{equation}
Since $A_{\ve,t}\xrightarrow{\ve\to0}A_{t}$ locally smoothly on $X\setminus D$, we obtain a weakly holomorphic projection $\wtil{\pi}^{(i)}_t$ of $(E,\delbar_{A_t},h_0)$ such that $\pi^{(i)}_{\ve,t}\xrightarrow{\ve\to0}\wtil{\pi}^{(i)}_t$ weakly in $L^2_{1,\loc}(X\setminus D)$.   We show $\wtil{\pi}^{(i)}_t={\pi}^{(i)}_t$. 
Let us define $g_{\ve,t,i}$ by 
$$
g_{\ve,t,i}:(E_i,\delbar_{E_i})\subset (E,\delbar_E)\xrightarrow{g_{\ve,t}} (E,\delbar_{A_{\ve,t}}).
$$
Since $\pi^{(i)}_{\ve,t}$ is the projection to $g_{\ve,t,i}(E_i)$, we have 
\begin{equation}\label{eq32}
\pi^{(i)}_{\ve,t}\circ g_{\ve,t,i}=g_{\ve,t,i}.
\end{equation}
 Since $h_{\ve,t}=h_0g_{\ve,t}^2\to h_0g_t^2=h_t$ smoothly by Proposition \ref{prop6}, we have $g_{\ve,t,i}\xrightarrow{\ve\to0}g_{t,i}$ in $C^{\infty}_{\loc}(X\setminus D)$. Here $g_{t,i}$ is defined from $g_t$ by
$$
g_{t,i}:(E_i,\delbar_{E_i})\hookrightarrow (E,\delbar_E)\xrightarrow{g_t} (E,\delbar_{A_t}).
$$ 
Thus, in $\ve\to0$ of (\ref{eq32}), we obtain 
$$
\wtil{\pi}^{(i)}_{t}\circ g_{t,i}=g_{t,i}.
$$
Thus we have $\im(\pi^{(i)}_t)=g_t(E_i)=\im(g_{t,i})\subset \im(\wtil{\pi}^{(i)}_t)$. Since $\rk(\wtil{\pi}^{(i)}_t)=\rk(\pi^{(i)}_{\ve,t})=\rk(E_i)$ as in Lemma \ref{lem18}, we obtain $\im(\pi^{(i)}_t)=\im(\wtil{\pi}^{(i)}_t)$. Hence, we obtain $\wtil{\pi}^{(i)}_t=\pi^{(i)}_t$.The smooth convergence $\pi^{(i)}_{\ve,t_j}\xrightarrow{\ve\to0}\pi^{(i)}_{t_j}$ follows from the smooth convergence $g_{\ve,t,i}\xrightarrow{\ve\to0}g_{t,i}$. \\
(2) Then (2) follows from the weak $L^p_{2,\loc}$ convergence $\pi_{\ve_j,t_j}^{(i)}\xrightarrow{j\to\infty}\pi_{\infty}^{(i)}$ in Lemma \ref{lem18} and the smooth convergence $\pi_{\ve,t}^{(i)}\xrightarrow{\ve\to0}\pi^{(i)}_t$ in (1) above.
 By (1) and (2), the remaining statements (3) and (4) follow directly from \cite[Lemma 5.12]{Das92}.
\end{proof}

\subsection{Proof of main theorem \ref{main thm}}\label{pf of main thm3}
We fix a holomorphic hermitian vector bundle $(E,\delbar_E,h_0)$ on a compact K\"{a}hler manifold $X$ and $\alpha$ a nef and big class on $X$ such that $D=E_{nK}(\alpha)$ is an snc divisor. Fix also an adapted current $T\in\alpha$ and its adapted approximation $\omega_{\ve}\in\alpha+\ve\omega_0$. We also fix an $\alpha^{n-1}$-HNS filtration of $(E,\delbar_E)$:
\begin{equation}\label{eq32.001}
0=E_0\subset E_1\subset \cdots \subset E_l=E.
\end{equation}
Let $A_{\ve,t}$ be an $\omega_{\ve}$-YM flow with initial condition $A_0=(\delbar_E,h_0)$ on $(E,h_0)$ as in Corollary \ref{cor7} such that $\delbar_{A_{\ve,t}}=g_{\ve,t}\cdot\delbar_{A_0}$ for some $g_{\ve,t}\in\mathcal{G}^{\mathbb{C}}$. Let $\pi^{(i)}_{\ve,t}$ be the weakly holomorphic projection of $(E,\delbar_{A_{\ve,t}},h_0)$ to $g_{\ve,t}(E_i)$ as in Lemma \ref{lem18}. Then we define a unitary connection $A_{\ve,t}^{(i)}$ on $(g_{\ve,t}(E_i),h_0^{(i)})$ induced from $A_{\ve,t}$ on $E$ where $h_0^{(i)}$ is the restriction of $h_0$. Equivalently, we have $\nabla_{A_{\ve,t}^{(i)}}=\pi^{(i)}_{\ve,t}\circ\nabla_{A_{\ve,t}}\circ\pi^{(i)}_{\ve,t}$. We also denote by $A_{\ve,t}^{(i)}=\pi^{(i)}_{\ve,t}\cdot A_{\ve,t}$.
We start with the following estimate.
\begin{lemm}\label{lem20}
Let $A_{\ve,t}$ be the $\omega_{\ve}$-YM flow with initial condition $A_0=(\delbar_E,h_0)$ and $A_{\ve,t}^{(i)}:=\pi_{\ve,t}^{(i)}\cdot A_{\ve,t}$ defined as above.
Then, for any $t>0$, we have
$
\|\Lambda_{\omega_{\ve}}F_{A^{(i)}_{\ve,t}}\|_{L^1(X,\omega_{\ve})}\le C.
$
\end{lemm}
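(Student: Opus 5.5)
The idea is to use the second fundamental form formula for a holomorphic subbundle, and to split the resulting curvature into a piece with a sign and an error term controlled by the global bound on $\Lambda_{\omega_{\ve}}F_{A_{\ve,t}}$.

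\textbf{Curvature of the subbundle.} Write $\pi=\pi^{(i)}_{\ve,t}$ and $S=g_{\ve,t}(E_i)$. Since each $E_i$ may be assumed to be a holomorphic subbundle (Proposition \ref{HNS resol prop}, after passing to a blow-up), $S$ is a smooth holomorphic subbundle of $(E,\delbar_{A_{\ve,t}})$. The Gauss–Codazzi formula then gives
\begin{equation*}
\sqrt{-1}\Lambda_{\omega_{\ve}}F_{A^{(i)}_{\ve,t}}=\pi\circ\sqrt{-1}\Lambda_{\omega_{\ve}}F_{A_{\ve,t}}\circ\pi-\Theta,
\end{equation*}
where, up to the sign convention for $\Lambda$, $\Theta:=\sqrt{-1}\Lambda_{\omega_{\ve}}(\delbar\pi)^*\wedge\delbar\pi$ is a nonnegative hermitian endomorphism of $S$ with $\Tr\Theta=|\delbar_{A_{\ve,t}}\pi|^2_{h_0,\omega_{\ve}}$.

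\textbf{Pointwise bound.} By the triangle inequality,
\begin{equation*}
|\Lambda_{\omega_{\ve}}F_{A^{(i)}_{\ve,t}}|\le|\Lambda_{\omega_{\ve}}F_{A_{\ve,t}}|+|\Theta|\le|\Lambda_{\omega_{\ve}}F_{A_{\ve,t}}|+|\delbar_{A_{\ve,t}}\pi|^2,
\end{equation*}
where the second step uses $|\Theta|\le\Tr\Theta$, valid since $\Theta\ge0$.

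\textbf{Integrating the first term.} Integrating against $\omega_{\ve}^n$, the first term is bounded uniformly in $\ve$ and $t$ by Lemma \ref{lem4} (1). This holds because $A_{\ve,t}$ is gauge equivalent to the Chern connection of $h_{\ve,t}$, so the pointwise norms agree.

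\textbf{Integrating the second term (the main step).} Integrating the formula above against $\omega_{\ve}^n$ and using Chern–Weil as in (\ref{eq25}) gives
\begin{equation*}
\int_X|\delbar_{A_{\ve,t}}\pi|^2\omega_{\ve}^n=\int_X\Tr\big(\sqrt{-1}\Lambda_{\omega_{\ve}}F_{A_{\ve,t}}\circ\pi\big)\omega_{\ve}^n-\deg_{\omega_{\ve}}(E_i).
\end{equation*}
Since $|\pi|\le\sqrt{r}$, the first term on the right is at most $\sqrt r\,\|\Lambda_{\omega_{\ve}}F_{A_{\ve,t}}\|_{L^1(\omega_{\ve})}\le C$, again by Lemma \ref{lem4} (1). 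The remaining point is a uniform lower bound for $\deg_{\omega_{\ve}}(E_i)=c_1(E_i)\cdot(\alpha+\ve\omega_0)^{n-1}$. This is a polynomial in $\ve$ with fixed cohomological coefficients, so it converges to $\deg_\alpha(E_i)$ as $\ve\to0$. Hence it is bounded for $\ve\in(0,1]$, which is the range we restrict to.

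Combining the bounds on the two terms gives $\|\Lambda_{\omega_{\ve}}F_{A^{(i)}_{\ve,t}}\|_{L^1(X,\omega_{\ve})}\le C$ with $C$ independent of $\ve$ and $t$. I expect the main care to be needed in fixing the sign conventions in the Gauss–Codazzi formula, so that $\Theta$ indeed has the right sign to be absorbed through its trace.
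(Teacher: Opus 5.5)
Your proof is correct and follows the paper's argument. The paper also bounds $|\Lambda_{\omega_{\ve}}F_{A^{(i)}_{\ve,t}}|$ pointwise by $|\Lambda_{\omega_{\ve}}F_{A_{\ve,t}}|+|\delbar_{A_{\ve,t}}\pi^{(i)}_{\ve,t}|^2$ and integrates, using Lemma \ref{lem4} (1) for the first term and the Chern--Weil bound (\ref{eq31.1}) on the second fundamental form for the second. Your derivation of that second bound directly from the $L^1$ estimate in Lemma \ref{lem4} (1) and the boundedness of $c_1(E_i)\cdot(\alpha+\ve\omega_0)^{n-1}$ makes the uniformity in all $t>0$ and $\ve\in(0,1]$ explicit. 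The paper obtained (\ref{eq31.1}) via the $\ve\to0$ convergence of Lemma \ref{lem13}, which is stated only for fixed $t\ge t_0$, so this is a slight improvement.
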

\begin{proof}
Using  Lemma \ref{lem4} (1) and (\ref{eq31.1}), we have for any $t>0$
$$
\int_X\left|\Lambda_{\omega_{\ve}}F_{A^{(i)}_{\ve,t}}\right|\omega_{\ve}^n
\le \int_X\left|\Lambda_{\omega_{\ve}}F_{A_{\ve,t}}\right|\omega_{\ve}^n+\int_X\left|\delbar_{A_{\ve,t}}\pi^{(i)}_{\ve,t}\right|^2\omega_{\ve}^n\le C.
$$
\end{proof}
Since the second fundamental form $\delbar_{A_{\ve,t}}\pi^{(i)}_{\ve,t}$ is not uniformly bounded in $L^{\infty}$, the curvature tensor 
$
\sqrt{-1}\Lambda_{\omega_{\ve}}F_{A^{(i)}_{\ve,t}}
$
is not uniformly bounded in $L^{\infty}$. Then, we consider the $\omega_{\ve}$-YM flow $(A_{s,\ve,t}^{(i)})_s$ on $g_{\ve,t}(E_i)$ with initial condition $A_{\ve,t}^{(i)}$. 
\begin{lemm}\label{lem21}
Let $(A_{s,\ve,t}^{(i)})_s$ be the $\omega_{\ve}$-YM flow on $g_{\ve,t}(E_i)\subset (E,\delbar_{A_{\ve,t}})$ with initial condition $A_{\ve,t}^{(i)}$ as above. Then, 
\begin{enumerate}
\item For any $s>0$, $\ve>0$ and $t>0$, we have
$
\|\Lambda_{\omega_{\ve}}F_{A_{s,\ve,t}^{(i)}}\|_{L^1(X,\omega_{\ve})}\le C.
$
\item At each $s>0$, there exists $C_s>0$ such that, for every $t\ge t_0$ and $\ve>0$,
$
\|\Lambda_{\omega_{\ve}}F_{A^{(i)}_{s,\ve,t}}\|_{L^{\infty}(X)}\le C_s 
$
where $t_0>0$ is defined as in Lemma \ref{lem4} (2).
\item Furthermore, for each $K\Subset X\setminus D$, there exists $C_K>0$ such that for any $\ve>0$, $s\ge 0$ and $t\ge 0$, we have
$
\|\Lambda_{\omega_{\ve}}F_{A_{s,\ve,t}}\|_{L^{\infty}(K)}\le C_K.
$
\end{enumerate}
\end{lemm}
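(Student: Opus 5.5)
The three estimates are the sub-bundle analogues of Lemma \ref{lem4}, and I would prove them the same way, on the hermitian bundle $(g_{\ve,t}(E_i),h_0^{(i)})$ with the smooth K\"{a}hler metric $\omega_{\ve}$. Lemma \ref{lem20} controls the initial curvature in $L^1$, and Lemma \ref{sob ineq} supplies heat kernel estimates that are uniform in $\ve$.

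For (1), I apply Lemma \ref{lem1} (3) to the $\omega_{\ve}$-YM flow $(A^{(i)}_{s,\ve,t})_s$ on $g_{\ve,t}(E_i)$. This gives
$$
\int_X|\Lambda_{\omega_{\ve}}F_{A^{(i)}_{s,\ve,t}}|\omega_{\ve}^n\le \int_X|\Lambda_{\omega_{\ve}}F_{A^{(i)}_{\ve,t}}|\omega_{\ve}^n\le C,
$$
where the last inequality is Lemma \ref{lem20}. The constant there is uniform in $\ve$ and $t$, because (\ref{eq31.1}) comes from the Chern--Weil identity (\ref{eq25}) together with $\deg_{\omega_{\ve}}(E_i)\to\deg_{\alpha}(E_i)$ and the uniform $L^1$ bound of Lemma \ref{lem4} (1).

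For (2), I use the subsolution property $(\partial_s-\Delta_{\omega_{\ve}})|\Lambda_{\omega_{\ve}}F_{A^{(i)}_{s,\ve,t}}|\le 0$ from Lemma \ref{lem1} (3). The representation formula then gives
$$
|\Lambda_{\omega_{\ve}}F_{A^{(i)}_{s,\ve,t}}|(x)\le \int_X H_{\omega_{\ve}}(x,y,s)\,|\Lambda_{\omega_{\ve}}F_{A^{(i)}_{\ve,t}}|(y)\,\omega_{\ve}^n(y).
$$
By Lemma \ref{sob ineq} (2), $H_{\omega_{\ve}}(x,y,s)\le C\max(s^{-q/(q-1)},1)$ uniformly in $\ve$. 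Combining this with Lemma \ref{lem20} yields $\|\Lambda_{\omega_{\ve}}F_{A^{(i)}_{s,\ve,t}}\|_{L^\infty}\le C_s$. This argument uses only the $L^1$ bound, so it holds for every $t>0$ and in particular for $t\ge t_0$.

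For (3), the heat kernel bound degenerates as $s\to0$, so it cannot give a bound uniform in $s$. Instead I would follow the local argument behind Lemma \ref{lem4} (3), which uses a local maximum principle / parabolic Moser iteration on a neighbourhood of $K$ where $\omega_{\ve}$ is uniformly equivalent to $T$. That argument needs the initial value $\Lambda_{\omega_{\ve}}F_{A^{(i)}_{\ve,t}}$ to be bounded on a slightly larger compact set $K'\Subset X\setminus D$, uniformly in $\ve$ and $t$. I expect this to be the main obstacle. By the Gauss--Codazzi formula,
$$
\Lambda_{\omega_{\ve}}F_{A^{(i)}_{\ve,t}}=\pi^{(i)}_{\ve,t}\Lambda_{\omega_{\ve}}F_{A_{\ve,t}}\pi^{(i)}_{\ve,t}\pm\Lambda_{\omega_{\ve}}\bigl(\delbar_{A_{\ve,t}}\pi^{(i)}_{\ve,t}\wedge(\delbar_{A_{\ve,t}}\pi^{(i)}_{\ve,t})^*\bigr).
\]
The first term is bounded on $K'$ by Lemma \ref{lem4} (3). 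The second fundamental form, however, is locally bounded only for $t$ in compact ranges, by the smooth convergence of Proposition \ref{prop19} (1), and not uniformly as $t\to\infty$.

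I would get around this by using the sign of the second term rather than a bound on it. The term $-\delbar\pi\wedge(\delbar\pi)^*$ contributes a nonpositive operator to $\sqrt{-1}\Lambda F$ of the subbundle, so $\sqrt{-1}\Lambda_{\omega_{\ve}}F_{A^{(i)}_{\ve,t}}$ is bounded above on $K'$ by $C_{K'}$. Along the flow, the largest eigenvalue satisfies a subsolution inequality, so this upper bound propagates for all $s$. A lower bound comes from the trace, $\Tr\sqrt{-1}\Lambda F=\Tr(\pi\sqrt{-1}\Lambda F_{A_{\ve,t}})-|\delbar\pi|^2$, together with the global $L^1$ bound of (1) and the mean value inequality of Lemma \ref{sob ineq} (3). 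If this does not close, I would instead settle for the estimate for $s\ge s_0>0$ via (2), which is what the later arguments actually use.
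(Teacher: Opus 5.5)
Your arguments for (1) and (2) are exactly the paper's proof. The $L^1$ bound propagates from Lemma \ref{lem20} by Lemma \ref{lem1} (3), and the $L^\infty$ bound follows from the subsolution representation together with the $\ve$-uniform heat kernel bound of Lemma \ref{sob ineq} (2). You are right that this uses only the $L^1$ bound, so it holds for every $t>0$.

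For (3) the paper's proof gives no argument, and your sketch does not close. The failing step is the lower bound, and it fails already at $s=0$. There the connection is $A^{(i)}_{\ve,t}$ itself, and
\[
\Tr\bigl(\sqrt{-1}\Lambda_{\omega_\ve}F_{A^{(i)}_{\ve,t}}\bigr)=\Tr\bigl(\pi^{(i)}_{\ve,t}\sqrt{-1}\Lambda_{\omega_\ve}F_{A_{\ve,t}}\bigr)-|\delbar_{A_{\ve,t}}\pi^{(i)}_{\ve,t}|^2,
\]
where the first term is bounded on $K$ by Lemma \ref{lem4} (3). So a bound on $K$ uniform in $t\ge 0$ at $s=0$ is equivalent to a uniform local $L^\infty$ bound on the second fundamental form, which is exactly what you identified as unavailable.

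No argument through the trace can supply this bound. The trace solves the scalar heat equation in $s$ with precisely this initial value. Lemma \ref{sob ineq} (3) is an elliptic estimate that needs $\Delta_{\omega_\ve}v\ge -C'$ at fixed $s$, and you do not have that for $v=-\Tr\sqrt{-1}\Lambda_{\omega_\ve}F$. Your upper-bound half, via the sign of the second-fundamental-form term, is fine.

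Your fallback is also not the right one. Lemma \ref{lem23} needs local control on $W\times[0,S]$ (see (\ref{eq32.1})) in order to pass to $\ve\to 0$ and identify the initial condition $A^{(i)}_t$. Bounds for $s\ge s_0>0$ therefore do not suffice.

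What is provable, and is all that is later used, is (3) with a constant $C_{K,t}$ uniform in $\ve>0$ and $s\ge 0$. Take $x\in K$ and $K\Subset K'\Subset X\setminus D$, and split $\int_X H_{\omega_\ve}(x,y,s)|\Lambda_{\omega_\ve}F_{A^{(i)}_{\ve,t}}|(y)\,\omega_\ve^n(y)$ over $K'$ and $X\setminus K'$, as in Lemma \ref{lem4} (3).
\begin{itemize}
\item The part over $K'$ is at most $\sup_{\ve}\sup_{K'}|\Lambda_{\omega_\ve}F_{A^{(i)}_{\ve,t}}|$. For fixed $t$ this is finite, since $A_{\ve,t}\to A_t$ and $\pi^{(i)}_{\ve,t}\to\pi^{(i)}_t$ smoothly on $K'$ (Corollary \ref{cor7} and the argument of Proposition \ref{prop19} (1)).
\item The part over $X\setminus K'$ is at most $C_\delta$ times the $L^1$ bound of Lemma \ref{lem20}. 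Here $\delta>0$ is an $\ve$-uniform lower bound for $d_{\omega_\ve}(K,X\setminus K')$, which exists because $\omega_\ve\to T$ smoothly on $K'$. The constant $C_\delta$ bounds the right-hand sides in Lemma \ref{sob ineq} (2) for $d_{\omega_\ve}(x,y)\ge\delta$, uniformly in $s>0$.
\end{itemize}
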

\begin{proof}
Since $A_{s,\ve,t}^{(i)}$ is the $\omega_{\ve}$-YM flow with initial condition $A_{\ve,t}^{(i)}$, we have, by Lemma \ref{lem4} (1) and Lemma \ref{lem20}, 
$$
\int_X|\Lambda_{\omega_{\ve}}F_{A_{s,\ve,t}^{(i)}}|\omega_{\ve}^n
\le \int_X|\Lambda_{\omega_{\ve}}F_{A_{\ve,t}^{(i)}}|\omega_{\ve}^n \le C.
$$
By Lemma \ref{lem1} (3), we have
$$
\left|\Lambda_{\omega_{\ve}}F_{A^{(i)}_{s,\ve,t}}\right|(x)\le \int_XH_{\omega_{\ve}}(x,y,s)\left|\Lambda_{\omega_{\ve}}F_{A^{(i)}_{\ve,t}}\right|(y)\omega_{\ve}(y)^n.
$$
By Lemma \ref{sob ineq}, we have 
$$
H_{\omega_{\ve}}(x,y,s)\le \frac{C}{s^q}\exc(-\frac{d_{\omega_{\ve}}(x,y)}{s})\le C_s
$$ 
at each $s>0$.
Then, together with Lemma \ref{lem20}, we obtain  $\|\Lambda_{\ve}F_{A^{(i)}_{s,\ve,t}}\|_{L^{\infty}(X)}\le C_s$.
\end{proof}
We want to find a limit of $A^{(i)}_{s,\ve,t}$ in $\ve\to 0$. However, the vector bundle where $A^{(i)}_{s,\ve,t}$ is defined is $g_{\ve,t}(E_i)$ which deforms as hermitian vector bundles depending on $\ve$. Then we use the gauge transformations in Proposition \ref{prop19} to fix the vector bundle as follows.
\begin{lemm}\label{lem23}
Let $(A_{s,\ve,t}^{(i)})_s$ be the $\omega_{\ve}$-YM flow as in Lemma \ref{lem21}.
Let $u_{\ve,t}^{(i)}:(E,h_0)\to (E,h_0)$ be the isometry in Proposition \ref{prop19} which, in particular, induces an isometry $u_{\ve,t}^{(i)}:(g_{\ve,t}(E_i),h_0^{(i)})\to (g_t(E_i),h_0^{(i)})$ (refer to Proposition \ref{prop19} for the definition of $g_t$). Here two metrics $h_0^{(i)}$ above are both restrictions of $h_0$ to subbundles $g_{\ve,t}(E_i)$ and $g_t(E_i)$ respectively. Then there exists a smooth hermitian positive definite endomorphism $g_{s,t}^{(i)}: (E_i, h_0^{(i)})\to (g_t(E_i),h_0^{(i)})$ on $X\setminus D$ such that 
$$
u_{\ve,t}^{(i)}\cdot A_{s,\ve,t}^{(i)}\xrightarrow{\ve\to 0} g_{s,t}^{(i)}\cdot (\delbar_{E_i},h_0^{(i)}(g_{s,t}^{(i)})^2)=:A_{s,t}^{(i)}
$$
in $C^{\infty}_{\loc}(X\setminus D)$, where $(\delbar_{E_i},h_0^{(i)}(g_{s,t}^{(i)})^2)$ is the Chern connection of $h_0^{(i)}(g_{s,t}^{(i)})^2$ on $(E_i,\delbar_{E_i})$. The above family $(A_{s,t}^{(i)})_s$ defines a $T$-YM flow on $(g_t(E_i),h_0^{(i)})$ with initial condition $A_t^{(i)}$ which satisfies the following.
\begin{enumerate}
\item For every $s>0$ and $t>0$, we have
$
\|\Lambda_TF_{A_{s,t}^{(i)}}\|_{L^1(X,T)}\le C.
$
\item At each $s>0$, and for every $t\ge t_0$, we have
$
\|\Lambda_TF_{A_{s,t}^{(i)}}\|_{L^{\infty}(X\setminus D)}\le C_s
$
where $t_0>0$ is defined as in Lemma \ref{lem4} (2).
\item Moreover, we have
$\Lambda_{\omega_{\ve}}F_{u_{\ve,t}^{(i)}\cdot A_{s,\ve,t}}^{(i)}\xrightarrow{\ve\to0} \Lambda_TF_{A_{s,t}^{(i)}}$  strongly in $L^q(X\setminus D)$ for $1\le q<\infty$.

\end{enumerate}
\end{lemm}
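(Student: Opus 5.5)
We will prove Lemma \ref{lem23} by transferring the sub-flow to a fixed bundle and mirroring the proof of Proposition \ref{prop6} and Corollary \ref{cor7}, now for the bundle $E_i$ instead of $E$. Fix $t$. The connection $A^{(i)}_{\ve,t}$ on $g_{\ve,t}(E_i)$ is the Chern connection of the restricted metric. Pulling back by $g_{\ve,t,i}:(E_i,\delbar_{E_i})\to (g_{\ve,t}(E_i),\delbar_{A_{\ve,t}})$, it corresponds to the Chern connection on $(E_i,\delbar_{E_i})$ of the metric $k_{\ve,t}:=g_{\ve,t,i}^*h_0^{(i)}$, which is the restriction of $h_{\ve,t}$ to $E_i$. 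By Proposition \ref{prop6}, $k_{\ve,t}\to k_t:=h_t|_{E_i}$ in $C^\infty_{\loc}(X\setminus D)$.

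By Lemma \ref{lem0}, the $\omega_\ve$-YM flow $(A^{(i)}_{s,\ve,t})_s$ is gauge equivalent to the $\omega_\ve$-HYM flow $k_{s,\ve,t}$ on $(E_i,\delbar_{E_i})$ with initial metric $k_{\ve,t}$. We then repeat the argument of Lemma \ref{lem5} and Proposition \ref{prop6} for $k_{s,\ve,t}$, using two inputs.
\begin{enumerate}
\item The local $L^\infty(K)$ bound of Lemma \ref{lem21} (3). Its constant depends only on $\sup_\ve\|\Lambda_{\omega_\ve}F_{k_{\ve,t}}\|_{L^\infty(K)}$, which is finite by the smooth convergence $k_{\ve,t}\to k_t$, and on the $L^1$ bound of Lemma \ref{lem20}.
\item The uniform heat kernel estimate, Lemma \ref{sob ineq}.
\end{enumerate}
Together these give $C^{1,\beta}_{\loc}$ bounds on $k_{s,\ve,t}^{\pm1}$ for $s$ in compact intervals. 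Elliptic bootstrapping of (\ref{eq7}), plus a diagonal argument in $s$, then yields a subsequence with $k_{s,\ve,t}\to k_{s,t}$ in $C^\infty_{\loc}((X\setminus D)\times[0,\infty))$, where $k_{s,t}$ solves the $T$-HYM flow with initial value $k_t$.

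Next we identify the limit as a connection on $g_t(E_i)$. Write $k_{s,t}=h_0^{(i)}(g^{(i)}_{s,t})^2$ with a positive hermitian endomorphism $g^{(i)}_{s,t}$, regarded via $g_{t,i}$ as a map $E_i\to g_t(E_i)$. The isometries $u^{(i)}_{\ve,t}$ of Proposition \ref{prop19} (3) identify $g_{\ve,t}(E_i)$ with $g_t(E_i)$, and $u^{(i)}_{\ve,t}\to\id$ smoothly. Composing the complex gauge transformations from Lemma \ref{lem0} (2) with these isometries gives $u^{(i)}_{\ve,t}\cdot A^{(i)}_{s,\ve,t}\to A^{(i)}_{s,t}:=g^{(i)}_{s,t}\cdot(\delbar_{E_i},k_{s,t})$ in $C^\infty_{\loc}$. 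The real gauge parts $S_t$ of Lemma \ref{lem0} also converge, since they solve an ODE in $s$ whose coefficients converge locally smoothly. The family $A^{(i)}_{s,t}$ is then a $T$-YM flow with initial value $A^{(i)}_t$, exactly as in Corollary \ref{cor7}.

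Estimates (1) and (2) follow by Fatou's lemma from Lemma \ref{lem21} (1) and (2), using local smooth convergence and $\omega_\ve\to T$. For (3), local smooth convergence gives $L^q_{\loc}$ convergence. To upgrade to $L^q(X\setminus D)$ we need uniform integrability near $D$.
\begin{itemize}
\item For $s>0$ and $t\ge t_0$, the uniform $L^\infty$ bound $C_s$ of Lemma \ref{lem21} (2) gives $\int_{U}|\Lambda F|^q\omega_\ve^n\le C_s^q\int_U\omega_\ve^n$ on a neighbourhood $U$ of $D$.
\item The volumes $\int_U\omega_\ve^n$ are uniformly small by condition (3-2) of Definition \ref{adapted defi}.
\end{itemize}
This yields convergence of the $L^q$ norms and hence strong convergence, as in Lemma \ref{lem13}.

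The main obstacle is the local $C^{1,\beta}$ bound on $k_{s,\ve,t}$ uniformly in $\ve$. The initial metric now depends on $\ve$, and the initial curvature $\Lambda_{\omega_\ve}F_{k_{\ve,t}}$ carries the unbounded second fundamental form. So we must verify carefully that the constants in Lemma \ref{lem5} depend only on quantities that are controlled here. For $s=0$ and for $t<t_0$, the global $L^\infty$ bound fails, so statement (3) should be restricted to $s>0$, $t\ge t_0$. At $s=0$ it needs a separate argument using the $L^2$ control of $\delbar\pi^{(i)}_{\ve,t}$ from (\ref{eq31.1}).
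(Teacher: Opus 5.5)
Your proposal is correct and follows essentially the paper's route. You pull the sub-flow back to the fixed holomorphic bundle $(E_i,\delbar_{E_i})$ via $g_{\ve,t,i}$ and the isometries $u^{(i)}_{\ve,t}$, convert it to an HYM flow by Lemma \ref{lem0}, obtain $\ve$-uniform local bounds as in Lemma \ref{lem5} and Proposition \ref{prop6}, pass to the limit, and transfer the curvature estimates, with (3) obtained as in Lemma \ref{lem13}. Your restriction of (3) to $s>0$, $t\ge t_0$ matches what the paper actually proves, since its appeal to Lemma \ref{lem13} likewise relies on the uniform $L^{\infty}$ bound available only in that range.
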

\begin{proof}
Let us denote by $A_{s,\ve,t}^{(i)}$ the $\omega_{\ve}$-YM flow in $s$ on $(g_{\ve,t}(E_i),h_0^{(i)})$ with initial condition $A_{\ve,t}^{(i)}$. By Lemma \ref{lem0}, there exists a holomorphic isometry
\begin{equation}\label{eq32.0}
g_{s,\ve,t}^{(i)}:(g_{\ve,t}(E_i), \delbar_{A_{\ve,t}^{(i)}},h_0^{(i)}(g_{s,\ve,t}^{(i)})^2)\to (g_{\ve,t}(E_i),\delbar_{A_{s,\ve,t}^{(i)}},h_0^{(i)})
\end{equation}
such that $h_{s,\ve,t}^{(i)}:=h_0^{(i)}(g_{s,\ve,t}^{(i)})^2$ is an $\omega_{\ve}$-HYM flow on the holomorphic vector bundle $(g_{\ve,t}(E_i), \delbar_{A_{\ve,t}^{(i)}})$ with initial condition $h_0^{(i)}$.
We have
\begin{equation}\label{eq32.01}
\nabla_{A_{s,\ve,t}^{(i)}}=g_{s,\ve,t}^{(i)}\circ\nabla_{{(\delbar_{A_{\ve,t}^{(i)}},h_{s,\ve,t}^{(i)})}}\circ(g_{s,\ve,t}^{(i)})^{-1},
\end{equation}
thus we have 
$$
|\Lambda_{\omega_{\ve}}F_{A_{s,\ve,t}^{(i)}}|_{h_0^{(i)}}^2
=|\Lambda_{\omega_{\ve}}F_{(\delbar_{A_{\ve,t}^{(i)}},h_{s,\ve,t}^{(i)})}|_{h_{s,\ve,t}^{(i)}}^2, \hspace{4mm}
|F_{A_{s,\ve,t}^{(i)}}|_{h_0^{(i)},\omega_{\ve}}^2
=|F_{(\delbar_{A_{\ve,t}^{(i)}},h_{s,\ve,t}^{(i)})}|_{h_{s,\ve,t}^{(i)},\omega_{\ve}}^2.
$$
Then the curvatures in RHS above satisfy the same estimates as in Lemma \ref{lem21}.  By Proposition \ref{prop6}, we have
\begin{equation}\label{eq32.1}
\|g_{s,\ve,t}^{(i)}\|_{C^{k/l}(W\times [0,S])}\le C_{W,S}
\end{equation}
for any $k>0$ which is the order of the spacial derivatives, $l>0$ which is the derivatives with respect to $s$, and $W\Subset X\setminus D$. Recall that $h_{\ve,t}=h_0g_{\ve,t}^2$ is the $\omega_{\ve}$-HYM flow on $(E,\delbar_{A_0})$ in (\ref{eq4}) and $g_{\ve,t}$ locally smoothly converges to $g_t$ by Proposition \ref{prop6}. Here $h_t=h_0g_t^2$ is the $T$-HYM flow on $(E,\delbar_{A_0})$ in Proposition \ref{prop6}. Hence, if we define $g_{\ve,t,i}:E_i\hookrightarrow E\xrightarrow{g_{\ve,t}} E$, we have
\begin{equation}\label{eq32.2}
g_{\ve,t,i}\xrightarrow{\ve\to 0} g_{t,i} \hspace{4mm} \hbox{ in $C^{\infty}_{\loc}(X\setminus D)$}.
\end{equation}
where $g_{t,i}:E_i\hookrightarrow E\xrightarrow{g_t}E$. By Proposition \ref{prop19}, we know
\begin{equation}\label{eq32.3}
u_{\ve,t}^{(i)}\xrightarrow{\ve\to0}\id_E \hspace{4mm} \hbox{in $C^{\infty}_{\loc}(X\setminus D)$}.
\end{equation}
Let us consider the composition of holomorphic maps
\begin{equation}\label{eq32.31}
(E_i,\delbar_{E_i})\xrightarrow{g_{\ve,t,i}} (g_{\ve,t}(E_i), \delbar_{A_{\ve,t}^{(i)}})\xrightarrow{g_{s,\ve,t}^{(i)}} (g_{\ve,t}(E_i), \delbar_{A_{s,\ve,t}^{(i)}})\xrightarrow{u_{\ve,t}^{(i)}} (g_t(E_i), \delbar_{u_{\ve,t}^{(i)}\cdot A_{s,\ve,t}^{(i)}}).
\end{equation}
All of them are hermitian with respect to the restriction of $h_0$ to each subbundle of $E$.
Then, by (\ref{eq32.1}), (\ref{eq32.2}) and (\ref{eq32.3}), we obtain a smooth hermitian isomorphism $g_{s,t}^{(i)}:(E_i,h_0^{(i)})\to (g_t(E_i),h_0^{(i)})$ such that
\begin{equation}\label{eq32.4}
u_{\ve,t}^{(i)}\circ g_{s,\ve,t}^{(i)}\circ g_{\ve,t,i}\xrightarrow{\ve\to 0} g_{s,t}^{(i)} \hspace{4mm} \hbox{ in $C^{\infty}_{\loc}(X\setminus D)$.}
\end{equation}
Let us define a hermitian metric $\wtil{h}_{s,\ve,t}^{(i)}$ on $E_i$ by the pullback of $h_0^{(i)}$ on $g_t(E_i)$ along (\ref{eq32.31}):
\begin{equation}\label{eq32.41}
\wtil{h}_{s,\ve,t}^{(i)}:=h_0^{(i)}\cdot(u_{\ve,t}^{(i)}\circ g_{s,\ve,t}^{(i)}\circ g_{\ve,t,i})^2.
\end{equation}
Then we have
\begin{align}
\nabla_{u_{\ve,t}^{(i)}\cdot A_{s,\ve,t}^{(i)}}
&=u_{\ve,t}^{(i)}\circ\nabla_{A_{s,\ve,t}^{(i)}}\circ (u_{\ve,t}^{(i)})^{-1}\notag\\
&=u_{\ve,t}^{(i)}\circ g_{s,\ve,t}^{(i)}\circ\nabla_{(\delbar_{A_{\ve,t}^{(i)}},h_{s,\ve,t}^{(i)})}\circ(g_{s,\ve,t}^{(i)})^{-1}\circ (u_{\ve,t}^{(i)})^{-1}\notag\\
&=(u_{\ve,t}^{(i)}\circ g_{s,\ve,t}^{(i)}\circ g_{\ve,t,i})\circ\nabla_{(\delbar_{E_i},\wtil{h}_{s,\ve,t}^{(i)})}\circ(u_{\ve,t}^{(i)}\circ g_{s,\ve,t}^{(i)}\circ g_{\ve,t,i})^{-1}\label{eq32.42}.
\end{align}
Let us define a hermitian metric $\wtil{h}_{s,t}^{(i)}$ on $E_i$ by 
$$
\wtil{h}_{s,t}^{(i)}:=h_0^{(i)}\cdot (g_{s,t}^{(i)})^2.
$$
Then, by (\ref{eq32.4}), (\ref{eq32.41}) and (\ref{eq32.42}), we obtain that
$$
\nabla_{u_{\ve,t}^{(i)}\cdot A_{s,\ve,t}^{(i)}}
\xrightarrow{\ve\to0} g_{s,t}^{(i)}\circ\nabla_{(\delbar_{E_i},\wtil{h}_{s,t}^{(i)})}\circ (g_{s,t}^{(i)})^{-1}=:\nabla_{A_{s,t}^{(i)}} \hspace{4mm} \hbox{ in $C^{\infty}_{\loc}(X\setminus D)$.}
$$
Since $u_{\ve,t}^{(i)}\cdot A_{s,\ve,t}^{(i)}$ is an $\omega_{\ve}$-YM flow on $(g_t(E_i),h_0^{(i)})$, its smooth limit $A_{s,t}^{(i)}$ is a $T$-YM flow on $(g_t(E_i),h_0^{(i)})$.
Since $u_{\ve,t}^{(i)}$ is isometric, the curvatures of $u_{\ve,t}^{(i)}\cdot A_{s, \ve,t}^{(i)}$ satisfy the same uniform estimates as in Lemma \ref{lem21}. Thus, the smooth limit $A_{s,t}^{(i)}$ also satisfies the same uniform estimates which are the second statements. The last statement follows from Lemma \ref{lem13}.
\end{proof}
Then we use the gauge transformations $u_{t_j}^{(i)}$ in Proposition \ref{prop19} (4) to find a Uhlenbeck limit of $(A_{s,t})_t$ in $t\to \infty$ as follows:
\begin{prop}\label{prop24}
Fix $s>0$ and $p>n$.
Let $t_j\to\infty$ be a sequence in Lemma \ref{lem18}. Let $A_{s,t}^{(i)}$ be a $T$-YM flow in $s$ on $(g_t(E_i),h_0^{(i)})$ constructed in Lemma \ref{lem23}.
Let $u_{t_j}^{(i)}:(E,h_0)\to (E_{\infty},h_{\infty})$ be the isometry in Proposition \ref{prop19}. Then $u_{t_j}^{(i)}\cdot A_{s,t_j}^{(i)}$ is a sequence of integrable unitary connections on $(E_{\infty,i},h_{\infty}^{(i)})$  satisfying 
$
\|\Lambda_TF_{u_{t_j}^{(i)}\cdot A_{s,t_j}^{(i)}}\|_{L^1(X\setminus (D\cup Z_{\an}), T)}\le C,
$
$
\|\Lambda_TF_{u_{t_j}^{(i)}\cdot A_{s,t_j}^{(i)}}\|_{L^{\infty}(X\setminus (D\cup Z_{\an}))}\le C_s \hbox{ and } \|F_{u_{t_j}^{(i)}\cdot A_{s,t_j}^{(i)}}\|_{L^2(X\setminus (D\cup Z_{\an}), T)}\le C_s. 
$\\
In particular, for each $s>0$, there exists a closed subset $Z_{\an}^s\subset X\setminus D$ of Hausdorff codimension at least 4, and a smooth hermitian vector bundle $(E_{\infty,i}^s, h_{\infty,i}^s)$ with an integrable unitary connection $A_{s,\infty}^{(i)}$ such that, up to gauge transformations on $X\setminus (D\cup Z_{\an}^s)$, we have
$
u_{t_j}^{(i)}\cdot A_{s,t_j}^{(i)}\xrightarrow{j\to\infty} A_{s,\infty}^{(i)}
$
weakly in $L^p_{1,\loc}(X\setminus (D\cup Z_{\an}^s))$.
\end{prop}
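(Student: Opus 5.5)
The plan is to derive the three uniform bounds for $u_{t_j}^{(i)}\cdot A_{s,t_j}^{(i)}$ from the corresponding bounds on $A_{s,t_j}^{(i)}$ in Lemma \ref{lem23}, and then apply Theorem \ref{thm2} on the noncompact Kähler manifold $(X\setminus(D\cup Z_{\an}),T)$. Since $u_{t_j}^{(i)}$ is an isometry (Proposition \ref{prop19} (4)), it does not change the pointwise norms $|\Lambda_TF|$ and $|F|_{T}$. The $L^1$ bound is therefore exactly Lemma \ref{lem23} (1). The $L^\infty$ bound follows from Lemma \ref{lem23} (2), which is uniform in $t\ge t_0$ once $t_j\ge t_0$.

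The genuinely new input is the uniform bound $\|F_{A_{s,t_j}^{(i)}}\|_{L^2(T)}\le C_s$. I would obtain it at the level of $\omega_\ve$ and pass to the limit $\ve\to0$ by Fatou, using the local smooth convergence in Lemma \ref{lem23}.

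For fixed $\ve$, the flow $A_{s,\ve,t}^{(i)}$ lives on a smooth bundle over the compact Kähler manifold $(X,\omega_\ve)$. By (\ref{eq0}), $\YM-\HYM$ is a topological term $c_n(2c_2-c_1^2)(g_{\ve,t}(E_i))\cdot\{\omega_\ve\}^{n-2}$. Since $g_{\ve,t}(E_i)\cong E_i$ as smooth bundles, these characteristic numbers are those of $E_i$, paired with $(\alpha+\ve\omega_0)^{n-2}$, and so they are bounded uniformly in $\ve,t$. (If $E_i$ is only a reflexive subsheaf, I would first pass to the resolution of Proposition \ref{HNS resol prop} so that $E_i$ is a subbundle.) Also $\HYM^{\omega_\ve}(A_{s,\ve,t}^{(i)})\le \|\Lambda F\|_{L^\infty}\|\Lambda F\|_{L^1}\le C_sC$ by Lemma \ref{lem21} (1),(2). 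Hence $\int_X|F_{A_{s,\ve,t}^{(i)}}|^2\omega_\ve^n\le C_s$ uniformly in $\ve$ and $t\ge t_0$.

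Restricting to compacts $K\Subset X\setminus D$, where $u_{\ve,t}^{(i)}\cdot A^{(i)}_{s,\ve,t}\to A^{(i)}_{s,t}$ smoothly and $\omega_\ve\to T$, and then exhausting, gives $\|F_{A_{s,t}^{(i)}}\|_{L^2(X\setminus D,T)}\le C_s$. Removing $Z_{\an}$ only decreases the integrals.

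With these bounds, Theorem \ref{thm2} (with $\omega=T$ on $X\setminus(D\cup Z_{\an})$, fixed $p>n$) produces a subsequence, a closed set of Hausdorff codimension $\ge4$, a limit bundle $(E^s_{\infty,i},h^s_{\infty,i})$ and a weak $L^p_{1,\loc}$ limit $A^{(i)}_{s,\infty}$ up to gauge. I would set $Z^s_{\an}$ to be the union of this singular set with $Z_{\an}$; it is still closed in $X\setminus D$ and of codimension $\ge4$. Here $Z_{\an}$ has measure zero and the closure is taken in $X\setminus D$.

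The main obstacle is the $L^2$ curvature bound. One has to make sure the topological term in (\ref{eq0}) is uniform in $\ve$ and $t$: the subbundles $g_{\ve,t}(E_i)$ move, but their Chern classes stay those of $E_i$. One also needs the heat-kernel $L^\infty$ bound $C_s$ to be independent of $\ve$, which is Lemma \ref{lem21} (2).

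A secondary point is that $u_{t_j}^{(i)}$ is only $L^p_{2,\loc}$, so the conjugated connections are $L^p_{1,\loc}$ rather than smooth. Theorem \ref{thm2} still applies, since it is stated for connections of this regularity (as in \cite{Sib15}). Alternatively, one can apply Theorem \ref{thm2} to the smooth $A_{s,t_j}^{(i)}$ themselves and absorb $u_{t_j}^{(i)}$ into the gauge transformations.
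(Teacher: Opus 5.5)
Your proposal is correct and follows the paper's own route: the curvature norms are unchanged by the isometries $u_{t_j}^{(i)}$, the bounds are taken from Lemma \ref{lem23} (for $t_j\ge t_0$), and the limit is produced by Theorem \ref{thm2} on $(X\setminus(D\cup Z_{\an}),T)$. The one difference is that you actually justify the uniform $L^2$ bound on the full curvature, which the paper attributes to Lemma \ref{lem23} even though that lemma only controls $\Lambda_TF$. Your argument fills this step correctly: it combines (\ref{eq0}) with the uniformly bounded term $c_n(2c_2(E_i)-c_1(E_i)^2)\cdot(\alpha+\ve\omega_0)^{n-2}$, the estimate $\HYM^{\omega_\ve}\le\|\Lambda_{\omega_\ve}F\|_{L^\infty}\|\Lambda_{\omega_\ve}F\|_{L^1}$ from Lemma \ref{lem21}, and Fatou under the local smooth convergence of Lemma \ref{lem23}.
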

\noindent We will see in Proposition \ref{prop27} that $A_{s,\infty}^{(i)}$ is $T$-YM.
\begin{proof}
Since $u_{t_j}^{(i)}$ is isometry, the norms of curvatures of $A_{\ve,t_j}$ are the same as those of $u_{t_j}^{(i)}\cdot A_{s,t_j}^{(i)}$. Thus, by Lemma \ref{lem23}, the first statement holds. In particular, by Theorem \ref{thm2}, a Uhlenbeck limit $A_{s,\infty}^{(i)}$ of $u_{t_j}^{(i)}\cdot A_{s,t_j}^{(i)}$ exists.
\end{proof}
For notation, we will omit the gauge transformations $u_{\ve,t}^{(i)}$ in Lemma \ref{lem23} and $u_{t_j}^{(i)}$ in Proposition \ref{prop24} and assume that $A_{s,\ve,t}^{(i)}$ and $A_{s,t}^{(i)}$ are defined on $(E_i,h_0^{(i)})$.
That is, the holomorphic structures $\delbar_{A_{\ve,t}}$ on $E$ preserves $E_i$.
 Furthermore, we can assume that $A_{s,\ve,t}^{(i)}\xrightarrow{\ve\to0} A_{s,t}^{(i)}$ in $C^{\infty}_{\loc}(X\setminus D)$ by Lemma \ref{lem23} and $A_{s,t_j}^{(i)}\xrightarrow{j\to\infty}A_{s,\infty}^{(i)}$ weakly in $L^p_{1,\loc}$ by Proposition \ref{prop24}.  
By the results obtained so far, the proofs of the following lemmas proceed in the same way as in \cite[section 7]{Sib15}.
\begin{lemm}[see \cite{Sib15} Lemma 7.5]\label{lem25}
Let $E_i$ be the i-th term of the $\alpha^{n-1}$-HNS filtration of $(E,\delbar_E)$ in (\ref{eq32.001}).
We denote by $\mu^{(i)}$ the $\alpha^{n-1}$-HN type of $E_i$.
Let $\wtil{\ve_j}\to0$ and $t_j\to\infty$ be sequences in Corollary \ref{cor15} and fix a sequence $0<\ve_j<\wtil{\ve_j}$. 
Let $A_{s,\ve_j,t_j}^{(i)}$ be the $\omega_{\ve}$-YM flow on $E_i$ in Lemma \ref{lem21}.
For each $s>0$, we have
$$
\lim_{j\to\infty}\HYM^{\omega_{\ve_j}}(A_{s,\ve_j,t_j}^{(i)})=\HYM(\mu^{(i)}).
$$
\end{lemm}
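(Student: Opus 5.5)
The plan is to bound $\HYM^{\omega_{\ve_j}}_{\beta,N}(A^{(i)}_{s,\ve_j,t_j})$ from below and from above by quantities that both converge to $\HYM_{\beta,N}(\mu^{(i)})$. I read the statement with the same $\beta\ge 1$, $N\ge 0$ as in Corollary \ref{cor15}; for $\beta\ge1$ the map $\mu\mapsto\HYM_{\beta,N}(\mu)$ is continuous.

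\emph{Lower bound.} For each fixed $\ve>0$, the connection $A^{(i)}_{s,\ve,t}$ lies in the complex gauge orbit of a holomorphic structure on $E_i$ isomorphic to $(E_i,\delbar_{E_i})$. Hence the inequality of \cite[Lemma 4.3]{DW04} and \cite[Proposition 5.7]{Sib15} (the one already used in the proof of Proposition \ref{prop16}) gives
$$\HYM^{\omega_\ve}_{\beta,N}(A^{(i)}_{s,\ve,t})\ge \HYM_{\beta,N}(\mu^{(i)}_\ve),$$
where $\mu^{(i)}_\ve$ is the $\{\omega_\ve\}^{n-1}$-HN type of $E_i$. By Proposition \ref{prop3.1} applied to $E_i$, $\mu^{(i)}_{\ve_j}\to\mu^{(i)}$. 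This gives the $\liminf$ inequality.

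\emph{Upper bound: splitting the initial curvature.} Let $\pi=\pi^{(i)}_{\ve_j,t_j}$. The Gauss--Codazzi formula gives
$$\sqrt{-1}\Lambda_{\omega_{\ve_j}}F_{A^{(i)}_{\ve_j,t_j}}=G_j+B_j,\qquad G_j:=\pi\,\sqrt{-1}\Lambda_{\omega_{\ve_j}}F_{A_{\ve_j,t_j}}\,\pi,$$
where $B_j$ is quadratic in the second fundamental form $\delbar_{A_{\ve_j,t_j}}\pi$ and is sign-definite (negative semidefinite). The two pieces behave as follows.
\begin{itemize}
\item By Lemma \ref{lem18}, $\|B_j\|_{L^1(\omega_{\ve_j})}\le\int_X|\del_{A_{\ve_j,t_j}}\pi|^2\omega_{\ve_j}^n\to0$.
\item $G_j$ is uniformly bounded in $L^\infty$ by Corollary \ref{cor15}. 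Moreover, by Corollary \ref{cor15} (via (\ref{eq25.0})), Lemma \ref{lem18} and Proposition \ref{prop16}, $G_j$ converges in every $L^q$, in the same sense as in the proof of Lemma \ref{lem18}, to $\pi^{(i)}_\infty\sqrt{-1}\Lambda_TF_{A_\infty}\pi^{(i)}_\infty$. This is the curvature of $A_\infty$ restricted to $E_{\infty,i}$, whose constant eigenvalues are exactly $\mu^{(i)}$.
\end{itemize}

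\emph{Upper bound: propagating to time $s$.} Here the mere monotonicity of Lemma \ref{lem12.1} is not enough, because $B_j$ is controlled only in $L^1$, not in $L^\beta$. Instead I will use the heat kernel bound from Lemma \ref{lem1}(3) and the proof of Lemma \ref{lem21}. Along the flow in $s$, the endomorphism $\sqrt{-1}\Lambda F_{A^{(i)}_{s,\ve,t}}+N$ is dominated by the heat evolution of its initial value. Apply the convex function $\Phi_\beta$ through a Jensen/majorization argument: the contribution of $G_j+N$ is controlled by $\int H\,\Phi_\beta(G_j+N)$, and the contribution of $B_j$ is bounded by $C_s\|B_j\|_{L^1}\,\sup|G_j+N|^{\beta-1}$, using $\sup_{x,y}H_{\omega_{\ve}}(x,y,s)\le C_s$ from Lemma \ref{sob ineq}(2). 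Integrating over $X$, and using that the heat kernel has total mass one and $\int_X\omega_{\ve}^n$ is uniformly bounded, yields
$$\HYM^{\omega_{\ve_j}}_{\beta,N}(A^{(i)}_{s,\ve_j,t_j})\le \int_X\Phi_\beta(G_j+N)\omega_{\ve_j}^n+o(1)\longrightarrow\HYM_{\beta,N}(\mu^{(i)}).$$

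The step I expect to be the main obstacle is the last one. It requires a genuinely matrix-valued sub-heat inequality for $\Phi_\beta(\sqrt{-1}\Lambda F+N)$, in the spirit of \cite[Proposition 2.25]{DW04}, that still allows the $L^1$-small piece to be isolated using the negativity of $B_j$. If this fails, I would fall back on \cite[Lemma 7.5]{Sib15}. That route interpolates between the $L^\infty$ bound $C_s$ of Lemma \ref{lem21}(2) and the convergence of $\HYM_{1,N}$, which follows directly from $\|B_j\|_{L^1}\to0$ and the monotonicity of Lemma \ref{lem12.1}, and uses $\mu_{\ve_j}^{(i)}\to\mu^{(i)}$ to rule out any loss of eigenvalue mass.
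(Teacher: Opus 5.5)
Your lower bound is fine. The flow stays in the complex gauge orbit of $(E_i,\delbar_{E_i})$, and $\mu^{(i)}_{\ve_j}\to\mu^{(i)}$ by Proposition \ref{prop3.1}. The paper itself gives no argument here: it refers to \cite[Lemma 7.5]{Sib15}, with Lemma \ref{lem18}, Corollary \ref{cor15} and the time-$s$ bound of Lemma \ref{lem21} as inputs.

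\textbf{The gap is in your upper bound.} The tools you invoke are Lemma \ref{lem1}(3) and the convex sub-heat inequality behind Lemma \ref{lem12.1}. They bound $\Phi_\beta(\sqrt{-1}\Lambda_{\omega_{\ve_j}}F_{A^{(i)}_{s,\ve_j,t_j}}+N)$ by heat averages of $\Phi_\beta(G_j+B_j+N)$, that is, of the full initial datum. Since $|B_j|\sim|\delbar_{A_{\ve_j,t_j}}\pi|^2$ is small only in $L^1$, $\int_X\Phi_\beta(G_j+B_j+N)\,\omega_{\ve_j}^n$ is not controlled for $\beta>1$. The sign $B_j\le 0$ does not help, because $\Phi_\beta$ is not monotone for the Loewner order. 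So your split into a $G_j$-contribution and an $L^1$-small $B_j$-contribution is asserted, not derived. Convexity (Jensen, majorization) applied to $\Phi_\beta$ of the full curvature cannot produce it.

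\textbf{The missing idea is linearity.} Fix the family $A_s:=A^{(i)}_{s,\ve_j,t_j}$. Then $u_s:=\sqrt{-1}\Lambda_{\omega_{\ve_j}}F_{A_s}+N$ solves the linear system $\partial_s u=-\Delta_{A_s}u$, where $\Delta_A=\partial_A^*\partial_A+\delbar_A^*\delbar_A$. Hence $u=u^G+u^B$, where $u^G$ and $u^B$ solve the same equation with initial data $G_j+N$ and $B_j$.
\begin{itemize}
\item Kato's inequality and Lemma \ref{sob ineq}(2) give $\sup_X|u^B_s|\le C_s\|B_j\|_{L^1}\to 0$.
\item The maximum principle gives $|u^G_s|\le\sup_X|G_j+N|\le C$.
\item The convex sub-heat argument uses only this linear equation, so it gives $\int\Phi_\beta(u^G_s)\le\int\Phi_\beta(G_j+N)$.
\end{itemize}
Local Lipschitz continuity of $\Phi_\beta$ on bounded sets then yields exactly your displayed estimate.

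\textbf{Your fallback also works, but only once you allow $N$ of both signs.} Monotonicity and the lower bound hold for every $N\in\mathbb{R}$. Moreover, $\HYM_{1,N}$ is $1$-Lipschitz for the trace norm, and $\|B_j\|_{\mathrm{tr}}=-\Tr B_j$ is $L^1$-small. Together these give $\HYM^{\omega_{\ve_j}}_{1,N}(A^{(i)}_{s,\ve_j,t_j})\to\HYM_{1,N}(\mu^{(i)})$ for all real $N$. Now take $C\ge\max(C_s,\max_k|\mu^{(i)}_k|)$. The identity $x^2+C^2=\int_{-C}^{C}|x+N|\,dN$ for $|x|\le C$ writes $\HYM$ as $\int_{-C}^{C}\HYM_{1,N}\,dN$ minus $\rk(E_i)\,C^2\int_X\omega_{\ve_j}^n$, and dominated convergence in $N$ concludes. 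With only $N\ge 0$, the function $|x+N|$ is affine on $x\ge 0$, so these functionals cannot control the positive eigenvalues.
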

Using Lemma \ref{lem25}, we can prove the following.
\begin{lemm}[see \cite{Sib15} Lemma 7.6]\label{lem26}
We use the same notation as Lemma \ref{lem25}.
Then, for every $\wtil{\ve_j}>\ve_j\to0$ and every $s>s_1>0$, we have
$$
\lim_{j\to \infty}\|A_{s,\ve_j,t_j}^{(i)}-A_{s_1,\ve_j,t_j}^{(i)}\|_{L^2(X)}=0.
$$ 
\end{lemm}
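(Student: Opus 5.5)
We want to show that the $\omega_{\ve_j}$-YM flows $A^{(i)}_{s,\ve_j,t_j}$ on $E_i$ barely move between times $s_1$ and $s$ as $j\to\infty$. The idea, as in \cite[Lemma 7.6]{Sib15}, is to combine the gradient-flow energy identity for $\HYM^{\omega_{\ve_j}}$ along the flow in $s$ with Lemma \ref{lem25}. Lemma \ref{lem25} says that the value of the functional already converges to the same limit $\HYM(\mu^{(i)})$ at every fixed positive time.

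\textbf{Step 1 (energy identity).} Since $(A^{(i)}_{\sigma,\ve,t})_\sigma$ is an $\omega_{\ve}$-YM flow, the difference between $\YM$ and $\HYM$ is topological by (\ref{eq0}). Lemma \ref{lem1} (1) then gives
$$
\HYM^{\omega_{\ve}}(A^{(i)}_{s_1,\ve,t})-\HYM^{\omega_{\ve}}(A^{(i)}_{s,\ve,t})=2\int_{s_1}^{s}\Big\|\frac{\partial A^{(i)}_{\sigma,\ve,t}}{\partial\sigma}\Big\|^2_{L^2(X,\omega_{\ve})}d\sigma .
$$
Here the constant $c_n(2c_2-c_1^2)\cdot\{\omega_\ve\}^{n-2}$ is the same at both times and cancels. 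I will use the K\"ahler identity to rewrite $d^*_AF_A=\sqrt{-1}(\delbar_A-\del_A)\Lambda F_A$, up to sign convention. This gives the standard identity $\|\partial_\sigma A\|^2=\|d_A\Lambda_{\omega_\ve}F_A\|^2$, which is enough for our purpose.

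\textbf{Step 2 (vanishing of the right-hand side).} Apply Lemma \ref{lem25} at the two times $s_1$ and $s$, with $\beta=2$ and $N=0$, which is the plain $\HYM$. Both $\HYM^{\omega_{\ve_j}}(A^{(i)}_{s_1,\ve_j,t_j})$ and $\HYM^{\omega_{\ve_j}}(A^{(i)}_{s,\ve_j,t_j})$ tend to $\HYM(\mu^{(i)})$. Hence the left-hand side of the identity tends to $0$, and so
$$
\int_{s_1}^{s}\|\partial_\sigma A^{(i)}_{\sigma,\ve_j,t_j}\|^2_{L^2(X,\omega_{\ve_j})}d\sigma\to0 .
$$

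\textbf{Step 3 (conclusion).} Write $A_s-A_{s_1}=\int_{s_1}^s\partial_\sigma A\,d\sigma$. Minkowski's inequality and Cauchy--Schwarz then give
$$
\|A^{(i)}_{s,\ve_j,t_j}-A^{(i)}_{s_1,\ve_j,t_j}\|_{L^2(X,\omega_{\ve_j})}\le (s-s_1)^{1/2}\Big(\int_{s_1}^s\|\partial_\sigma A\|^2_{L^2}d\sigma\Big)^{1/2}\to0 .
$$
The $L^2(X)$ norm in the statement is taken with respect to $\omega_{\ve_j}$. If it is meant with respect to a fixed metric, I would restrict to compact $K\Subset X\setminus D$, where $\omega_{\ve_j}$ is uniformly equivalent to $T$ by (3-1), and argue locally. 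The integrand here is a $1$-form, so comparing its norm under different metrics needs uniform equivalence of the metrics, not only of the volume forms.

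\textbf{Main obstacle.} The delicate point is justifying Step 1 uniformly in $j$. The flows start from $A^{(i)}_{\ve,t}$, whose curvature is only $L^1$-bounded (Lemma \ref{lem20}), so at $\sigma=0$ the functional $\HYM$ may be infinite. Because $s_1>0$, we only use the identity on $[s_1,s]$. There the curvature is smooth with $L^\infty$ bound $C_{s_1}$ by Lemma \ref{lem21} (2), and the same argument applied to the full curvature gives a uniform $L^2$ bound. This makes both sides of the identity finite, so the identity holds on that interval. The rest is the bookkeeping that the limits in Lemma \ref{lem25} hold for every admissible sequence $\ve_j<\wtil{\ve_j}$, which is exactly how that lemma is stated.
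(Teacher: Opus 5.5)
Your argument is correct and is the same one the paper relies on when it defers to \cite[Lemma 7.6]{Sib15}: you combine the gradient-flow identity for $\HYM^{\omega_{\ve_j}}$ on $[s_1,s]$ with Lemma~\ref{lem25} at the two times $s_1$ and $s$, and finish with Cauchy--Schwarz in $\sigma$. The only quibble is that your ``main obstacle'' is not one: for each fixed $j$ the initial connection $A^{(i)}_{\ve_j,t_j}$ is smooth on compact $X$ (the filtration is by subbundles), so the energy identity holds on all of $[0,\infty)$, and only the convergence in Lemma~\ref{lem25} is needed, not any uniform bound.
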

As a consequence, we obtain that the Uhlenbeck limit $A_{s,\infty}^{(i)}$ is independent of $s$.
\begin{prop}[see \cite{Sib15} Lemma 7.7]\label{prop27}
Fix $p>n$.
There is a $T$-YM connection $A_{*,\infty}^{(i)}$ on a smooth hermitian vector bundle $E_{\infty,i}^{*}\to X\setminus (D\cup Z_{\an,i}^*)$ where $Z_{\an,i}^*\subset X\setminus D$ is a closed subset of Hausdorff codimension at least 4 with the following property: 
Let $A_{s,t}^{(i)}$ be a sequence of integrable unitary connections on $E_i$ constructed in Lemma \ref{lem23} (also refer to the remark below Proposition \ref{prop24}). Then,
for almost every $s>0$, there is a sequence $t_j\to \infty$ and a closed subset $Z_{\an,i}^s\subset X\setminus D$ such that
$
A_{s,t_j}^{(i)}\xrightarrow{j\to\infty} A_{*,\infty}^{(i)}
$
weakly in $L^p_{1,\loc}(X\setminus (D\cup Z_{\an,i}^s\cup Z_{\an,i}^*))$. Furthermore, we have
$\Lambda_TF_{A_{s,t_j}^{(i)}}\xrightarrow{j\to\infty} \Lambda_TF_{A_{*,\infty}^{(i)}}$ strongly in $L^q(X\setminus (D\cup Z_{\an,i}^s\cup Z_{\an,i}^*))$ for any $1\le q<\infty$.
\end{prop}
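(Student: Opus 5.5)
The plan follows \cite[Lemma 7.7]{Sib15}, using Lemma \ref{lem26} as the key input.

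\textbf{Step 1: the limits for different $s$ agree.} Fix $s>s_1>0$. By Proposition \ref{prop24}, for each such $s$ there are Uhlenbeck limits $A_{s,\infty}^{(i)}$ of $A_{s,t_j}^{(i)}$, after passing to a subsequence. To compare different $s$, I first fix a countable dense set $S\subset(0,\infty)$ and pass to a diagonal subsequence of $t_j$, so that the convergence in Proposition \ref{prop24} holds simultaneously for all $s\in S$. I then transfer Lemma \ref{lem26} to the $T$-flows. On each $K\Subset X\setminus D$, the $C^{\infty}_{\loc}$ convergence $A_{s,\ve,t}^{(i)}\to A_{s,t}^{(i)}$ of Lemma \ref{lem23}, together with $\omega_{\ve}\to T$ smoothly, lets me choose $\ve_j<\wtil{\ve_j}$ small enough that $\|A_{s,\ve_j,t_j}^{(i)}-A_{s,t_j}^{(i)}\|_{L^2(K)}<1/j$ for all $s$ in a finite subset of $S$, with the subset growing with $j$. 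Lemma \ref{lem26} then gives
\[
\lim_{j\to\infty}\|A_{s,t_j}^{(i)}-A_{s_1,t_j}^{(i)}\|_{L^2(K)}=0 .
\]

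\textbf{Step 2: the gauge difficulty.} The Uhlenbeck limits are taken only up to gauge transformations $u_j^s$, and these may depend on $s$. As in \cite{Sib15}, I will use a single gauge sequence, namely one chosen for a fixed $s_1\in S$. Uhlenbeck's estimate applied to $u_j^{s_1}\cdot A_{s,t_j}^{(i)}$ then shows, via the $L^2$ closeness from Step 1 and the uniform curvature bounds of Proposition \ref{prop24}, that this sequence also converges weakly in $L^p_{1,\loc}$ after modification by gauges that converge in $L^p_{2,\loc}$. Since the connections become $L^2$-close, the two limits $A_{s,\infty}^{(i)}$ and $A_{s_1,\infty}^{(i)}$ are gauge equivalent on $X\setminus(D\cup Z_{\an,i}^s\cup Z_{\an,i}^{s_1})$. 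I define $A_{*,\infty}^{(i)}:=A_{s_1,\infty}^{(i)}$ and $Z_{\an,i}^*:=Z_{\an,i}^{s_1}$. For $s\notin S$, I repeat the argument; a full-measure set of such $s$ suffices, which accounts for the ``almost every $s$'' in the statement. This identification of limits obtained under different gauges is where I expect the main obstacle.

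\textbf{Step 3: the limit is $T$-YM.} I would show $\|\nabla_{A_{s,t_j}^{(i)}}\Lambda_TF_{A_{s,t_j}^{(i)}}\|_{L^2}\to0$ for a.e.\ $s$. Along the $\omega_{\ve}$-YM flow in $s$,
\[
\frac{d}{ds}\HYM^{\omega_{\ve}}(A_{s,\ve,t}^{(i)})=-2\|d_{A}\Lambda_{\omega_{\ve}}F_{A}\|^2_{L^2(\omega_{\ve})} .
\]
Integrating from $s_1$ to $s$ and using Lemma \ref{lem25}, which says $\HYM$ has the same limit $\HYM(\mu^{(i)})$ at $s$ and at $s_1$, gives
\[
\int_{s_1}^{s}\|d_A\Lambda_{\omega_{\ve_j}}F_A\|^2\,ds'\to0 .
\]
By Fatou's lemma, the integrand then tends to $0$ for a.e.\ $s'$ along a subsequence. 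This estimate passes to $A_{s,t_j}^{(i)}$ by local smooth convergence and lower semicontinuity. Corollary \ref{cor3} then shows that $A_{*,\infty}^{(i)}$ is $T$-YM and gives the strong $L^q$ convergence of $\Lambda_TF_{A_{s,t_j}^{(i)}}$.
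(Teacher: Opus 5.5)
Your proposal is correct and follows the paper's route. The paper gives no separate proof; it defers to \cite[Lemma 7.7]{Sib15}, with Lemmas \ref{lem25} and \ref{lem26} as inputs. That is your scheme: transfer the $L^2$-closeness and the identity $\frac{d}{ds}\HYM=-2\|d_A\Lambda F_A\|^2$ from the $\omega_{\ve}$-flows to the $T$-flows (legitimate because those lemmas hold for every $\ve_j<\wtil{\ve_j}$), get the Yang--Mills property for a.e.\ $s$ from the Fatou step, and identify the limits for different $s$ up to gauge via the $L^2$-closeness.

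Two small corrections. The ``almost every $s$'' comes from the Fatou step, not from $s\notin S$. Also, you should choose $s_1$ among the good values of $s$, so that $A_{*,\infty}^{(i)}:=A_{s_1,\infty}^{(i)}$ is itself $T$-Yang--Mills. Otherwise you need an extra argument, via gauge equivalence and removability across the codimension-$4$ set.
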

Then we obtain the following:
\begin{prop}[see \cite{Sib15} section 7]\label{prop29}
Let $A_{\infty}^{(l-1)}$ be the $T$-YM connection on $E_{\infty,l-1}$ in Corollary \ref{cor18.0}.
Let $A_{*,\infty}^{(l-1)}$ be the $T$-YM connection in Proposition \ref{prop27}. Then we have
$$
A_{*,\infty}^{(l-1)}=A_{\infty}^{(l-1)}.
$$
\end{prop}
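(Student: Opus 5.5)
The plan is to follow \cite[section 7]{Sib15} and compare the two limits through the quantity $\|A_{s,\ve_j,t_j}^{(l-1)}-A_{\ve_j,t_j}^{(l-1)}\|_{L^2}$. The connection $A_{*,\infty}^{(l-1)}$ is the Uhlenbeck limit of the flowed connections $A^{(l-1)}_{s,t_j}$. The connection $A_{\infty}^{(l-1)}=\pi^{(l-1)}_{\infty}\cdot A_{\infty}$ is the limit of the unflowed restrictions $A^{(l-1)}_{\ve_j,t_j}=\pi^{(l-1)}_{\ve_j,t_j}\cdot A_{\ve_j,t_j}$. It therefore suffices to show that flowing for a short time $s$ moves the restricted connection by an amount which vanishes as $j\to\infty$, uniformly as $s\to 0$, and then to pass to the limit on compact subsets of $X\setminus(D\cup Z_{\an}\cup Z^s_{\an,l-1}\cup Z^*_{\an,l-1})$.

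\textbf{Step 1: the initial connections converge to $A_{\infty}^{(l-1)}$.} By Corollary \ref{cor18.0}, $\pi^{(l-1)}_{\ve_j,t_j}\to\pi^{(l-1)}_{\infty}$ weakly in $L^p_{2,\loc}$. By Corollary \ref{cor15}, $A_{\ve_j,t_j}\to A_{\infty}$ weakly in $L^p_{1,\loc}$. The restricted connection is
\[
\nabla_{A^{(l-1)}}=\pi\circ\nabla_{A}\circ\pi = \nabla_A|_{\mathrm{im}\,\pi}-(\nabla_A\pi)\circ\pi .
\]
Hence $A^{(l-1)}_{\ve_j,t_j}\to A^{(l-1)}_{\infty}$ weakly in $L^p_{1,\loc}(X\setminus(D\cup Z_{\an}))$, after composing with the isometries of Proposition \ref{prop19}(3),(4). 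In addition, the second fundamental form tends to zero globally:
\[
\int_X|\del_{A_{\ve_j,t_j}}\pi^{(l-1)}_{\ve_j,t_j}|^2\omega_{\ve_j}^n\to 0 .
\]

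\textbf{Step 2: short-time flow displacement.} Along the $\omega_\ve$-YM flow one has $\partial_s A=-d_A^*F_A=\sqrt{-1}(\del_A-\delbar_A)\Lambda F_A$, up to sign conventions. Together with the Kähler identities this gives
\[
\|A^{(l-1)}_{s,\ve,t}-A^{(l-1)}_{\ve,t}\|_{L^2(\omega_\ve)}^2\le s\int_0^s\|d_A^*F_A\|^2\,ds' = \frac{s}{2}\bigl(\HYM^{\omega_\ve}(A^{(l-1)}_{\ve,t})-\HYM^{\omega_\ve}(A^{(l-1)}_{s,\ve,t})\bigr).
\]
The identity uses Lemma \ref{lem1}(1) together with (\ref{eq0}), since $\YM-\HYM$ is topological for the fixed class $\{\omega_\ve\}$. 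By Lemma \ref{lem25}, $\HYM^{\omega_{\ve_j}}(A^{(l-1)}_{s,\ve_j,t_j})\to\HYM(\mu^{(l-1)})$ for each $s>0$.

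The initial value is controlled as follows. By the Chern–Weil expansion behind Lemma \ref{lem20},
\[
\Lambda F_{A^{(l-1)}}=\pi\Lambda F_A\pi+(\text{terms quadratic in } \delbar_A\pi),
\]
and the eigenvalues of $\pi\sqrt{-1}\Lambda F_A\pi$ converge in $L^2$ to those of $\sqrt{-1}\Lambda_T F_{A_\infty}$ restricted to $E_{\infty,l-1}$. These eigenvalues equal $\mu^{(l-1)}$ by Proposition \ref{prop16} and Corollary \ref{cor18.0}. This yields $\limsup_j \HYM^{\omega_{\ve_j}}(A^{(l-1)}_{\ve_j,t_j})\le\HYM(\mu^{(l-1)})$. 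The difficulty is that the quadratic term is controlled only in $L^1$, not in $L^2$. So the flow for a small positive time $s_1$ has to be used in place of time $0$: the displacement is estimated between $s_1$ and $s$, and Lemma \ref{lem26} is combined with the $L^1$-smallness of the error at time $0$ and the heat-kernel smoothing of Lemma \ref{sob ineq}(2). This shows that the displacement from time $0$ to time $s_1$ tends to zero in $L^1_{\loc}$ as $j\to\infty$. In this way one obtains $\|A^{(l-1)}_{s,\ve_j,t_j}-A^{(l-1)}_{\ve_j,t_j}\|_{L^1_{\loc}}\to0$.

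\textbf{Step 3: identification.} Local smooth convergence in $\ve$ on $X\setminus D$ (Lemma \ref{lem23}) and weak $L^p_{1,\loc}$ convergence in $j$ (Proposition \ref{prop24}, Proposition \ref{prop27}) give two weak limits of the same sequence modulo a term which vanishes in $L^1_{\loc}$. Uniqueness of distributional limits, with the gauge choices fixed by Proposition \ref{prop19}, then gives $A^{(l-1)}_{*,\infty}=A^{(l-1)}_{\infty}$ off a set of Hausdorff codimension at least $4$. Both limits are smooth $T$-YM connections on bundles which are isomorphic there, so the identification extends across the removable set.

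\textbf{Main obstacle.} I expect the main obstacle to be the time-$0$ endpoint in Step 2, namely the lack of $L^\infty$ or $L^2$ control on $\Lambda F_{A^{(l-1)}_{\ve,t}}$ caused by the second fundamental form. I would handle it as Sibley does, using that the Uhlenbeck limit for almost every $s$ is independent of $s$ (Proposition \ref{prop27}) and letting $s\to0$ along such a sequence.
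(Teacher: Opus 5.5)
Your overall framework (Step~1, Lemma~\ref{lem25}, Lemma~\ref{lem26}, Proposition~\ref{prop27}, then an identification of limits) is reasonable. Note that the paper gives no argument of its own here beyond the reference to \cite[section 7]{Sib15}. However, the one step that actually ties $A^{(l-1)}_{*,\infty}$ to $A^{(l-1)}_{\infty}$ is only asserted: you need control of the flow near $s=0$ that is uniform in $j$.

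The bound $\limsup_j\HYM^{\omega_{\ve_j}}(A^{(l-1)}_{\ve_j,t_j})\le\HYM(\mu^{(l-1)})$ in Step~2 is not justified. The operator $\sqrt{-1}\Lambda F_{A^{(l-1)}}$ equals $\pi\sqrt{-1}\Lambda F_A\pi$ minus a nonnegative term of size $|\delbar_A\pi|^2$. The fact that $\int|\delbar_A\pi|^2\omega_{\ve_j}^n\to0$ gives no control of $\int|\delbar_A\pi|^4\omega_{\ve_j}^n$. At $s=0$ the only uniform bound is the $L^1$ bound of Lemma~\ref{lem20}. You notice this yourself in the next sentence, so the estimate $\|A_s-A_0\|_{L^2}^2\le\frac{s}{2}(\HYM(A_0)-\HYM(A_s))$ cannot be used from time $0$.

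The remedy you propose does not close the gap.

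\begin{enumerate}
\item Lemma~\ref{lem26} compares only two positive times. Pushing $s_1\to0$ in $\|A_s-A_{s_1}\|_{L^2}^2\le\frac{s-s_1}{2}(\HYM(A_{s_1})-\HYM(A_s))$ needs a bound on $\HYM(A^{(l-1)}_{s_1,\ve_j,t_j})$ that is uniform in $j$ and good enough as $s_1\to0$.
\item Heat-kernel smoothing of $L^1$ data (Lemma~\ref{sob ineq}(2)) only gives $\|\Lambda F\|_{L^\infty}\le Cs_1^{-q/(q-1)}$, hence $\HYM(A_{s_1})\le Cs_1^{-q/(q-1)}$. Summing the resulting displacement bounds over dyadic intervals $[2^{-k-1}s,2^{-k}s]$ gives a series of order $\sum_k(2^{-k}s)^{(1-q/(q-1))/2}$, which diverges because $q/(q-1)>1$.
\item Pointwise heat-kernel upper bounds and $L^1$-smallness of the second-fundamental-form term give no control of $\nabla_A\Lambda F_A$. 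That is the quantity the velocity $\partial_sA=-d_A^*F_A$ involves. So your claim that the displacement from time $0$ to $s_1$ tends to zero in $L^1_{\loc}$ has no argument behind it.
\item Letting $s\to0$ in Proposition~\ref{prop27} does not help either. That proposition only says the $j\to\infty$ limits at different positive times coincide. Passing to $s=0$ means interchanging $\lim_{s\to0}$ and $\lim_{j\to\infty}$, which is exactly the missing uniform continuity at $s=0$.
\end{enumerate}

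Without this step, Step~3 has two limits and nothing relating them. Step~3 also needs a separate gauge argument. The Uhlenbeck limits in Proposition~\ref{prop24} and Proposition~\ref{prop27} are taken modulo gauges different from those of Proposition~\ref{prop19}. Even a smallness statement must therefore be combined with the usual argument that the relative gauges are bounded in $L^p_{2,\loc}$ and subconverge to a gauge transformation intertwining the two limits.

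To close the gap you need an ingredient that avoids the global $L^2$ norm of $\Lambda F$ at $s=0$. One possibility is a localized argument on $K\Subset X\setminus(D\cup Z_{\an})$, using the metric (HYM) form of the restricted flow. There the restricted initial data converge well, because the projections converge in $L^p_{2,\loc}$; this would be combined with the global heat-kernel subsolution bounds. Another possibility is a holomorphic-map argument in the spirit of the proof of Theorem~\ref{thm31}.
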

As a consequence, we obtain an approximation of $A_{\infty}^{(l-1)}$ admitting curvature estimates:
\begin{corr}\label{cor30}
Let $A_{\infty}^{(l-1)}$ be the $T$-YM connection on $E_{\infty,l-1}$ in Corollary \ref{cor18.0}. For each $\ve>0$ and $t\ge 0$, we denote by $(A_{s,\ve,t}^{(l-1)})_s$ the $\omega_{\ve}$-YM flow on $(E_{l-1},h_0^{(l-1)})$ as in Lemma \ref{lem20} (refer to the remark below Proposition \ref{prop24}).  Fix $s>0$, $p>n$ and sequences $\wtil{\ve_j}\to0$ and $t_j\to\infty$ as in Corollary \ref{cor15}. Then, by replacing $\wtil{\ve_j}$ by a smaller one,  we have for every $\wtil{\ve_j}>\ve_j$,
$
A_{s,\ve_j,t_j}^{(l-1)}\xrightarrow{j\to\infty} A_{\infty}^{(l-1)}
$
weakly in $L^p_{1,\loc}(X\setminus (D\cup Z_{\an}))$. Furthermore, there exists $C_s>0$ at each $s>0$ such that for every $j$,
$
\|\Lambda_{\omega_{\ve_j}}F_{A_{s,\ve_j,t_j}^{(l-1)}}\|_{L^{\infty}(X)}\le C_s.
$
We also have
$\Lambda_{\omega_{\ve_j}}F_{A_{s,\ve_j,t_j}^{(l-1)}}\xrightarrow{j\to\infty} \Lambda_TF_{A_{\infty}^{(l-1)}}$ 
 strongly in $L^q(X\setminus (D\cup Z_{\an}))$ for any $1\le q<\infty$.
\end{corr}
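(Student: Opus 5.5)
The plan is a diagonal argument that combines three earlier results: Proposition \ref{prop29}, the $\ve\to0$ convergence in Lemma \ref{lem23}, and the uniform curvature bounds of Lemma \ref{lem21}.

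First, fix $s>0$. By Proposition \ref{prop27} and Proposition \ref{prop29}, after passing to a subsequence of the given $t_j$ (and replacing $s$ by a nearby $s$ in the full-measure set if necessary), we have $A_{s,t_j}^{(l-1)}\to A_{\infty}^{(l-1)}$ weakly in $L^p_{1,\loc}(X\setminus(D\cup Z_{\an}))$. Here we enlarge $Z_{\an}$ by the codimension-4 sets $Z^s_{\an,l-1}$ and $Z^*_{\an,l-1}$. Proposition \ref{prop27} also gives
\[
\Lambda_TF_{A_{s,t_j}^{(l-1)}}\to\Lambda_TF_{A_{\infty}^{(l-1)}}
\]
strongly in $L^q$. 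The independence of $s$ recorded in Lemma \ref{lem26} is what allows us to handle every $s>0$ and not only almost every one.

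Next, for each fixed $j$, Lemma \ref{lem23} gives $A^{(l-1)}_{s,\ve,t_j}\to A^{(l-1)}_{s,t_j}$ in $C^\infty_{\loc}(X\setminus D)$ as $\ve\to0$, and $\Lambda_{\omega_\ve}F\to\Lambda_TF$ strongly in $L^q(X\setminus D)$. Fix an exhaustion $K_j\Subset X\setminus(D\cup Z_{\an})$. We then choose $\wtil{\ve_j}$ so small that for all $\ve<\wtil{\ve_j}$ two things hold. The first is
\[
\|A^{(l-1)}_{s,\ve,t_j}-A^{(l-1)}_{s,t_j}\|_{C^1(K_j)}<1/j.
\]
The second is
\[
\|\Lambda_{\omega_\ve}F_{A^{(l-1)}_{s,\ve,t_j}}-\Lambda_TF_{A^{(l-1)}_{s,t_j}}\|_{L^q}<1/j,
\]
imposed for $q=1,\dots,j$, which suffices by interpolation and the uniform $L^\infty$ bound. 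The first convergence in $L^q$ should be read via the pairing with $\omega_{\ve}^n$ versus $T^n$, as in Lemma \ref{lem13}. The uniform $L^\infty$ bound on $\Lambda_{\omega_\ve}F$ and $\Lambda_TF$ from Lemma \ref{lem21}(2) and Lemma \ref{lem23}(2), together with (3-2) of Definition \ref{adapted defi}, controls the contribution near $D$. This is exactly how Lemma \ref{lem13} was proved, and it uses $t_j\ge t_0$.

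Combining the two steps, a $C^1(K_j)$ perturbation tending to zero does not change the weak $L^p_{1,\loc}$ limit. Hence $A^{(l-1)}_{s,\ve_j,t_j}\to A^{(l-1)}_\infty$ weakly in $L^p_{1,\loc}$ for every $\ve_j<\wtil{\ve_j}$. The triangle inequality gives the strong $L^q$ convergence of $\Lambda F$. The bound $\|\Lambda_{\omega_{\ve_j}}F_{A^{(l-1)}_{s,\ve_j,t_j}}\|_{L^\infty(X)}\le C_s$ is Lemma \ref{lem21}(2) directly, since $t_j\ge t_0$ for large $j$.

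The main obstacle is the strong $L^q$ convergence of $\Lambda_{\omega_{\ve}}F$ on all of $X\setminus D$ and not just on compacts. I would handle it as in Lemma \ref{lem13}: split into $K_j$ and its complement, and use the uniform $L^\infty$ bound together with the uniform smallness of the $\omega_\ve^n$- and $T^n$-volumes of neighbourhoods of $D\cup Z_{\an}$. That smallness holds because $T^n$ puts no mass on $D$, $Z_{\an}$ has measure zero, and the densities in (3-2) are uniformly integrable.
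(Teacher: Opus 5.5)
Your proposal is correct and follows essentially the paper's own argument. You choose $\wtil{\ve_j}$ diagonally (as in Lemma~\ref{lem11}), combining the $C^\infty_{\loc}$ convergence as $\ve\to0$ from Lemma~\ref{lem23} with the weak $L^p_{1,\loc}$ convergence as $j\to\infty$ from Propositions~\ref{prop27} and~\ref{prop29}; the $L^\infty$ bound then comes from Lemma~\ref{lem21}(2), and the $L^q$ convergence of $\Lambda F$ from Lemma~\ref{lem23}(3) and Proposition~\ref{prop27} via the triangle inequality. Your extra discussion of the region near $D$ and of the almost-every-$s$ issue via Lemma~\ref{lem26} is more detailed than the paper, though the near-$D$ control is already contained in Lemma~\ref{lem23}(3).
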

\begin{proof}
Recall that $A_{s,\ve,t_j}^{(i)}\xrightarrow{\ve\to0}A_{s,t_j}^{(i)}$ locally smoothly on $X\setminus D$ by Lemma \ref{lem23} and $A_{s,t_j}^{(i)}\xrightarrow{j\to\infty}A_{\infty}^{(i)}$ weakly in $L^p_{1,\loc}(X\setminus (D\cup Z_{\an}))$ by Proposition \ref{prop27} and Proposition \ref{prop29}. Hence we can find $\wtil{\ve_j}\to 0$ such that for every $\wtil{\ve_j}>\ve_j$, $A_{s,\ve_j,t_j}^{(i)}\xrightarrow{j\to\infty} A_{\infty}^{(i)}$ weakly in $L^p_{1,\loc}$ as in Lemma \ref{lem11}. Then the second statement follows from Lemma \ref{lem21}. The last statement follows from Lemma \ref{lem23} and Proposition \ref{prop27}.
\end{proof}
Then we can prove the following convergence of $\alpha^{n-1}$-HNS filtrations:
\begin{prop}\label{prop30.0}
Fix $p>n$.
Let $(E,\delbar_E,h_0)$ be a holomorphic hermitian vector bundle over a compact K\"{a}hler manifold $X$. Let $\alpha$ be a nef and big class on $X$ such that $D=E_{nK}(\alpha)$ is an snc divisor. Fix an adapted current $T\in\alpha$ and its adapted approximation $\omega_{\ve}\in\alpha+\ve\omega_0$.
Let $0=E_0\subset E_1\subset E_2\subset\cdots\subset E_{l-1}\subset E_l=E$ be an $\alpha^{n-1}$-HNS filtration of $(E,\delbar_{E})$. Without loss of generality, we can assume that each $E_i\subset E_{i+1}$ is a holomorphic subbundle (see Proposition \ref{HNS resol prop}). Let us denote by $h_0^{(i)}$ the restriction of $h_0$ to $E_i$.
Let $(E_{\infty},h_{\infty})$ be a hermitian vector bundle with a $T$-YM connection $A_{\infty}$ constructed in Corollary \ref{cor10}.
Then there exists
\begin{itemize}
\item a filtration $0=E_{\infty,0}\subset E_{\infty,1}\subset \cdots\subset E_{\infty,l}=E_{\infty}$ of $(E_{\infty},\delbar_{A_{\infty}})$ and
\item a sequence of integrable unitary connections $A_j^{(i)}$ on $(E_i,h_0^{(i)})$ for each $i=1,\ldots,l$,
\item and a sequence $\ve_j\to0$
\end{itemize} 
such that the following property holds: 
\begin{enumerate}
\item Without loss of generality, we can assume that the holomorphic structures $\delbar_{A_j^{(i)}}$ on $E_i$ preserves $E_{i-1}$.
\item If we define $Q_{\infty,i}:=E_{\infty,i}/E_{\infty,i-1}$, then we have a holomorphic orthogonal decomposition
$$
(E_{\infty},A_{\infty},h_{\infty})=\bigoplus_{i=1}^l(Q_{\infty,i},A_{\infty,i},h_{\infty,i}),
$$
where $A_{\infty,i}$ is a $T$-admissible HE connection on $Q_{\infty,i}$ with $T$-HE constant $\mu_i=\mu_{\alpha}(Q_i)$, here $Q_i=E_i/E_{i-1}$. We define 
$$
(E_{\infty,i}, A_{\infty}^{(i)},h_{\infty}^{(i)}):=\bigoplus_{j\le i}(Q_{\infty,j},A_{\infty,j},h_{\infty,j})
$$
and $\pi^{(i-1)}_{\infty}$ the weakly holomorphic projection of $(E_{\infty,i},\delbar_{A_{\infty}^{(i)}},h_{\infty}^{(i)})$ corresponding to the holomorphic subbundle $(E_{\infty,i-1},\delbar_{A_{\infty}^{(i-1)}})$.
\item We have $\rk(\pi^{(i-1)}_{\infty})=\rk(E_{i-1})$ and $\deg_T(\pi^{(i-1)}_{\infty})=\deg_{\alpha}(E_{i-1})$.
\item If we denote by $\pi^{(i-1)}_j$ the weakly holomorphic projection of $(E_i,\delbar_{A_j^{(i)}},h_0)$ to a holomorphic subbundle $E_{i-1}$, then $\pi^{(i-1)}_j\xrightarrow{j\to \infty} \pi^{(i-1)}_{\infty}$ weakly in $L^p_{2,\loc}(X\setminus (D\cup Z_{\an}))$ and strongly in $L^q(X\setminus (D\cup Z_{\an}))$ for any $1\le q<\infty$. Furthermore, we have
$
\lim_{j\to\infty}\int_X|\del_{A_j^{(i)}}\pi^{(i-1)}_j|^2\omega_{\ve_j}^n=0.
$
\item We have $A_j^{(i)}\xrightarrow{j\to\infty} A_{\infty}^{(i)}$ weakly in $L^p_{1,\loc}$ and 
$\|\Lambda_{\omega_{\ve_j}}F_{A_j^{(i)}}\|_{L^{\infty}(X)}\le C$ and $\Lambda_{\omega_{\ve_j}}F_{A_j^{(i)}}\xrightarrow{j\to\infty}\Lambda_TF_{A_{\infty}^{(i)}}$ in $L^q(X\setminus (D\cup Z_{\an}))$ for any $1\le q<\infty$.
Furthermore, we have $\del_{A_{\infty}^{(i)}}\pi^{(i-1)}_{\infty}=0.$
\end{enumerate}
\end{prop}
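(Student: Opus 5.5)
The proof goes by descending induction on $i$, from $i=l$ down to $i=1$, peeling off one graded piece at a time.

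\textbf{Base step.} For $i=l$ we take
\[
A_j^{(l)}:=A_{\ve_j,t_j},
\]
with $\ve_j<\wtil{\ve_j}$ and $t_j$ as in Corollary \ref{cor15}. By Lemma \ref{lem11} and Corollary \ref{cor15} this sequence converges weakly in $L^p_{1,\loc}$ to $A_\infty=A_\infty^{(l)}$. It also has uniformly bounded $\Lambda_{\omega_{\ve_j}}F$ in $L^\infty$, with $L^q$-convergence of $\Lambda_{\omega_{\ve_j}}F$ by Corollary \ref{cor10}. Using the gauges $u_{\ve,t}^{(i)}$ of Proposition \ref{prop19}, as in the remark below Proposition \ref{prop24}, we may regard $\delbar_{A_j^{(l)}}$ as preserving $E_{l-1}$. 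This gives (1) and (5) for $i=l$.

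Corollary \ref{cor18.0} then gives the rest of the base step:
\begin{itemize}
\item the subsheaf $E_{\infty,l-1}$;
\item the projections $\pi^{(l-1)}_j\to\pi^{(l-1)}_\infty$ with $\int_X|\del\pi_j^{(l-1)}|^2\omega_{\ve_j}^n\to0$ and $\del_{A_\infty}\pi^{(l-1)}_\infty=0$;
\item the rank and degree identities;
\item the orthogonal splitting $E_\infty=E_{\infty,l-1}\oplus Q_{\infty,l}$, where $A_{\infty,l}$ is $T$-admissible HE with constant $\mu_l$.
\end{itemize}
So (2)–(5) hold for $i=l$.

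\textbf{Inductive step.} Suppose the statement holds at level $i$, with $E_{\infty,i}$, $A^{(i)}_\infty$ and the restricted connections constructed. We need an approximating sequence on $(E_{i-1},h_0^{(i-1)})$ that satisfies the three bulleted hypotheses of Lemma \ref{lem18}.

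\begin{itemize}
\item \emph{Construction.} We set $A_j^{(i-1)}:=A_{s,\ve_j,t_j}^{(i-1)}$ for a fixed $s>0$, chosen from the full-measure set in Proposition \ref{prop27}. This is the $\omega_{\ve_j}$-YM flow run for time $s$ starting from the restriction $\pi^{(i-1)}_j\cdot A_j^{(i)}$. The ingredients are Lemmas \ref{lem20}, \ref{lem21}, \ref{lem23}, Proposition \ref{prop24}, and Proposition \ref{prop27}, now applied with $E_i$ in place of $E$.
\item \emph{Convergence and bounds.} The analogue of Proposition \ref{prop29}, together with Corollary \ref{cor30}, gives $A_j^{(i-1)}\to A_\infty^{(i-1)}$ weakly in $L^p_{1,\loc}$, a uniform $L^\infty$ bound $C_s$ on $\Lambda_{\omega_{\ve_j}}F$, and strong $L^q$ convergence of $\Lambda F$. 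After shrinking $\wtil{\ve_j}$ we may take $\ve_j<\wtil{\ve_j}$.
\item \emph{Eigenvalues.} The eigenvalues of $\sqrt{-1}\Lambda_TF_{A^{(i-1)}_\infty}$ are the restriction of $\mu_0$ to the first $i-1$ blocks. By the HN filtration property, the $\alpha^{n-1}$-HN type of $E_{i-1}$ is exactly the truncation of $\mu_0$ (see $\mu^{(i)}$ in Lemma \ref{lem25}). So the eigenvalue hypothesis of Lemma \ref{lem18} holds.
\end{itemize}

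Now Lemma \ref{lem18}(1) applies to $E_{i-1}\supset E_{i-2}$, or Lemma \ref{lem18}(2) inside a single semistable HN block. This produces $\pi^{(i-2)}_\infty$, its convergence, the vanishing second fundamental form, and the rank and degree identities. Because $\del_{A_\infty^{(i-1)}}\pi^{(i-2)}_\infty=0$ and $\pi^{(i-2)}_\infty$ is a weakly holomorphic projection, the image is $A^{(i-1)}_\infty$-parallel. This gives an orthogonal holomorphic splitting $E_{\infty,i-1}=E_{\infty,i-2}\oplus Q_{\infty,i-1}$. The restriction to $Q_{\infty,i-1}$ is $T$-YM, so $\sqrt{-1}\Lambda_TF$ is parallel there.

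It remains to show this restriction is HE with constant $\mu_{i-1}$.
\begin{itemize}
\item In the HN step (distinct slopes), Lemma \ref{lem18.0} identifies $Q_{\infty,i-1}$ with an eigenspace, so the constant is $\mu_{i-1}$.
\item In the Seshadri step (equal slopes), the whole block $E_{\infty,i-1}$ already has the single eigenvalue $\mu_{i-1}$ by the previous step. Its orthogonal summand therefore also has constant eigenvalue $\mu_{i-1}$.
\end{itemize}
Admissibility, i.e.\ finite $L^2$ curvature, is inherited from $A_\infty$ via the Gauss–Codazzi equations, since the second fundamental form vanishes.

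\textbf{Main obstacle.} The hard part is the inductive step: showing that the Uhlenbeck limit of the auxiliary flows $A^{(i-1)}_{s,\ve_j,t_j}$ is the restriction $A^{(i-1)}_\infty$ (Proposition \ref{prop29}), uniformly along a common sequence $\ve_j,t_j$. I would handle it as in Lemma \ref{lem26}: use the convergence of $\HYM_{\beta,N}$ from Lemma \ref{lem25} and the monotonicity of Lemma \ref{lem12.1} to show that the flow moves an $L^2$-negligible amount in $s$. The starting restriction converges to $A_\infty^{(i-1)}$ because $\|\del\pi\|_{L^2}\to0$. Finally, a diagonal argument over $\ve$ and $t$, as in Lemma \ref{lem11}, is needed at each of the finitely many induction steps. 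Since there are finitely many steps, we can extract a single sequence $\ve_j$ that works for all $i$ simultaneously.
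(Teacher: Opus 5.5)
Your descending induction is the paper's own. You take $A^{(l)}_j=A_{\ve_j,t_j}$ with Corollary \ref{cor18.0} as the base step. You then build $A^{(i-1)}_j=A^{(i-1)}_{s,\ve_j,t_j}$ from Lemmas \ref{lem20}--\ref{lem23}, Propositions \ref{prop24}, \ref{prop27}, \ref{prop29} and Corollary \ref{cor30}, and apply Lemma \ref{lem18} again to $E_{i-2}\subset E_{i-1}$. Choosing $s$ in the full-measure set of Proposition \ref{prop27} is, if anything, more careful than the paper.

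\textbf{The step that fails.} As written, your argument breaks in the Seshadri case $\mu_{i-2}=\mu_{i-1}$ when the HN block containing $Q_{i-1}$ is not the first one.
\begin{itemize}
\item There $E_{i-1}$ is not semistable, so Lemma \ref{lem18}(2) does not apply to $(E_{i-1},A^{(i-1)}_j)$.
\item You cannot apply it ``inside the semistable HN block'' either. That block is a quotient $F_k/F_{k-1}$, and it carries no approximating connections with $L^\infty$ control on $\Lambda F$. The auxiliary flows exist precisely to produce such connections, and they live on subbundles.
\item Your claim that $E_{\infty,i-1}$ has the single eigenvalue $\mu_{i-1}$ is false: it carries all of $\mu_1,\dots,\mu_{i-1}$. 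So your derivation of the HE constant of $Q_{\infty,i-1}$ does not go through.
\item The equality case of Lemma \ref{lem18.0} needs a strict gap after the $\rk E_{i-2}$-th eigenvalue, so it does not identify $E_{\infty,i-2}$ with an eigenspace here.
\end{itemize}

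\textbf{The repair.} It is short and handles HN and Seshadri steps uniformly.
\begin{enumerate}
\item Run the proof of Lemma \ref{lem18}(1) verbatim for $E_{i-2}\subset E_{i-1}$. It only uses that $\deg_\alpha(E_{i-2})=\sum_{k\le i-2}\mu_k\rk Q_k$ is the sum of the top $\rk E_{i-2}$ eigenvalues of $\sqrt{-1}\Lambda_TF_{A^{(i-1)}_\infty}$, and this holds for every HNS term.
\item This gives $\|\del_{A^{(i-1)}_j}\pi^{(i-2)}_j\|_{L^2}\to0$, a parallel limit projection $\pi^{(i-2)}_\infty$ of the right rank and degree, and the a.e.\ identity $\Tr(\sqrt{-1}\Lambda_TF_{A^{(i-1)}_\infty}\circ\pi^{(i-2)}_\infty)=\sum_{k\le i-2}\mu_k\rk Q_k$.
\item Since $\pi^{(i-2)}_\infty$ is parallel, $\Lambda_TF_{A^{(i-1)}_\infty}$ commutes with it. On $Q_{\infty,i-1}=(E_{\infty,i-2})^\perp$, which has rank $\rk Q_{i-1}$, its eigenvalues form a sub-multiset of $\mu_1\ge\dots\ge\mu_{i-1}$, so each is $\ge\mu_{i-1}$.
\item Subtracting the trace on $E_{\infty,i-2}$ from the total trace, these eigenvalues sum to $\mu_{i-1}\rk Q_{i-1}$. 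Hence the restriction is $\mu_{i-1}\id$.
\end{enumerate}

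The paper does not draw your HN/Seshadri distinction at all. It simply invokes Corollary \ref{cor18.0} at every step, so it implicitly relies on this same argument. Your base step, which also cites Corollary \ref{cor18.0}, is in the same position.
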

\begin{proof}
We construct such objects inductively. If $i=l$, then we define $A_j^{(l)}:=A_{\ve_j,t_j}$. Then, as remarked below Proposition \ref{prop24}, we can assume that $E_{l-1}$ is preserved by $\delbar_{A_{\ve_j,t_j}}$. By Corollary \ref{cor18.0}, we already know that the weakly holomorphic projections $\pi^{(l-1)}_j:=\pi^{(l-1)}_{\ve_j,t_j}$ to $E_{l-1}\subset (E,\delbar_{A_{\ve_j,t_j}},h_0)$ satisfies (4).
More precisely, $\pi^{(l-1)}_j$ converges to a weakly holomorphic projection $\pi^{(l-1)}_{\infty}$ of $(E_{\infty},A_{\infty},h_{\infty})$ to $E_{\infty,l-1}$ in Corollary \ref{cor18.0} satisfying the condition (3). In particular we have
$
\del_{A_{\infty}}\pi^{(l-1)}_{\infty}=0.
$ 
By Corollary \ref{cor18.0}, we have a holomorphic orthogonal splitting
$$
(E_{\infty},A_{\infty},h_{\infty})=(E_{\infty,l-1},A_{\infty}^{(l-1)},h_{\infty}^{(l-1)})\oplus (Q_{\infty,l},A_{\infty,l},h_{\infty,l})
$$
where $Q_{\infty,l}=E_{\infty}/E_{\infty,l-1}$.
Fix $s=s_{l-1}$. Then,
by Corollary \ref{cor30}, the connections $A_j^{(l-1)}:=A_{s_{l-1},\ve_j,t_j}^{(l-1)}$ on $E_{l-1}$ considered in Corollary \ref{cor30} is the approximating sequence of $A_{\infty}^{(l-1)}$ satisfying the condition (5).
We have constructed the objects satisfying the conditions (1) to (5) when $i=l$. Next we see the construction when $i=l-1$. 

Remark that $0=E_0\subset E_1\subset\cdots\subset E_{l-1}$ is an $\alpha^{n-1}$-HNS filtration of $E_{l-1}$. 
Furthermore $A_{s_{l-1},\ve_j,t_j}^{(l-1)}\xrightarrow{j\to\infty} A_{\infty}^{(l-1)}$ weakly in $L^p_{1,\loc}$ as above, $A_{s_{l-1},\ve,t}^{(l-1)}\xrightarrow{\ve\to0} A_{s_{l-1},t}^{(l-1)}$ in $C^{\infty}_{\loc}(X\setminus D)$ by Lemma \ref{lem23} and $A_{s_{l-1},t_j}^{(l-1)}\xrightarrow{j\to\infty}A_{\infty}^{(l-1)}$ weakly in $L^p_{1,\loc}$ by Proposition \ref{prop24}, Proposition \ref{prop27} and Proposition \ref{prop29}.  
Hence we can apply Proposition \ref{prop19} to $E_{l-2}\subset E_{l-1}$ and thus we can assume that $\delbar_{A_j^{(l-1)}}=\delbar_{A_{s_{l-1},\ve_j,t_j}^{(l-1)}}$ on $E_{l-1}$ preserves the subbundle $E_{l-2}$ as remarked below Proposition \ref{prop24}. In particular, we obtain the following: Let $\pi^{(l-2)}_{j}$ be the weakly holomorphic projection of $E_{l-2}\subset (E_{l-1},\delbar_{A_{j}^{(l-1)}},h_0)$. Then by the estimates on curvatures of $A_j^{(l-1)}=A_{s_{l-1},\ve_j,t_j}^{(l-1)}$ in (5), Corollary \ref{cor18.0} implies that there exists a weakly holomorphic projection $\pi^{(l-2)}_{\infty}$ of $(E_{\infty,l-1},\delbar_{A_{\infty}^{(l-1)}},h_0)$ with same rank and degree with $E_{l-2}$ such that $\pi^{(l-2)}_j\xrightarrow{j\to\infty}\pi^{(l-2)}_{\infty}$ weakly in $L^p_{2,\loc}$ and
$
\lim_{j\to\infty}\|\del_{A_j^{(l-1)}}\pi^{(l-2)}_j\|_{L^2(X,\omega_{\ve_j})}=0.
$
In particular we have $\del_{A_{\infty}^{(l-1)}}\pi_{\infty}^{(l-2)}=0$. Thus, if we denote by $E_{\infty,l-2}:=\im(\pi^{(l-2)}_{\infty})\subset (E_{\infty,l-1},\delbar_{A_{\infty}^{(l-1)}})$, we obtain a holomorphic orthogonal splitting
$$
(E_{\infty,l-1},A_{\infty}^{(l-1)},h_{\infty}^{(l-1)})=(E_{\infty,l-2},A_{\infty}^{(l-2)},h_{\infty}^{(l-2)})\oplus (Q_{\infty,l-1},A_{\infty,l-1},h_{\infty,l-1})
$$
where $Q_{\infty,l-1}=E_{\infty,l-1}/E_{\infty,l-2}$.
Let us denote by $A_{s,j}^{(l-2)}$ the $\omega_{\ve_j}$-YM flow on $E_{l-2}$ with initial condition $\pi^{(l-2)}_j\cdot A_{j}^{(l-1)}$, which is a restriction of $A_j^{(l-1)}$ to $E_{l-2}\subset E_{l-1}$. Fix $s=s_{l-2}>0$. 
 Then, in the same way as in the case when $i=l$ above, we can see that the sequence of connections $A_j^{(l-2)}:=A_{s_{l-2},j}^{(l-2)}$ defines an approximating sequence of $A_{\infty}^{(l-2)}$ satisfying the condition (5).
Then we can prove the result for general $i$ in the same way.
\end{proof}

Then we can prove the main theorem.
\begin{theo}\label{thm31}
Let $(E,\delbar_E,h_0)$ be a holomorphic hermitian vector bundle over a compact K\"{a}hler manifold $X$ with a nef and big class $\alpha$. Let $T\in\alpha$ be an adapted current. Let $A_t$ be the $T$-YM flow on $(E,h_0)$ in Corollary \ref{cor7} and $A_{\infty}$ be the $T$-YM connection on a hermitian vector bundle $(E_{\infty},h_{\infty})$ given by a Uhlenbeck limit of $A_t$ as in Corollary \ref{cor10}.
Let 
$$
0=E_0\subset E_1\subset \cdots \subset E_l=E
$$ 
be an $\alpha^{n-1}$-HNS filtration of $(E,\delbar_E)$ and 
$$
0=E_{\infty,0}\subset E_{\infty,1}\subset \cdots \subset E_{\infty,l}=E_{\infty}
$$
be the corresponding filtration of $(E_{\infty},\delbar_{A_{\infty}})$ constructed in Proposition \ref{prop30.0}. Then there exists a parallel isomorphism of graded vector bundles:
$$
g_{\infty}:\Gr^{\HNS}_{\alpha}(E,\delbar_E)=\bigoplus_{i=1}^l(Q_i,\delbar_{Q_i}, h_i)
\simeq \bigoplus_{i=1}^l(Q_{\infty,i},\delbar_{A_{\infty,i}},h_{\infty,i})=(E_{\infty},\delbar_{A_{\infty}},h_{\infty}),
$$
where $h_i$ is the $T$-adapted HE metric on $(Q_i=E_i/E_{i-1},\delbar_{Q_i})$ and the RHS is the holomorphic orthogonal splitting constructed in Proposition \ref{prop30.0}. More precisely, the restriction of $g_{\infty}$ above defines a holomorphic isomorphism
$$
g_{\infty,i}: (Q_i,\delbar_{Q_i},h_i)\to (Q_{\infty,i},\delbar_{A_{\infty,i}},h_{\infty,i})
$$
such that $\nabla_{(\delbar_{A_{\infty,i}},h_{\infty,i})}\circ g_{\infty,i}=g_{\infty,i}\circ\nabla_{(\delbar_{Q_i},h_i)}$.
\end{theo}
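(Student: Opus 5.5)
We construct $g_{\infty}$ one graded piece at a time, using the approximating connections $A_j^{(i)}$ on $(E_i,h_0^{(i)})$ from Proposition \ref{prop30.0}. Each $A_j^{(i)}$ is complex-gauge equivalent to a Chern connection on $(E_i,\delbar_{E_i})$, say $A_j^{(i)}=f_j^{(i)}\cdot(\delbar_{E_i},h_0^{(i)}(f_j^{(i)})^2)$. By item (1) of Proposition \ref{prop30.0}, $\delbar_{A_j^{(i)}}$ preserves $E_{i-1}$, so $f_j^{(i)}$ induces a holomorphic map
\[
f_{j,i}:(Q_i,\delbar_{Q_i})\to (E_i/E_{i-1},\delbar_{A_j^{(i)}})\cong \bigl(E_{i-1}^{\perp},(\id-\pi^{(i-1)}_j)\circ\delbar_{A_j^{(i)}}\bigr).
\]
The aim is to normalize $f_{j,i}$ and pass to a limit $g_{\infty,i}:Q_i\to Q_{\infty,i}$ on $X\setminus(D\cup Z_{\an})$, following \cite[section 4]{DW04} and \cite[section 7]{Sib15}.

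First, normalize. Set $\tilde f_{j,i}=f_{j,i}/\|f_{j,i}\|$, where the norm is a fixed $L^2$ norm over a compact $K\Subset X\setminus(D\cup Z_{\an})$, or a sup-norm weighted by $|s_D|^k$. Then $\tilde f_{j,i}$ is holomorphic from $(Q_i,\delbar_{Q_i})$ to $(E_i/E_{i-1},\delbar_{A_j^{(i)}})$. The connections $(\id-\pi^{(i-1)}_j)A_j^{(i)}$ converge weakly in $L^p_{1,\loc}$ to $A_{\infty,i}$, since $\pi^{(i-1)}_j\to\pi^{(i-1)}_\infty$ by items (4) and (5). Elliptic estimates for $\delbar$ then give local $L^p_2$ bounds, and hence a nonzero limit $g_{\infty,i}$ holomorphic $Q_i\to Q_{\infty,i}$ on $X\setminus(D\cup Z_{\an})$.

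To keep the limit from vanishing and to control its growth near $D$, use a Bochner-type subharmonicity estimate for $|\tilde f_{j,i}|^2$ computed with $h_i$ on the source and the metric induced by $A_j^{(i)}$ on the target. The source $h_i$ is $T$-adapted HE with constant $\mu_i$, so Corollary \ref{est on HE} bounds $s_D^k h_i^{\pm1}$. The target curvature $\Lambda F$ on the quotient is bounded above by $\Lambda_{\omega_{\ve_j}}F_{A_j^{(i)}}$, which is uniformly $L^\infty$-bounded by item (5). Since the second fundamental form only lowers quotient curvature, the sign works in our favor. Lemma \ref{sob ineq}(3) then gives a uniform sup bound, and hence a polynomial growth bound of the form $\sup|s_D^k g_{\infty,i}|<\infty$. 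The singular set $Z_{\an}$ has codimension $\ge 4$, so a bounded holomorphic map extends across it, and $g_{\infty,i}$ is defined on $X\setminus D$.

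Next we show $g_{\infty,i}$ is parallel. Both $h_i$ and $A_{\infty,i}$ are $T$-admissible HE with the same constant $\mu_i$: the first by Remark \ref{HE const rema}, the second by Proposition \ref{prop30.0}(2). Hence $\Lambda_T\delbar\del|g_{\infty,i}|^2=|\nabla g_{\infty,i}|^2\ge0$, so $|g_{\infty,i}|^2$ is $T$-subharmonic. Its polynomial growth near $D$ and the finite-energy conditions (c), (d), (e) let us integrate by parts on $X\setminus D$ with cutoffs $\chi_\delta$ around $D$; these cutoffs have $\int|d\chi_\delta|^2T^n\to0$ because $\alpha^{n-1}\cdot[D]=0$ by Theorem \ref{CT}. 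This yields $\nabla g_{\infty,i}=0$. A parallel map between HE bundles has kernel and image that are parallel subbundles.

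Finally we show $g_{\infty,i}$ is an isomorphism. $Q_i$ is $\alpha^{n-1}$-stable, and $\ker g_{\infty,i}$ is a parallel, hence holomorphic, orthogonal summand of $(Q_i,h_i)$ with bounded growth. By Lemma \ref{simple stable}, applied to the projection onto it, the kernel is $0$. For surjectivity we compare ranks: Proposition \ref{prop30.0}(3) gives $\rk Q_{\infty,i}=\rk Q_i$, so the injective $g_{\infty,i}$ is an isomorphism. Parallelism makes it intertwine the Chern connections, and summing over $i$ gives $g_\infty$.

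The main obstacle is the nonvanishing and growth control of the normalized limit near $D$, i.e. the sup estimate of $|\tilde f_{j,i}|$ uniformly in $\ve_j$. The difficulty is that $h_i$ degenerates polynomially along $D$ while the approximating metrics $\omega_{\ve_j}$ vary. The first attempt would be to run the Bochner inequality with the weight $|s_D|^{2k}$ and absorb its curvature term $k\,\sqrt{-1}\Lambda_{\omega_\ve}\Theta_{h_D}$, which is $L^1$-controlled, into the mean-value inequality of Lemma \ref{sob ineq}(3).
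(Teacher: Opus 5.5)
\textbf{There is a genuine gap, and it sits in the step you yourself flag as the main obstacle.} The cause is a sign error.

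For a holomorphic subbundle $S\subset E$ with quotient $E/S\cong S^{\perp}$ and second fundamental form $\beta$, one has
$\sqrt{-1}\Lambda F_{E/S}=(\id-\pi)\sqrt{-1}\Lambda F_E(\id-\pi)+\Lambda(\sqrt{-1}\beta\wedge\beta^*)$,
and the last term is positive semidefinite. Curvature decreases in subbundles and \emph{increases} in quotients. This is the same sign as in the Chern--Weil formula $\deg_\omega(\pi)=\int\Tr(\sqrt{-1}\Lambda_\omega F\circ\pi)-\int|\delbar\pi|^2$ used in Lemma \ref{lem18}.

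So the mean curvature of $(E_i/E_{i-1},\delbar_{A_j^{(i)}})$ is bounded \emph{below}, not above, by that of $A_j^{(i)}$. The extra term has size $|\del_{A_j^{(i)}}\pi^{(i-1)}_j|^2$. This is only known to tend to $0$ in $L^1$ (Proposition \ref{prop30.0}(4)). It is not uniformly bounded in $L^\infty$, as the paper notes before Lemma \ref{lem21}.

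The Bochner formula for a holomorphic map gives $\Delta|f|^2\ge -C|f|^2$ only if the target's $\sqrt{-1}\Lambda F$ is bounded above. For $f_{j,i}:Q_i\to E_i/E_{i-1}$ you therefore get no uniform subharmonicity. That means no sup bound, and hence neither non-vanishing nor growth control of the limit.

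There are further problems in the same step:
\begin{itemize}
\item \emph{Source metric.} The Laplacian and the inequalities of Lemma \ref{sob ineq} are with respect to $\omega_{\ve_j}$. But $h_i$ is Hermitian--Einstein for $T$ and singular along $D$, so $\sqrt{-1}\Lambda_{\omega_{\ve_j}}F_{h_i}$ is uncontrolled near $D$.
\item \emph{Normalization on $K$.} Normalizing on a compact $K$ gives no control of $\tilde f_{j,i}$ away from $K$. The weighted sup-norm option does not prevent mass escaping toward $D$.
\item \emph{Use of Lemma \ref{sob ineq}(3).} That lemma needs $\Delta v\ge -C'$, not $\Delta v\ge -Cv$, together with a global $L^1$ bound.
\end{itemize}

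\textbf{How the paper avoids these problems.}
\begin{itemize}
\item On the source it uses the $\omega_{\ve_j}$-HE metrics $h_{j,i}$ on the stable pieces. These exist for small $\ve$ by Lemma \ref{open stable}, have constant mean curvature, and converge to $h_i$ in $C^\infty_{\loc}(X\setminus D)$.
\item On the target it only ever maps into full bundles. For the first piece it uses the inclusion $E_1\hookrightarrow(E_2,\delbar_{A_j^{(2)}})$. For quotient pieces it dualizes, taking $Q_i^\vee\hookrightarrow(E_i,\delbar_{A_j^{(i)}})^\vee$. The target mean curvature is then $\pm\sqrt{-1}\Lambda_{\omega_{\ve_j}}F_{A_j^{(i)}}$, which is uniformly bounded by Proposition \ref{prop30.0}(5).
\item This gives $\Delta_{\omega_{\ve_j}}|g_j|^2\ge -C|g_j|^2$ on all of $X$. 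With the global normalization $\int_X|g_j|^4\omega_{\ve_j}^n=1$, Moser iteration with the uniform Sobolev inequality of Lemma \ref{sob ineq}(1) gives $\sup_X|g_j|\le C$. That yields both local compactness and a nonzero limit.
\item The dual limit is placed in $Q_{\infty,i}^\vee$ by passing $g_{j,1}^\vee\circ g_{j,2}=0$ to the limit.
\end{itemize}

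\textbf{Your later steps.} Once a nonzero limit with polynomial growth exists, the rest of your outline is fine: injectivity via Lemma \ref{simple stable} applied to the projection onto the kernel, and surjectivity via the rank equality from Proposition \ref{prop30.0}(3). The rank count is a cleaner conclusion than the paper's. However, your claim that $\int|d\chi_\delta|^2_T T^n\to0$ ``because $\alpha^{n-1}\cdot[D]=0$'' is not justified as stated. For parallelism the paper instead weights by $s_D^k$, using a $T$-adapted HE metric on $\mathcal{O}(D)$ with HE constant $0$, and appeals to \cite[Lemma 3.16]{Jin26}.
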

\begin{proof}
Without loss of generality, we can assume that $D=E_{nK}(\alpha)$ is an snc divisor and the $\alpha^{n-1}$-HNS filtration of $(E,\delbar_E)$ is given by holomorphic subbundles. Let $s_D$ be the defining section of $D$.
We fix an adapted current $T\in \alpha$ and its adapted approximation $\omega_{\ve}\in\alpha+\ve\omega_0$.
We use a sequence of integrable unitary connections $A_j^{(i)}$ on $E_i$ constructed in Proposition \ref{prop30.0}. By Proposition \ref{prop30.0}, we can assume that $E_{i-1}$ is a holomorphic subbundle of $(E_i,\delbar_{A_j^{(i)}})$.
We then denote by $g_{j,i}$ the natural inclusion of holomorphic subbundle
$$
g_{j,i}:(E_{i-1},\delbar_{E_{i-1}})\hookrightarrow (E_i,\delbar_{A_j^{(i)}}).
$$
We first consider the case when $i=2$. 
By definition of $\alpha^{n-1}$-HNS filtrations, we know that $(E_1,\delbar_{E_1})$ is $\alpha^{n-1}$-slope stable. Thus we have a $T$-adapted HE metric $h_1$ on $(E_1,\delbar_{E_1})$ and an $\omega_{\ve}$-HE metric $h_{\ve,1}$ on $(E_1,\delbar_{E_1})$ such that $h_{\ve,1}\xrightarrow{\ve\to0} h_1$ in $C^{\infty}_{\loc}(X\setminus D)$.
We consider (possibly non-isometric) holomorphic maps
$$
g_{j,1}:(E_1,\delbar_{E_1}, h_{j,1}:=h_{\ve_j,1})\to (E_2,\delbar_{A_j^{(2)}}, h_0)
$$
defined above. Since $\|\Lambda_{\omega_{\ve_j}}F_{A_j^{(2)}}\|_{L^{\infty}(X)}\le C$ by Proposition \ref{prop30.0} (5), we have
\begin{align}
&\Delta_{\omega_{\ve_j}}|g_{j,1}|_{h_{j,1}^*\otimes h_0}^2\notag\\
&=\left|\del_{(\delbar_{E_1},h_{j,1})\otimes A_j^{(2)}}g_{j,1} \right|^2_{h_{j,1}^*\otimes h_0}
-\langle g_{j,1}, \sqrt{-1}\Lambda_{\omega_{\ve_j}}F_{A_j^{(2)}}\circ g_{j,1}-g_{j,1}\circ \sqrt{-1}\Lambda_{\omega_{\ve_j}}F_{(\delbar_{E_1},h_{j,1})}\rangle\label{eq40}\\
&\ge -C|g_{j,1}|_{h_{j,1}^*\otimes h_0}^2\label{eq41}
\end{align}
Multiplying $|g_{j,1}|_{h_{j,1}^*\otimes h_0}^2$ to both sides of (\ref{eq41}) above and integrating with respect to $\omega_{\ve_j}^n$, we have
$$
\int_X\left|\nabla|g_{j,1}|_{h_{j,1}^*\otimes h_0}^2\right|^2\omega_{\ve_j}^n\le C\int_X|g_{j,1}|_{h_{j,1}^*\otimes h_0}^4\omega_{\ve_j}^n.
$$
Then we normalize $g_{j,1}$ so that 
\begin{equation}\label{eq41.1}
\int_X|g_{j,1}|_{h_{j,1}^*\otimes h_0}^4\omega_{\ve_j}^n=1.
\end{equation}
Then by Lemma \ref{sob ineq} (1) and by Moser iteration, we obtain
\begin{equation}\label{eq42}
\sup_X|g_{j,1}|_{h_{j,1}^*\otimes h_0}\le C
\end{equation}
where $C>0$ depends on the Sobolev constant of $(X,\omega_{\ve_j})$ which is uniformly bounded by Lemma \ref{sob ineq}. Then, by integrating (\ref{eq40}), we obtain
\begin{equation}\label{eq43}
\int_X\left|\del_{(\delbar_{E_1},h_{j,1})\otimes A_j^{(2)}}g_{j,1} \right|^2_{h_{j,1}^*\otimes h_0}\omega_{\ve_j}^n\le C.
\end{equation}
Then $h_{j,1}\xrightarrow{j\to\infty} h_1$ in $C_{\loc}^{\infty}(X\setminus D)$, $A_j^{(2)}\xrightarrow{j\to\infty} A_{\infty}^{(2)}$ weakly in $L^p_{1,\loc}$, thus strongly in $C^0_{\loc}$, Proposition \ref{prop30.0} (5), the uniform estimates (\ref{eq42}) and (\ref{eq43}) imply that
$$
g_{j,1}\xrightarrow{j\to\infty} g_{\infty,1}\hspace{4mm} \hbox{ weakly in $L^2_{1,\loc}$.}
$$
Then $g_{j,1}$ converges to $g_{\infty,1}$ strongly in $L^2_{\loc}$, hence the uniform $L^{\infty}$-estimate (\ref{eq42}) ensures that 
$$
\lim_{j\to\infty}\int_X|g_{j,1}|_{h_{j,1}^*\otimes h_0}^p\omega_{\ve_j}^n
=\int_X|g_{\infty,1}|^p_{h_1^*\otimes h_0}\langle T^n\rangle
$$
for any $1\le p<\infty$. Therefore we have $g_{\infty,1}\ne0$ by (\ref{eq41.1}). Since $\delbar_{(\delbar_{E_1},h_{j,1})^{\vee}\otimes A_j^{(2)}}g_{j,1}=0$, $\delbar_{(\delbar_{E_1},h_{j,1})^{\vee}}=\delbar_{E_1^{\vee}}$ and $A_j^{(2)}\xrightarrow{j\to\infty}A_{\infty}^{(2)}$ in $C^0_{\loc}$ by Proposition \ref{prop30.0} (5), we have that $(\delbar_{E_1^{\vee}}\otimes \delbar_{A_{\infty}^{(2)}})g_{\infty,1}=0$. That is,
\begin{equation}\label{eq44}
g_{\infty,1}:(E_1,\delbar_{E_1},h_1)\to (E_{\infty,2},\delbar_{A_{\infty}^{(2)}}, h_{\infty})
\end{equation}
is a non-zero holomorphic map. We show that above $g_{\infty,1}$ is parallel. Let us consider 
$$
E_1\xrightarrow{g_{j,1}} (E_2,A_j^{(2)}, h_0^{(2)})\xrightarrow{(\pi^{(1)}_j)^{\perp}} E_2.
$$
Here $(\pi^{(1)}_j)^{\perp}=\id_{E_2}-\pi^{(1)}_j$ which is the projection of $E_1^{\perp}\subset (E_2,h_0^{(2)})$. 
By Proposition \ref{prop30.0} (4), we have that $\pi_j^{(1)}$ converges to a weak holomorphic projection $\pi_{\infty}^{(1)}$.
Since $(\pi^{(1)}_j)^{\perp}\circ g_{j,1}=0$, we have, in $j\to\infty$, that $(\pi^{(1)}_{\infty})^{\perp}\circ g_{\infty,1}=0$.
Recall the holomorphic orthogonal splitting
$$
(E_{\infty,2}, A_{\infty}^{(2)},h_{\infty}^{(2)})=(E_{\infty,1},A_{\infty}^{(1)},h_{\infty}^{(1)})\oplus (Q_{\infty,2},A_{\infty,2}, h_{\infty,2})
$$
in Proposition \ref{prop30.0} (2).
Since $\pi^{(1)}_{\infty}$ is the projection to $E_{\infty,1}$ by Proposition \ref{prop30.0}, we have that $(\pi^{(1)}_{\infty})^{\perp}$ is the projection to $Q_{\infty,2}$. Thus we obtain that the image of $g_{\infty,1}$ is contained in the kernel of $(\pi^{(2)}_{\infty})^{\perp}$, that is, in $E_{\infty,1}=Q_{\infty,1}$. 
Thus, $g_{\infty,1}$ defines a non-zero holomorphic map 
\begin{equation}\label{eq45}
g_{\infty,1}:(E_1,\delbar_{E_1},h_1)\to (E_{\infty,1},\delbar_{A_{\infty,1}},h_{\infty,1}),
\end{equation}
where
$A_{\infty,1}$ is a $T$-admissible HE connection on $(Q_{\infty,1},h_{\infty,1})$ with $T$-HE constant $\mu_1=\mu_{\alpha}(E_1)$ (refer to Proposition \ref{prop30.0} (2)). Since $h_1$ is a $T$-adapted HE metric on $(E_1,\delbar_{E_1})$, its $T$-HE constant is also $\mu_1$ (refer to \cite{Jin26}).
Let $h_D$ be a $T$-adapted HE metric on $\mathcal{O}(D)$ (see Theorem \ref{KH corr nef big}) and $s_D$ be a defining section of $D$. Then the $T$-HE constant of $h_D$ equals $\mu_{\alpha}(D)=0$ (refer to \cite{Jin26}). Fix a large integer $k>0$.
Then we have
$$
\Delta_T|s_D^kg_{\infty,1}|_{h_D^k\otimes h_1^*\otimes h_0}^2
=\left|\del_{(\delbar_{\mathcal{O}(kD)}, h_D^k)\otimes(\delbar_{E_1},h_1)\otimes A_{\infty}^{(2)}}(s_D^kg_{\infty,1})\right|^2_{h_D^k\otimes h_1^*\otimes h_0}.
$$
By the argument in \cite[Lemma 3.16]{Jin26}, we can prove that both sides of the above equation are integrable with respect to $T^n$ and 
\begin{equation}\label{eq45.1}
\int_{X\setminus D}\Delta_T|s_D^kg_{\infty,1}|_{h_D^k\otimes h_1^*\otimes h_0}^2T^n=0.
\end{equation}
Therefore we have
$$
\int_{X\setminus D}\left|\del_{(\delbar_{\mathcal{O}(kD)}, h_D^k)\otimes(\delbar_{E_1},h_1)\otimes A_{\infty}^{(2)}}(s_D^kg_{\infty,1})\right|^2_{h_D^k\otimes h_1^*\otimes h_0}T^n=0.
$$
Thus $g_{\infty,1}$ in (\ref{eq45}) is parallel. Since it is non-zero and $(E_1,\delbar_{E_1})$ is $\alpha^{n-1}$-slope stable, we obtain that $g_{\infty,1}$ in (\ref{eq45}) is an isomorphism by Lemma \ref{simple stable}. 

Next we see the construction of parallel isomorphism $Q_2\to Q_{\infty,2}$. 
Let us consider the natural injection
$$
g_{j,2}:(Q_2,\delbar_{Q_2},h_{j,2})^{\vee}\hookrightarrow (E_2,\delbar_{A_j^{(2)}},h_0)^{\vee}
$$
induced by the natural projection $E_2\to Q_2$. Here $h_{j,2}$ is the $\omega_{\ve_j}$-HE metric on $(Q_2,\delbar_{Q_2})$. Let us denote by $h_2$ the $T$-adapted HE metric on $(Q_2,\delbar_{Q_2})$ such that $h_{j,2}\xrightarrow{j\to\infty} h_2$ in $C^{\infty}_{\loc}(X\setminus D)$. In the same way as above, we can construct a non-zero holomorphic map
$$
g_{\infty,2}:(Q_2,\delbar_{Q_2},h_2)^{\vee}\to (E_{\infty,2},\delbar_{A_{\infty}^{(2)}},h_{\infty}^{(2)})^{\vee}
$$
such that $g_{j,2}\xrightarrow{j\to\infty} g_{\infty,2}$ weakly in $L^2_{1,\loc}$ and strongly in $L^p$.
Let us consider the composition 
$$
Q_2^{\vee} \xrightarrow{g_{j,2}} E_2^{\vee}\xrightarrow{g_{j,1}^{\vee}} E_1^{\vee}.
$$
Since $g_{j,2}$ is the dual of the natural quotient $E_2\to Q_2=E_2/E_1$ and $g_{j,1}$ is the natural inclusion $E_1\subset (E_2,\delbar_{A_j^{(2)}})$, we have $g_{j,1}^{\vee}\circ g_{j,2}=0$. Since $g_{j,1}\to g_{\infty,1}$ and $g_{j,2}\to g_{\infty,2}$ in $L^2_{\loc}$, we have
$
g_{\infty,1}^{\vee}\circ g_{\infty,2}=0.
$
Hence the image of $g_{\infty,2}$ is contained in the kernel of $g_{\infty,1}^{\vee}$, which equals $Q_{\infty,2}^{\vee}$.
Hence we obtain that $g_{\infty,2}$ induces a non-zero holomorphic map
\begin{equation}\label{eq46}
g_{\infty,2}:(Q_2,\delbar_{Q_2},h_2)^{\vee}\to (Q_{\infty,2},\delbar_{A_{\infty,2}},h_{\infty,2})^{\vee}
\end{equation}
Here we remark that $A_{\infty,2}$ is the $T$-admissible HE connection on $Q_{\infty,2}$ with Einstein constant $\mu_2=\mu_{\alpha}(Q_2)$ (refer to Proposition \ref{prop30.0} (5)). Thus, by the same argument as above, we know that $g_{\infty,2}$ above is non-zero parallel. Since $(Q_2,\delbar_{Q_2})^{\vee}$ is $\alpha^{n-1}$-slope stable, we obtain that $g_{\infty,2}$ in (\ref{eq46}) is an isomorphism.  Then $g_{\infty,1}$ in (\ref{eq45}) and $g_{\infty,2}$ in (\ref{eq46}) define parallel isomorphisms
\begin{align*}
g_{\infty,1}\oplus (g_{\infty,2}^{\vee})^{-1}
:\Gr^{\HNS}_{\alpha}(E_2,\delbar_{E_2})
&=\bigoplus_{i=1}^2(Q_i,\delbar_{Q_i}, h_i)\\
&\simeq \bigoplus_{i=1}^2(Q_{\infty,i},A_{\infty,i},h_{\infty,i})=(E_{\infty,2},A_{\infty,2},h_{\infty,2}),
\end{align*}
Repeating the same argument, we can prove the theorem.
\end{proof}

\end{document}